\newif\ifpersonal
\newif\ifarxiv
\definecolor{linkcolor}{HTML}{005050}
\newcommand*{\personal}[1]{\textcolor{blue}{(Personal: #1)}}
\newcommand*{\todo}[1]{\textcolor{red}{(Todo: #1)}}
\newcommand*{\personal}[1]{\ignorespaces}
\newcommand*{\todo}[1]{\ignorespaces}
\theoremstyle{plain}
\newtheorem{thm*}{Theorem}
\newtheorem{thm}{Theorem}[section]
\newtheorem{lem}[thm]{Lemma}
\newtheorem{prop}[thm]{Proposition}
\newtheorem{conj}[thm]{Conjecture}
\newtheorem{cor*}[thm*]{Corollary}
\newtheorem{cor}[thm]{Corollary}
\theoremstyle{definition}
\newtheorem{defin}[thm]{Definition}
\newtheorem{defin*}[thm*]{Definition}
\newtheorem{eg*}[thm*]{Example}
\newtheorem{rem*}[thm*]{Remark}
\newtheorem{rem}[thm]{Remark}
\newtheorem{construction}[thm]{Construction}
\theoremstyle{remark}
\numberwithin{equation}{section}
\newcommand{\cA}{\mathcal A}
\newcommand{\cB}{\mathcal B}
\newcommand{\cC}{\mathcal C}
\newcommand{\cE}{\mathcal E}
\newcommand{\cH}{\mathcal H}
\newcommand{\cJ}{\mathcal J}
\newcommand{\cO}{\mathcal O}
\newcommand{\cT}{\mathcal T}
\newcommand{\cS}{\mathcal S}
\newcommand{\cX}{\mathcal X}
\newcommand{\fX}{\mathfrak X}
\newcommand{\cY}{\mathcal Y}
\newcommand{\cF}{\mathcal F}
\newcommand{\cG}{\mathcal G}
\newcommand{\cD}{\mathcal D}
\newcommand{\PSh}{\mathrm{PSh}}
\newcommand{\Sh}{\mathrm{Sh}}
\newcommand{\inv}{^{-1}}
\newcommand{\Cech}{\check{\mathcal C}}
\newcommand{\canal}{$\mathbb C$-analytic\xspace}
\DeclareMathAlphabet{\mathpzc}{OT1}{pzc}{m}{it}
\newcommand{\sSet}{\mathrm{sSet}}
\newcommand{\rSet}{\mathrm{Set}}
\newcommand{\Ab}{\mathrm{Ab}}
\newcommand{\Mod}{\mathrm{Mod}}
\newcommand{\Coh}{\mathrm{Coh}}
\newcommand{\trunc}{\mathrm{t}_0}
\newcommand{\et}{\mathrm{\acute{e}t}}
\newcommand{\Jac}{\mathrm{Jac}}
\newcommand{\fib}{\mathrm{fib}}
\newcommand{\Cat}{\mathcal C \mathrm{at}}
\newcommand{\PStab}{\mathrm{PStab}}
\newcommand{\Stab}{\mathrm{Stab}}
\newcommand{\cTdisc}{\cT_{\mathrm{disc}}}
\newcommand{\cTzar}{\cT_{\mathrm{Zar}}}
\newcommand{\cTet}{\cT_{\mathrm{\acute{e}t}}}
\newcommand{\Tan}{\cT_{\mathrm{an}}}
\newcommand{\cotimes}{\widehat{\otimes}}
\newcommand{\Strloc}{\mathrm{Str}^\mathrm{loc}}
\newcommand{\alg}{^\mathrm{alg}}
\newcommand{\an}{^\mathrm{an}}
\newcommand{\Str}{\mathrm{Str}}
\newcommand{\CRing}{\mathrm{CRing}}
\newcommand{\DM}{Deligne-Mumford\xspace}
\DeclareMathOperator{\Hom}{Hom}
\DeclareMathOperator{\Map}{Map}
\DeclareMathOperator{\Fun}{Fun}
\DeclareMathOperator{\Spec}{Spec}
\DeclareMathOperator{\Sp}{Sp}
\DeclareMathOperator*{\colim}{colim}
\begin{document}

\title{Derived $\mathbb C$-analytic geometry II: square-zero extensions}

\author{Mauro PORTA}
\address{Mauro PORTA, Institut de Math\'ematiques de Jussieu, CNRS-UMR 7586, Case 7012, Universit\'e Paris Diderot - Paris 7, B\^atiment Sophie Germain 75205 Paris Cedex 13 France}
\email{mauro.porta@imj-prg.fr}
\date{June 29, 2014 (Revised on \today)}

\subjclass[2010]{Primary 14A20; Secondary 32C35 14F05}
\keywords{derived analytic stack, derived complex geometry, infinity category, analytic stack, analytic square-zero extension, analytic modules, stabilization}

\begin{abstract}
	We continue the explorations of derived \canal geometry started in \cite{DAG-IX} and \cite{Porta_GAGA_2015}.
	We describe the category of $\cO_X$-modules over a derived \canal space $X$ as the stabilization of a suitable category of analytic algebras over $\cO_X$.
	Finally, we apply this description to introduce the notion of analytic square-zero extension and prove a fundamental structure theorem for them.
\end{abstract}

\maketitle

\personal{PERSONAL COMMENTS ARE SHOWN!!!}

\tableofcontents

\section*{Introduction}

We referred to this article in \cite{Porta_GAGA_2015} with the title ``Derived \canal geometry II: deformation theory''.
We finally decided to split the content originally meant to appear here into two different articles: the current one and \cite{Porta_Cotangent_2015} that will soon be available.
We did so because the article was growing longer than expected. The material has been divided as follows: in the current article we discuss to a great length the notion of analytic module over a derived \canal space $(X, \cO_X)$. We provide an alternative description of $\cO_X\alg \textrm{-} \Mod$ which is particularly suited for the study of infinitesimal deformation theory in this derived \canal setting.
As a first application, we introduce the notion of analytic square-zero extensions and we prove an important structure theorem for them (\cref{cor:structure_theorem_square_zero}).

On the other side, the analytic cotangent complex will be discussed thoroughly in \cite{Porta_Cotangent_2015}. For this reason, we felt the need to change the title of the current article to ``Derived \canal geometry II: square-zero extensions''.

\medskip

\paragraph{\textbf{Overview and organization of the paper}}

In this article we continue the explorations of derived \canal geometry started in \cite{Porta_GAGA_2015}.
We briefly recall that there we chose to adopt the foundations proposed by J.\ Lurie in \cite{DAG-IX}, and in this paper we will continue to do so.
We refer to the introduction of \cite{Porta_GAGA_2015} for an expository account of the main ideas involved in this approach.
We can summarize the main results we previously obtained by saying that we introduced a notion of coherent sheaf over a derived \canal space and that we proved it to be solid by showing that it leads to a version of both Grauert's proper direct image theorem and the two GAGA theorems.

We now continue in the very natural direction of (infinitesimal) deformation theory. One of the attracting features of derived algebraic geometry is that it provides a very powerful framework to deal with deformation theory. In a sense, one could even bring himself to say that the objects of study in derived algebraic geometry can be naturally decomposed into two ``orthogonal'' directions: a stacky (underived) part and a purely derived part; and the latter is completely determined by (infinitesimal) deformation theory. There are indeed many ways to make this idea precise, or, better, there are many instances of this general principle.
Here we collect some of the ones we consider most significant:
\begin{enumerate}
	\item if $X$ is a derived \DM stack, $\trunc(X)$ denotes its truncation, and $X_\et$, $\trunc(X)_\et$ denote their small \'etale sites, then pullback along the closed immersion $\trunc(X) \to X$ induce an equivalence
	\[ X_\et \simeq \trunc(X)_\et . \]
	One of the reasons this is a very important fact is that it implies that the small \'etale topos of $X$ is $n$-localic for some $n$ and that therefore it behaves much like it was hypercomplete (despite not being hypercomplete). We refer e.g.\ to \cite[Proposition 2.3]{Porta_Comparison_2015} for an application of this result.
	\item If $k$ is a discrete commutative ring and $X = \Spec(A)$ is an affine derived scheme over $\Spec(k)$ (e.g.\ the reader can think of $A$ as a simplicial commutative $k$-algebra), we can always represent $A$ as the inverse limit of its Postnikov tower:
	\[ A \simeq \lim \tau_{\le n}(A) \]
	where the truncation is taken in the $\infty$-category $\mathrm{CAlg}_k$.
	This decomposition is very useful because we can say a lot about the structure of the morphisms $\tau_{\le n}(A) \to \tau_{\le n-1}(A)$.
	First of all, they all correspond to closed immersion.
	More strikingly, they always are \emph{square-zero extensions}. The structure theorem for square-zero extensions (see e.g.\ \cite[7.4.1.26]{Lurie_Higher_algebra} or, for a more elementary formulation, \cite{Porta_Vezzosi_Square_zero}) implies therefore that one can always choose a $k$-linear derivation $d \colon \tau_{\le n-1}(A) \to \tau_{\le n - 1}(A) \oplus \pi_n(A)[n+1]$ in such a way that
	\[ \begin{tikzcd}
	\tau_{\le n}(A) \arrow{r} \arrow{d} & \tau_{\le n - 1}(A) \arrow{d}{d} \\
	\tau_{\le n - 1}(A) \arrow{r}{d_0} & \tau_{\le n - 1}(A) \oplus \pi_n(A)[n+1]
	\end{tikzcd} \]
	is a pullback square. Here $\tau_{\le n - 1}(A) \oplus \pi_n(A)[n+1]$ denotes the split square-zero extension of $\tau_{\le n - 1}(A)$ associated to the module $\pi_n(A)[n+1]$ and $d_0$ denotes the trivial derivation.
	\item Finally, there is Lurie's representability theorem, which is perhaps the most important manifestation of the guiding principle we are discussing. Roughly speaking, this theorem says that a derived stack $X$ is a geometric stack (and therefore in particular it admits an atlas by affine derived schemes) if and only if its truncation $t_0(X)$ is a geometric stack on its own and $X$ has a good infinitesimal deformation theory. We refer to \cite[Appendix C]{HAG-II} for this formulation and to \cite{DAG-XIV} for a more leisurely exposition.
\end{enumerate}

These three techniques combined together form a very powerful and useful toolkit that allows to prove statements concerning derived algebraic geometry by splitting them into an underived statement and a problem in infinitesimal deformation theory.
At that point, one can deal with the underived part with more classical techniques; on the other side, one can heavily draw upon the cotangent complex formalism to understand the infinitesimal deformation part.

An important part of the project started in \cite{Porta_Yu_Higher_analytic_stacks_2014,Porta_GAGA_2015} is to show that a similar toolkit can be developed also in derived \canal geometry.
In fact, we already dealt with the first and most unrefined of the previous techniques in \cite[Proposition 3.15]{Porta_GAGA_2015}, and we applied it to deduce all the major results of that article precisely by reducing to their underived analogues, which had been previously dealt with in \cite{Porta_Yu_Higher_analytic_stacks_2014}.
In this article, we will focus on the \canal versions of point (2), and we will come back to the third point in \cite{Porta_Cotangent_2015}, where we will also discuss to a great extent the notion of analytic cotangent complex.

\medskip

\paragraph{\textbf{Modules as tangent categories.}}

Let us start by recalling that $\Tan$ denotes the pregeometry consisting of open subsets of $\mathbb C^n$, where admissible morphisms are precisely open immersions and we consider the usual analytic topology.
In first approximation, a derived \canal space is a pair $(X, \cO_X)$, where $X$ is a topological space and $\cO_X$ is a $\Tan$-structure $\cO_X \colon \Tan \to \Sh(X)$, whose underlying locally ringed space is a \canal space.
To explain what is the underlying locally ringed space, we recall from the introduction of \cite{Porta_GAGA_2015} that $\cO_X$ can be roughly thought as a sheaf of simplicial commutative $\mathbb C$-algebras satisfying an axiomatic version of the holomorphic functional calculus typical of Banach algebras.
We can therefore forget the extra structure giving the holomorphic functional calculus to remain with a sheaf $\cO_X\alg$ with values in the $\infty$-category $\mathrm{CAlg}_{\mathbb C}$ of simplicial commutative $\mathbb C$-algebras, and at this point we can further apply the $\pi_0$ to get a sheaf of classical $\mathbb C$-algebras whose stalks are local rings.
Therefore the requirement is that $(X, \pi_0(\cO_X\alg))$ is a \canal space.

In \cite[Definition 4.1]{Porta_GAGA_2015} we defined a sheaf of $\cO_X$-modules on $(X, \cO_X)$ to be a sheaf of $\cO_X\alg$-modules.
This is in line with what it is usually done for $\mathcal C^\infty$-rings (see e.g.\ \cite{Joyce_Cinfty_rings}), but yet it is awkward because this definition forgets completely the interesting analytic part of $\cO_X$.
There is a more natural definition that it is possible to consider.
Namely, Quillen remarked that if $A$ is a commutative ring then the category of $A$-modules is equivalent to $\Ab(\CRing_{/A})$.
This definition uses only the internal structure of the category of commutative rings, and therefore it is suitable to be extended to more general algebraic structures.
Now, in higher category theory the role of internal abelian groups is played by the spectrum objects, and therefore it is tempting to define the category of $\cO_X$-modules as
\[ \cO_X \textrm{-} \Mod \coloneqq \Sp(\Strloc_{\Tan}(\cX)_{/\cO_X}) \]
Since we were able to prove GAGA theorems using $\cO_X\alg \textrm{-} \Mod$ as category of $\cO_X$-modules, it becomes interesting to try to understand the precise relation between these two $\infty$-categories.

It is not difficult to construct a comparison functor
\[ \Phi \colon \Sp(\Strloc_{\Tan}(\cX)_{/\cO_X}) \to \cO_X\alg \textrm{-} \Mod \]
We then prove:

\begin{thm*}[\cref{thm:equivalence_of_modules}]
	Suppose that $(\cX, \cO_\cX)$ is a derived \canal space.
	Then the comparison functor $\Phi$ is an equivalence of $\infty$-categories.
\end{thm*}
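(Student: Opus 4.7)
The plan is to reduce to a stalkwise check and then exploit that a $\Tan$-structure on a split square-zero extension of $\cO_\cX$ is uniquely determined by its underlying algebra via the first-order Taylor formula for holomorphic functions.

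First I would make $\Phi$ explicit. The forgetful functor $\Strloc_{\Tan}(\cX) \to \mathrm{CAlg}_{\mathbb C}(\Sh(\cX))$ preserves small limits and the terminal object, and sends $\cO_\cX$ to $\cO_\cX\alg$; it therefore induces an exact functor on slices over $\cO_\cX$, and in turn on stabilizations. Composing with the classical identification $\Sp(\mathrm{CAlg}_{\mathbb C}(\Sh(\cX))_{/\cO_\cX\alg}) \simeq \cO_\cX\alg\textrm{-}\Mod$ (cf.\ \cite[Proposition 7.3.4.13]{Lurie_Higher_algebra}) produces $\Phi$. In particular $\Phi$ is automatically exact and colimit-preserving between stable presentable $\infty$-categories. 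Since both sides are sheaves on $\cX$ (standard on the right; on the left it follows from the sheaf-theoretic definition of $\Strloc_{\Tan}(\cX)$ combined with the commutation of $\Sp(-)$ with limits of presentable $\infty$-categories), it suffices to check that $\Phi$ is an equivalence at each stalk $A \coloneqq \cO_{\cX,x}$.

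The heart of the argument is the construction of an inverse. Given an $A\alg$-module $M$, I would equip $A \oplus M[n]$ (for each $n \geq 0$) with a local $\Tan$-structure extending that of $A$ by the first-order Taylor formula
\[ f(a + m) \coloneqq f(a) + \sum_{i=1}^{k} \partial_i f(a) \cdot m_i , \]
for $f$ holomorphic on an open $U \subseteq \mathbb C^k$ and $(a + m)$ a section of $(A \oplus M[n])^k$ mapping into $U$. That $M[n]$ is square-zero in $A \oplus M[n]$ makes this assignment well-defined and functorial in $f$; compatibility with admissible open immersions and locality (the residue field at $x$ is unchanged) are immediate. The family $\{A \oplus M[n]\}_{n \geq 0}$ then becomes a spectrum object through the tautological loop identification $\Omega(A \oplus M[n+1]) \simeq A \oplus M[n]$, producing a candidate inverse $\Psi$ on objects.

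The main obstacle will be upgrading this pointwise recipe into a homotopy-coherent inverse functor. My plan is to exhibit $\Psi$ as the right adjoint of $\Phi$ (which exists by the adjoint functor theorem applied to the colimit-preserving $\Phi$ between presentable stable $\infty$-categories) and to check that the unit and counit of $(\Phi, \Psi)$ are equivalences. The crucial rigidity input is that any two $\Tan$-enhancements of a split square-zero extension $A \oplus M[n]$ with the same underlying algebra and the same augmentation must differ by an analytic derivation in the sense of Lurie's pregeometry formalism, and such derivations are controlled by the square of the augmentation ideal, which vanishes identically. This rigidity kills every higher obstruction and pins down the coherence data of $\Psi$. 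The counit is then an equivalence essentially by construction, and the unit is an equivalence because the split square-zero extensions $A \oplus M[n]$ generate the left-hand stabilization under shifts and colimits.
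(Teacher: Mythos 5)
Your core idea --- equipping the split square-zero extension $A \oplus M[n]$ with a $\Tan$-structure via the first-order Taylor formula $f(a+m) = f(a) + \sum_i \partial_i f(a)\,m_i$ --- is precisely the explicit construction the paper uses (it appears as the ``jacobian construction'' $\Jac_A(M)$). So the engine of the proof is right.

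There is, however, a genuine gap between your pointwise recipe and the statement being proved. The Taylor formula is a $1$-categorical construction: it literally requires elements $a, m$ of an actual set, and it only makes sense when $A$ and $M$ are \emph{discrete}. In the derived setting, $A$ is a sheaf of simplicial commutative rings and $M$ is an honest module spectrum; there is no pointwise formula to write down, and the paper is explicit that ``the jacobian construction uses explicit formulas, and those lose significance when dealing with objects up to homotopy.'' Your plan attempts to paper over this with a ``rigidity'' claim: that any two $\Tan$-enhancements of $A\oplus M[n]$ with the same underlying algebra and augmentation are forced to coincide because ``analytic derivations are controlled by the square of the augmentation ideal.'' As written, this is not an argument --- analytic derivations are precisely the objects of the $\infty$-category $\Sp(\Strloc_{\Tan}(\cX)_{/\cO})$ whose structure you are trying to determine, and uniqueness of an enhancement (even if true) does not produce existence of one when no formula is available. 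You also can't reduce to showing the relevant mapping spaces are contractible just by noting that the augmentation ideal squares to zero.

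The paper's way around this is the real content and it is absent from your proposal. After reducing to a geometric point $(\cS, \cO)$ (itself done not by a sheaf-gluing argument for the stable $\infty$-categories, as you suggest, but via the left-adjointability theorem \cref{thm:reduction_to_spaces} together with the stability of Beck--Chevalley squares under stabilization, \cref{prop:stabilization_of_left_adjointable_squares}), one shows that the unit map $A \to \overline{\Phi}_\cO(\overline{\Psi}_\cO(A))$ is \emph{flat in the derived sense}. This is the relative analytification theorem (\cref{prop:relative_flatness}), which genuinely uses that $(\cX, \cO_\cX)$ is a derived \canal space; it is the only place that hypothesis enters, and it is what turns ``check an equivalence of spectra'' into ``check an isomorphism on $\pi_0$,'' where the jacobian/Taylor construction finally applies. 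Without this flatness bridge, there is no way to pass from the $1$-categorical computation to the derived statement. If you want to salvage your route, this is the missing lemma you would need to prove.
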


The whole \cref{sec:sheaves_in_canal_geometry} is devoted to discuss this theorem.
The proof is actually quite long and it goes through several reduction steps. We refer to \cref{subsec:modules_theorem_exposition} for a detailed outline of the strategy and of the main ideas involved.
We remark that we stated this theorem only for derived \canal spaces and not for general $\Tan$-structured topoi.
We strongly believe the latter statement is true, but we are not yet able to prove it.
We discuss at length this conjecture in \cref{subsec:conjectures}.

\medskip

\paragraph{\textbf{The structure theorem for analytic square-zero extensions}}

So far we presented \cref{thm:equivalence_of_modules} from a purely philosophical point of view.
However, as we already anticipated, this is far from being an idle question. Indeed, the new definition we are taking into consideration has the advantage of leading to a very natural theory of square-zero extensions.
To explain why it is so, let us start by briefly recalling this notion in the more classical setting of connective $\mathbb E_\infty$-rings.
The category of $\mathbb E_\infty$-rings is defined as the category of algebras for the (monochromatic) $\infty$-operad $\mathbb E_\infty$ \cite[7.1.0.1]{Lurie_Higher_algebra}, and modules for a given $\mathbb E_\infty$-ring $R$ can be defined as algebras for the (bichromatic) operad $\mathcal{LM}^\otimes$.
If we want to deal with square-zero extensions, all we have to do in virtue of the structure theorem \cite[7.4.1.26]{Lurie_Higher_algebra} is to have a good understanding of split square-zero extension (see e.g.\ \cite[§1.1]{DAG-X} for an axiomatic treatment of the notion of artinian object that implicitly relies on this result).
Let $R$ be an $\mathbb E_\infty$-ring and let $M$ be an $R$-module.
The split square-zero extension of $R$ by $M$ should be an $\mathbb E_\infty$-ring structure on $R \oplus M$ such that the multiplication map
\[ M \otimes M \to (R \oplus M) \otimes (R \oplus M) \xrightarrow{m} R \oplus M \]
is nullhomotopic.
However, as usual in higher category theory, we cannot define such a multiplicative structure by writing down equations, and therefore a cleverer way has to be found.
We can summarize J.\ Lurie's approach in \cite[§7.3.4]{Lurie_Higher_algebra} as follows.
Instead of directly constructing the object $R \oplus M$, we consider the would-be functor
\[ R \textrm{-} \Mod \to \mathrm{CAlg}_{/R} \]
informally described by $M \mapsto R \oplus M$ (here $\mathrm{CAlg} = \mathrm{CAlg}(\Sp)$ denotes the $\infty$-category of connective $\mathbb E_\infty$-ring spectra).
This functor should preserve limits and therefore it should factor as
\[ R \textrm{-} \Mod \xrightarrow{G} \Sp(\mathrm{CAlg}_{/R}) \xrightarrow{\Omega^\infty} \mathrm{CAlg}_{/R} \]
At this point, what is really done is to construct explicitly a functor $F \colon \Sp(\mathrm{CAlg}_{/R}) \to R \textrm{-} \Mod$ and use some Goodwillie's calculus to prove that $F$ is actually an equivalence (see \cite[7.3.4.14]{Lurie_Higher_algebra}). This permits to define $G$ as a quasi-inverse of $F$ and our desired split square-zero extension functor is now defined as $\Omega^\infty \circ G$.

In the analytic setting, \cref{thm:equivalence_of_modules} plays the role of \cite[7.3.4.14]{Lurie_Higher_algebra}.
More explicitly, if $(\cX, \cO_\cX)$ is a derived \canal space and $M \in \cO_\cX\alg \textrm{-} \Mod$, we will define the split square-zero extension of $\cO_\cX$ by $M$ to be $\Omega^\infty_{\mathrm{an}}(M)$, where
\[ \Omega^\infty_{\mathrm{an}} \colon \Sp(\Strloc_{\Tan}(\cX)_{/\cO}) \to \Strloc_{\Tan}(\cX)_{\cO // \cO} \]
is the usual forgetful functor from the stabilization of an $\infty$-category $\cC$ to the $\infty$-category $\cC$ itself, and we review $M$ as an element of $\Sp(\Strloc_{\Tan}(\cX)_{/\cO})$ using \cref{thm:equivalence_of_modules}.
This leads us to a very natural notion of analytic derivation, which will simply be a section of the projection map $\Omega^\infty_{\mathrm{an}}(M) \to \cO$ in the $\infty$-category $\Strloc_{\Tan}(\cX)_{/ \cO}$.
We will discuss better this idea in the next paragraph, while dealing with the notion of cotangent complex.
For the moment, we are interested in this because it allows to introduce the notion of square-zero extension in derived \canal geometry:

\begin{defin*}[\cref{def:analytic_square_zero_extension}]
	Let $\cX$ be an $\infty$-topos and let $f \colon \cO' \to \cO$ be a morphism in $\Strloc_{\Tan}(\cX)$.
	We will say that $f$ is \emph{an analytic square-zero extension} if there exists an analytic $\cO$-module $M$ and an $\cO'$-linear derivation $d \colon \cO \to \Omega^\infty_{\mathrm{an}}(M)$ such that the square
	\[ \begin{tikzcd}
	\cO' \arrow{r} \arrow{d}{f} & \cO \arrow{d}{d} \\
	\cO \arrow{r}{d_0} & \Omega^\infty_{\mathrm{an}}(M)
	\end{tikzcd} \]
	is a pullback in $\Strloc_{\Tan}(\cX)_{/\cO}$.
	In this case, we will say that $f$ is a square-zero extension of $\cO$ by $M[-1]$.
\end{defin*}

This notion of square-zero extension would be pretty much useless if it didn't come with a recognizing criterion that it allows in practice to decide whether a given map is a square-zero extension or not.
This is the goal of the structure theorem of square-zero extensions.
We introduce the following algebraic notion:

\begin{defin*}[{\cref{def:analytic_small_extension}}]
	Let $\cX$ be an $\infty$-topos and let $f \colon A \to B$ be a morphism in $\Strloc_{\Tan}(\cX)$.
	We will say that $f$ is an $n$-small extension if the morphism $f\alg \colon A\alg \to B\alg$ is an $n$-small extension in the sense of \cite[Definition 7.4.1.18]{Lurie_Higher_algebra}, that is if the following two conditions are satisfied:
	\begin{enumerate}
		\item The fiber $\mathrm{fib}(f\alg)$ is $n$-connective and $(2n)$-truncated, and
		\item the multiplication map $\mathrm{fib}(f\alg) \otimes_{A\alg} \mathrm{fib}(f\alg) \to \mathrm{fib}(f\alg)$ is nullhomotopic.
	\end{enumerate}
\end{defin*}

These two conditions don't concern the analytic structure on the morphism $f$ at all, and moreover they can be tested on the homotopy groups.
Therefore, it is often a fairly task to check in practice whether a given analytic morphism is an $n$-small extension.
The structure theorem for square-zero extension says that the two notions coincide:

\begin{thm*}[{\cref{cor:structure_theorem_square_zero}}]
	Let $(\cX, \cO_\cX)$ be a derived \canal space.
	Let $\cO \in \Str_{\Tan}(\cX)$ be any other analytic structure on $\cX$ and let $f \colon \cO \to \cO_\cX$ be a local morphism.
	Fix furthermore a non-negative integer $n$.
	Then the following conditions are equivalent:
	\begin{enumerate}
		\item $f$ is an analytic square-zero extension by $M[-1]$ and $\Omega^\infty_{\mathrm{an}}(M)$ is $n$-connective and $(2n)$-truncated;
		\item $f$ is an $n$-small extension.
	\end{enumerate}
\end{thm*}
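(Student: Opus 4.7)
The plan is to reduce the theorem to the algebraic structure theorem \cite[Proposition 7.4.1.14 and Theorem 7.4.1.26]{Lurie_Higher_algebra} using \cref{thm:equivalence_of_modules}. The essential preliminary is a compatibility statement between analytic and algebraic split square-zero extensions: under the equivalence $\Phi$, one has $\Omega^\infty_{\mathrm{an}}(M)\alg \simeq \cO_\cX\alg \oplus \Phi(M)$ over $\cO_\cX\alg$. This naturality for $\Phi$ should either be a byproduct of the proof of \cref{thm:equivalence_of_modules} or an easy consequence of it, given that $\Phi$ is built compatibly with the stabilisation functors on both sides.

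For the direction (1) $\Rightarrow$ (2), apply $(-)\alg$ to the defining analytic pullback square. Since $(-)\alg$ preserves limits, the resulting square exhibits $f\alg$ as the algebraic square-zero extension of $\cO_\cX\alg$ by $\Phi(M)[-1]$, so in particular $\fib(f\alg) \simeq \Phi(M)[-1]$. The connectivity and truncation hypotheses on $\Omega^\infty_{\mathrm{an}}(M)$ transfer to the algebraic split extension, and from there to the fiber, yielding exactly the connectivity and truncation required by Lurie's definition of an $n$-small extension; the nullhomotopy of the multiplication on the fiber is automatic for any algebraic square-zero extension.

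For the direction (2) $\Rightarrow$ (1), start from Lurie's algebraic structure theorem applied to $f\alg$: there exist an algebraic $\cO_\cX\alg$-module $N$ and an algebraic derivation $d\alg \colon \cO_\cX\alg \to \cO_\cX\alg \oplus N$ realising $f\alg$ as the pullback of $d_0\alg$ along $d\alg$. Transport $N$ via $\Phi^{-1}$ to an analytic spectrum object $M$ with $\Phi(M) \simeq N$ and the correct connectivity and truncation. The crucial step is then to lift $d\alg$ to an analytic derivation $d \colon \cO_\cX \to \Omega^\infty_{\mathrm{an}}(M)$; once this is available, form the analytic pullback $\widetilde{\cO}$ of $d_0$ along $d$. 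By construction, $\widetilde{\cO}$ is an analytic square-zero extension satisfying (1), and the compatibility of $(-)\alg$ with pullbacks gives $\widetilde{\cO}\alg \simeq f\alg = \cO\alg$. The final step is to identify $\widetilde{\cO}$ with $\cO$ as analytic structures.

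I expect the main obstacle to lie in those last two steps of (2) $\Rightarrow$ (1): lifting the derivation, and then identifying $\widetilde{\cO}$ with $\cO$. Both are rigidity statements asserting that the analytic data is controlled by the algebraic one. They should follow from an enhancement of \cref{thm:equivalence_of_modules} identifying the space of analytic derivations $\Map_{\Strloc_{\Tan}(\cX)_{/\cO_\cX}}(\cO_\cX, \Omega^\infty_{\mathrm{an}}(M))$ with the analogous space of algebraic derivations, together with a conservativity statement for $(-)\alg$ on the relevant subcategory of local morphisms into $\cO_\cX$. Both enhancements are ultimately consequences of the holomorphic functional calculus encoded in the hypothesis that $(\cX, \cO_\cX)$ is a derived \canal space — which is also, as noted in \cref{subsec:conjectures}, precisely what prevents the immediate generalisation of \cref{thm:equivalence_of_modules} and of the present corollary to arbitrary $\Tan$-structured topoi.
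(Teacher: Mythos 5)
Your direction (1)~$\Rightarrow$~(2) matches the paper: apply $(-)\alg$, use the $t$-exactness of $\Phi$ (\cref{cor:Phi_t_exact}), and invoke \cite[7.4.1.26]{Lurie_Higher_algebra}. For (2)~$\Rightarrow$~(1) you correctly identify the skeleton of the argument --- get an algebraic derivation from Lurie's structure theorem, use \cref{thm:equivalence_of_modules} to transport the module, lift the derivation to an analytic one, and identify the resulting analytic pullback with $\cO$ by conservativity of $(-)\alg$ --- and you correctly isolate the derivation-lifting step as the crux. But the way you propose to discharge it is a genuine gap.

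You suggest that lifting the derivation ``should follow from an enhancement of \cref{thm:equivalence_of_modules} identifying the space of analytic derivations $\Map_{\Strloc_{\Tan}(\cX)_{/\cO_\cX}}(\cO_\cX, \Omega^\infty_{\mathrm{an}}(M))$ with the analogous space of algebraic derivations.'' This is not a consequence of \cref{thm:equivalence_of_modules}. That theorem is an equivalence of \emph{stabilizations} $\Sp(\Strloc_{\Tan}(\cX)_{/\cO_\cX}) \simeq \Sp(\Strloc_{\cTdisc}(\cX)_{/\cO_\cX\alg})$; it says nothing about mapping spaces in the \emph{unstable} over/undercategories where derivations live. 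The statement you need --- which amounts to a comparison of analytic and algebraic cotangent complexes --- is a much deeper assertion, deferred by the author to \cite{Porta_Cotangent_2015}, and is not what the paper uses here. The paper instead proves a more targeted lemma, \cref{prop:lifting_derivations}: any algebraic derivation $d$ exhibiting $f\alg$ as a pullback along a trivial analytic derivation $d_0$, where $d_0$ is an effective epimorphism admitting a retraction $\pi$ compatible with $d$, lifts to an analytic derivation. The proof is an explicit construction via \v{C}ech nerves of effective epimorphisms (Lemmas \ref{lem:semisimplicial_I}, \ref{lem:semisimplicial_II}, \ref{lem:reflexive_pullback}), exploiting that $(-)\alg$ commutes with \v{C}ech nerves and geometric realizations of such covers. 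Notably this lemma holds for \emph{arbitrary} $\Tan$-structured $\infty$-topoi; the derived \canal space hypothesis enters only once, to obtain $M$ with $\Phi(M) \simeq N$ via \cref{thm:equivalence_of_modules}. So your proposal misattributes both what is hard and where the analyticity assumption is actually used.
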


As an immediate consequence, we obtain:

\begin{cor*}[\cref{cor:truncations_are_analytic_square_zero}]
	Let $(\cX, \cO_\cX)$ be a derived \canal space.
	For every non-negative integer $n$, the natural map $\tau_{\le n} \cO_\cX \to \tau_{\le n - 1} \cO_\cX$ is a square-zero extension.
\end{cor*}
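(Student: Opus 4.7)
The plan is to derive the corollary directly from the structure theorem \cref{cor:structure_theorem_square_zero}, applied with $\tau_{\le n-1}\cO_\cX$ in the role of the target analytic structure and $\tau_{\le n}\cO_\cX$ in the role of the source. Concentrating on $n \ge 1$ (the case $n=0$ being degenerate, since $\tau_{\le -1}\cO_\cX$ is terminal), the first thing I would point out is that $(\cX, \tau_{\le n-1}\cO_\cX)$ is itself a derived \canal space, so that the hypotheses of the theorem are available with this new base.

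Granted this, I would verify the two assumptions of the theorem on the natural map $f \colon \tau_{\le n}\cO_\cX \to \tau_{\le n-1}\cO_\cX$. First, $f$ is local: both truncations share the same $\pi_0$, namely the structure sheaf of the underlying classical \canal space, so $\pi_0(f)$ is the identity, and in particular $f$ induces isomorphisms of local rings on stalks. Second, I need $f$ to be an $n$-small extension, which is an entirely algebraic condition after applying the algebrization functor. A direct long exact sequence on homotopy sheaves identifies
\[ \mathrm{fib}(f\alg) \simeq \pi_n(\cO_\cX\alg)[n], \]
which is concentrated in a single degree, hence $n$-connective and $n$-truncated (a fortiori $(2n)$-truncated), giving condition (1) of \cref{def:analytic_small_extension}. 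For condition (2), the multiplication
\[ \mathrm{fib}(f\alg) \otimes_{(\tau_{\le n}\cO_\cX)\alg} \mathrm{fib}(f\alg) \to \mathrm{fib}(f\alg) \]
has $2n$-connective source (by the Tor spectral sequence, the lowest surviving homotopy group sits in degree $2n$) and $n$-truncated target; since $2n > n$ whenever $n \ge 1$, the mapping space is contractible and the map is nullhomotopic.

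The main technical input that is not internal to the corollary itself is the compatibility of the truncation functor on $\Strloc_{\Tan}(\cX)$ with algebrization, namely $(\tau_{\le n}\cO_\cX)\alg \simeq \tau_{\le n}(\cO_\cX\alg)$; I expect this to follow because the algebrization functor is a right adjoint in the pregeometry framework of \cite{DAG-IX}, hence preserves $n$-truncated objects, and the universal property of truncation then identifies the two. Once this is in place, the verification above plugs directly into \cref{cor:structure_theorem_square_zero} and delivers the conclusion.
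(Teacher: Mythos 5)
Your argument is correct, but it takes a genuinely different and heavier route than the paper's. The paper proves the (strictly more general) version of \cref{cor:truncations_are_analytic_square_zero} stated in the body, for an arbitrary $\Tan$-structured $\infty$-topos, by invoking \cref{thm:partial_equivalence_sqzero_small} directly: the algebraic structure theorem \cite[7.4.1.26]{Lurie_Higher_algebra} supplies a derivation $d \colon (\tau_{\le n-1}\cO)\alg \to (\tau_{\le n-1}\cO)\alg \oplus \pi_n(\cO\alg)[n+1]$, and since $\pi_n(\cO\alg)$ is \emph{discrete}, only the heart-level equivalence of \cref{prop:heart_equivalence_of_modules} is needed to lift $\pi_n(\cO\alg)[n+1]$ to an analytic module; the full force of \cref{thm:equivalence_of_modules} is never invoked. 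You instead appeal to \cref{cor:structure_theorem_square_zero} with $(\cX, \tau_{\le n-1}\cO_\cX)$ playing the role of the base derived $\mathbb C$-analytic space, which internally uses \cref{thm:equivalence_of_modules} at full strength. This is a legitimate deduction, but there are two costs. First, it requires the claim that $(\cX, \tau_{\le n-1}\cO_\cX)$ is again a derived $\mathbb C$-analytic space; this is true, but it is a nontrivial input not established anywhere in the present paper, and should be cited (it ultimately comes from the local structure of derived $\mathbb C$-analytic spaces in \cite{DAG-IX} or \cite{Porta_GAGA_2015}). Second, your route only establishes the corollary for derived $\mathbb C$-analytic spaces, not for general $\Tan$-structures, which the paper's proof gets essentially for free.

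Two smaller remarks. The identification $(\tau_{\le n}\cO)\alg \simeq \tau_{\le n}(\cO\alg)$ does not follow merely from $(-)\alg$ being a right adjoint; the correct reason, used repeatedly in the paper (\cref{prop:t_structure_Str}, \cref{cor:Phi_t_exact}), is that both $\cTdisc$ and $\Tan$ are compatible with $n$-truncations, so truncation of $\cT$-structures is computed pointwise and thus commutes with precomposition along $\varphi$. Also, your nullhomotopy argument for condition (2) of \cref{def:analytic_small_extension} needs $n \ge 1$, which you correctly flag; but note the paper's statement includes $n = 0$ (where the target $\tau_{\le -1}\cO$ requires some care), so if one wants the full range, one should also record, as the paper implicitly does, that $n$-smallness for $n = 0$ is still a standard algebraic fact about $\pi_0$-truncation.
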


\medskip

\paragraph{\textbf{The appendixes}}

Even though the results of Appendixes \ref{sec:relative_analytification} and \ref{sec:left_adjointable} are mainly needed as auxiliary tools in \cref{sec:sheaves_in_canal_geometry}, the results we obtain there are somehow interesting on their own.
In \cref{sec:relative_analytification} we prove a flatness result analogous to \cite[Theorem 6.19]{Porta_GAGA_2015} in what should be called a ``relative setting'':

\begin{thm*}[\cref{prop:relative_flatness}]
	Let $\cO \in \Str_{\Tan}(\cS)$  be the germ of a derived \canal space.
	There is an adjunction
	\[ \overline{\Psi}_\cO \colon \Strloc_{\cTzar}(\cS)_{\cO\alg /} \rightleftarrows \Strloc_{\Tan}(\cS)_{\cO /} \colon \overline{\Phi}_\cO \]
	where $\overline{\Phi}_{\cO}$ is given by precomposition along the morphism of pregeometries $\varphi \colon \cTzar \to \Tan$.
	Moreover, if $A \in \Strloc_{\cTzar}(\cS)_{\cO\alg /}$ the canonical map $A \to \overline{\Phi}_{\cO}(\overline{\Psi}_\cO(A))$ is flat (in the derived sense).
\end{thm*}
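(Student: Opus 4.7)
The statement decomposes naturally into two parts: constructing the adjunction $\overline\Psi_\cO\dashv\overline\Phi_\cO$, and verifying flatness of the unit. For the first, I would descend the absolute adjunction $\Psi\colon\Strloc_{\cTzar}(\cS)\rightleftarrows\Strloc_{\Tan}(\cS)\colon\Phi$ to the indicated slices in two steps. From $\Psi\dashv\Phi$ one first obtains a slice adjunction
\[
\Strloc_{\cTzar}(\cS)_{\cO\alg/}\;\rightleftarrows\;\Strloc_{\Tan}(\cS)_{\Psi(\cO\alg)/},
\]
and second, the counit $\epsilon_\cO\colon\Psi(\cO\alg)\to\cO$ of the absolute adjunction induces a pushout/pullback adjunction between $\Strloc_{\Tan}(\cS)_{\Psi(\cO\alg)/}$ and $\Strloc_{\Tan}(\cS)_{\cO/}$. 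Composing yields $\overline\Psi_\cO\dashv\overline\Phi_\cO$ with the explicit formula
\[
\overline\Psi_\cO(A)\;\simeq\;\cO\,\coprod_{\Psi(\cO\alg)}\,\Psi(A),
\]
the pushout being taken in $\Strloc_{\Tan}(\cS)$.

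For the flatness, I would factor the relative unit as
\[
A \;\longrightarrow\; \Phi\Psi(A) \;\longrightarrow\; \overline\Phi_\cO\overline\Psi_\cO(A),
\]
where the first arrow is flat by Theorem 6.19 of \cite{Porta_GAGA_2015} and the second arises from applying $\Phi$ to the pushout defining $\overline\Psi_\cO(A)$. Flatness of the composite is then reduced to flatness of the second map by a density argument: the $\infty$-category $\Strloc_{\cTzar}(\cS)_{\cO\alg/}$ is generated under colimits by free $\cO\alg$-algebras associated to admissible opens in $\cTzar$, and flatness is preserved both by pushouts along arbitrary maps (base change of a flat map is flat) and by filtered colimits in the derived sense. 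On a free generator, $\overline\Psi_\cO$ can be described explicitly as the analytification of a relative algebraic chart, and the flatness in question reduces to the classical fact that the inclusion of polynomial rings into rings of convergent power series is flat, relativized over the germ $\cO$.

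The hardest point is the interaction between $\Phi$ and the pushout defining $\overline\Psi_\cO$: because $\Phi$ is a right adjoint, it need not preserve pushouts, so reading off $\overline\Phi_\cO\overline\Psi_\cO(A)$ at the level of underlying algebras requires either an explicit geometric identification in the free case or a careful base-change argument that exploits the absolute flatness of $\cO\alg\to\Phi\Psi(\cO\alg)$. A secondary technicality is ensuring that flatness genuinely propagates from the free generators to arbitrary $A$ through the colimits that rebuild $A$; this is standard but nontrivial given that analytic modules carry a $t$-structure whose left exactness properties must be invoked when applying the filtered colimit step.
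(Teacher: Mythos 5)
Your construction of the adjunction is correct and matches the route the paper takes (the left adjoint $\overline\Psi_\cO$ exists formally because $\overline\Phi_\cO$ preserves limits and sifted colimits between presentable $\infty$-categories), and the pushout formula $\overline\Psi_\cO(A)\simeq\cO\coprod_{\Psi(\cO\alg)}\Psi(A)$ is right. Where your outline diverges, and where the gaps lie, is in the flatness argument. The factorization $A\to\Phi\Psi(A)\to\overline\Phi_\cO\overline\Psi_\cO(A)$ does not genuinely reduce to the absolute case \cite[Theorem 6.19]{Porta_GAGA_2015}: the second arrow is $\Phi$ applied to a pushout of $\Tan$-structures, and since $\Phi$ is a right adjoint this map is no more accessible than the relative unit itself; you say so yourself in your last paragraph but do not resolve it. The paper never passes through the absolute unit. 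It works directly with the relative free functor via $\cO\{-\}\simeq\overline\Psi_\cO\circ\cO\alg[-]$ and $\cO\{K\}\simeq\cO\cotimes_{\mathbb C}\cH\{K\}$, redoing (rather than citing) the computations of \cite{Porta_GAGA_2015} in the relative setting on the generators $\cO\alg[\Delta^0]$ and $\cO\alg[\partial\Delta^n]$ (\cref{lem:relative_computation_I}, \cref{lem:relative_computation_II}, \cref{lem:elementary_flatness}).

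The colimit propagation step is the genuine hole. That flat maps are stable under base change and filtered colimits is not by itself sufficient, because $(-)^{\mathrm{an}}=\overline\Phi_\cO\circ\overline\Psi_\cO$ does \emph{not} commute with pushouts: $\overline\Phi_\cO$ is a right adjoint and commutes only with limits and sifted colimits. To run the cell-attachment induction one needs the further base-change statement that $(-)^{\mathrm{an}}$ carries the relevant attaching pushout in $\Strloc_{\cTzar}(\cS)_{\cO\alg/}$ to a pushout; the paper proves exactly this in the unlabeled lemma following \cref{lem:elementary_flatness}, and that lemma in turn depends on the generator-level flatness already established. Without such a statement, ``flatness propagates through colimits'' does not apply to the unit. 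Finally, the generator-level flatness that you reduce to ``polynomial rings inside convergent power series, relativized over $\cO$'' is precisely where the hypothesis that $\cO$ is the germ of a derived $\mathbb C$-analytic space is used: the proof of \cref{lem:relative_computation_I} chooses a closed immersion of $X$ into $\Spec^{\Tan}(U)$ for a Stein open $U$, identifies $\cO\{\Delta^0\}$ with the germ of $X\times\cE^1_{\mathbb C}$ at $(x,0)$, and invokes a classical flatness result for local analytic rings via \cite[Lemma C.4]{Porta_Yu_Higher_analytic_stacks_2014}. That reduction is not automatic and should not be treated as a black box.
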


The functor $\overline{\Psi}_\cO$ should really be thought of as the analytification functor relative to the analytic base $(\cS, \cO)$, much like in the sense of M.\ Hakim's thesis \cite{Hakim_Topos_1972}. It is possible to greatly expand this observation, and we will come back to this subject in a subsequent work.

In \cref{sec:left_adjointable} we take advantage of a particularly simple description of left adjointable squares introduced by M.\ Hopkins and J.\ Lurie in \cite{Lurie_Ambidexterity} to prove the stability of these objects under a number of operations.
Namely, we prove that they can be composed horizontally and vertically and, most importantly, that stabilization preserves (good) left adjointable squares:

\begin{thm*}[\cref{prop:stabilization_of_left_adjointable_squares}]
	Let
	\[ \begin{tikzcd}
	\cB_1 & \cA_1 \arrow{l}[swap]{G_1} \\
	\cB_0 \arrow{u}{P} & \cA_0 \arrow{l}[swap]{G_0} \arrow{u}[swap]{Q}
	\end{tikzcd} \]
	be a left adjointable square of pointed $\infty$-categories with finite limits (where $G_0$ and $G_1$ are the right adjoints).
	Suppose furthermore that $P$ and $Q$ are left exact functors and that they commute with sequential colimits.
	Then the induced square
	\[ \begin{tikzcd}
	\Sp(\cB_1) & \Sp(\cA_1) \arrow{l}[swap]{\partial G_1} \\
	\Sp(\cB_0) \arrow{u}{\partial P} & \Sp(\cA_0) \arrow{l}[swap]{\partial G_0} \arrow{u}[swap]{\partial Q}
	\end{tikzcd} \]
	is left adjointable as well. Here the symbol $\partial$ stands for the first Goodwillie derivative.
\end{thm*}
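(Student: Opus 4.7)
The strategy is to exploit the standard presentation $\Sp(\cC) \simeq \lim(\cdots \xrightarrow{\Omega} \cC)$ valid for any pointed $\infty$-category with finite limits, and then to reduce the left adjointability of the induced square on stabilizations to the left adjointability of the original square, which is the assumption.

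First I would set up the induced square. Since $G_0$, $G_1$, $P$, and $Q$ are all left exact, they commute with $\Omega$, and hence each one extends levelwise to the inverse system defining the stabilization. Passing to the limit yields the four functors $\partial G_0$, $\partial G_1$, $\partial P$, $\partial Q$, and the commutativity of the resulting square is automatic from the commutativity of the original square. What remains is to show that $\partial G_0$ and $\partial G_1$ admit left adjoints and that the resulting Beck--Chevalley transformation is an equivalence.

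Next I would construct a candidate left adjoint $\partial F_i$ to $\partial G_i$ out of the original left adjoint $F_i \dashv G_i$. Morally this should be given by the sequential colimit formula $\partial F_i(X) \simeq \colim_n \Omega^n F_i \Sigma^n X$, which expresses the first Goodwillie derivative of $F_i$. The hypothesis that $P$ and $Q$ preserve sequential colimits is precisely what guarantees that this formula is preserved by the vertical functors, so that the corresponding Beck--Chevalley map $\partial F_1 \circ \partial P \to \partial Q \circ \partial F_0$ admits a reasonable description level by level.

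Finally I would verify left adjointability using the characterization of left adjointable squares recorded by Hopkins and Lurie in \cite{Lurie_Ambidexterity}: the square is left adjointable if and only if the natural transformation $\partial F_1 \circ \partial P \to \partial Q \circ \partial F_0$ (constructed from units and counits) is an equivalence. Unwinding this transformation via the level-wise description of $\Sp$ and the sequential-colimit formula for $\partial F_i$, one sees that it is obtained as a sequential colimit of the Beck--Chevalley transformations $F_1 P \to Q F_0$ of the original square; each such transformation is an equivalence by hypothesis, and sequential colimits preserve equivalences, so the limiting transformation is an equivalence as required.

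I expect the main obstacle to be the careful manipulation of $\partial F_i$ as an actual left adjoint to $\partial G_i$: the colimit formula is easy to write but verifying adjunction in this level of generality (pointed $\infty$-categories with only finite limits and sequential colimits rather than full presentability) requires some care, and identifying the Beck--Chevalley transformation in stabilizations with the sequential colimit of the Beck--Chevalley transformations in the original square involves a nontrivial exchange of units and counits across the stabilization tower. The hypothesis that $P$ and $Q$ commute with sequential colimits is essential precisely here, as it permits these exchanges to be carried out coherently.
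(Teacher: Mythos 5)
Your proposal has the right conceptual core — the Goodwillie-derivative colimit formula $\partial F \simeq \colim_n \Omega^n F \Sigma^n$ is indeed what the proof ultimately turns on, and you have correctly identified that the left-exactness and preservation of sequential colimits by $P$ and $Q$ are exactly the hypotheses needed to push the excisive reflection through the vertical legs of the square. However, the paper takes a genuinely different \emph{formal} route that sidesteps precisely the obstacle you flag at the end. Rather than working with the tower presentation $\Sp(\cC) \simeq \lim(\cdots \xrightarrow{\Omega} \cC)$ and trying to track units, counits, and Beck--Chevalley maps by hand, the paper encodes the entire square as a Cartesian fibration $p \colon \cC \to \Delta^1 \times \Delta^1$ and applies the $\mathrm{PStab}(p)$ and $\mathrm{Stab}(p)$ constructions of \cite[\S 6.2.2]{Lurie_Higher_algebra}. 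The payoff is \cref{prop:Beck_Chevalley_fibration_formulation}, which recasts left adjointability as a purely incidence-geometric condition on (co)Cartesian edges of the fibration; this makes the verification for $\PStab(p)$ literally pointwise over finite pointed spaces $K \in \cS^{\mathrm{fin}}_*$, and reduces the verification for $\Stab(p)$ to the single question of whether the vertical functor carries the unit of the excisive-reflection adjunction $P_1 \dashv \iota$ to another such unit — which is exactly where left-exactness plus preservation of sequential colimits get used, via the explicit formula for $P_1$ in \cite[6.1.1.22, 6.1.1.27]{Lurie_Higher_algebra}. So your ``main obstacle'' — that the Beck--Chevalley transformation on stabilizations is \emph{not} merely a sequential colimit of the original Beck--Chevalley transformations, because it also involves the reflection unit of $P_1$ at each stage — is real, and the fibration formalism is the device the paper deploys to make that bookkeeping tractable. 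Your approach could likely be completed, but it would require re-deriving, in unit/counit language, essentially the same coherence statements that \cref{prop:Beck_Chevalley_fibration_formulation} packages once and for all.
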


\medskip

\paragraph{\textbf{Conventions}}

We will work freely with the language of $(\infty,1)$-categories.
We will call them simply $\infty$-categories and our basic reference on the subject is \cite{HTT}.
The notation $\cS$ will be reserved for the $\infty$-categories of spaces.
Whenever categorical constructions are used (such as limits, colimits etc.), we mean the corresponding $\infty$-categorical notion.

In \cite{HTT} and more generally in the DAG series, whenever $\cC$ is a $1$-category the notation $\mathrm N(\cC)$ denotes $\cC$ reviewed (trivially) as an $\infty$-category. This notation stands for the nerve of the category $\cC$ (and this is because an $\infty$-category in \cite{HTT} is defined to be a quasicategory, that is a simplicial set with special lifting properties). In this note, we will systematically suppress this notation, and we encourage the reader to think to $\infty$-categories as model-independently as possible. For this reason, if $k$ is a (discrete) commutative ring we chose to denote by $\mathrm{CRing}_k$ the $1$-category of discrete $k$-algebras and by $\mathrm{CAlg}_k$ the $\infty$-category underlying the category of simplicial commutative $k$-algebras.
Concerning $\infty$-topoi, we choose to denote geometric morphisms by $f\inv \colon \cX \rightleftarrows \cY \colon f_*$. We warn the reader that in \cite{HTT} the notation $f^*$ is used instead of $f\inv$. We reserve $f^*$ to denote the pullback in the category of $\cO_X$-modules.

We will use the language of pregeometries introduced in \cite{DAG-V}. We refer to the introduction of \cite{Porta_GAGA_2015} for an expository account, as well as for the definition of the four main pregeometries we will work with: $\cTdisc$, $\cTzar$, $\cTet$ and $\Tan$.

\medskip

\paragraph{\textbf{Acknowledgments}}

\cref{thm:equivalence_of_modules} occupied my mind for many months.
The proof uses a number of different ideas that arrived to me through several channels.
First of all, I want to emphasize my intellectual debt to J.\ Lurie: the idea of using the Goodwillie calculus was derived from \cite{Lurie_Higher_algebra}.
Next, I want to express my deepest gratitude to Gabriele Vezzosi and Marco Robalo, for listening to a huge number of never-working strategies for quite a long time, as well as for suggesting many alternative paths.
Finally, I want to warmly thank Tony Yue Yu for teaching (and periodically reminding) me the importance of the relative analytification, which revealed itself the very last piece of the puzzle in the proof of \cref{thm:equivalence_of_modules}.

\section{Sheaves of modules in derived \canal geometry} \label{sec:sheaves_in_canal_geometry}

\subsection{Statements of the main results} \label{subsec:modules_theorem_exposition}

Let $X = (\cX, \cO_\cX)$ be a derived \canal space.
In \cite[§4]{Porta_GAGA_2015} we defined the $\infty$-category $\Coh(X)$ as a full subcategory of $\cO_\cX\alg \textrm{-} \Mod$.
Let us recall that $\cO_\cX\alg$ is the $\cTdisc$-structure on $\cX$ obtained by composing $\cO_\cX$ with the morphism of pregeometries $\varphi \colon \cTdisc \to \Tan$. Since $\cTdisc$-structures are precisely sheaves of connective $\mathbb E_\infty$-rings which are $\mathrm H \mathbb C$-algebras, we will think of $\cO_\cX\alg$ as a sheaf with values in the $\infty$-category $\mathrm{CAlg}_{\mathbb C}$.

As we explained in the introduction, there is at least another reasonable definition of the $\infty$-category of $\cO_\cX$-modules:

\begin{defin} \label{def:analytic_modules}
	Let $X = (\cX, \cO_\cX)$ be a $\Tan$-structured topos.
	The $\infty$-category of \emph{analytic $\cO_\cX$-modules} is the stabilization of $\Strloc_{\Tan}(\cX)_{/\cO}$, i.e.\ $\Sp(\Strloc_{\Tan}(\cX)_{/\cO})$.
\end{defin}

The main goal of this section is to compare the $\infty$-category of analytic $\cO_\cX$-modules with $\cO_\cX\alg \textrm{-} \Mod$.
In virtue of \cite[7.3.4.14]{Lurie_Higher_algebra}, we can further describe this $\infty$-category as
\[ \Sp(\Strloc_{\cTdisc}(\cX)_{/\cO\alg}) \]
Using the morphism of pregeometries $\varphi \colon \cTdisc \to \Tan$ and \cite[Corollary 1.18]{Porta_GAGA_2015} we immediately obtain a comparison functor
\begin{equation} \label{eq:comparison_functor_modules}
\Phi_\cO \colon \cO \textrm{-} \Mod_{\Tan} = \Sp(\Strloc_{\Tan}(\cX)_{/\cO}) \to \Sp(\Strloc_{\cTdisc}(\cX)_{/\cO\alg}) = \cO\alg \textrm{-} \Mod
\end{equation}
We can now state the main result of this first section:

\begin{thm} \label{thm:equivalence_of_modules}
	Let $X = (\cX, \cO_\cX)$ be a derived \canal space.
	Then the comparison functor \eqref{eq:comparison_functor_modules} is an equivalence of $\infty$-categories.
\end{thm}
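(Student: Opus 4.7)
The plan is to exhibit $\Phi_\cO$ as the derivative of a right adjoint at the unstabilized level and then to verify that the induced adjunction on spectra is an equivalence. Precomposition with the morphism of pregeometries $\varphi \colon \cTdisc \to \Tan$ gives a functor $\varphi^* \colon \Strloc_{\Tan}(\cX)_{/\cO} \to \Strloc_{\cTdisc}(\cX)_{/\cO\alg}$ which is a right adjoint. The hypotheses of \cref{prop:stabilization_of_left_adjointable_squares} (left exactness and preservation of sequential colimits, both inherited from the pregeometry formalism) are easily checked, so stabilization yields a genuine adjunction $\Psi_\cO \dashv \Phi_\cO$ with $\Phi_\cO$ the map of \eqref{eq:comparison_functor_modules}. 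The problem becomes showing that both the unit and the counit are equivalences.

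The next step is to reduce to a local situation. Both $\Sp(\Strloc_{\Tan}(\cX)_{/\cO})$ and $\cO\alg \textrm{-} \Mod$ are hyperlocal as functors of the $\Tan$-structured topos $(\cX, \cO_\cX)$: the stabilization of a slice commutes with limits of $\infty$-topoi, and the right-hand side is identified with $\Sp(\Strloc_{\cTdisc}(\cX)_{/\cO\alg})$ by \cite[7.3.4.14]{Lurie_Higher_algebra}, which has the same formal behavior. Invoking \cref{prop:stabilization_of_left_adjointable_squares} for the pullback squares along a sufficiently fine open cover of the topological space underlying $(\cX, \cO_\cX)$, it suffices to check that $\Phi_\cO$ is an equivalence on stalks, i.e.\ when $(\cX, \cO) = (\cS, \cO)$ is a germ of a derived \canal space.

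In this local setting I bring in the relative analytification of \cref{sec:relative_analytification}. The flatness conclusion of \cref{prop:relative_flatness} is the crucial ingredient: for a $\cTzar$-structure $A$ augmented over $\cO\alg$, the canonical map $A \to \overline{\Phi}_\cO(\overline{\Psi}_\cO(A))$ is flat, which means that tensoring along it is exact in the appropriate derived sense. Apply this to the split square-zero extension $\cO\alg \oplus N$ associated to an $\cO\alg$-module $N$ (using the algebraic structure theorem to describe the $\cTdisc$-level spectrum in terms of such split extensions). The flatness forces $(\overline{\Psi}_\cO(\cO\alg \oplus N))\alg \simeq \cO\alg \oplus N$, so the counit $\Psi_\cO \Phi_\cO \to \id$ evaluated on split square-zero extensions is an equivalence. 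Since both $\Psi_\cO$ and $\Phi_\cO$ are stabilized functors preserving the loop-space identifications, an inductive argument up the Postnikov tower (combining Goodwillie-style convergence with the flatness just established at each level) promotes the check on split extensions to a check on arbitrary objects of $\Sp$.

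The main obstacle I anticipate is that the relative analytification $\overline{\Psi}_\cO$ is \emph{not} literally the left adjoint $\Psi_\cO$ to $\Phi_\cO$ on stabilizations—it lives at the unstabilized $\cTzar$ level and goes in the wrong direction in a naive sense. Reconciling these two functors requires a careful comparison square at the unstabilized level and, to transfer the comparison to the stabilization, a second application of \cref{prop:stabilization_of_left_adjointable_squares}. Getting this comparison square to be left adjointable is where I expect the bulk of the technical work to lie, and it is presumably the reason the author singles out relative analytification as \emph{the very last piece of the puzzle}: once the square is in place, the flatness statement directly produces the required equivalence on first derivatives, and hence on all of $\Sp$.
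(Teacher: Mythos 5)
Your proposal correctly identifies the overall architecture of the paper's argument: stabilize the adjunction coming from $\varphi^* \colon \Strloc_{\Tan}(\cX)_{/\cO} \to \Strloc_{\cTdisc}(\cX)_{/\cO\alg}$, reduce to stalks using left-adjointable squares and \cref{prop:stabilization_of_left_adjointable_squares}, and then exploit the flatness of relative analytification from \cref{prop:relative_flatness}. The instinct that relative analytification is the key local input is right, and so is the recognition that $\overline{\Psi}_\cO$ must be reconciled with the stabilized $\Psi_\cO$ (the paper does this via \cref{prop:Psi_preserves_spectra} and \cref{cor:Psi_preserves_spectra}, showing that $\overline{\Psi}_\cO$ carries spectrum objects to spectrum objects on the nose, which is what lets one compute $\Psi_\cO$ levelwise). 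Two small discrepancies: since $\Phi_\cO$ is conservative, the paper only needs the \emph{unit} to be an equivalence, not both unit and counit; and the reduction to stalks is via ``enough points'' of the $\infty$-topos rather than via an open cover, though the effect is the same.

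The genuine gap is in the sentence ``the flatness forces $(\overline{\Psi}_\cO(\cO\alg \oplus N))\alg \simeq \cO\alg \oplus N$.'' Flatness does not force this. What flatness of $A \to \overline{\Phi}_\cO(\overline{\Psi}_\cO(A))$ buys is only that this map is an equivalence \emph{if and only if} it induces an isomorphism on $\pi_0$. The $\pi_0$-level statement is a nontrivial assertion about discrete local analytic rings, and it is where the real analytic content of the theorem resides: the paper proves it as \cref{prop:equivalence_discrete_modules} via the explicit \emph{Jacobian construction} $\Jac_A(M)$, whose multiplicative structure is computed by hand using the chain rule and a local power-series expansion of holomorphic functions to show that $\Jac_A(M)$ is the analytic reflection of the split square-zero extension $A\alg \oplus M$. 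Your proposal has no counterpart to this step, and without it the flatness argument has nothing to bite on. Relatedly, the closing ``inductive argument up the Postnikov tower (combining Goodwillie-style convergence with flatness)'' is not needed and is not what the paper does: once flatness reduces to $\pi_0$ and the $\pi_0$ case is settled by the Jacobian construction, the levelwise identification of $\Psi_\cO$ with $\overline{\Psi}_\cO$ finishes the proof directly, with no Postnikov induction or convergence estimate.
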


Observe that the functor $\Phi_\cO$ commutes with limits and filtered colimits. Therefore, it admits both a left adjoint $\Psi_\cO$ and a right adjoint $\Xi_\cO$.
Moreover, the functor $\overline{\Phi}_\cO$ is conservative (see \cite[Proposition 11.9]{DAG-IX}).
It follows that $\Phi_\cO$ is conservative as well. Therefore, \cref{thm:equivalence_of_modules} is equivalent to prove that the functor $\Psi_\cO$ is fully faithful.
Our strategy to prove this statement is the following: we first proceed to a preliminary study of the properties of the category $\Sp(\Strloc_{\Tan}(\cX)_{/\cO_\cX})$; in particular, we aim at endowing such category with a $t$-structure and to study its behavior with respect to the operation of forgetting along the morphism of pregeometries $\cTdisc \to \Tan$.
Next, we argue that it is enough to prove \cref{thm:equivalence_of_modules} for $(\cS, \cO)$, where $\cO$ is the stalk of the structure sheaf of some derived \canal space at a geometric point.

We then use the flatness result of the relative analytification discussed in \cref{sec:relative_analytification} to further reduce to the case where $\cO$ is discrete.
After this final d\'evissage step, we can finally deal with the problem in a very explicit way.
This will complete the proof of \cref{thm:equivalence_of_modules}.
We will end the section by discussing a possible generalization of \cref{thm:equivalence_of_modules} to the case of a generic $\Tan$-structured topos.

\subsection{Modules for a pregeometry} \label{subsec:modules_pregeometry}

As we just explained, our first goal is to endow the $\infty$-category $\Sp(\Strloc_{\Tan}(\cX)_{/\cO_\cX})$ with a reasonable $t$-structure.
Since no particular feature of the pregeometry $\Tan$ is needed in deducing the main results, we will be working with a rather general pregeometry $\cT$.
The analogue of \cref{def:analytic_modules} in this setting is the following:

\begin{defin} \label{def:modules_for_T}
	Let $\cT$ be a pregeometry and let $\cX$ be an $\infty$-topos.
	Let $\cO \in \Str_\cT(\cX)$. The $\infty$-category of \emph{$\cO$-modules (for the pregeometry $\cT$)} is defined to be
	\[ \cO \textrm{-} \Mod_{\cT} \coloneqq \Sp(\Strloc_{\cT}(\cX)_{/\cO}) \]
	When the pregeometry $\cT$ is clear from the context, we will simply denote $\cO \textrm{-} \Mod_{\cT}$ by $\cO \textrm{-} \Mod$.
\end{defin}

Let us start by recalling that in \cite{Porta_GAGA_2015} the notion of weak Morita equivalence allowed us to prove the following result:

\begin{prop} \label{prop:O_modules_presentable}
	Let $\cT$ be a pregeometry and let $\cX$ be an $\infty$-topos.
	For every $\cO \in \Str_{\cT}(\cX)$ the $\infty$-category $\cO \textrm{-} \Mod$ is presentable and stable.
\end{prop}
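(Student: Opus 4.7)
The claim splits into two parts, and stability is essentially a free lunch. Indeed, for any pointed $\infty$-category $\cD$ with finite limits the stabilization $\Sp(\cD)$ is stable by \cite[1.4.2.17]{Lurie_Higher_algebra}. The slice $\Strloc_{\cT}(\cX)_{/\cO}$ has a terminal object $\id_\cO$; replacing it if necessary by $\Strloc_{\cT}(\cX)_{\cO//\cO}$ to pointify, the standard construction produces a stable $\Sp(\Strloc_{\cT}(\cX)_{/\cO})$. So the real work is presentability.

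The plan for presentability is to reduce everything to the already-established presentability of $\Strloc_{\cT}(\cX)$ itself. More precisely, I would argue in three steps. First, I would recall from \cite{Porta_GAGA_2015} that the notion of weak Morita equivalence allows one to replace the pregeometry $\cT$ by one in which the full subcategory $\Strloc_{\cT}(\cX) \subseteq \Str_{\cT}(\cX)$ is an accessible reflective subcategory of a presentable $\infty$-category of product-preserving functors $\Fun^{\mathrm{lex}}(\cT,\cX)$; combined with the fact that locality is preserved under this equivalence, this yields that $\Strloc_{\cT}(\cX)$ is itself presentable (and in particular admits finite limits).

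Second, I would invoke \cite[5.5.3.10]{HTT}, which says that slices of presentable $\infty$-categories are presentable, to conclude that $\Strloc_{\cT}(\cX)_{/\cO}$ is presentable for every $\cO \in \Str_{\cT}(\cX)$. Third, I would apply \cite[1.4.4.4]{Lurie_Higher_algebra}: the stabilization of a presentable pointed $\infty$-category is presentable (and the pointing step introduced above, passing from $\Strloc_{\cT}(\cX)_{/\cO}$ to $\Strloc_{\cT}(\cX)_{\cO//\cO}$, preserves presentability since it is another slice construction).

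The only delicate point is the first step, i.e.\ the presentability of $\Strloc_{\cT}(\cX)$ itself. In general, asking a $\cT$-structure to be \emph{local} is a condition involving effective epimorphisms indexed by the Grothendieck topology on $\cT$, and it is not obvious a priori that the resulting full subcategory of $\Str_{\cT}(\cX)$ is accessible. This is exactly what the weak Morita equivalence machinery of \cite{Porta_GAGA_2015} takes care of, by allowing us to enlarge $\cT$ to a pregeometry in which the requisite covers are already present as admissible morphisms, reducing locality to a limit-preservation condition that is manifestly accessible. Once that input is in hand, the rest of the proof is formal.
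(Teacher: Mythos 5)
Your proof is correct and takes essentially the same route as the paper: stability is the formal fact about stabilizations (HA 1.4.2.17), and presentability comes from the weak Morita equivalence machinery of \cite{Porta_GAGA_2015}. The paper's own proof is terser — it simply points to \cite[Corollary 1.16]{Porta_GAGA_2015} for presentability of $\cO\textrm{-}\Mod$ — but what you have written is exactly the content of that citation (presentability of $\Strloc_{\cT}(\cX)$ via weak Morita, slices preserve presentability, stabilization of a presentable pointed $\infty$-category is presentable), so there is no genuine difference in approach, only in the level of detail displayed.
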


\begin{proof}
	The category $\cO \textrm{-} \Mod$ is the stabilization of $\Strloc_{\cT}(\cX)_{/\cO}$ and therefore it is stable, see \cite[1.4.2.17]{Lurie_Higher_algebra}.
	The presentability has instead been proved in \cite[Corollary 1.16]{Porta_GAGA_2015}.
\end{proof}

The previous proposition together with \cite[Proposition 1.4.4.11]{Lurie_Higher_algebra} allow to put on $\cO \textrm{-} \Mod$ the $t$-structure we are interested in quite easily.
Nevertheless, to prove certain special features of this $t$-structure, some additional work is needed.
We will begin by discussing the content of \cite[Proposition 1.7]{DAG-VII} in a slightly different setting.
The next couple of lemmas will prove quite useful in what follows.

\begin{lem}[{Cf.\ \cite[Lemma 3.3.4]{DAG-V}}] \label{lem:truncation_and_pullbacks}
	Let $\cX$ be an $\infty$-topos and suppose given a pullback square
	\[ \begin{tikzcd}
	K' \arrow{r} \arrow{d} & K \arrow{d}{p} \\
	L' \arrow{r}{q} & L
	\end{tikzcd} \]
	If $K$ and $L$ are $n$-truncated, then the functor $\tau_{\le n} \colon \cX \to \tau_{\le n} \cX$ commutes with this pullback.
\end{lem}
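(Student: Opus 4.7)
The plan is to introduce the auxiliary object $M := K \times_L \tau_{\le n}(L')$, to show it is automatically $n$-truncated, and to identify the natural comparison map $K' \to M$ with $(n+1)$-connective base change of the truncation unit, so that upon applying $\tau_{\le n}$ we obtain $\tau_{\le n}(K') \simeq M \simeq K \times_L \tau_{\le n}(L') \simeq \tau_{\le n}(K) \times_{\tau_{\le n}(L)} \tau_{\le n}(L')$, which is exactly the content of the lemma.

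First, since $L$ is $n$-truncated, the map $q \colon L' \to L$ factors uniquely through a morphism $\bar q \colon \tau_{\le n}(L') \to L$, so the pullback $M := K \times_L \tau_{\le n}(L')$ taken along $\bar q$ is well defined. I would then invoke the standard fact that any morphism in $\cX$ between two $n$-truncated objects is itself an $n$-truncated morphism (a consequence of the long exact sequence of homotopy sheaves, see e.g.\ \cite[Lemma 5.5.6.14]{HTT}); in particular $p \colon K \to L$ is $n$-truncated. Pullbacks of $n$-truncated morphisms are $n$-truncated, so $M \to \tau_{\le n}(L')$ is $n$-truncated, and then $M$ itself is $n$-truncated because $\tau_{\le n}(L')$ is.

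Second, I would identify the canonical comparison map $K' \to M$ with the pullback of the truncation unit $\eta_{L'} \colon L' \to \tau_{\le n}(L')$. Using associativity of pullbacks one has
\[
K' \simeq K \times_L L' \simeq (K \times_L \tau_{\le n}(L')) \times_{\tau_{\le n}(L')} L' \simeq M \times_{\tau_{\le n}(L')} L',
\]
so that $K' \to M$ is a base change of $\eta_{L'}$. Since $\eta_{L'}$ is $(n+1)$-connective and the class of $(n+1)$-connective morphisms in the $\infty$-topos $\cX$ is stable under pullback (see \cite[Proposition 6.5.1.12]{HTT}), the comparison map $K' \to M$ is $(n+1)$-connective as well.

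Finally, because an $(n+1)$-connective morphism becomes an equivalence after applying $\tau_{\le n}$, and $M$ is already $n$-truncated, we obtain $\tau_{\le n}(K') \simeq \tau_{\le n}(M) \simeq M$. Combined with $\tau_{\le n}(K) \simeq K$ and $\tau_{\le n}(L) \simeq L$, this proves that the square obtained by applying $\tau_{\le n}$ is still a pullback. The proof is quite soft; the only conceptual observation is that $M$ is automatically $n$-truncated, which is what makes it a viable candidate for $\tau_{\le n}(K')$. I do not expect any serious obstacle beyond keeping the conventions on $n$-connective versus $n$-truncated straight.
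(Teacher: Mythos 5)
Your proof is correct, but it follows a genuinely different route from the paper's. The paper exploits the slice topos: since $L$ is $n$-truncated, the factorization $L' \to \tau_{\le n}L' \to L$ can be read as the $n$-truncation in $\cX_{/L}$ (via \cite[5.5.6.14]{HTT}), and then the key step is that the base-change functor $-\times_L K \colon \cX_{/L} \to \cX_{/K}$ is both a left and a right adjoint, hence commutes with the truncation functors of the slices (\cite[5.5.6.28]{HTT}); applying \cite[5.5.6.14]{HTT} once more using that $K$ is $n$-truncated identifies $\tau^{\cX_{/K}}_{\le n}(K')$ with $\tau_{\le n}K'$. Your argument instead introduces the candidate $M := K \times_L \tau_{\le n}L'$, shows it is $n$-truncated by noting that $p$ is an $n$-truncated morphism between $n$-truncated objects, and then identifies $K' \to M$ with the base change of the truncation unit $\eta_{L'}$, which is $(n+1)$-connective; stability of $(n+1)$-connective morphisms under pullback plus the fact that an $(n+1)$-connective map to an $n$-truncated object exhibits that object as the $n$-truncation finishes it off. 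The paper's approach hides the connectivity bookkeeping inside the adjoint-functor argument about slice topoi, while yours makes the connectivity/truncatedness estimates explicit; both are standard and roughly the same length, and the choice is largely a matter of taste. One small nit: the citation for stability of $(n+1)$-connective morphisms under pullback is \cite[Proposition 6.5.1.16]{HTT} (item (6)) rather than 6.5.1.12, which is the object-level statement.
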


\begin{proof}
	Since $L$ is $n$-truncated, the morphism $L' \to L$ factors as $L' \to \tau_{\le n} L' \to L$.
	Since $L$ is $n$-truncated, we can think of this as the $n$-truncation in $\cX_{/L}$ (see \cite[5.5.6.14]{HTT}).
	Since the pullback functor $- \times_L K \colon \cX_{/L} \to \cX_{/K}$ is both a left and a right adjoint, it commutes with the truncation functors (see \cite[5.5.6.28]{HTT}).
	Therefore
	\[ \tau_{\le n} L' \times_L K = \tau_{\le n}^{\cX_{/L}}(L') \times_L K \simeq \tau_{\le n}^{\cX_{/K}}(L' \times_L K ) \simeq \tau_{\le n}^{\cX_{/K}}(K') \]
	Since $K$ is $n$-truncated, we see that $\tau_{\le n}^{\cX_{/K}}(K') \simeq \tau_{\le n} K'$ by \cite[5.5.6.14]{HTT} again.
	Therefore the diagram
	\[ \begin{tikzcd}
	\tau_{\le n} K' \arrow{r} \arrow{d} & K \arrow{d} \\
	\tau_{\le n} L' \arrow{r} & L
	\end{tikzcd} \]
	is a pullback square. As $\tau_{\le n} K \simeq K$ and $\tau_{\le n} L \simeq L$, the proof is complete.
\end{proof}

\begin{lem} \label{lem:truncation_and_overcategories}
	Let $\cX$ be an $\infty$-topos and let $X \in \cX$ be an object.
	\begin{enumerate}
		\item The functor $j\inv \colon \cX_{/ \tau_{\le n} X} \to \cX_{/ X}$ given by the pullback along $X \to \tau_{\le n} X$ induces a fully faithful functor
		\[ \sigma \colon \tau_{\le n}( \cX_{/\tau_{\le n} X} ) \to \tau_{\le n} (\cX_{/X} ) \]
		\item Suppose furthermore that there exists an integer $m$ such that $X$ is $m$-truncated or that $\cX$ is hypercomplete. Then the essential image of $\sigma$ is given by $n$-truncated maps $Y \to X$ inducing isomorphisms $\pi_{n+1}(Y,x) \to \pi_{n+1}(X,x)$ for every basepoint $x$.
		\item In particular, if $X$ is $m$-truncated for some $m \le n$, then the truncation functor $\tau_{\le n} \colon \cX \to \tau_{\le n} \cX$ induces an equivalence $\tau_{\le n}(\cX_{/X}) \to \tau_{\le n}(\cX)_{/X}$.
	\end{enumerate}
\end{lem}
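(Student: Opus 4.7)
My plan is to write $p \colon X \to W := \tau_{\le n} X$ for the canonical truncation map and denote by $p^* \colon \cX_{/W} \to \cX_{/X}$ the pullback functor (this is the $j\inv$ of the statement); it admits a right adjoint $p_*$, the dependent product along $p$. For (1), the approach is to show that the unit $\eta \colon \id \to p_* p^*$ is an equivalence on the full subcategory of $n$-truncated objects of $\cX_{/W}$; combined with the general fact that $p^*$ preserves $n$-truncated morphisms (pullbacks preserve truncatedness of morphisms), this immediately gives that the induced functor $\sigma$ is fully faithful.

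To verify the unit claim, let $Y \to W$ be $n$-truncated as a morphism; since $W$ itself is $n$-truncated, $Y$ is $n$-truncated as an object of $\cX$. For any $T \to W$, mapping into $Y$ factors through the $n$-truncation in $\cX_{/W}$, which by \cite[5.5.6.14]{HTT} agrees with the absolute $n$-truncation. The projection $T \times_W X \to T$ has fibers isomorphic to those of $p$, which have vanishing $\pi_i$ for $i \le n$ by construction of $W = \tau_{\le n} X$, so a standard long-exact-sequence argument gives $\tau_{\le n}(T \times_W X) \simeq \tau_{\le n}(T)$, proving the unit is an equivalence and hence (1).

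For (2), one direction is a direct computation: given $Y \to W$ that is $n$-truncated and $Z := Y \times_W X$, the fibers of $Z \to X$ coincide with those of $Y \to W$ and are therefore $n$-truncated, while the long exact sequence of the fibration $Z \to X$, combined with $\pi_{n+1}(W) = 0$, forces $\pi_{n+1}(Z) \simeq \pi_{n+1}(X)$. Conversely, given $Z \to X$ as prescribed, I would set $Y := \tau_{\le n}(Z)$, regarded over $W$ via the composite $Z \to X \to W$ (the $\cX_{/W}$-truncation and absolute truncation again agree by \cite[5.5.6.14]{HTT}), and study the canonical comparison $\phi \colon Z \to Y \times_W X$ in $\cX_{/X}$. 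A homotopy-group-by-homotopy-group inspection shows that $\phi$ induces isomorphisms on every $\pi_i$: on $i \le n$ by construction of $Y$, on $i = n+1$ by the hypothesis on $Z$ together with the forward-direction computation applied to $Y \times_W X$, and on $i \ge n+2$ because both $Z \to X$ and $Y \times_W X \to X$ have $n$-truncated fibers. The hard part is the passage from ``iso on all $\pi_i$'' to ``equivalence'', which is precisely where the auxiliary hypothesis enters: under hypercompleteness of $\cX$ this is Whitehead, while if $X$ is $m$-truncated one bounds the truncatedness of $Z$ and $Y \times_W X$ in terms of $m$ and $n$ to reduce to an equivalence between truncated objects.

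Finally, (3) follows directly from (1): if $X$ is $m$-truncated with $m \le n$ then $W = X$, $\sigma$ is tautologically an autoequivalence of $\tau_{\le n}(\cX_{/X})$, and \cite[5.5.6.14]{HTT} (combined with a last long-exact-sequence step, using that $X$ is $n$-truncated to ensure that an $n$-truncated morphism into $X$ has $n$-truncated source and vice versa) identifies $\tau_{\le n}(\cX_{/X})$ with $\tau_{\le n}(\cX)_{/X}$.
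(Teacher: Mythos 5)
Your proof is correct and follows essentially the same outline as the paper, with one genuine variation in part (1) worth noting. Where the paper observes that $\sigma$ preserves limits and accessible colimits (via \cite[5.5.6.16]{HTT}), concludes it has a left adjoint $\pi \simeq \tau_{\le n}\circ j_!\circ i$, and checks that the counit $\pi\sigma\to\mathrm{Id}$ is an equivalence, you instead invoke the already-existing right adjoint $p_*$ (dependent product) and check the unit $\mathrm{Id}\to p_*p^*$ on $n$-truncated objects -- equivalently, you compute mapping spaces directly. These are dual phrasings that reduce to exactly the same core fact: for any $T\to W$, the projection $T\times_W X\to T$, being a pullback of the $(n+1)$-connective map $X\to \tau_{\le n}X$, induces an equivalence on $n$-truncations. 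Your route has the small advantage of not needing the presentability/adjoint-functor-theorem step that the paper uses to produce $\pi$.

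One place to tighten the exposition: in part (1) you call this key step ``a standard long-exact-sequence argument.'' Because the ambient $\infty$-topos is not assumed hypercomplete there, an argument purely on homotopy sheaves only works after first passing to the $n$-truncations; it is cleaner to say that $(n+1)$-connective morphisms are stable under pullback (\cite[6.5.1.16]{HTT}) and that an $(n+1)$-connective morphism induces an equivalence on $\tau_{\le n}$ (by uniqueness of the $(n+1)$-connective / $n$-truncated factorization), rather than reasoning object-by-object on $\pi_i$. Your sketch is still correct -- after truncating, one is comparing $n$-truncated (hence hypercomplete) objects, so ``iso on all $\pi_i$'' does imply equivalence -- but spelling out the connectivity mechanism would avoid any appearance of using Whitehead in a non-hypercomplete setting. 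Parts (2) and (3) match the paper's argument: same comparison map $Z\to Y\times_W X$, same degree-by-degree check (degrees $\le n$ via the truncation identification, degree $n+1$ via the Mayer--Vietoris/fibration LES, degrees $\ge n+2$ via $n$-truncatedness of the fibers), and the same use of hypercompleteness or boundedness to pass from ``iso on homotopy sheaves'' to ``equivalence.''
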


\begin{proof}
	The pullback functor $j\inv \colon \cX_{/\tau_{\le n} X} \to \cX_{/X}$ is both a left and a right adjoint.
	Therefore \cite[Proposition 5.5.6.16]{HTT} shows that the composite
	\[ \tau_{\le n}(\cX_{/\tau_{\le n} X}) \to \cX_{/\tau_{\le n} X} \xrightarrow{j\inv} \cX_{/X} \]
	factors as $\sigma \colon \tau_{\le n}( \cX_{/\tau_{\le n} X} ) \to \tau_{\le n} (\cX_{/X} )$ in such a way that the diagram
	\[ \begin{tikzcd}
	\cX_{/\tau_{\le n} X} \arrow{r}{j\inv} & \cX_{/ X} \\
	\tau_{\le n} (\cX_{/\tau_{\le n} X}) \arrow{u}{i_n} \arrow{r}{\sigma} & \tau_{\le n} (\cX_{/X}) \arrow{u}{i}
	\end{tikzcd} \]
	commutes, where the vertical arrows are the inclusion functors.
	This diagram shows that $\sigma$ commutes with limits and $\kappa$-filtered colimits for $\kappa$ large enough.
	Therefore it admits a left adjoint, which we will denote $\pi$. We observe that $j\inv$ is right adjoint to the \emph{forgetful} functor $j_! \colon \cX_{/X} \to \cX_{/\tau_{\le n} X}$.
	In conclusion, we have a commutative square
	\[ \begin{tikzcd}
	\cX_{/\tau_{\le n} X} \arrow{d}{\tau_{\le n}} & \cX_{/ X} \arrow{l}[swap]{j_!} \arrow{d}{\tau_{\le n}} \\
	\tau_{\le n} (\cX_{/\tau_{\le n} X}) & \tau_{\le n} (\cX_{/X}) \arrow{l}[swap]{\pi}
	\end{tikzcd} \]
	which allows to identify $\pi$ with $\tau_{\le n} \circ j_! \circ i$.
	Unraveling the definitions of the functors $\sigma$ and $\pi$, it follows that the natural transformation $\pi \circ \sigma \to \mathrm{Id}_{\tau_{\le n}(\cX_{/\tau_{\le n} X})}$  is an equivalence and therefore $\sigma$ is fully faithful.
	This proves (1).
	
	Let us now turn to (2).
	If $X$ is $m$-truncated and we are given an $n$-truncated map $f \colon Y \to X$, then we conclude that $Y$ is $\max\{n,m\}$-truncated.
	In particular, both $X$ and $Y$ are hypercomplete objects of $\cX$.
	We can therefore assume this last assertion to be satisfied in both situations.
	Consider the commutative diagram
	\[ \begin{tikzcd}
	Y \arrow[bend right = 20]{dr}[swap]{f} \arrow{r}{g} & \sigma(\tau_{\le n}(Y)) \arrow{r} \arrow{d}{f'} & \tau_{\le n} Y \arrow{d}{\tau_{\le n}(f)} \\
	{} & X \arrow{r} & \tau_{\le n} X
	\end{tikzcd} \]
	Observe that the map $f'$ is $n$-truncated and therefore that, in both the situations we are considering, $\sigma(\tau_{\le n}(Y))$ is an hypercomplete object of $\cX$.
	Thus, to conclude that $g$ is an equivalence, it will be sufficient to show that it is $\infty$-connected.
	\cref{lem:truncation_and_pullbacks} shows that $g$ induces an equivalence $\tau_{\le n} Y \simeq \tau_{\le n} \sigma(\tau_{\le n} Y)$, i.e.\ it induces isomorphisms on the $\pi_i$ for every $i \le n$.
	On the other side, both $f$ and $f'$ are $n$-truncated, and therefore $g$ induces isomorphisms on the $\pi_i$ for every $i \ge n+2$.
	
	We are left to deal with the case $i = n+1$.
	Consider the long exact sequence
	\[ \pi_{n+2}(\tau_{\le n} X) \to \pi_{n+1}(\sigma(\tau_{\le n}(Y)) ) \to \pi_{n+1}(X) \oplus \pi_{n+1}(\tau_{\le n}Y) \to \pi_{n+1}(\tau_{\le n} X) \]
	The vanishings of the involved homotopy groups show that $\pi_{n+1}(\sigma(\tau_{\le n}(Y))) \simeq \pi_{n+1}(X)$.
	Now set $F \coloneqq \fib(f)$ and $F' \coloneqq \fib(f')$. We have a morphism of fiber sequences
	\[ \begin{tikzcd}
	F \arrow{r} \arrow{d} & Y \arrow{d}{g} \arrow{r}{f} & X \arrow{d}{\mathrm{id}} \\
	F' \arrow{r} & \sigma(\tau_{\le n} Y) \arrow{r}{f'} & X
	\end{tikzcd} \]
	which in turn induces a morphism of long exact sequences
	\[ \begin{tikzcd}
	\pi_{n+1}(F) \arrow{d} \arrow{r} & \pi_{n+1}(Y) \arrow{d}{\pi_{n+1}(g)} \arrow{r}{\pi_{n+1}(f)} & \pi_{n+1}(X) \arrow{d}{\mathrm{id}} \\
	\pi_{n+1}(F') \arrow{r} & \pi_{n+1}(\sigma(\tau_{\le n} Y)) \arrow{r}{\pi_{n+1}(f')} & \pi_{n+1}(X)
	\end{tikzcd} \]
	Since both $f$ and $f'$ are $n$-truncated, we see that $\pi_{n+1}(F) = \pi_{n+1}(F') = 0$.
	On the other side, we already showed that $\pi_{n+1}(f')$ is an isomorphism.
	Therefore $\pi_{n+1}(g)$ is an isomorphism if and only if $\pi_{n+1}(f)$ is one, thus completing the proof of point (2).
	
	As for point (3), it is enough to observe that in the previous morphism of long exact above, the hypothesis forces $\pi_{n+1}(Y) = \pi_{n+1}(X) = 0$, so that \emph{every} $n$-truncated map $Y \to X$ satisfies the hypothesis of point (2).
	The lemma is thus proved.
\end{proof}

Let us now fix an $\infty$-topos $\cY$ and an object $X \in \cY$.
We will consider the $\infty$-topos $\cY_{/X}$.
The stabilization $\Sp(\cY_{/X})$ can be identified with the $\infty$-category of sheaves of spectra on $\cY_{/X}$, see \cite[Remark 1.2]{DAG-VII}.
Therefore, for every integer $n \in \mathbb Z$ we obtain functors
\[ \pi_n \colon \Sp(\cY_{/X}) \to \Ab(\tau_{\le 0}(\cY_{/X})) \]
On the other side, we also have a functor
\[ \Omega^\infty \colon \Sp(\cY_{/X}) \to \cY_{/X} \]
We will say that an object $M$ in $\Sp(\cY_{/X})$ is \emph{connective} if $\pi_n(M) = 0$ for $n > 0$.
On the other side, we will say that $M$ is \emph{coconnective} if $\Omega^\infty(M)$ is a discrete object of $\cY_{/X}$.
Equivalently, $M$ is coconnective if $\pi_n(M) = 0$ for every $n < 0$.
We warn the reader that, on the contrary of \cite{Lurie_Higher_algebra,DAG-VII}, we are using a \emph{cohomological notation} and not a homological one.

\begin{prop} \label{prop:t_structure_on_Fun}
	Let $\cY$ an $\infty$-topos and $X$ an object of $\cY$. Then:
	\begin{enumerate}
		\item The categories $(\Sp(\cY_{/X})_{\le 0}, \Sp(\cY_{/X})_{\ge 0})$ determine an accessible $t$-structure on $\Sp(\cY_{/X})$;
		\item the $t$-structure on $\Sp(\cY_{/X})$ is compatible with filtered colimits;
		\item the $t$-structure on $\Sp(\cY_{/X})$ is right complete.
	\end{enumerate}
	Suppose furthermore that $\cY$ is hypercomplete. Then:
	\begin{enumerate}
		\setcounter{enumi}{3}
		\item the functor $\pi_0$ determines an equivalence of the heart of this $t$-structure with the category of abelian group objects in $\tau_{\le 0}(\cY)_{/\pi_0\cO}$.
		\item The $t$-structure on $\Sp(\cY_{/X})$ is also left complete.
	\end{enumerate}
\end{prop}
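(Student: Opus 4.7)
The plan is to verify each of the five properties in sequence, exploiting the identification of $\Sp(\cY_{/X})$ with the $\infty$-category of sheaves of spectra on $\cY_{/X}$ recorded in \cite[Remark 1.2]{DAG-VII}, so that the desired $t$-structure becomes the standard one on such sheaves of spectra.

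For (1), I would invoke \cite[Proposition 1.4.4.11]{Lurie_Higher_algebra} applied to the presentable $\infty$-category $\cY_{/X}$ equipped with its canonical truncation functors: this produces the $t$-structure on the stabilization with exactly the description given above in terms of homotopy sheaves and $\Omega^{\infty}$, and the accessibility is automatic since both halves are cut out by accessible conditions involving the accessible functors $\pi_n$ and $\Omega^{\infty}$. For (2), compatibility with filtered colimits reduces to showing that the connective part of the $t$-structure is closed under filtered colimits; this follows from the fact that each $\pi_n$, being obtained by sheafification of the presheaf homotopy group functors, commutes with filtered colimits. For (3), right completeness reduces to the tautological statement that a spectrum object all of whose homotopy sheaves vanish is the zero object, which is immediate from the very definition of the $t$-structure.

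Parts (4) and (5) require the hypercompleteness hypothesis, which is what ensures that Postnikov towers of spectrum objects converge. I first observe that the slice $\cY_{/X}$ of a hypercomplete $\infty$-topos is itself hypercomplete, so we may work with $\cY_{/X}$ directly. For (4), I plan to combine \cref{lem:truncation_and_overcategories}(3)---which, after replacing $X$ by $\tau_{\le 0} X = \pi_0 X$, yields $\tau_{\le 0}(\cY_{/X}) \simeq \tau_{\le 0}(\cY)_{/\pi_0 X}$---with the classical identification, in the hypercomplete setting, of the heart of the $t$-structure on sheaves of spectra with sheaves of abelian groups on the underlying discrete topos, as recorded in \cite[Proposition 1.7]{DAG-VII}. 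The composite yields the desired equivalence $\Sp(\cY_{/X})^\heartsuit \simeq \Ab(\tau_{\le 0}(\cY)_{/\pi_0 X})$ induced by $\pi_0$.

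For (5), which is the main (and essentially only) substantive obstacle of the proof, I would argue that hypercompleteness of $\cY_{/X}$ forces Postnikov towers in $\Sp(\cY_{/X})$ to converge: indeed in a hypercomplete $\infty$-topos every $\infty$-connective object is terminal, and this implies that any spectrum object with vanishing homotopy sheaves must itself be zero. Combined with the right completeness already established, this verifies the hypotheses of \cite[Proposition 1.2.1.19]{Lurie_Higher_algebra} and yields left completeness. Without the hypercompleteness hypothesis this argument genuinely breaks down, which is precisely why the hypothesis cannot be dropped at this step.
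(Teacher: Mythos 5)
Your proposal for parts (1)--(3) and (5) is, modulo some imprecisions, in the same spirit as the paper (which simply cites \cite[Proposition 1.7, Warning 1.8]{DAG-VII} for these). The genuine problem is with part (4).

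You assert that \cref{lem:truncation_and_overcategories}(3), applied with $X$ replaced by $\pi_0 X$, yields an equivalence $\tau_{\le 0}(\cY_{/X}) \simeq \tau_{\le 0}(\cY)_{/\pi_0 X}$. This is false. What the lemma gives, with $\pi_0 X$ in place of $X$, is an equivalence $\tau_{\le 0}(\cY_{/\pi_0 X}) \simeq \tau_{\le 0}(\cY)_{/\pi_0 X}$; and \cref{lem:truncation_and_overcategories}(1) gives a \emph{fully faithful} functor $\sigma \colon \tau_{\le 0}(\cY_{/\pi_0 X}) \to \tau_{\le 0}(\cY_{/X})$, which need not be essentially surjective. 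For instance, if $X$ is a delooping $K(G,1)$ in $\cY$, then $\pi_0 X \simeq *$, so $\tau_{\le 0}(\cY)_{/\pi_0 X} \simeq \tau_{\le 0}(\cY)$; but $\tau_{\le 0}(\cY_{/X})$ contains the $0$-truncated covers of $X$, which form a strictly larger category. The categories of objects are genuinely different; what is true (and what the paper proves) is that passing to \emph{abelian group objects} makes the fully faithful inclusion essentially surjective. That step uses \cref{lem:truncation_and_overcategories}(2): an abelian group object of $\tau_{\le 0}(\cY_{/X})$ is a $0$-truncated map $Y \to X$ admitting a section, hence inducing an isomorphism on $\pi_1$ for each basepoint, which by point (2) is exactly the criterion for $Y$ to lie in the essential image of $\sigma$. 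Your proof omits this entirely, so the essential surjectivity --- the substantive half of the equivalence --- is left unaddressed.

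Two smaller remarks. For (3), describing right completeness as the ``tautological'' statement that a spectrum object with vanishing homotopy sheaves is zero glosses over the point that $\cC_{-\infty}$ is defined by a truncatedness condition (each $\Omega^{\infty-n} M$ is $(n-m)$-truncated for all $m$, hence $(-2)$-truncated, hence terminal), which is what makes the conclusion hold \emph{without} hypercompleteness; the homotopy-sheaf phrasing by itself would make the distinction between (3) and (5) opaque, since the homotopy-sheaf criterion is the same one you invoke for (5). For (5), the reference to \cite[Proposition 1.2.1.19]{Lurie_Higher_algebra} is to a statement about right completeness; what is actually needed is its dual (compatibility with countable products plus $\cC_{+\infty}$ trivial), so the citation should be adjusted accordingly.
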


\begin{proof}
	The first three statements are precisely a rewriting of \cite[Proposition 1.7]{DAG-VII}.
	The fifth one is the content of \cite[Warning 1.8]{DAG-VII}.
	We are left to deal with point (4).
	First of all using \cite[Proposition 1.7]{DAG-VII} we have a canonical identification
	\[	\Sp(\cY_{/X})^\heartsuit \simeq \Ab(\tau_{\le 0}( \cY_{/X}))	\]
	On the other side, combining points (1) and(3) of \cref{lem:truncation_and_overcategories} we obtain a fully faithful functor
	\[ \tau_{\le 0}(\cY)_{/\pi_0 X} \simeq \tau_{\le 0}(\cY_{/\pi_0 X} ) \to \tau_{\le 0}( \cY_{/ X}) \]
	This functor commutes with limits and therefore it induces a fully faithful functor
	\[ \Ab(\tau_{\le 0}(\cY)_{/\pi_0 X}) \to \Ab (\tau_{\le 0}( \cY_{/ X}) ) \]
	We claim that it is essentially surjective.
	Indeed, the forgetful functor
	\[ \Ab (\tau_{\le 0}( \cY_{/ X}) ) \to \tau_{\le 0} (\cY_{/X}) \]
	can be factored as
	\[ \Ab (\tau_{\le 0}(\cY_{/ X})) \to \tau_{\le 0}(\cY_{X // X}) \to \tau_{\le 0}(\cY_{/X}) \]
	Now, if $Y \to X$ is a $0$-truncated maps which admits a section, we see that it induces an isomorphism on each $\pi_1$.
	Therefore, point (2) of \cref{lem:truncation_and_overcategories} shows that $Y$ belongs to the essential image of $\tau_{\le 0}(\cY)_{/ \pi_0 \cO}$.
	The proof is now completed.
\end{proof}

We now want to endow $\cO\textrm{-} \Mod$ with a $t$-structure with similar properties.
\cite[Lemma 1.9]{Porta_GAGA_2015} shows that we have a fully faithful inclusion
\[ i \colon \Strloc_{\cT}(\cX)_{/\cO} \to \Fun(\cT, \cX)_{/\cO} \]
Since $i$ takes the final object to the final object, we deduce from \cite[Lemma 1.17]{Porta_GAGA_2015} that this functor commutes with limits and sifted colimits.
Therefore, composition with $i$ induces a fully faithful functor
\[ j \colon \cO \textrm{-} \Mod \to \Sp(\Fun(\cT, \cX)_{/\cO}) \]
The functor $j$ commutes with limits and filtered colimits.
Since the two categories $\cO\textrm{-} \Mod$ and $\Sp(\Fun(\cT, \cX)_{/\cO})$ are stable, we see that $j$ commutes also with finite colimits.
Therefore, $\cO \textrm{-} \Mod$ is a reflective \emph{and} coreflective subcategory of $\Sp(\Fun(\cT, \cX)_{/\cO})$.
It follows that it is stable under both limits and colimits computed in $\Sp(\Fun(\cT, \cX)_{/\cO})$.

As $\Fun(\cT, \cX)$ is an $\infty$-topos (which furthermore is hypercomplete if $\cX$ is), we can apply \cref{prop:t_structure_on_Fun} to get a $t$-structure on $\Sp(\Fun(\cT, \cX)_{/\cO})$. We are going to use this to induce a $t$-structure on $\cO\textrm{-} \Mod$. 

\begin{defin}
	Define $\cO \textrm{-} \Mod_{\ge 0}$ (resp.\ to $\cO \textrm{-} \Mod_{\le 0}$) as the full subcategory of $\cO \textrm{-} \Mod$ spanned by those elements $M$ such that $j(M)$ belongs to $\Sp(\Fun(\cT, \cX)_{/\cO})_{\ge 0}$ (resp.\ to $\Sp(\Fun(\cT, \cX)_{/\cO})_{\le 0}$).
	We will refer to the objects in $\cO \textrm{-} \Mod_{\ge 0}$ as \emph{coconnective $\cO$-modules} and to the objects in $\cO \textrm{-} \Mod_{\le 0}$ as \emph{connective $\cO$-modules}.
\end{defin}

\begin{prop} \label{prop:t_structure_Str}
	Let $\cT$ be a pregeometry, $\cX$ an $\infty$-topos and $\cO \in \Str_{\Tan}(\cX)$ a $\Tan$-structure on $\cX$. Then:
	\begin{enumerate}
		\item There exists a $t$-structure $(\cO \textrm{-} \Mod', \cO \textrm{-} \Mod_{\ge 0})$ on $\cO \textrm{-} \Mod$.
	\end{enumerate}
	Suppose furthermore that the pregeometry $\cT$ is compatible with $n$-truncations for every $n \ge 0$. Then:
	\begin{enumerate}
		\setcounter{enumi}{1}
		\item $\cO \textrm{-} \Mod' = \cO \textrm{-} \Mod_{\le 0}$;
		\item this $t$-structure is compatible with filtered colimits;
		\item this $t$-structure is right complete.
	\end{enumerate}
	Finally, suppose also that $\cX$ is hypercomplete. Then:
	\begin{enumerate}
		\setcounter{enumi}{4}
		\item the functor $\pi_0$ realizes an equivalence of the heart of this $t$-structure with abelian group objects in $\Strloc_{\cT}(\tau_{\le 0}\cX)_{/\pi_0 \cO}$.
		\item This $t$-structure is also left complete.
	\end{enumerate}
\end{prop}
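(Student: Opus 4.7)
The strategy is to systematically transport the $t$-structure and its properties from \cref{prop:t_structure_on_Fun} along the fully faithful embedding $j \colon \cO\textrm{-}\Mod \hookrightarrow \Sp(\Fun(\cT,\cX)_{/\cO})$. As recalled in the discussion preceding the definition, $j$ preserves all small limits and colimits and realizes $\cO\textrm{-}\Mod$ as a reflective \emph{and} coreflective subcategory of the ambient presentable stable $\infty$-category. For part (1), I would apply the dual form of \cite[Proposition 1.4.4.11]{Lurie_Higher_algebra} to the subcategory $\cO\textrm{-}\Mod_{\ge 0}$: since $j$ preserves limits, the preimage of the accessible, limit-closed, extension-closed subcategory $\Sp(\Fun(\cT,\cX)_{/\cO})_{\ge 0}$ inherits all these properties and is also stable under the relevant shifts. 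This produces an accessible $t$-structure on $\cO\textrm{-}\Mod$ whose coconnective part is $\cO\textrm{-}\Mod_{\ge 0}$, and one sets $\cO\textrm{-}\Mod' := {}^{\perp}(\cO\textrm{-}\Mod_{\ge 0}[-1])$.

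For part (2), I would invoke the hypothesis that $\cT$ is compatible with $n$-truncations for every $n \ge 0$; by definition this means that the truncation endofunctors $\tau_{\le n}$ on $\Fun(\cT,\cX)$ restrict to endofunctors of $\Str_{\cT}(\cX)$. After slicing over $\cO$ and passing to stabilizations, this implies that the truncation functors of the ambient $t$-structure on $\Sp(\Fun(\cT,\cX)_{/\cO})$ preserve the essential image of $j$. Consequently, for any $M \in \cO\textrm{-}\Mod$, the truncation triangle $\tau^{\le 0}(jM) \to jM \to \tau^{\ge 1}(jM)$ stays in the essential image of $j$, which forces the abstract connective part $\cO\textrm{-}\Mod'$ to coincide with the preimage $\cO\textrm{-}\Mod_{\le 0}$. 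Parts (3) and (4) then follow by direct transport: $j$ preserves filtered colimits and is conservative, so compatibility with filtered colimits and right completeness pass from \cref{prop:t_structure_on_Fun}(2), (3) to $\cO\textrm{-}\Mod$.

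Finally, suppose $\cX$ is hypercomplete, so that $\Fun(\cT,\cX)$ is hypercomplete too and \cref{prop:t_structure_on_Fun}(4), (5) become available. Left completeness transports to $\cO\textrm{-}\Mod$ by the same reasoning, giving (6). For the heart identification in (5), I would restrict the ambient equivalence $\Sp(\Fun(\cT,\cX)_{/\cO})^\heartsuit \simeq \Ab(\tau_{\le 0}(\Fun(\cT,\cX))_{/\pi_0 \cO})$ along $j$ to obtain a fully faithful functor $\cO\textrm{-}\Mod^\heartsuit \to \Ab(\tau_{\le 0}(\Fun(\cT,\cX))_{/\pi_0 \cO})$, and then identify its essential image with $\Ab(\Strloc_{\cT}(\tau_{\le 0}\cX)_{/\pi_0 \cO})$. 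Here \cref{lem:truncation_and_overcategories}(3) is used to rewrite $\tau_{\le 0}(\Fun(\cT,\cX)_{/\pi_0 \cO}) \simeq \tau_{\le 0}(\Fun(\cT,\cX))_{/\pi_0 \cO}$, and the pregeometry compatibility hypothesis is used to show that an abelian group object over $\pi_0 \cO$ lies in the image of $j$ precisely when its underlying sheaf is a local $\cT$-structure. I expect this last matching to be the main obstacle: one must verify that the forgetful factorization of the proof of \cref{prop:t_structure_on_Fun}(4) interacts compatibly with the local $\cT$-structure condition, which is where \cref{lem:truncation_and_overcategories} and the pregeometry compatibility must be carefully combined.
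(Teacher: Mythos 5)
Your proposal takes essentially the same approach as the paper: transport the $t$-structure from $\Sp(\Fun(\cT,\cX)_{/\cO})$ along the limit- and colimit-preserving fully faithful embedding $j$, using compatibility with $n$-truncations together with \cref{lem:truncation_and_overcategories} to see that ambient truncation preserves the image of $j$. The paper cites \cite[1.4.3.4]{Lurie_Higher_algebra} rather than a dual form of \cite[1.4.4.11]{Lurie_Higher_algebra} for part (1), and pins down the key claim in part (2) via the explicit tower description of spectrum objects as in \cite[Proposition 1.7]{DAG-VII}, but these are bookkeeping variants of the same strategy.
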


\begin{proof}
	Since the functor $i \colon \Strloc_{\cT}(\cX)_{/\cO} \to \Fun(\cT, \cX)_{/\cO}$ preserves limits, we see that the square
	\[ \begin{tikzcd}
	\cO \textrm{-} \Mod \arrow{d}{\Omega^\infty} \arrow{r}{j} & \Sp(\Fun(\cT, \cX)_{/\cO}) \arrow{d}{\Omega^\infty} \\
	\Strloc_{\cT}(\cX)_{/\cO} \arrow{r}{i} & \Fun(\cT,\cX)_{/\cO}
	\end{tikzcd} \]
	commutes.
	Therefore, an element $M \in \cO \textrm{-} \Mod$ is coconnective if and only if $\Omega^\infty (j(\Omega(M))) \simeq i (\Omega^{\infty + 1}(M))$ is the final object of $\Fun(\cT, \cX)_{/\cO}$. This is equivalent to say that $\Omega^{\infty+1}(M)$ is the final object of $\Strloc_{\cT}(\cX)_{/\cO}$.
	Therefore the existence of the $t$-structure $(\cC, \cO \textrm{-} \Mod_{\ge 0})$ is a direct consequence of \cite[1.4.3.4]{Lurie_Higher_algebra}.
	
	We can now reason as in \cite[Proposition 1.7]{DAG-VII} to identify $\cC$.
	Namely, let $M \in \cO \textrm{-} \Mod$.
	We can identify $\cO \textrm{-} \Mod$ as the inverse limit of the diagram
	\[ \cdots \xrightarrow{\Omega} \Strloc_{\Tan}(\cX)_{\cO //\cO} \xrightarrow{\Omega} \Strloc_{\Tan}(\cX)_{\cO //\cO} \]
	Let $\cJ \coloneqq \mathbb N^{\mathrm{op}}$ be the poset indexing this diagram and let $p \colon \cC \to \cJ$ be the associated coCartesian fibration.
	We can identify $M$ with a Cartesian section of $p$, which amounts to a sequence of objects $M(n)$ together with equivalences $\gamma_n \colon M(n) \simeq \Omega M(n+1)$.
	Set $M'(n) \coloneqq \tau_{\le n - 1} M(n)$, where the truncation is taken in the presentable $\infty$-category $\Strloc_{\cT}(\cX)_{/\cO}$.
	Since $\cT$ is compatible with $n$-truncations for every $n \ge 0$, we can actually identify this truncation with the composition $\tau_{\le n-1}^{\cX} \circ M(n)$, which in turn coincides with the truncation in $\Fun(\cT, \cX)_{\cO // \cO}$ in virtue of \cref{lem:truncation_and_overcategories}.(2).
	It follows as in \cite[Proposition 1.7]{DAG-VII} that the equivalences $\gamma_n$ induce equivalences $\gamma'_n \colon M'(n) \simeq \Omega M(n+1)$, and therefore we can regard $M'(n)$ as an object in $\cO \textrm{-} \Mod$.
	Moreover, the canonical map $M \to M'$ exhibits $M'$ as a reflection of $M$ in $\cO \textrm{-} \Mod_{\ge 0}$. The proof given in loc.\ cit.\ (paired up with the description of truncations in $\Strloc_{\cT}(\cX)_{\cO // \cO}$ we gave above) shows that the map $j(M) \to j(M')$ exhibits $j(M')$ as a reflection of $j(M)$ in $\Sp(\Fun(\cT, \cX)_{\cO // \cO})_{\ge 0}$.
	Introduce the fiber sequence
	\[ M'' \to M \to M' \]
	in $\cO \textrm{-} \Mod$ (so that $M'' \in \cO \textrm{-} \Mod'$).
	Combining the previous observation with the fact that $j$ preserves fiber sequences, we conclude that $j(M'') \in \Sp(\Fun(\cT, \cX)_{\cO // \cO})_{\le -1}$, and therefore that $M'' \in \cO \textrm{-} \Mod_{\le -1}$.
	In conclusion, $M'' \in \cO \textrm{-} \Mod_{\le -1}$.
	This completes the proof of point (2)
	
	Now, points (3), (4) and (6) follow immediately from the fact that $j$ commutes with small colimits and \cref{prop:t_structure_on_Fun}.
	As for point (5), consider the following diagram
	\[ \begin{tikzcd}
	\Ab(\Strloc_{\cT}(\tau_{\le 0} \cX)_{/\tau_{\le 0} \cO}) \arrow[dashed]{d} \arrow{r} & \Ab(\Fun(\cT, \tau_{\le 0} \cX)_{/\tau_{\le 0} \cO}) \arrow{d} \\
	\Sp(\Strloc_{\cT}(\cX)_{/\cO})^\heartsuit \arrow{r} & \Sp(\Fun(\cT, \cX)_{/ \cO})^\heartsuit
	\end{tikzcd} \]
	The top horizontal morphism exists because the functor
	\[ \Strloc_{\cT}(\tau_{\le 0} \cX)_{/\tau_{\le 0} \cO} \to \Fun(\cT, \tau_{\le 0} \cX)_{/\tau_{\le 0} \cO} \]
	commutes with products. The right vertical functor is an equivalence in virtue of \cref{prop:t_structure_on_Fun}. We conclude that the dotted functor exists as well. Since the other three functors are fully faithful, the same goes for this functor. Finally, it is essentially surjective in an obvious way. The proof is therefore complete.
\end{proof}

Let now $\varphi \colon \cT' \to \cT$ be a morphism of pregeometries.
Let $\cX$ be an $\infty$-topos and let $\cO \in \Str_{\cT}(\cX)$.
\cite[Corollary 1.18]{Porta_GAGA_2015} shows that the induced functor
\[ \overline{\Phi} \colon \Strloc_{\cT}(\cX)_{/\cO} \to \Strloc_{\cT'}(\cX)_{/\cO \circ \varphi} \]
commutes with limits and sifted colimits.
Therefore, composition with $\overline{\Phi}$ induces a well defined functor
\[ \Phi_\cO \colon \Sp(\Strloc_{\cT}(\cX)_{/\cO}) \to \Sp(\Strloc_{\cT'}(\cX)_{/\cO \circ \varphi}) \]
The following result is a direct consequence of \cref{prop:t_structure_Str}:

\begin{cor} \label{cor:Phi_t_exact}
	Let $\cT$ and $\cT'$ be pregeometries which are compatible with $n$-truncations for every $n \ge 0$.
	Let $\varphi \colon \cT' \to \cT$ be a morphism between them.
	Let $\cX$ be an $\infty$-topos and let $\cO \in \Str_{\cT}(\cX)$.
	The induced functor
	\[ \Phi_\cO \colon \Sp(\Strloc_{\cT}(\cX)_{/\cO}) \to \Sp(\Strloc_{\cT'}(\cX)_{/\cO \circ \varphi}) \]
	is both left and right $t$-exact. In particular, its left adjoint $\Psi_\cO$ is right $t$-exact.
\end{cor}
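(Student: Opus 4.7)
The plan is to transport the question to $\Sp(\Fun(\cT, \cX)_{/\cO})$, where the $t$-structure admits a transparent pointwise description, and then use the fully faithful embedding $j$ constructed in the proof of \cref{prop:t_structure_Str}.

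First I would establish the commutative diagram of stable $\infty$-categories
\[ \begin{tikzcd}
\cO \textrm{-} \Mod \arrow{r}{j} \arrow{d}{\Phi_\cO} & \Sp(\Fun(\cT, \cX)_{/\cO}) \arrow{d}{\Sp(\varphi^*)} \\
\cO\circ\varphi \textrm{-} \Mod \arrow{r}{j'} & \Sp(\Fun(\cT', \cX)_{/\cO \circ \varphi})
\end{tikzcd} \]
This reduces to the analogous square on the unstabilized level, where both $\overline{\Phi}_\cO$ and the precomposition functor $\varphi^*$ are defined by restriction along $\varphi$; since all four functors in play preserve limits, the square propagates to stabilizations.

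Next I would verify that $\Sp(\varphi^*)$ is $t$-exact. The key observation is that the $t$-structure on $\Sp(\Fun(\cT, \cX)_{/\cO})$ supplied by \cref{prop:t_structure_on_Fun} is pointwise in $T \in \cT$: since truncations of slices over functor categories are computed pointwise and $\pi_n$ is determined by \cref{prop:t_structure_on_Fun}.(1) via the heart formula \cref{prop:t_structure_on_Fun}.(4), an object $F \in \Sp(\Fun(\cT, \cX)_{/\cO})$ is connective (resp.\ coconnective) precisely when each value $F(T) \in \Sp(\cX_{/\cO(T)})$ is. The functor $\Sp(\varphi^*)$ acts by $F \mapsto F \circ \varphi$, i.e.\ by reindexing on a subset of values, so it tautologically preserves both halves of the $t$-structure.

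With these two inputs in hand, the $t$-exactness of $\Phi_\cO$ is immediate: by the definition adopted before \cref{prop:t_structure_Str}, an object $M \in \cO \textrm{-} \Mod$ lies in $\cO \textrm{-} \Mod_{\ge 0}$ (resp.\ $\cO \textrm{-} \Mod_{\le 0}$) iff $j(M)$ does, and by the commutativity of the square above, $j'(\Phi_\cO(M)) \simeq \Sp(\varphi^*)(j(M))$, which sits in the same part of the $t$-structure. The statement about $\Psi_\cO$ is then a purely formal adjunction argument: left $t$-exactness of $\Phi_\cO$ (preservation of coconnective objects) translates under $\Psi_\cO \dashv \Phi_\cO$ into right $t$-exactness of $\Psi_\cO$ (preservation of connective objects).

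No real obstacle stands in the way; the only mildly delicate point is the pointwise description of the $t$-structure on $\Sp(\Fun(\cT, \cX)_{/\cO})$, and this is essentially forced by the compatibility of limits, truncations, and $\Omega^\infty$ with evaluation in $\cT$.
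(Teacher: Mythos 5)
Your proof is correct and follows essentially the same route as the paper's: factor $\Phi_\cO$ through the fully faithful embedding $j$ into $\Sp(\Fun(\cT,\cX)_{/\cO})$ and use that the vertical arrows are $t$-exact (by \cref{prop:t_structure_Str}) while precomposition with $\varphi$ is $t$-exact on the functor-category stabilizations. The only presentational difference is that the paper compresses the $t$-exactness of $\widetilde{\Phi}_\cO$ into the remark that $- \circ \varphi$ commutes with all limits, whereas you unpack this into the (correct and clarifying) observation that the $t$-structure on $\Sp(\Fun(\cT,\cX)_{/\cO})$ is pointwise in $T \in \cT$ and precomposition merely reindexes values.
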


\begin{proof}
	Consider the commutative diagram
	\[ \begin{tikzcd}
	\Sp(\Strloc_{\cT}(\cX)_{/\cO}) \arrow{r}{\Phi_\cO} \arrow{d}{j_\cT} & \Sp(\Strloc_{\cT'}(\cX)_{/\cO \circ \varphi}) \arrow{d}{j_{\cT'}} \\
	\Sp(\Fun(\cT, \cX)_{/\cO}) \arrow{r}{\widetilde{\Phi}_\cO} & \Sp(\Fun(\cT', \cX)_{/\cO \circ \varphi})
	\end{tikzcd} \]
	The vertical arrows are $t$-exact in virtue of \cref{prop:t_structure_Str} because both $\cT$ and $\cT'$ are compatible with $n$-truncations for $n \ge 0$.
	On the other side, we see that the functor $- \circ \varphi \colon \Fun(\cT, \cX)_{/\cO} \to \Fun(\cT, \cX)_{/\cO \circ \varphi}$ commutes with all limits, and therefore the induced functor $\widetilde{\Phi}$ is both left and right $t$-exact.
\end{proof}

\subsection{Reduction to the case of spaces} \label{subsec:stable_Morita}

We now turn to the first main reduction step needed in the proof of \cref{thm:equivalence_of_modules}.
Again, as the arguments given don't require any particular property of the pregeometry $\Tan$, we work with a general morphism of pregeometries $\varphi \colon \cT' \to \cT$, which will be fixed once and for all throughout the whole subsection.
We will furthermore fix an $\infty$-topos $\cX$ and $\cO \in \Str_{\cT}(\cX)$ a $\cT$-structure.
As we already discussed, composition with the natural forgetful functor
\[ \overline{\Phi}_{\cX} = \overline{\Phi} \colon \Strloc_{\cT}(\cX)_{/\cO} \to \Strloc_{\cT}(\cX)_{/\cO \circ \varphi} \]
commutes with finite limits and therefore induces a functor on the stabilizations:
\begin{equation} \label{eq:Phi_stable}
\Phi_{\cO, \cX} \colon \Sp(\Strloc_{\cT}(\cX)_{/\cO}) \to \Sp(\Strloc_{\cT}(\cX)_{/\cO \circ \varphi})
\end{equation}

\begin{rem}
	Recall from \cite{Lurie_Higher_algebra} that if $\cC$ is an $\infty$-category with terminal object $*$, then composition with the forgetful functor $\cC_* \coloneqq \cC_{* /} \to \cC$ induces an equivalence
	\[ \Sp(\cC_*) \to \Sp(\cC) \]
\end{rem}

In our setting, composition with $\varphi \colon \cT' \to \cT$ induces a well defined morphism
\[ \overline{\Phi}_{\cO, \cX} = \colon \Strloc_{\cT}(\cX)_{\cO //\cO} \to \Strloc_{\cT}(\cX)_{\cO \circ \varphi // \cO \circ \varphi} \]
fitting in a commutative diagram
\[ \begin{tikzcd}
\Strloc_{\cT}(\cX)_{\cO // \cO} \arrow{r}{\overline{\Phi}_{\cO, \cX}} \arrow{d} & \Strloc_{\cT'}(\cX)_{\cO \circ \varphi // \cO \circ \varphi} \arrow{d} \\
\Strloc_{\cT}(\cX)_{/ \cO} \arrow{r}{\overline{\Phi}_\cX} & \Strloc_{\cT'}(\cX)_{/\cO}
\end{tikzcd} \]
where the vertical morphisms are the forgetful functors.
The above remark shows therefore that passing to the stabilization $\overline{\Phi}_{\cO, \cX}$ and $\overline{\Phi}_\cX$ induce the same functor of stable $\infty$-categories.
Moreover, observe that the vertical morphisms reflect all limits and all connected colimits. Therefore, $\overline{\Phi}_{\cO, \cX}$ commutes with limits and sifted colimits.
Since undercategories of presentable categories are presentable (see \cite[Corollary 5.4.5.16]{HTT}), we conclude that $\overline{\Phi}_{\cO, \cX}$ admits a left adjoint, which we will denote by $\overline{\Psi}_{\cO, \cX}$.
We are now ready to state precisely the key result of this subsection:

\begin{thm} \label{thm:reduction_to_spaces}
	Let $\varphi \colon \cT' \to \cT$ be a morphism of pregeometries.
	Fix a geometric morphism of $\infty$-topoi $f\inv \colon \cX \rightleftarrows \cY \colon f_*$ let $\cO \in \Str_{\cT}(\cX)$.
	Set $\cO_1 \coloneqq \cO \circ \varphi$, $\cO_2 \coloneqq f\inv \circ \cO$ and $\cO_3 \coloneqq f\inv \circ \cO \circ \varphi$.
	Then the square
	\begin{equation} \label{eq:square_reduction_to_spaces}
	\begin{tikzcd}
	\Strloc_{\cT}(\cX)_{\cO //\cO} \arrow{r}{\overline{\Phi}_{\cO, \cX}} \arrow{d}[swap]{f\inv \circ -} & \Strloc_{\cT'}(\cX)_{\cO_1 //\cO_1} \arrow{d}{f\inv \circ -} \\
	\Strloc_{\cT}(\cY)_{\cO_2 // \cO_2} \arrow{r}{\overline{\Phi}_{\cO_2, \cY}} & \Strloc_{\cT'}(\cY)_{\cO_3 // \cO_3}
	\end{tikzcd}
	\end{equation}
	is left adjointable.
\end{thm}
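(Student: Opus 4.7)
The plan is to show that the canonical Beck--Chevalley transformation
\[ \overline{\Psi}_{\cO_2, \cY} \circ (f\inv \circ -) \Rightarrow (f\inv \circ -) \circ \overline{\Psi}_{\cO, \cX} \]
is an equivalence, where $\overline{\Psi}$ denotes the left adjoint of $\overline{\Phi}$ (which exists by the presentability of the slice categories involved).

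The first step I would take is to enlarge the square to the ambient functor categories. Postcomposing the vertices with the fully faithful inclusions $\iota \colon \Strloc_{\cT}(\cX)_{\cO // \cO} \hookrightarrow \Fun(\cT, \cX)_{\cO // \cO}$ (and its analogs) produces a commutative cube whose $\Fun$-face is left adjointable for purely formal reasons: the left adjoint of $- \circ \varphi$ is left Kan extension $\varphi_!$, which is computed as a pointwise colimit, and the functor $f\inv$ preserves all colimits since it is the left adjoint half of a geometric morphism. Hence at the $\Fun$-level the mate transformation is an equivalence, pointwise in the indexing variable of the Kan extension.

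Next, I would identify the accessible reflector $L_{\cO, \cX} \colon \Fun(\cT, \cX)_{\cO // \cO} \to \Strloc_{\cT}(\cX)_{\cO // \cO}$, whose existence follows from presentability and the fact (recalled in the paper just before) that $\iota$ preserves limits and sifted colimits. Then $\overline{\Psi}_{\cO, \cX} \simeq L_{\cO, \cX} \circ \varphi_! \circ \iota$. The crucial compatibility to establish is the naturality $L_{\cO_2, \cY} \circ (f\inv \circ -) \simeq (f\inv \circ -) \circ L_{\cO, \cX}$. This reduces to showing that $f\inv$ preserves the class of morphisms at which $L$ localizes---those imposing preservation of finite products and of pullbacks along admissible morphisms, together with sending admissible coverings to effective epimorphisms and the locality condition---which follows because $f\inv$ is left exact and sends effective epimorphisms to effective epimorphisms. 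Combining this with the $\Fun$-level equivalence via the horizontal-composition closure of left adjointable squares from \cref{sec:left_adjointable} yields the desired equivalence for the original square.

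The main obstacle is the verification of the naturality of the reflector $L$ along $f\inv$. While morally transparent---a geometric morphism respects every structure visible from an $\infty$-topos---a fully formal proof requires either pinning down the localizing class of morphisms explicitly via the small object argument, or reasoning directly with the characterization of $\Strloc_{\cT}(\cX)$ as a reflective subcategory furnished by \cite[Lemma 1.9]{Porta_GAGA_2015}. This is the step where all the real $\infty$-topos-theoretic content is deployed; once it is in place, the theorem follows by the composition calculus for left adjointable squares developed in the appendix.
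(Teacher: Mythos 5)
Your strategy is essentially the one used in the paper: factor $\overline{\Psi}$ through the ambient functor categories, establish that left Kan extension along $\varphi$ commutes with $f\inv$ at the $\Fun$-level (the paper's \cref{lem:left_Kan_left_adjointable}), establish that the reflector onto $\Strloc_\cT$ commutes with $f\inv$ in the mate sense (the paper's \cref{prop:left_adjointable_strloc_fun}, proved via the right adjoint $\rho_*$ rather than via explicit control of the localizing class), and then assemble. The two ingredients you flag as carrying the real content are exactly the ones the paper isolates.

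There are, however, two places where your outline does not close.  First, the factorization $\overline{\Psi}_{\cO,\cX}\simeq L_{\cO,\cX}\circ\varphi_!\circ\iota$ hides a real bookkeeping problem with the slice categories.  After applying $\varphi_!$ to an object of $\Fun(\cT',\cX)_{\cO_1//\cO_1}$ you land in $\Fun(\cT,\cX)_{\cO'//\cO'}$ where $\cO'=\mathrm{Lan}_\varphi(\cO_1)$, not in $\Fun(\cT,\cX)_{\cO//\cO}$; there is a nontrivial base change from $\cO'$ to $\cO$ along the counit $\varepsilon_1\colon\cO'\to\cO$, which the paper handles via the intermediate category $\Fun(\cT,\cX)_{\cO'//\cO}$ and a pushout functor $\pi$.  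This step contributes its own left adjointable square and cannot be absorbed silently into $\varphi_!$.

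Second, and more seriously, your final step — \enquote{the theorem follows by the composition calculus for left adjointable squares} — does not follow from \cref{prop:horizontal_composition_left_adjointable}.  In your chain $L\circ\varphi_!\circ\iota$ the inclusion $\iota\colon\Strloc_{\cT'}\hookrightarrow\Fun(\cT')$ is a \emph{right} adjoint (of the reflector $L'$), so the chain is not a composite of left adjoints; after applying horizontal composition to the pieces you genuinely control, what you obtain is left adjointability of the outer rectangle from $\Strloc_\cT(\cX)_{\cO//\cO}$ all the way to $\Fun(\cT',\cX)_{\cO_1//\cO_1}$, not of the target square ending at $\Strloc_{\cT'}(\cX)_{\cO_1//\cO_1}$.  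The composition lemma gives you \enquote{left and right LA implies outer LA}; it does not give the cancellation \enquote{outer LA and right LA implies left LA}.  The paper's proof has a separate Step 2 devoted exactly to this: it verifies directly, by a diagram chase with the units $\alpha_1$ of $L^\cT_\cX\dashv i^\cT_\cX$ and the counits $\beta_1$, $\delta_1$ of the other adjunctions, that $f\inv\circ\delta_1\simeq\delta_2$, i.e.\ that $f\inv$ preserves the counit of $\overline{\Psi}'\dashv\overline{\Phi}'$, and concludes via \cref{prop:Beck_Chevalley_fibration_formulation}.  Without an argument of this type your proof establishes \emph{some} equivalence $\overline{\Psi}_\cY\circ f\inv\simeq f\inv\circ\overline{\Psi}_\cX$ but not that the canonical Beck--Chevalley transformation is one, which is the statement to be proved.
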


The proof of this theorem is rather long and will occupy our attention for the most of this subsection.
Before starting to discuss it, let us describe the main consequences of this theorem.
Observe that in the situation of \cref{thm:reduction_to_spaces}, the functor
\[ f\inv \circ - \colon \Strloc_{\cT}(\cX)_{\cO // \cO} \to \Strloc_{\cT}(\cY)_{\cO_2 // \cO_2} \]
commutes with limits and filtered colimits (see \cite[Corollary 1.17]{Porta_GAGA_2015}).
Therefore it induces a functor on the stabilizations, which we will denote $\sigma_f$:
\[ \sigma_f \colon \Sp(\Strloc_{\cT}(\cX)_{\cO // \cO}) \to \Sp(\Strloc_{\cT}(\cY)_{\cO_2 // \cO_2}) \]
We have:

\begin{cor} \label{cor:reduction_to_spaces}
	Keeping the notations of \cref{thm:reduction_to_spaces}, the square
	\[ \begin{tikzcd}
	\Sp(\Strloc_{\cT}(\cX)_{\cO //\cO}) \arrow{r}{\Phi_{\cO, \cX}} \arrow{d}[swap]{\sigma_f} & \Sp(\Strloc_{\cT'}(\cX)_{\cO_1 //\cO_1}) \arrow{d}{\sigma_f} \\
	\Sp(\Strloc_{\cT}(\cY)_{\cO_2 // \cO_2}) \arrow{r}{\Phi_{\cO_2, \cY}} & \Sp(\Strloc_{\cT'}(\cY)_{\cO_3 // \cO_3})
	\end{tikzcd} \]
	is left adjointable.
\end{cor}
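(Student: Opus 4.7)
The plan is to deduce this corollary as a direct formal consequence of the stabilization principle for left adjointable squares, Proposition \ref{prop:stabilization_of_left_adjointable_squares}, applied to the left adjointable square \eqref{eq:square_reduction_to_spaces} provided by Theorem \ref{thm:reduction_to_spaces}. The task therefore reduces to verifying that the hypotheses of that proposition are satisfied when we take $G_0 = \overline{\Phi}_{\cO, \cX}$, $G_1 = \overline{\Phi}_{\cO_2, \cY}$, and $P = Q = f\inv \circ -$.

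First I would check that each of the four categories $\Strloc_{\cT}(\cZ)_{\mathcal P // \mathcal P}$ appearing in the square is pointed and admits finite limits: the identity $\id_{\mathcal P}$ is a zero object, and finite limits are inherited from those of $\Fun(\cT, \cZ)_{\mathcal P // \mathcal P}$ via the reflective inclusion of local $\cT$-structures used throughout the section. The horizontal right adjoints $\overline{\Phi}_{\cO, \cX}$ and $\overline{\Phi}_{\cO_2, \cY}$ preserve limits by construction, as they are induced by precomposition with $\varphi \colon \cT' \to \cT$.

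Next I would verify that $P = Q = f\inv \circ -$ are left exact and commute with sequential colimits. Since $f\inv$ is the inverse image of a geometric morphism, it preserves finite limits and all small colimits; these properties transfer to postcomposition on functor categories, and \cite[Corollary 1.17]{Porta_GAGA_2015} (already invoked in the paragraph preceding the corollary) shows that they descend to the full subcategory of local $\cT$-structures, and hence to the double slice $\mathcal P // \mathcal P$.

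With the hypotheses in place, Proposition \ref{prop:stabilization_of_left_adjointable_squares} immediately produces a left adjointable square between the stabilizations. The horizontal and vertical functors that appear are, a priori, the first Goodwillie derivatives of the $\overline{\Phi}$'s and of $f\inv \circ -$; but since each of these original functors is already left exact, its Goodwillie derivative coincides with the functor induced on stabilizations by postcomposition, i.e.\ with $\Phi_{\cO, \cX}$, $\Phi_{\cO_2, \cY}$, and $\sigma_f$ as defined in the text preceding the statement. I do not anticipate any substantive obstacle here: the work is one of matching hypotheses, and the genuine mathematical content is entirely absorbed into Theorem \ref{thm:reduction_to_spaces} (whose proof occupies the bulk of the subsection) and Proposition \ref{prop:stabilization_of_left_adjointable_squares} (proved in the appendix).
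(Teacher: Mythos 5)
Your proof is correct and takes exactly the paper's route: the paper's own proof is the one-line citation of \cref{thm:reduction_to_spaces} together with \cref{prop:stabilization_of_left_adjointable_squares}, and your write-up just spells out the (routine but necessary) verification of that proposition's hypotheses -- pointedness and finite limits of the double slice categories, left exactness and preservation of sequential colimits by $f\inv \circ -$, and the identification of the Goodwillie derivatives of the left exact functors $\overline{\Phi}$ and $f\inv \circ -$ with the induced functors $\Phi$ and $\sigma_f$ on stabilizations.
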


\begin{proof}
	This follows from \cref{thm:reduction_to_spaces} and \cref{prop:stabilization_of_left_adjointable_squares}.
\end{proof}

\begin{cor} \label{cor:first_reduction_step}
	Let $\varphi \colon \cT' \to \cT$ be a morphism of pregeometries.
	Let $\cX$ be an $\infty$-topos with enough points and let $\cO \in \Str_{\cT}(\cS)$ be a $\cT$-structure on $\cX$.
	Then the functor
	\[ \Psi_{\cO, \cX} \colon \Sp(\Strloc_{\cT'}(\cX)_{/\cO \circ \varphi}) \to \Sp(\Strloc_{\cT}(\cX)_{/\cO}) \]
	is fully faithful if and only if for every geometric point $x\inv \colon \cX \rightleftarrows \cS \colon x_*$ the functor
	\[ \Psi_{x\inv \cO, \cS} \colon \Sp(\Strloc_{\cT'}(\cS)_{/ x\inv \circ \cO \circ \varphi}) \to \Sp(\Strloc_{\cT}(\cS)_{/ x\inv \circ \cO}) \]
	is fully faithful.
\end{cor}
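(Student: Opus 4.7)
The proof is a formal consequence of \cref{cor:reduction_to_spaces} together with the hypothesis that $\cX$ has enough points. Full faithfulness of a left adjoint amounts to the invertibility of its unit, so the statement reduces to comparing the unit $\eta$ of the adjunction $\Psi_{\cO,\cX} \dashv \Phi_{\cO,\cX}$ with the units $\eta'$ of the adjunctions $\Psi_{x\inv\cO,\cS} \dashv \Phi_{x\inv\cO,\cS}$ as $x$ ranges over the geometric points of $\cX$.

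The Beck--Chevalley equivalence $\sigma_x \circ \Psi_{\cO,\cX} \simeq \Psi_{x\inv\cO,\cS} \circ \sigma_x$ supplied by \cref{cor:reduction_to_spaces}, paired with the manifest commutation $\sigma_x \circ \Phi_{\cO,\cX} \simeq \Phi_{x\inv\cO,\cS} \circ \sigma_x$ (the two functors operate on orthogonal slots, $\sigma_x$ pulling back inside $\cX$ and $\Phi$ precomposing with $\varphi \colon \cT' \to \cT$), allows the mate-pair formalism to identify $\sigma_x(\eta_M) \simeq \eta'_{\sigma_x M}$ for every $M \in \Sp(\Strloc_{\cT'}(\cX)_{/\cO \circ \varphi})$. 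On the other hand, since $\cX$ has enough points, the pullback functors $x\inv$ are jointly conservative on $\cX$; this lifts to joint conservativity of the family $\{\sigma_x\}$ on the stabilization, because an equivalence in $\Strloc_{\cT'}(\cX)_{\cO \circ \varphi // \cO \circ \varphi}$ is detected on the underlying functor $\cT' \to \cX$, and equivalences of spectrum objects are detected by $\Omega^\infty$ of all suspensions.

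For the ``if'' direction: if each $\Psi_{x\inv\cO,\cS}$ is fully faithful, then each $\sigma_x(\eta_M) \simeq \eta'_{\sigma_x M}$ is an equivalence, and joint conservativity forces $\eta_M$ to be an equivalence for every $M$, whence $\Psi_{\cO,\cX}$ is fully faithful. The ``only if'' direction is analogous in spirit: assuming $\Psi_{\cO,\cX}$ is fully faithful and fixing a geometric point $x$, the same identification shows that $\eta'_N$ is invertible for every $N$ in the essential image of $\sigma_x$. The main obstacle is to upgrade this to invertibility of $\eta'_N$ for \emph{all} $N \in \Sp(\Strloc_{\cT'}(\cS)_{/x\inv \cO \circ \varphi})$; the cleanest route I see is to exhibit a family of generators lying in the image of $\sigma_x$ (for instance, by realizing every $N$ as a colimit of objects of the form $\sigma_x(M)$ via the right adjoint of $\sigma_x$) and to invoke that both $\eta'$ and the identity commute with the relevant colimits. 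Carrying out this step carefully is where the bulk of the remaining work lies.
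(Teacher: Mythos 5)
Your treatment of the ``if'' direction is exactly the paper's: combine \cref{cor:reduction_to_spaces} with \cref{prop:left_adjointable_preserves_unit} to identify $\sigma_x(\eta)$ with the pointwise unit $\eta'_{\sigma_x(-)}$, then invoke the joint conservativity of the family $\{\sigma_x\}$, which holds because $\cX$ has enough points and because equivalences of spectrum objects in $\Sp(\Strloc_{\cT'}(\cX)_{\cO\circ\varphi//\cO\circ\varphi})$ are detected on underlying objects of $\Fun(\cT',\cX)$. Your informal reason why $\sigma_x$ and $\Phi$ commute is a reasonable gloss on the formal commutativity of the square, which is part of the data the paper establishes before stating \cref{cor:reduction_to_spaces}.

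For the ``only if'' direction you have correctly put your finger on the real sticking point: the mate identification $\sigma_x(\eta_M)\simeq\eta'_{\sigma_x M}$ only gives invertibility of $\eta'$ on objects in the essential image of $\sigma_x$, and some further argument is needed to reach all of $\Sp(\Strloc_{\cT'}(\cS)_{/x\inv\cO\circ\varphi})$. You should be aware that the paper's own proof of this half is a single sentence citing the same two references with no discussion of this point, so it carries the same gap you describe. Moreover, the only place \cref{cor:first_reduction_step} is used (the reduction of \cref{thm:equivalence_of_modules} to stalks, right after the lemma showing derived \canal spaces have enough points) invokes exclusively the sufficiency half. So although your ``only if'' argument is left unfinished, it is no less complete than the paper's, and you are more forthcoming about the difficulty. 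If you do want to close it, the idea of realizing objects as colimits of objects in the image of $\sigma_x$ via the right adjoint is plausible, but it will require a genuinely topos-theoretic argument rather than a purely formal one, since $\sigma_x$ is not in general essentially surjective.
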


\begin{proof}
	The condition is necessary in virtue of \cref{cor:reduction_to_spaces} and \cref{prop:left_adjointable_preserves_unit}.
	On the other hand, suppose that the condition is satisfied.
	Let $\eta \colon \mathrm{Id} \to \Phi \circ \Psi$ be a unit morphism for the adjunction $(\Psi_{\cO, \cX}, \Phi_{\cO, \cX})$.
	Since $\cX$ has enough points, to show that $\eta$ is an equivalence is sufficient to show that for every geometric point  $x\inv \colon \cX \rightleftarrows \cS \colon x_*$ the morphism $x\inv(\eta)$ is an equivalence.
	Combining again \cref{cor:reduction_to_spaces} and \cref{prop:left_adjointable_preserves_unit}, we can identify $x\inv(\eta)$ with the unit of the adjunction $(\Psi_{x\inv \cO, \cS}, \Phi_{x\inv \cO, \cS})$, which is an equivalence by hypothesis.
\end{proof}

The rest of this subsection will be devoted to the proof of \cref{thm:reduction_to_spaces}.
Before explaining the general strategy, let us ease the notations.
Since $\cO$ will be fixed all throughout the section, we will suppress it in the notations $\overline{\Psi}_{\cO, \cX}$ and $\overline{\Phi}_{\cO, \cX}$.
However, since the $\overline{\Phi}_{\cX}$ already stands for the functor $\Strloc_{\cT}(\cX)_{/\cO} \to \Strloc_{\cT}(\cX)_{/\cO \circ \varphi}$, we will write $\overline{\Phi}'_{\cX}$ and $\overline{\Psi}'_{\cX}$ for the adjunction
\[ \Strloc_{\cT}(\cX)_{\cO // \cO} \rightleftarrows \Strloc_{\cT'}(\cX)_{\cO \circ \varphi // \cO \circ \varphi} \]

The general strategy consists of breaking up the square \eqref{eq:square_reduction_to_spaces} into simpler pieces and to prove that each of them is left adjointable.
We will then patch together the various informations using \cref{prop:horizontal_composition_left_adjointable}.

Let us start by explaining precisely how to break up the square \eqref{eq:square_reduction_to_spaces}.
Let $\varphi \colon \cT' \to \cT$ be the given morphism of pregeometries, let $\cX$ be an $\infty$-topos and let $\cO \in \Str_\cT(\cX)$.
At the beginning of this subsection we introduced the functor
\[ \overline{\Psi}'_\cX \colon \Strloc_{\cT'}(\cX)_{\cO \circ \varphi // \cO \circ \varphi} \to \Strloc_{\cT}(\cX)_{\cO // \cO}  \]
as left adjoint to the functor $\overline{\Phi}' \colon \Strloc_{\cT}(\cX)_{\cO // \cO} \to \Strloc_{\cT'}(\cX)_{\cO \circ \varphi // \cO \circ \varphi}$ induced by composition with $\varphi$.
We will decompose $\overline{\Psi}'$ as composition of simpler functors (see \cref{eq:decomposition_of_Psi}).
First of all, consider the inclusion functor
\[ i_\cX = i^{\cT}_\cX \colon \Strloc_{\cT}(\cX)_{\cO // \cO} \to \Fun(\cT, \cX)_{\cO // \cO} \]
which is fully faithful in virtue of \cite[Proposition 1.11]{Porta_GAGA_2015}.
\cite[Propositions 1.15 and 1.17]{Porta_GAGA_2015} show that this functor has a left adjoint, which we will denote by $L^{\cT}_{\cX}$ (or simply $L_\cX$ when the pregeometry is clear from the context).

Denote by
\[ \mathrm{Lan}_\varphi \colon \Fun(\cT', \cX) \to \Fun(\cT, \cX) \]
the left Kan extension along $\varphi$.
Set $\cO_1 \coloneqq \cO \circ \varphi$ and $\cO' \coloneqq \mathrm{Lan}_{\varphi}(\cO_1)$.
We have a canonical map
\[ \varepsilon_1 \colon \cO' = \mathrm{Lan}_{\varphi}(\cO \circ \varphi) \to \cO \]
induced by the counit of the adjunction $\mathrm{Lan}_\varphi \dashv - \circ \varphi$.
Left Kan extension along $\varphi$ induces a well-defined functor $\Fun(\cT', \cX)_{\cO_1 // \cO_1} \to \Fun(\cT, \cX)_{\cO' // \cO'}$ which we will continue to denote by $\mathrm{Lan}_\varphi$.
Introduce now the functor
\[ \Lambda_{\cX} \colon \Fun(\cT', \cX)_{\cO_1 // \cO_1} \to \Fun(\cT, \cX)_{\cO' // \cO} \]
formally defined as the composition
\[ \begin{tikzcd}
\Fun(\cT', \cX)_{\cO_1 // \cO_1} \arrow{r}{\mathrm{Lan}_\varphi} & \Fun(\cT, \cX)_{\cO' // \cO'} \arrow{r}{(\varepsilon_1)_!} & \Fun(\cT, \cX)_{\cO' // \cO}
\end{tikzcd} \]
where the second arrow is composition with $\varepsilon_1 \colon \cO' \to \cO$ (see the subsequent remark).

\begin{rem}
	Let $\cC$ be an $\infty$-category and let $f \colon X \to Y$ be a morphism in $\cC$. The canonical functor $\cC_{/X} \to \cC_{/Y}$ is formally defined as follows: let $p \colon \Delta^1 \to \cC$ be the functor classifying $f$. The evaluations at $0$ and at $1$ induce functors
	\[ \cC_{/X} \leftarrow \cC_{/p} \xrightarrow{\pi} \cC_{/Y} \]
	The dual of \cite[2.1.2.5]{HTT} shows that $\cC_{/p} \to \cC_{/X}$ is a trivial Kan fibration.
	We can therefore find a section $s \colon \cC_{/X} \to \cC_{/p}$ (uniquely determined up to homotopy) informally given by $(Z \to X) \mapsto (Z \to X \xrightarrow{f} Y)$.
	The composite $\pi \circ s$ is the functor we are looking for.
\end{rem}

\begin{lem} \label{lem:right_adjoint_to_Lambda}
	The functor $\Lambda_\cX$ admits a right adjoint
	\[ R_\cX \colon \Fun(\cT, \cX)_{\cO' // \cO} \to \Fun(\cT', \cX)_{\cO_1 // \cO_1} \]
	Moreover, the diagram
	\[ \begin{tikzcd}
	\Fun(\cT, \cX)_{\cO' // \cO} \arrow{r}{R_\cX} \arrow{d} & \Fun(\cT', \cX)_{\cO_1 // \cO_1} \arrow{d} \\
	\Fun(\cT, \cX) \arrow{r}{- \circ \varphi} & \Fun(\cT', \cX)
	\end{tikzcd} \]
	commutes.
\end{lem}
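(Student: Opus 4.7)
The plan is to write down an explicit formula for $R_\cX$, check that it defines a well-formed object of the doubly-sliced category, and then verify the adjunction by a direct mapping-space computation. For $(\cO' \to N \to \cO) \in \Fun(\cT, \cX)_{\cO'//\cO}$, set $R_\cX(N)$ to be the functor $N \circ \varphi$ equipped with source map $s_N \colon \cO_1 \xrightarrow{\eta} \cO' \circ \varphi \to N \circ \varphi$ (the unit of the adjunction $\mathrm{Lan}_\varphi \dashv (- \circ \varphi)$ followed by the image under $-\circ\varphi$ of the source $\cO' \to N$), and target map $t_N \colon N \circ \varphi \to \cO \circ \varphi = \cO_1$ obtained by applying $-\circ\varphi$ to the structural map $N \to \cO$. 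Since the composition $\cO' \to N \to \cO$ is by definition $\varepsilon_1$, the composite $t_N \circ s_N$ equals $(\varepsilon_1 \circ \varphi) \circ \eta = \mathrm{id}_{\cO_1}$ by the triangle identity of the adjunction, so $R_\cX(N)$ lies in $\Fun(\cT', \cX)_{\cO_1 // \cO_1}$. The commutative square of the lemma is then immediate, since forgetting the under-over structure returns exactly $N \circ \varphi$.

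To check that this $R_\cX$ is right adjoint to $\Lambda_\cX$, I take $M' \in \Fun(\cT', \cX)_{\cO_1 // \cO_1}$ and use the bijection $\Map_{\Fun(\cT, \cX)}(\mathrm{Lan}_\varphi M', N) \simeq \Map_{\Fun(\cT', \cX)}(M', N \circ \varphi)$ supplied by the standard adjunction. A map $f \colon \mathrm{Lan}_\varphi M' \to N$ lies in $\Map_{\Fun(\cT,\cX)_{\cO'//\cO}}(\Lambda_\cX M', N)$ iff the source condition $f \circ (\cO' \to \mathrm{Lan}_\varphi M') = (\cO' \to N)$ and the target condition $(N \to \cO) \circ f = \mathrm{Lan}_\varphi M' \to \cO' \xrightarrow{\varepsilon_1} \cO$ both hold. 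Under the adjunction, the source condition transports to $\tilde{f} \circ (\cO_1 \to M') = s_N$ (by naturality, combined with the fact that $\cO' \to \mathrm{Lan}_\varphi M'$ is $\mathrm{Lan}_\varphi$ applied to $\cO_1 \to M'$). The target condition transports to $t_N \circ \tilde{f} = M' \to \cO_1 \xrightarrow{\eta} \cO' \circ \varphi \xrightarrow{\varepsilon_1 \circ \varphi} \cO_1$, which by the triangle identity collapses to the structural map $M' \to \cO_1$. These are precisely the conditions defining $\Map_{\Fun(\cT', \cX)_{\cO_1//\cO_1}}(M', R_\cX(N))$.

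Conceptually, the same answer is produced by factoring $\Lambda_\cX$ as $(\varepsilon_1)_! \circ \mathrm{Lan}_\varphi$ and composing the right adjoints of the two factors: the right adjoint of $(\varepsilon_1)_!$ is pullback along $\varepsilon_1$ (well-defined because $\cX$ is an $\infty$-topos), and $\mathrm{Lan}_\varphi$ at the level of doubly-sliced categories admits a right adjoint lifted from $\mathrm{Lan}_\varphi \dashv (-\circ\varphi)$ via $\cO' = \mathrm{Lan}_\varphi \cO_1$. The composite right adjoint, applied to $N$, comes out as $(N \circ \varphi \times_{\cO_1} (\cO' \circ \varphi)) \times_{\cO' \circ \varphi} \cO_1$, which collapses to $N \circ \varphi$ using $(\cO' \circ \varphi) \times_{\cO' \circ \varphi} \cO_1 \simeq \cO_1$ (pullback along the identity), recovering the formula above. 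The main technical fussiness, rather than any deep obstacle, lies in handling the under-over slices coherently in the $\infty$-categorical setting; this is dispatched using the description of slice maps recalled in the remark preceding the lemma.
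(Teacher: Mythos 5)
Your proof is correct and takes essentially the same approach as the paper: define $R_\cX$ by $R_\cX(N) = N \circ \varphi$ with source $\cO_1 \xrightarrow{\eta_1} \cO'\circ\varphi \to N\circ\varphi$ and target $N\circ\varphi \to \cO_1$ (this is exactly $\eta_1^! \circ (-\circ\varphi)$), check well-definedness via the triangle identity, and verify the adjunction by transporting the source and target conditions through $\mathrm{Lan}_\varphi \dashv (-\circ\varphi)$. The additional conceptual paragraph you give, factoring $\Lambda_\cX = (\varepsilon_1)_! \circ \mathrm{Lan}_\varphi$ and composing the two right adjoints, corresponds to what the paper reserves for the remark that follows the lemma, and your computation that the composite of the two pullbacks collapses to $N\circ\varphi$ is a more careful account of that remark.
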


\begin{proof}
	Introduce the unit morphism $\eta_1 \colon \cO_1 \to \cO' \circ \varphi = \mathrm{Lan}_\varphi(\cO_1) \circ \varphi$.
	Consider the following commutative diagram
	\[ \begin{tikzcd}
	\Fun(\cT, \cX)_{\cO' // \cO} \arrow{r}{- \circ \varphi} \arrow{d} & \Fun(\cT', \cX)_{\cO' \circ \varphi // \cO_1} \arrow{r}{\eta_1^!} \arrow{d} & \Fun(\cT', \cX)_{\cO_1 // \cO_1} \arrow{d} \\
	\Fun(\cT, \cX) \arrow{r}{- \circ \varphi} & \Fun(\cT', \cX) \arrow{r}{\mathrm{id}} & \Fun(\cT', \cX) .
	\end{tikzcd} \]
	We define $R_\cX$ to be the composition of the upper horizontal morphisms.
	The universal property characterizing $\eta_1$ shows that, for every $B \in \Fun(\cT, \cX)_{\cO' // \cO}$ and every $A \in \Fun(\cT', \cX)_{\cO_1 // \cO_1}$ one has
	\begin{align*}
	\Map_{\cO_1 // \cO_1}(A, R_\cX(B)) & \simeq \Map_{\cO_1 // \cO \circ \varphi}(A, B \circ \varphi) \\
	& \simeq \Map_{\mathrm{Lan}_\varphi(\cO_1) // \cO}(\mathrm{Lan}_\varphi(A), B) \\
	& \simeq \Map_{\cO' // \cO}(L_\cX(A), B)
	\end{align*}
	and therefore we conclude that $L_\cX$ and $R_\cX$ are adjoint to each other.
\end{proof}

We leave the following lemma as an exercise to the reader:

\begin{lem} \label{lem:forgetful_and_pushout}
	Let $\cC$ be an $\infty$-category with pushouts.
	Let $\varepsilon \colon X \to Y$ be a morphism in $\cC$.
	The functor
	\[ \varepsilon^! \colon \cC_{Y /} \to \cC_{X /} \]
	given by composition with $\varepsilon$ admits a left adjoint $\pi \colon \cC_{X /} \to \cC_{Y /}$ given by pushout along $\varepsilon$.
	Dually, if $\cC$ has pullbacks, the functor $\varepsilon_! \colon \cC_{/ X} \to \cC_{/ Y}$ has a right adjoint given by pullback along $\varepsilon$.
\end{lem}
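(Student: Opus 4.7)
The plan is to construct the left adjoint $\pi$ explicitly as a pushout functor, read off the adjunction from the universal property of pushouts, and then deduce the dual statement by replacing $\cC$ with $\cC\op$.

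The first step is to promote the informal assignment $(f \colon X \to Z) \mapsto (Y \to Z \sqcup_X Y)$ to a genuine $\infty$-functor. I would use the embedding $\cC_{X/} \hookrightarrow \Fun(\Lambda_0^2, \cC)$ sending $(f \colon X \to Z)$ to the span $Z \leftarrow X \xrightarrow{\varepsilon} Y$ with a fixed second leg $\varepsilon$, and then compose with the pushout functor $\Fun(\Lambda_0^2, \cC) \to \cC$ (available since $\cC$ has pushouts), remembering the canonical structure map out of $Y$. This produces the candidate $\pi \colon \cC_{X/} \to \cC_{Y/}$.

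The second step is a direct mapping-space calculation. Given $(f \colon X \to Z) \in \cC_{X/}$ and $(g \colon Y \to W) \in \cC_{Y/}$, the standard formula for mapping spaces in slice categories yields
\[ \Map_{\cC_{Y/}}(\pi(f), g) \simeq \fib_g\bigl(\Map_\cC(Z \sqcup_X Y, W) \to \Map_\cC(Y, W)\bigr). \]
Expanding the source via the universal property of the pushout and taking the fiber over the point $g$ gives
\[ \fib_{g \circ \varepsilon}\bigl(\Map_\cC(Z, W) \to \Map_\cC(X, W)\bigr) \simeq \Map_{\cC_{X/}}(f, \varepsilon^!(g)), \]
which is precisely the required adjunction. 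Running the same computation parametrically in $f$ and $g$ yields the naturality.

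The dual statement is obtained by applying the above to $\cC\op$, noting that $(\cC\op)_{X/} = (\cC_{/X})\op$, that pushouts in $\cC\op$ are pullbacks in $\cC$, and that postcomposition with $\varepsilon$ in $\cC$ corresponds to precomposition with $\varepsilon$ in $\cC\op$. The only real (and minor) obstacle in the whole argument is the rigorous construction of $\pi$ as an $\infty$-functor; once that is in place, every remaining step is formal.
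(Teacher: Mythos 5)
The paper does not supply a proof of this lemma at all — it explicitly "leaves the following lemma as an exercise to the reader" — so there is no official argument to compare against. That said, your proof is correct, and it follows the strategy most people would write out: construct $\pi$ from the pushout functor on span diagrams, verify the adjunction via the standard fiber-sequence description of mapping spaces in undercategories together with the universal property of the pushout, and dualize.

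Two small remarks. First, the one genuinely non-formal point is exactly the one you flag: the assembly of the objectwise mapping-space equivalences into an honest adjunction. As written, what you really need is a \emph{unit} transformation $\mathrm{id} \to \varepsilon^! \circ \pi$; this comes for free from your construction of $\pi$ via the colimit-cone functor $\Fun(\Lambda_0^2, \cC) \to \Fun(\Delta^1 \times \Delta^1, \cC)$, whose restriction to the edge $\{1\} \to \{(1,1)\}$ (the $Z$-leg of the cone) is precisely the natural map $f \to \varepsilon^!(\pi(f))$. With the unit in hand, \cite[5.2.2.8]{HTT} reduces the adjunction to your objectwise mapping-space computation, so the argument closes cleanly. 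Second, a slightly more economical route avoids the explicit construction of $\pi$ altogether: by \cite[5.2.4.2]{HTT}, $\varepsilon^!$ has a left adjoint if and only if for each $f \in \cC_{X/}$ the comma category $(f \downarrow \varepsilon^!)$ has an initial object, and unwinding the definitions identifies $(f \downarrow \varepsilon^!)$ with the $\infty$-category of cocones under the span $Z \leftarrow X \xrightarrow{\varepsilon} Y$, whose initial object is the pushout by definition. This gives existence of the left adjoint and its pointwise description in one stroke, and functoriality is automatic. Either version is a complete proof; yours just front-loads the construction of $\pi$, which the criterion approach obtains for free.
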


\begin{rem} \label{rem:R_X_alternative_description}
	\cref{lem:forgetful_and_pushout} provides another useful characterization of $R_\cX$.
	Indeed, $\Lambda_\cX = {(\varepsilon_1)_{!}} \circ \mathrm{Lan}_\varphi $, so that $R_\cX \simeq G_\varphi \circ (\varepsilon_1)\inv$, where $G_\varphi$ denotes a right adjoint to $\mathrm{Lan}_\varphi \colon \Fun(\cT', \cX)_{\cO_1 // \cO_1} \to \Fun(\cT, \cX)_{\cO' // \cO'}$.
	Unraveling the definitions, we therefore see that for every $A \in \Fun(\cT, \cX)_{\cO' // \cO}$, $R_\cX(A)$ fits into the pullback square
	\[ \begin{tikzcd}
	R_\cX(A) \arrow{r} \arrow{d} & A \circ \varphi \arrow{d} \\
	\cO_1 \arrow{r}{\eta_1} & \cO' \circ \varphi
	\end{tikzcd} \]
	where $\eta_1 \colon \cO_1 \to \cO' \circ \varphi = \mathrm{Lan}_\varphi(\cO_1) \circ \varphi$ is the unit of the adjunction.
\end{rem}

The previous lemma provides us with a left adjoint
\[ \pi \colon \Fun(\cT, \cX)_{\cO' // \cO} \to \Fun(\cT, \cX)_{\cO // \cO} \]
We can further compose with the reflection
\[ L^{\cT}_\cX \colon \Fun(\cT, \cX)_{\cO // \cO} \to \Strloc_{\cT}(\cT, \cX)_{\cO // \cO} \]
In this way, we obtained a functor
\[ L^{\cT}_\cX \circ \pi \circ \Lambda_\cX \colon \Fun(\cT', \cX)_{\cO_1 // \cO_1} \to \Strloc_{\cT}(\cT, \cX)_{\cO // \cO} \]
which is a left adjoint, being the composition of three left adjoints.
\cref{lem:right_adjoint_to_Lambda} implies that its right adjoint can be simply identified with precomposition with $\varphi$. It is therefore clear that this right adjoint factors through the inclusion
\[ i^{\cT}_{\cX} \colon \Strloc_{\cT'}(\cX)_{\cO_1 // \cO_1} \to \Fun(\cT, \cX)_{\cO_1 // \cO_1} . \]
Thus, we obtain the desired decomposition:
\begin{equation} \label{eq:decomposition_of_Psi}
\overline{\Psi}'_\cX \simeq L^{\cT}_{\cX} \circ \pi \circ \Lambda_\cX \circ i^{\cT'}_\cX .
\end{equation}
We now turn to the behavior of each of these components with respect to a geometric morphism of $\infty$-topoi $f\inv \colon \cX \rightleftarrows \cY \colon f_*$.

\subsubsection{The functors $i^\cT_\cX$ and $L^{\cT}_\cX$}

We will begin our analysis with the functor $i^{\cT}_\cX$ and its adjoint, $L^{\cT}_\cX$.

\begin{lem} \label{lem:lex_composition_preserves_structures}
	Let $\cT$ be a pregeometry and let $g \colon \cX \to \cY$ be a left exact functor between $\infty$-topoi.
	Let $\cO \in \Str_{\cT}(\cX)$.
	Then the square:
	\[ \begin{tikzcd}
	\Strloc_{\cT}(\cX)_{\cO // \cO} \arrow{r}{i^{\cT}_{\cX}} \arrow{d}{g \circ -} & \Fun(\cT, \cX)_{\cO // \cO} \arrow{d}{g\circ -} \\
	\Strloc_{\cT}(\cY)_{g \circ \cO // g \circ \cO} \arrow{r}{i^{\cT}_{\cY}} & \Fun(\cT, \cY)_{g \circ \cO // g \circ \cO}
	\end{tikzcd} \]
	commutes.
\end{lem}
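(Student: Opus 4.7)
The proof amounts to a routine unwinding of definitions together with the observation that a left-exact functor between $\infty$-topoi preserves everything that enters into the definitions of $\cT$-structures, local morphisms, and the associated slice categories. I would organize the verification into two steps.

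First, I would check that the right vertical functor $g \circ - \colon \Fun(\cT, \cX)_{\cO // \cO} \to \Fun(\cT, \cY)_{g\cO // g\cO}$ restricts to a functor on the local $\cT$-structure subcategories. This requires two things: (i) that $g \circ \cO'$ is a local $\cT$-structure whenever $\cO'$ is one, and (ii) that the post-composition of $g$ with a local morphism between $\cT$-structures is again a local morphism. Both are consequences of left exactness: the conditions defining a $\cT$-structure (preservation of finite products and of pullbacks along admissible morphisms) are formulated entirely in terms of finite limits in the target $\infty$-topos, hence are preserved by any left exact functor; likewise, the locality of a morphism $\cO_1 \to \cO_2$ is expressed as a pullback condition for each admissible morphism in $\cT$, and left exactness of $g$ carries this pullback condition from $\cX$ to $\cY$ (see, e.g., the relevant discussion of local morphisms in \cite{DAG-V}).

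Second, once the restriction is established, the commutativity of the square is essentially tautological. Both composites $i^{\cT}_\cY \circ (g \circ -)$ and $(g \circ -) \circ i^{\cT}_\cX$ send an object $\cO' \in \Strloc_\cT(\cX)_{\cO // \cO}$ to the underlying functor $g \circ \cO' \colon \cT \to \cY$, equipped with the composites $g \circ (\cO \to \cO')$ and $g \circ (\cO' \to \cO)$ as the maps into and out of $g \circ \cO$. Since the inclusion $i^\cT_\cY$ is fully faithful by \cite[Proposition 1.11]{Porta_GAGA_2015}, it suffices to compare the two composites after further composition with the forgetful functor to $\Fun(\cT, \cY)_{g\cO // g\cO}$, where they agree strictly by the definition of the vertical arrows as post-composition with $g$.

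The main (and only) potential obstacle is step one, namely making sure that locality genuinely travels through $g$; but as noted, locality is a finite-limit condition and is therefore preserved by any left-exact functor. No nontrivial $\infty$-categorical input beyond what is cited from \cite{Porta_GAGA_2015} and \cite{DAG-V} is required.
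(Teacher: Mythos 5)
Your overall architecture matches the paper's proof exactly: show that $g \circ -$ restricts from $\Fun(\cT,-)_{\cO//\cO}$ to $\Strloc_\cT(-)_{\cO//\cO}$, then observe the square commutes tautologically at the level of the ambient functor categories and use fully faithfulness of $i^\cT_\cY$ to descend. That part is fine.

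However, your justification of the restriction step has a small but real gap. You assert that "the conditions defining a $\cT$-structure (preservation of finite products and of pullbacks along admissible morphisms) are formulated entirely in terms of finite limits in the target $\infty$-topos, hence are preserved by any left exact functor." This omits the third defining condition of a $\cT$-structure, namely that admissible covers in $\cT$ must be sent to effective epimorphisms in the $\infty$-topos. Effective epimorphisms are \emph{not} preserved by an arbitrary left exact functor (that would require $g$ to be, say, the inverse image of a geometric morphism), so this route, taken literally, does not show that $g \circ \cO'$ is a $\cT$-structure. The paper avoids this issue precisely by appealing to the characterization in \cite[Proposition 1.11]{Porta_GAGA_2015} of the essential image of $\Strloc_\cT(\cX)_{\cO//\cO} \hookrightarrow \Fun(\cT,\cX)_{\cO//\cO}$: an object lies in the image if and only if its underlying functor preserves finite products and its map to $\cO$ is local. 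Under this description one only needs to check preservation of products and of locality (both genuinely finite-limit conditions, hence preserved by $g$); the covering condition for $g\circ\cO'$ then comes for free, by pulling back the effective epimorphisms of the fixed $\cT$-structure $g\circ\cO$ along the local map $g\circ\cO' \to g\circ\cO$ and using stability of effective epis under base change. You already cite Proposition 1.11 for fully faithfulness, so the fix is simply to also invoke it for the characterization of the essential image rather than attempting a direct verification of the $\cT$-structure axioms for $g\circ\cO'$.
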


\begin{proof}
	In virtue of \cite[Proposition 1.11]{Porta_GAGA_2015}, it is sufficient to remark that composition with $g$ sends functors which preserve products to functors that preserve products, and that moreover the image under $g \circ -$ of a local morphism is still a local morphism.
	This follows at once from left exactness of $g$.
\end{proof}

We fix now a geometric morphism of $\infty$-topoi $f\inv \colon \cX \rightleftarrows \cY \colon f_*$ and a $\cT$-structure $\cO \in \Str_{\cT}(\cX)$.
The above lemma provides us with a commutative diagram
\[ \begin{tikzcd}
\Strloc_{\cT}(\cX)_{\cO // \cO} \arrow{r}{i^{\cT}_{\cX}} \arrow{d}{f\inv \circ -} & \Fun(\cT, \cX)_{\cO // \cO} \arrow{d}{f\inv \circ -} \\
\Strloc_{\cT}(\cY)_{f\inv \circ \cO // f\inv \circ \cO} \arrow{r}{i^{\cT}_{\cY}} & \Fun(\cT, \cY)_{f\inv \circ \cO // f\inv \circ \cO}
\end{tikzcd} \]
Set temporarily $\overline{\cO}  \coloneqq f_* \circ f\inv \circ \cO$.
Note that $\overline{\cO}$ is not a $\cT$-structure on $\cX$.
Nevertheless, we have a canonical map $\alpha \colon \cO \to \overline{\cO}$ induced by the unit of the adjunction $f\inv \dashv f_*$.
Pulling back along this morphism produces a functor
\[ \begin{tikzcd}
\rho_* \colon \Fun(\cT, \cY)_{f\inv \circ \cO // f\inv \circ \cO} \arrow{r}{f_* \circ -} & \Fun(\cT, \cX)_{\overline{\cO} // \overline{\cO}} \arrow{r}{\alpha^*} & \Fun(\cT, \cX)_{\cO // \cO}
\end{tikzcd} \]
Inspection shows that this is a right adjoint to $f\inv \circ -$.
Since $f_*$ commutes with limits, we see that $f_* \circ -$ takes product preserving functors to product preserving functors, and local morphisms to local morphisms.
Since pullbacks commute with limits, we see that $\alpha^*$ shares the same properties.
Therefore, the same argument given in the proof of \cref{lem:lex_composition_preserves_structures} shows that this composition induces a factorization
\[ \begin{tikzcd}
\Strloc_{\cT}(\cY)_{f\inv \circ \cO // f\inv \circ \cO} \arrow[dotted]{d}{\rho_*} \arrow{r}{i^{\cT}_{\cY}} & \Fun(\cT, \cX)_{f\inv \circ \cO // f\inv \circ \cO} \arrow{d}{\rho_*} \\
\Strloc_{\cT}(\cX)_{\cO // \cO} \arrow{r}{i^{\cT}_{\cX}} & \Fun(\cT, \cX)_{\cO // \cO}
\end{tikzcd} \]
We summarize this discussion in the following lemma:

\begin{lem} \label{lem:geometric_adjunction_on_str}
	Let $\cT$ be a pregeometry, $f\inv \colon \cX \rightleftarrows \cY \colon f_*$ be a geometric morphism of $\infty$-topoi and $\cO \in \Str_{\cT}(\cX)$.
	Then the adjunction
	\[ f\inv \circ - \colon \Fun(\cT, \cX)_{\cO // \cO} \rightleftarrows \Fun(\cT, \cY)_{f\inv \circ \cO // f\inv \circ \cO} \colon \rho_* \]
	induces an adjunction
	\[ f\inv \circ - \colon \Strloc_{\cT}(\cX)_{\cO // \cO} \rightleftarrows \Strloc_{\cT}(\cY)_{f\inv \circ \cO // f\inv \circ \cO} \colon \rho_* \]
	and both the diagrams
	\[ \begin{tikzcd}
	\Strloc_{\cT}(\cX)_{\cO // \cO} \arrow{r}{i^\cT_{\cX}} \arrow[xshift = -0.60ex ]{d}[swap]{f\inv \circ -} & \Fun(\cT, \cX)_{\cO // \cO} \arrow[xshift = -0.60ex]{d}[swap]{f\inv \circ -} \\
	\Strloc_{\cT}(\cY)_{f\inv \circ \cO // f\inv \circ \cO} \arrow[xshift = 0.60ex]{u}[swap]{\rho_*} \arrow{r}{i^{\cT}_{\cY}} & \Fun(\cT, \cX)_{f\inv \circ \cO // f\inv \circ \cO} \arrow[xshift = 0.60ex]{u}[swap]{\rho_*}
	\end{tikzcd} \]
	are commutative.
\end{lem}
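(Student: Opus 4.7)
The plan is to reduce the lemma to an assembly of facts already established in the discussion preceding the statement: the ambient-level adjunction $(f\inv \circ -) \dashv \rho_*$ on the functor categories has been constructed, and what remains is to verify that both functors restrict to local $\cT$-structures, after which both the restricted adjunction and the commutativity of the two squares are essentially formal.

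First I would verify that $f\inv \circ -$ restricts. Since $f\inv$ is left exact as the left adjoint of a geometric morphism, \cref{lem:lex_composition_preserves_structures} applies directly with $g = f\inv$; this yields the factorization of $f\inv \circ -$ through $\Strloc_{\cT}(\cY)_{f\inv \circ \cO // f\inv \circ \cO}$, and the resulting functor is, by construction, the top horizontal arrow appearing in the commutative square of the statement.

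Next I would check that $\rho_* = \alpha^* \circ (f_* \circ -)$ restricts. The functor $f_* \circ -$ preserves finite products because $f_*$ commutes with all limits; for the same reason it sends local morphisms to local morphisms, since the locality condition can be tested by pullback along admissible morphisms of $\cT$, and such pullback squares are preserved by $f_*$. The pullback functor $\alpha^*$ on the relevant slice category is itself a right adjoint, hence commutes with the limits involved in product-preservation and in the locality condition. Combining these two observations, $\rho_*$ restricts to a functor $\Strloc_{\cT}(\cY)_{f\inv \circ \cO // f\inv \circ \cO} \to \Strloc_{\cT}(\cX)_{\cO // \cO}$, matching the dashed arrow constructed just before the lemma.

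Finally, since each $\Strloc_{\cT}(-)_{\cO_? // \cO_?}$ sits as a full subcategory of the corresponding functor category, the restriction of both $f\inv \circ -$ and $\rho_*$ immediately yields an adjunction on the subcategories: the relevant mapping spaces coincide with those computed in the ambient functor categories, where the adjunction has already been established. The commutativity of the two diagrams is then tautological, as the horizontal arrows are the fully faithful inclusions and the restrictions of the two functors agree with the ambient ones on objects lying in $\Strloc_{\cT}$. The only genuine content of the proof is the locality-preservation statement for $f_*$ and $\alpha^*$, which I expect to be the mildest of obstacles since it reduces to limit-preservation combined with the definition of local morphisms via the admissibles of $\cT$.
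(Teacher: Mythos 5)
Your proof is correct and follows essentially the same approach as the paper: the existence of both restricted functors is established in the discussion preceding the lemma (left exactness of $f\inv$ via \cref{lem:lex_composition_preserves_structures}, and limit-preservation of $f_*$ and $\alpha^*$ for $\rho_*$), and the adjunction is then inherited by the full subcategories $\Strloc_{\cT}(-)_{\cO // \cO}$ via the full faithfulness of the inclusions $i^\cT_\cX$ and $i^\cT_\cY$. One minor wording slip: the factorization of $f\inv \circ -$ is the left vertical arrow of the displayed square, not the top horizontal one (which is the inclusion $i^\cT_\cX$).
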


\begin{proof}
	We already proved the existence of the functors
	\begin{gather*}
	f\inv \circ - \colon \Strloc_{\cT}(\cX)_{\cO // \cO} \to \Strloc_{\cT}(\cY)_{f\inv \circ \cO // f\inv \circ \cO}, \\
	\rho_* \colon \Strloc_{\cT}(\cY)_{f\inv \circ \cO // f\inv \circ \cO} \to \Strloc_{\cT}(\cX)_{\cO // \cO} .
	\end{gather*}
	The fact that they are adjoint to each other follows at once from the fully faithfulness of $i^{\cT}_{\cX}$ and of $i^{\cT}_{\cY}$.
\end{proof}

\begin{prop} \label{prop:left_adjointable_strloc_fun}
	Let $\cT$ be a pregeometry, $f\inv \colon \cX \leftrightarrows \cY \colon f_*$ a geometric morphism of $\infty$-topoi and $\cO \in \Str_\cT(\cX)$ a $\cT$-structure.
	Set $\cO_2 \coloneqq f\inv \circ \cO$.
	The commutative square
	\[ \begin{tikzcd}
	\Strloc_{\cT}(\cX)_{\cO // \cO} \arrow{r}{i_\cX} \arrow{d}{f\inv \circ -} & \Fun(\cT, \cX)_{\cO // \cO} \arrow{d}{f\inv \circ -} \\
	\Strloc_{\cT}(\cY)_{\cO_2 // \cO_2} \arrow{r}{i_\cY} & \Fun(\cT, \cY)_{\cO_2 // \cO_2}
	\end{tikzcd} \]
	is left adjointable.
\end{prop}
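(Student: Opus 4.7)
The square commutes by \cref{lem:geometric_adjunction_on_str}. To prove it is left adjointable, I must show that the Beck-Chevalley natural transformation
\[ \beta \colon L^\cT_\cY \circ (f\inv \circ -) \longrightarrow (f\inv \circ -) \circ L^\cT_\cX \]
between the left adjoints of the horizontal inclusions $i_\cX, i_\cY$ is an equivalence. My plan is to verify this via the Yoneda lemma, using as sole input the structural data already assembled in \cref{lem:geometric_adjunction_on_str}.

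The key observation is that \cref{lem:geometric_adjunction_on_str} provides two ingredients: the vertical adjunction $(f\inv \circ -) \dashv \rho_*$ at the level of local $\cT$-structures, and the commutation $\rho_* \circ i_\cY \simeq i_\cX \circ \rho_*$ (coming from the second of the two commutative diagrams in the lemma). Combining these with the horizontal adjunctions $L^\cT_\cX \dashv i_\cX$ and $L^\cT_\cY \dashv i_\cY$ gives, for any $A \in \Fun(\cT, \cX)_{\cO // \cO}$ and any $B \in \Strloc_\cT(\cY)_{\cO_2 // \cO_2}$, a natural chain of equivalences
\begin{align*}
\Map(f\inv L^\cT_\cX A, B)
& \simeq \Map(L^\cT_\cX A, \rho_* B) \\
& \simeq \Map(A, i_\cX \rho_* B) \\
& \simeq \Map(A, \rho_* i_\cY B) \\
& \simeq \Map(f\inv A, i_\cY B) \\
& \simeq \Map(L^\cT_\cY f\inv A, B),
\end{align*}
where I use in turn $f\inv \dashv \rho_*$ at the $\Strloc$ level, $L^\cT_\cX \dashv i_\cX$, the commutation, $f\inv \dashv \rho_*$ at the $\Fun$ level, and $L^\cT_\cY \dashv i_\cY$. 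By the Yoneda lemma, this produces a natural equivalence $f\inv L^\cT_\cX A \simeq L^\cT_\cY f\inv A$; tracing the units and counits verifies that this coincides with the Beck-Chevalley map $\beta$, since $\beta$ is characterized precisely by its compatibility with the same units and counits.

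There is no real obstacle here: the entire substantive content has already been absorbed into \cref{lem:geometric_adjunction_on_str}, which established that $\rho_*$ restricts compatibly to local structures (via left exactness of $f_*$ and pullback along $\cO \to f_* f\inv \cO$) and commutes with the inclusions $i$. The argument above is then purely formal. The only point that might merit care is the naturality of the identification with $\beta$, but this is forced by the universal properties of the adjunctions involved.
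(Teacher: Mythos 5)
Your proof is correct and takes essentially the same route as the paper: both rely on \cref{lem:geometric_adjunction_on_str} for the $f\inv \dashv \rho_*$ adjunctions at both levels and the commutation $\rho_* \circ i_\cY \simeq i_\cX \circ \rho_*$, and both verify the Beck--Chevalley condition through a Yoneda-style chain of mapping-space equivalences. The paper packages the chain slightly differently—it checks directly that $f\inv$ carries the reflection unit $\eta\colon \cF \to \widetilde{\cF}$ to a reflection unit, which identifies the Beck--Chevalley map as an equivalence without the separate "tracing units and counits" step you defer at the end—but the content and inputs are the same.
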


\begin{proof}
	We remark that in order to prove that the square
	\[ \begin{tikzcd}
	\Strloc_{\cT}(\cX)_{\cO // \cO} \arrow{d}[swap]{f\inv \circ -} & \Fun(\cT, \cX)_{\cO // \cO} \arrow{l}[swap]{L_\cX} \arrow{d}{f\inv \circ -} \\
	\Strloc_{\cT}(\cX)_{\cO_2 // \cO_2} & \Fun(\cT, \cY)_{\cO_2 // \cO_2} \arrow{l}[swap]{L_\cY}
	\end{tikzcd} \]
	commutes, it suffices to prove that the diagram of right adjoints
	\[ \begin{tikzcd}
	\Strloc_{\cT}(\cX)_{\cO // \cO} \arrow{r}{i^{\cT}_\cX} & \Fun(\cT, \cX)_{\cO // \cO} \\
	\Strloc_{\cT}(\cX)_{\cO_2 // \cO_2} \arrow{u}{\rho_*} \arrow{r}{i^{\cT}_\cY} & \Fun(\cT, \cY)_{\cO_2 // \cO_2} \arrow{u}[swap]{\rho_*}
	\end{tikzcd} \]
	is commutative. This follows from \cref{lem:geometric_adjunction_on_str}.
	To complete the proof, fix $\cF \in \Fun(\cT, \cX)_{\cO // \cO}$ and let $\eta \colon \cF \to \widetilde{\cF}$ be a reflection of $\cF$ in $\Strloc_{\cT}(\cX)_{\cO // \cO}$.
	We need to prove that
	\[ f\inv(\eta) \colon f\inv(\cF) \to f\inv(\widetilde{\cF}) \]
	exhibits $f\inv(\widetilde{\cF})$ as a reflection of $f\inv(\cF)$ in $\Strloc_{\cT}(\cY)_{f\inv \cO // f\inv \cO}$.
	For every $\cG \in \Strloc_{\cT}(\cY)_{f\inv \cO // f\inv \cO}$, we have
	\begin{align*}
	\Map^{\Fun}_{f\inv \circ \cO // f\inv \circ \cO}(f\inv \circ \cF, \cG) & \simeq \Map^{\Fun}_{\cO // \cO}(\cF, \rho_*(\cG)) \\
	& \simeq \Map^{\Str}_{\cO // \cO}(\widetilde{\cF}, \rho_*(\cG)) \\
	& \simeq \Map^{\Str}_{f\inv \circ \cO // f\inv \circ \cO}(f\inv \circ \widetilde{\cF}, \cG) .
	\end{align*}
	where we denoted with $\Map^{\Fun}_{f\inv \circ \cO // f\inv \circ \cO}$ (resp.\ $\Map^{\Fun}_{\cO // \cO}$, $\Map^{\Str}_{\cO // \cO}$, $\Map^{\Str}_{f\inv \circ \cO // f\inv \circ \cO}$) the mapping space in $\Fun(\cT, \cY)_{f\inv \circ \cO // f\inv \circ \cO}$ (resp.\ in $\Fun(\cT, \cX)_{\cO // \cO}$, $\Strloc_\cT(\cX)_{\cO // \cO}$, $\Strloc_{\cT}(\cY)_{f\inv \circ \cO // f\inv \circ \cO}$).
	The proposition is now completely proved.
\end{proof}

\subsubsection{The functor $\Lambda_\cX$}

Keeping the notations we previously introduced, we now turn to the functor
\[ \Lambda_{\cX} \colon \Fun(\cT', \cX)_{\cO_1 // \cO_1} \to \Fun(\cT, \cX)_{\cO' // \cO} \]
Recall that it has a right adjoint $R_\cX$ informally given by precomposition with the morphism of pregeometries $\varphi \colon \cT' \to \cT$ (see \cref{lem:right_adjoint_to_Lambda}).

\begin{lem} \label{lem:left_Kan_left_adjointable}
	The diagram
	\[ \begin{tikzcd}
	\Fun(\cT, \cX) \arrow{r}{- \circ \varphi} \arrow{d}[swap]{f\inv \circ -} & \Fun(\cT', \cX) \arrow{d}{f\inv \circ -} \\
	\Fun(\cT, \cY) \arrow{r}{- \circ \varphi} & \Fun(\cT', \cY)
	\end{tikzcd} \]
	is left adjointable.
\end{lem}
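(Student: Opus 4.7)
The plan is to verify left adjointability by producing the mate (Beck--Chevalley) transformation explicitly and showing it is an equivalence via the pointwise formula for Kan extensions. The horizontal functors $-\circ\varphi$ admit left adjoints, namely the left Kan extensions $\mathrm{Lan}_\varphi$, since both $\cX$ and $\cY$ are cocomplete (in fact presentable) $\infty$-categories. The square is therefore left adjointable precisely when the canonical Beck--Chevalley transformation
\[ \mathrm{Lan}_\varphi \circ (f\inv \circ -) \longrightarrow (f\inv\circ -) \circ \mathrm{Lan}_\varphi \]
of functors $\Fun(\cT',\cX)\to\Fun(\cT,\cY)$ is an equivalence.

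First I would evaluate both sides pointwise at an object $t\in\cT$. For $G\in\Fun(\cT',\cX)$, the pointwise formula for left Kan extensions in a cocomplete $\infty$-category (see \cite[Lemma 4.3.2.13]{HTT}) gives
\[ \mathrm{Lan}_\varphi(G)(t) \simeq \colim_{(\cT'\times_\cT \cT_{/t})} G\circ\mathrm{pr}. \]
Since $f\inv$ is a left adjoint (to $f_*$), it preserves all small colimits, so applying it commutes with this colimit. This yields a natural equivalence $f\inv(\mathrm{Lan}_\varphi(G)(t))\simeq \mathrm{Lan}_\varphi(f\inv\circ G)(t)$ for every $t$, whence an equivalence of functors.

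The only nontrivial bookkeeping is to check that the equivalence constructed pointwise is indeed the mate of the commutativity datum $(-\circ\varphi)\circ(f\inv\circ-)\simeq (f\inv\circ-)\circ(-\circ\varphi)$ (which here is just the associativity of composition of functors). This is a standard diagram chase using the unit $\mathrm{id}\to(-\circ\varphi)\circ\mathrm{Lan}_\varphi$ and the counit $\mathrm{Lan}_\varphi\circ(-\circ\varphi)\to\mathrm{id}$, and it presents no serious obstacle. Conceptually, the statement is an instance of the general principle that a colimit-preserving functor commutes with left Kan extension along an arbitrary functor; since $f\inv$ preserves all colimits, the left adjointability is automatic.
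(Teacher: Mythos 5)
Your argument is correct and essentially coincides with the paper's: both reduce the Beck--Chevalley transformation to the pointwise colimit formula for $\mathrm{Lan}_\varphi$ over comma categories and invoke colimit-preservation of $f\inv$. The one step you label a ``standard diagram chase'' --- identifying the pointwise equivalence with the actual mate of the commutativity datum --- is precisely what the paper handles via \cref{prop:Beck_Chevalley_fibration_formulation}, reducing it to the statement that $f\inv$ preserves the counit of $\mathrm{Lan}_\varphi \dashv (-\circ\varphi)$, which then follows from the same colimit formula; you should also double-check the HTT citation, as the comma-category formula for Kan extension along a non-fully-faithful functor is the content of \cite[4.3.3.2, 4.3.2.15]{HTT} rather than 4.3.2.13.
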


\begin{proof}
	First of all observe that the diagram
	\[ \begin{tikzcd}
	\Fun(\cT, \cX) \arrow{r}{- \circ \varphi} & \Fun(\cT', \cX) \\
	\Fun(\cT, \cY) \arrow{u}{f_* \circ -} \arrow{r}{- \circ \varphi} & \Fun(\cT', \cY) \arrow{u}[swap]{f_* \circ -}
	\end{tikzcd} \]
	obviously commutes.
	It follows that the diagram of left adjoints
	\[ \begin{tikzcd}
	\Fun(\cT, \cX) \arrow{d}[swap]{f\inv \circ -} & \Fun(\cT', \cX) \arrow{d}{f\inv \circ -} \arrow{l}[swap]{\mathrm{Lan}_\varphi} \\
	\Fun(\cT, \cY) & \Fun(\cT', \cY) \arrow{l}[swap]{\mathrm{Lan}_\varphi}
	\end{tikzcd} \]
	Let $F \in \Fun(\cT, \cX)$ and choose a counit transformation $\varepsilon_F \colon \mathrm{Lan}_\varphi(F \circ \varphi) \to F$.
	Using \cref{prop:Beck_Chevalley_fibration_formulation}, it will be enough to prove that
	\[ f\inv(\varepsilon_F) \colon f\inv \circ \mathrm{Lan}_\varphi(F \circ \varphi) \simeq \mathrm{Lan}_\varphi(f\inv \circ F \circ \varphi)  \to f\inv \circ F \]
	is equivalent to $\varepsilon_{f\inv \circ F}$.
	Fix $X \in \cT$ and let $(\varphi \downarrow X)$ be the comma category introduced in \cref{def:comma_category}.
	Combining \cite[4.3.3.2, 4.3.2.15]{HTT} we see that the evaluation of the counit morphism $\mathrm{Lan}_\varphi(F \circ \varphi) \to F$ on $X$ can be explicitly constructed as the canonical map\footnote{The comma category $(\varphi \downarrow X)$ is implicitly defined in \cite[§4.3.3]{HTT}. It amounts to consider $\mathrm{Cyl}(\varphi) \coloneqq (\cT' \times \Delta^1) \coprod_{\cT' \times \{1\}} \cT$ and then set $(\varphi \downarrow X) \coloneqq (\cT' \times \{0\})_{/X} \subset \mathrm{Cyl}(\varphi)_{/X}$.}
	\[ \mathrm{Lan}_\varphi(F \circ \varphi)(X) = \colim_{Y \in (\varphi \downarrow X)} F(\varphi(Y)) \to F(X) . \]
	At this point, we see that the fact that $f\inv$ commutes with colimits implies that it preserves the counit transformation.
\end{proof}

Introduce now $\cO_2 \coloneqq f\inv \circ \cO$ and $\cO_3 \coloneqq \cO_2 \circ \varphi = f\inv \circ \cO_1$.
Furthermore we let $\cO_2' \coloneqq \mathrm{Lan}_\varphi(\cO_3)$.
\cref{lem:left_Kan_left_adjointable} shows that $\cO_2' \simeq f\inv \circ \cO'$, so that composition with $f\inv$ induces a well defined functor
\[ f\inv \circ - \colon \Fun(\cT, \cX)_{\cO' // \cO} \to \Fun(\cT, \cX)_{\cO_2' // \cO_2} \]

\begin{prop} \label{prop:Lambda_left_adjointable}
	The square
	\begin{equation} \label{eq:Lambda_left_adjointable}
	\begin{tikzcd}
	\Fun(\cT, \cX)_{\cO' // \cO} \arrow{r}{R_\cX} \arrow{d}{f\inv \circ -} & \Fun(\cT', \cX)_{\cO_1 // \cO_1} \arrow{d}{f\inv \circ -} \\
	\Fun(\cT, \cY)_{\cO'_2 // \cO_2} \arrow{r}{R_\cY} & \Fun(\cT', \cY)_{\cO_3 // \cO_3}
	\end{tikzcd}
	\end{equation}
	commutes and is left adjointable.
\end{prop}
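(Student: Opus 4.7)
The plan is to break the proposition into two separate assertions — commutativity of the square of right adjoints, and the Beck--Chevalley condition for their left adjoints $\Lambda_\cX$ and $\Lambda_\cY$ — and attack each by reducing to the simpler statements already established, most importantly \cref{lem:left_Kan_left_adjointable} (left adjointability of the $\mathrm{Lan}_\varphi$ square at the level of plain functor categories) and the fact that $f\inv$ preserves finite limits.

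For the commutativity of \eqref{eq:Lambda_left_adjointable}, I would use the explicit pullback description of $R_\cX$ provided by \cref{rem:R_X_alternative_description}. Given $A \in \Fun(\cT, \cX)_{\cO'//\cO}$, applying $f\inv \circ -$ to the defining pullback square
\[ \begin{tikzcd}
R_\cX(A) \arrow{r} \arrow{d} & A \circ \varphi \arrow{d} \\
\cO_1 \arrow{r}{\eta_1} & \cO' \circ \varphi
\end{tikzcd} \]
produces a pullback in $\Fun(\cT', \cY)$ because $f\inv$ is left exact. \cref{lem:left_Kan_left_adjointable} identifies $f\inv \circ (\cO' \circ \varphi)$ with $\cO_2' \circ \varphi$ and, via \cref{prop:left_adjointable_preserves_unit}, sends the unit $\eta_1 \colon \cO_1 \to \cO' \circ \varphi$ to the corresponding unit $\eta_1^{(\cY)} \colon \cO_3 \to \cO_2' \circ \varphi$. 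Comparing with \cref{rem:R_X_alternative_description} applied over $\cY$, one concludes $f\inv \circ R_\cX(A) \simeq R_\cY(f\inv \circ A)$ functorially in $A$.

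For left adjointability, I would factor $\Lambda_\cX = (\varepsilon_1)_! \circ \mathrm{Lan}_\varphi$ (and likewise $\Lambda_\cY$), split \eqref{eq:Lambda_left_adjointable} horizontally into the subsquare involving $\mathrm{Lan}_\varphi$ and the subsquare involving $(\varepsilon_1)_!$, and then invoke \cref{prop:horizontal_composition_left_adjointable} to reassemble. The $\mathrm{Lan}_\varphi$ subsquare is a slice-level enhancement of \cref{lem:left_Kan_left_adjointable}: since $f\inv$ is left exact and sends the counit $\varepsilon_1 \colon \mathrm{Lan}_\varphi(\cO_1) \to \cO$ to $\varepsilon_1^{(\cY)} \colon \mathrm{Lan}_\varphi(\cO_3) \to \cO_2$, the slice decorations are transported coherently and the Beck--Chevalley map inherited from the underlying functor-category statement remains an equivalence. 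For the $(\varepsilon_1)_!$ subsquare, \cref{lem:forgetful_and_pushout} identifies the right adjoint of $(\varepsilon_1)_!$ with pullback along $\varepsilon_1$; left adjointability then reduces to the fact that $f\inv$ commutes with pullback along $\varepsilon_1$, which again follows from left exactness together with $f\inv(\varepsilon_1) = \varepsilon_1^{(\cY)}$.

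The main obstacle I anticipate is purely bookkeeping: tracking the various units, counits, and base-point data across the nested slices $\cC_{\cO_? // \cO_?}$ and ensuring they are identified coherently in $\cX$ and $\cY$. There is no new mathematical content beyond left exactness of $f\inv$ and \cref{lem:left_Kan_left_adjointable}, but care is needed so that the Beck--Chevalley maps produced in the pasted square genuinely coincide with the one demanded by the definition of left adjointability for \eqref{eq:Lambda_left_adjointable}.
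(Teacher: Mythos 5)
Your proposal is correct, and it uses the same raw ingredients as the paper — the factorization $\Lambda_\cX = (\varepsilon_1)_! \circ \mathrm{Lan}_\varphi$, the pullback description of $R_\cX$ from \cref{rem:R_X_alternative_description}, \cref{lem:left_Kan_left_adjointable}, and left exactness of $f\inv$ — but distributes them differently between the two halves of the statement. The paper proves \emph{commutativity} by factoring the square of right adjoints as $\eta^! \circ (- \circ \varphi)$ and checking each sub-square commutes (the right one because $f\inv \circ \eta_1 \simeq \eta_2$), and then proves \emph{left adjointability} by a direct verification that $f\inv$ carries the unit $\gamma_1 \colon \mathrm{Id} \to R_\cX \Lambda_\cX$ to $\gamma_2$, using the explicit cartesian square defining $R_\cX$ together with left exactness. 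You do the opposite: commutativity is checked directly from the pullback formula for $R_\cX$ (a clean argument that genuinely works, since $f\inv$ preserves the pullback and carries $\eta_1$ to $\eta_2$ by \cref{lem:left_Kan_left_adjointable} together with \cref{prop:left_adjointable_preserves_unit}), while left adjointability is obtained by pasting the two factors of $\Lambda$ via \cref{prop:horizontal_composition_left_adjointable}. The paper's direct unit check conveniently avoids having to promote \cref{lem:left_Kan_left_adjointable} to the slice level, which is exactly where, as you flag, the bookkeeping concentrates in your route: the slice-level adjoint of $- \circ \varphi$ involves an additional pullback along $\eta_1$, so verifying that the slice-level Beck--Chevalley map is an equivalence ultimately requires the same combination of ``$f\inv$ preserves the pullback'' and ``$f\inv$ preserves $\eta_1$'' that the paper deploys directly. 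Similarly, for the $(\varepsilon_1)_!$ sub-square you do need $f\inv(\varepsilon_1) \simeq \varepsilon_2$, which is again \cref{lem:left_Kan_left_adjointable}. So the two routes are of comparable length; yours is perhaps more modular (each sub-square is a self-contained lemma), whereas the paper's avoids stating slice-level variants of \cref{lem:left_Kan_left_adjointable} at the cost of a concrete diagram chase.
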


\begin{proof}
	Introduce the unit morphisms $\eta_1 \colon \cO_1 \to \cO' \circ \varphi = \mathrm{Lan}_\varphi(\cO_1) \circ \varphi$ and $\eta_2 \colon \cO_3 \to \cO_2' \circ \varphi = \mathrm{Lan}_\varphi(\cO_3) \circ \varphi$.
	\cref{lem:left_Kan_left_adjointable} shows that $f\inv \circ \eta_1 = \eta_2$.
	The proof of \cref{lem:right_adjoint_to_Lambda} shows that we can factor the diagram \eqref{eq:Lambda_left_adjointable} as
	\[ \begin{tikzcd}
	\Fun(\cT, \cX)_{\cO' // \cO} \arrow{d}{f\inv \circ -} \arrow{r}{-\circ \varphi} & \Fun(\cT', \cX)_{\cO' \circ \varphi // \cO} \arrow{d}{f\inv \circ -} \arrow{r}{\eta_1^!} & \Fun(\cT', \cX)_{\cO_1 // \cO_1} \arrow{d}{f\inv \circ -} \\
	\Fun(\cT, \cY)_{\cO_2' // \cO_2} \arrow{r}{-\circ \varphi} & \Fun(\cT', \cY)_{\cO_2' \circ \varphi // \cO_2} \arrow{r}{\eta_2^!} & \Fun(\cT', \cY)_{\cO_3 // \cO_3} ,
	\end{tikzcd} \]
	where $\eta_1^!$ and $\eta_2^!$ denote the composition with $\eta_1$ and $\eta_2$ respectively.
	Now, the left square obviously commutes, and the right one commutes in virtue of the above observation.
	The first statement is therefore proved.
	
	We now introduce the diagram
	\[ \begin{tikzcd}
	\Fun(\cT', \cX)_{\cO_1 // \cO_1} \arrow{r}{\mathrm{Lan}_\varphi} \arrow{d}{f\inv \circ -} & \Fun(\cT, \cX)_{\cO' // \cO'} \arrow{r}{(\varepsilon_1)_!} \arrow{d}{f\inv \circ -} & \Fun(\cT, \cX)_{\cO' // \cO} \arrow{d}{f\inv \circ -} \\
	\Fun(\cT', \cY)_{\cO_3 // \cO_3} \arrow{r}{\mathrm{Lan}_\varphi} & \Fun(\cT, \cY)_{\cO_2' // \cO_2'} \arrow{r}{(\varepsilon_2)_!} & \Fun(\cT, \cY)_{\cO_2' // \cO_2}
	\end{tikzcd} \]
	where $\varepsilon_2 \colon \cO_2' = \mathrm{Lan}_\varphi(\cO_2 \circ \varphi) \to \cO_2$ denotes the counit of the adjunction $\mathrm{Lan}_\varphi \dashv - \circ \varphi$.
	By definition, the horizontal composites equal $\Lambda_\cX$ and $\Lambda_\cY$ respectively.
	\cref{lem:left_Kan_left_adjointable} shows that both these squares are commutative.
	
	Let now $\gamma_1 \colon \mathrm{Id} \to R_\cX \circ \Lambda_\cX$ and $\gamma_2 \colon \mathrm{Id} \to R_\cY \circ \Lambda_\cY$ be the units of the respective adjunctions. We will complete the proof by showing that $f\inv \circ \gamma_1$ is equivalent to $\gamma_2$.
	Fix $F \in \Fun(\cT', \cX)_{\cO_1 // \cO_1}$.
	Since the right adjoint of $(\varepsilon_1)_!$ is the pullback along $\varepsilon_1 \colon \cO' \to \cO$ and since $- \circ \varphi$ commutes with pullbacks, we can identify $\gamma_1(F)$ as the morphism fitting in the diagram
	\[ \begin{tikzcd}
	F \arrow[bend left = 15]{drr}{\eta_F} \arrow[bend right = 20]{ddr} \arrow{dr}{\gamma_1} \\
	{} & R_\cX(\Lambda_\cX(F)) \arrow{r} \arrow{d} & \mathrm{Lan}_\varphi(F) \circ \varphi \arrow{d} \\
	{} & \cO_1 \arrow{r}{\eta_1} & \cO' \circ \varphi
	\end{tikzcd} \]
	where the square is cartesian (see \cref{rem:R_X_alternative_description}).
	Since $f\inv$ is left exact and $f\inv \circ -$ preserves both $\eta_1$ and $\eta_F$ by \cref{lem:left_Kan_left_adjointable}, we conclude that $f\inv \circ \gamma_1$ is equivalent to $\gamma_2$, completing the proof.
\end{proof}

\subsubsection{The proof of \cref{thm:reduction_to_spaces}.}

At last, all the preliminaries have been dealt with, and we are ready to deal with the proof of our main result.

\begin{proof}[Proof of \cref{thm:reduction_to_spaces}.]
	We proceed in two steps.
	
	\emph{Step 1.} We  prove that the outer rectangle in the following commutative diagram
	\[ \begin{tikzcd}
	\Strloc_{\cT}(\cX)_{\cO // \cO} \arrow{r}{i^{\cT}_\cX} \arrow{d}{f\inv \circ -} & \Fun(\cT, \cX)_{\cO // \cO} \arrow{r}{(\varepsilon_1)^!} \arrow{d}{f\inv \circ -} & \Fun(\cT, \cX)_{\cO' // \cO} \arrow{r}{R_\cX} \arrow{d}{f\inv \circ -} & \Fun(\cT', \cX)_{\cO_1 // \cO_1} \arrow{d}{f\inv \circ -} \\
	\Strloc_{\cT}(\cY)_{\cO_2 // \cO_2} \arrow{r}{i^{\cT}_\cY} & \Fun(\cT, \cY)_{\cO_2 // \cO_2} \arrow{r}{(\varepsilon_2)^!} & \Fun(\cT, \cY)_{\cO_2' // \cO_2} \arrow{r}{R_\cY} & \Fun(\cT', \cY)_{\cO_3 // \cO_3}
	\end{tikzcd} \]
	is left adjointable.
	In virtue of \cref{prop:horizontal_composition_left_adjointable}, it will be sufficient to show that the three commutative squares are left adjointable.
	We dealt with the one on the left in \cref{prop:left_adjointable_strloc_fun} and with the one on the right in \cref{prop:Lambda_left_adjointable}.
	As for the middle one, it is left adjointable because $f\inv$ commutes with pushouts and in virtue of \cref{lem:left_Kan_left_adjointable}.
	
	\emph{Step 2.} Consider now the diagram
	\[ \begin{tikzcd}
	\Strloc_{\cT}(\cX)_{\cO // \cO} \arrow{r}{\overline{\Phi}'_\cX} \arrow{d}{f\inv \circ -} & \Strloc_{\cT'}(\cX)_{\cO_1 // \cO_1} \arrow{r}{i^{\cT'}_\cX} \arrow{d}{f\inv \circ -} & \Fun(\cT', \cX)_{\cO_1 // \cO_1} \arrow{d}{f\inv \circ -} \\
	\Strloc_{\cT}(\cY)_{\cO_2 // \cO_2} \arrow{r}{\overline{\Phi}'_\cY} & \Strloc_{\cT'}(\cY)_{\cO_3 // \cO_3} \arrow{r}{i^{\cT'}_\cY} & \Fun(\cT', \cY)_{\cO_3 // \cO_3} .
	\end{tikzcd} \]
	We proved in Step 1 that the outer rectangle is left adjointable, and we can use \cref{prop:left_adjointable_strloc_fun} to see that the square on the right is left adjointable as well.
	We want to deduce that the same goes for the left square.
	
	In \eqref{eq:decomposition_of_Psi} we showed that $\overline{\Psi}'_\cX = L^{\cT}_{\cX} \circ \pi \circ \Lambda_\cX \circ i^{\cT'}_{\cX}$.
	To ease notations, introduce $F_\cX \coloneqq \pi \circ \Lambda_\cX \circ i^{\cT'}_\cX$ and $F_\cY \coloneqq \pi \circ \Lambda_\cY \circ i^{\cT'}_\cY$.
	Denote by $G_\cX$ and $G_\cY$ the respective right adjoints.
	We can use Step 1.\ to deduce that the diagram
	\[ \begin{tikzcd}
	\Strloc_{\cT}(\cX)_{\cO // \cO} \arrow{d}{f\inv \circ -} & \Strloc_{\cT'}(\cX)_{\cO_1 // \cO_1} \arrow{l}[swap]{\overline{\Psi}'_\cX} \arrow{d}{f\inv \circ -} \\
	\Strloc_{\cT}(\cY)_{\cO_2 // \cO_2} & \Strloc_{\cT'}(\cY)_{\cO_3 // \cO_3} \arrow{l}[swap]{\overline{\Psi}'_\cY}
	\end{tikzcd} \]
	commutes.
	Let now $\delta_1 \colon \overline{\Psi}'_\cX \circ \overline{\Phi}'_\cX \to \mathrm{Id}$ and $\delta_2 \colon \overline{\Psi}'_\cY \circ \overline{\Phi}'_\cY \to \mathrm{Id}$ be the counits of the respective adjunctions.
	To complete the proof it will be sufficient to show that $f\inv \circ \delta_1$ is equivalent to $\delta_2$.
	Fix $A \in \Strloc_{\cT}(\cX)_{\cO // \cO}$.
	Then
	\[ i^{\cT}_\cX(\delta_1(A)) \colon i^{\cT}_\cX( \overline{\Psi}'_\cX( \overline{\Phi}'_\cX(A))) \to i^{\cT}_\cX(A) \]
	is uniquely determined by the property of making the diagram
	\[ \begin{tikzcd}
	F_\cX(G_\cX( i^{\cT}_{\cX} (A))) \arrow{r}{\alpha_1} \arrow{dr}[swap]{\beta_1} & L^{\cT}_{\cX}(F_\cX(G_\cX(i^{\cT}_{\cX} (A)))) \arrow{d}{i^{\cT}_{\cX}(\delta_1(A))} \simeq i^{\cT'}_\cX(\overline{\Psi}'(\overline{\Phi}'_\cX(A))) \\
	& i^{\cT}_\cX(A)
	\end{tikzcd} \]
	where $\beta_1$ is the counit of the adjunction $F_\cX \dashv G_\cX$ and $\alpha_1$ is the unit of the adjunction $L^{\cT}_\cX \dashv i^{\cT}_\cX$.
	The morphism $\delta_2(f\inv \circ A)$ has a similar characterization.
	We conclude that $f\inv \circ \delta_1$ is equivalent to $\delta_2$ by using \cref{lem:left_Kan_left_adjointable} and the Step 1.
\end{proof}

\subsection{A further reduction step}

From now on, we will specialize to the case $\cT = \Tan$.
Let $(\cX, \cO_\cX)$ be a derived \canal space.
In this case, we already know that the functor
\[ \Phi_\cO \colon \Sp(\Strloc_{\Tan}(\cX)_{/\cO_\cX}) \to \Sp(\Strloc_{\cTdisc}(\cX)_{/\cO_\cX\alg}) \]
is conservative. Therefore, \cref{thm:equivalence_of_modules} is equivalent to the fully faithfulness of the left adjoint $\Psi_\cO$.
We have the following refinement of \cite[Lemma 3.2]{Porta_GAGA_2015}:

\begin{lem} \label{lem:derived_canal_space_enough_points}
	Let $(\cX, \cO_\cX)$ be a derived \canal space.
	Then $\cX$ has enough points.
\end{lem}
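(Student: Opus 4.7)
The plan is to reduce the statement to its underived counterpart \cite[Lemma 3.2]{Porta_GAGA_2015}. Indeed, a derived \canal space $(\cX, \cO_\cX)$ comes, by definition, with an underlying \canal space $(\cX, \pi_0(\cO_\cX\alg))$: the \emph{same} $\infty$-topos $\cX$ carries both the derived structure sheaf $\cO_\cX$ and its $\pi_0$. Since the property of having enough points concerns only $\cX$ and not the structure sheaf at all, whatever was proved for underived \canal spaces immediately applies here.

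For the reader's convenience, let me sketch why the underived statement holds, since this is the geometric content. One uses that, by the definition of a \canal space, $\cX$ is equivalent to the $\infty$-topos $\Shv(X)$ of sheaves on the underlying topological space $X$, and that $X$ is paracompact Hausdorff and locally of finite covering dimension (bounded locally by twice the complex dimension of the space). Therefore, by \cite[Theorem 7.2.3.6]{HTT}, $\Shv(X)$ has locally finite homotopy dimension and is in particular hypercomplete. For every $x \in X$ the stalk functor $x\inv \colon \Shv(X) \to \cS$ is a geometric morphism, and the family $\{x\inv\}_{x \in X}$ is jointly conservative: a morphism $\cF \to \cG$ of hypercomplete sheaves of spaces is an equivalence iff it induces isomorphisms on every $\pi_n$, which in turn (being classical sheaves on $X$) can be tested stalkwise.

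I do not expect any real obstacle: the whole point is that, at the level of $\infty$-topoi, the derived structure is invisible, so there is nothing new to prove beyond the observation above together with an appeal to the already-established underived case.
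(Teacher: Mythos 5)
Your key observation — that ``enough points'' is a property of the underlying $\infty$-topos alone, so the derived structure of $\cO_\cX$ plays no role — is correct and is exactly what the paper exploits. However, there is a real gap in the way you set up the reduction. You assert that, by definition, the $\infty$-topos $\cX$ of a derived \canal space is \emph{globally} the $\infty$-topos of sheaves on the underlying topological space of a \canal space. The definition (following \cite{DAG-IX}) does not provide this: it only gives a cover $\coprod U_i \to \mathbf 1_\cX$ by objects such that each \emph{slice} $\cX_{/U_i}$ is equivalent to the $\infty$-topos of sheaves on some \canal space. That $\cX$ is globally $0$-localic, or that it agrees with the topos of its truncation, is a separate result rather than a definitional fact (cf.\ the paper's own discussion around $X_\et \simeq \trunc(X)_\et$, which it flags as a theorem). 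Your argument therefore silently assumes the very globalization that has to be proved.

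The paper's proof closes precisely this gap: it picks the cover $\{U_i\}$ from the definition, observes that each $\cX_{/U_i}$ has enough points (the underived input, e.g.\ \cite[Lemma 3.2]{Porta_GAGA_2015}), and then applies descent for $\infty$-topoi — via \cite[6.1.3.9]{HTT}, a map $\alpha$ in $\cX$ is an equivalence iff $u_i\inv(\alpha)$ is an equivalence for all $i$ — to conclude that the geometric points of the $\cX_{/U_i}$, composed with the localization morphisms $u_i\inv$, already detect equivalences in $\cX$. That descent step is what is missing from your proposal; without it (or without a prior argument establishing a global $0$-localic presentation), the reduction is incomplete. A secondary quibble: your sketch of the underived case invokes paracompactness and Hausdorffness of the underlying space, which are not automatic for arbitrary \canal spaces; the correct local input is that the affine local models are paracompact of finite covering dimension, and once more it is descent that globalizes this.
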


\begin{proof}
	Choose elements $U_i \in \cX$ such that the total morphism $\coprod U_i \to \mathbf 1_\cX$ is an effective epimorphism and each $\cX_{/U_i}$ is equivalent to the $\infty$-topos of sheaves on some \canal space.
	Then each $\cX_{/U_i}$ has enough points and every geometric point $x\inv \colon \cX_{/U_i} \rightleftarrows \cS \colon x_*$ gives rise via composition with $u_i\inv \colon \cX \rightleftarrows \cX_{/U_i} \colon (u_i)_*$ to a geometric point of $\cX$.
	Let $\alpha \colon F \to G$ be a morphism in $\cX$. Since $\coprod U_i \to \mathbf 1_\cX$ is an effective epimorphism, the general descent theory for $\infty$-topoi (see \cite[6.1.3.9]{HTT}) shows that $\alpha$ is an equivalence if and only if each $\alpha_i \coloneqq u_i\inv(\alpha)$ is an equivalence.
	Therefore $\alpha$ is an equivalence if and only if $x\inv (u_i\inv(\alpha))$ is an equivalence for every index $i$ and every geometric point $x\inv \colon \cX_{/U_i} \rightleftarrows \cS \colon x_*$.
	In conclusion, $\cX$ has enough points.
\end{proof}

The previous lemma shows that the hypotheses of \cref{cor:first_reduction_step} are satisfied in the situation of \cref{thm:equivalence_of_modules}. Thus, it is enough to consider the case $(\cS, \cO)$, where $\cO = x\inv \cO_\cX$ for some geometric point $x\inv \colon \cX \rightleftarrows \cS \colon x_*$.
Since from this point on there won't be any ambiguity concerning the $\infty$-topos $\cX$, we will replace the notation $\overline{\Psi}'_\cX$ with the more appropriate $\overline{\Psi}_\cO$.

Let $M \in \Sp(\Strloc_{\cTdisc}(\cS)_{/\cO\alg}) = \cO\alg \textrm{-} \Mod$ be a spectrum object.
Using \cite[1.4.2.24]{Lurie_Higher_algebra} we can represent $\Sp(\Strloc_{\cTdisc}(\cS)_{\cO\alg // \cO\alg})$ as the inverse limit of the diagram
\[ \ldots \xrightarrow{\Omega} \Strloc_{\cTdisc}(\cS)_{\cO\alg // \cO\alg} \xrightarrow{\Omega} \Strloc_{\cTdisc}(\cS)_{\cO\alg // \cO\alg} \]
Let $\mathcal J \coloneqq \mathbb N^{\mathrm{op}}$ be the poset parametrizing this diagram and let $p \colon \cC \to \mathcal J$ be the coCartesian fibration classifying it.
Similarly, let $q \colon \cD \to \mathcal J$ be the coCartesian fibration classifying the diagram
\[ \ldots \xrightarrow{\Omega} \Strloc_{\Tan}(\cS)_{\cO // \cO} \xrightarrow{\Omega} \Strloc_{\Tan}(\cS)_{\cO // \cO} \]
The functor $\overline{\Phi}_\cO$ induces a natural transformation of such diagrams and therefore it determines a functor $G_\cO \colon \cD \to \cC$ with the property of taking $q$-coCartesian morphisms to $p$-coCartesian ones.
This functor $G_\cO$ admits a left adjoint $F_\cO$ which no longer takes $p$-coCartesian edges to $q$-coCartesian ones.
Informally, $F_\cO$ sends an object $(A, n) \in \cC$ to $(\overline{\Psi}_\cO(A), n) \in \cD$.

In virtue of \cite[3.3.3.2]{HTT} we can identify $M$ with a coCartesian section of $p$, that is with a sequence of objects $\{M(n)\}$ of $\Strloc_{\cTzar}(\cS)_{\cO\alg // \cO\alg}$ and equivalences $\gamma_n \colon M(n) \simeq \Omega M(n+1)$.
The previous consideration shows that the composition $F_\cO \circ M$ is no longer a coCartesian section of $q$, at least a priori.
In the language of strongly excisive functors we are saying that $\overline{\Psi}_{\cO} \circ M$ is not a spectrum in an obvious way.
Nevertheless, it is true, as we are going to show.
We begin with a handy lemma.

\begin{lem}
	Let $A \in \mathrm{CAlg}_{\mathbb C}$ be a local connective $\mathbb E_\infty$-ring.
	The morphism of pregeometries induce an equivalence $\Sp(\Strloc_{\cTzar}(\cS)_{/A}) \simeq \Sp(\Strloc_{\cTdisc}(\cS)_{/A})$.
\end{lem}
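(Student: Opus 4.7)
The plan is to show that the comparison functor arises as the stabilization of a fully faithful, finite-limit-preserving inclusion $i \colon \Strloc_{\cTzar}(\cS)_{/A} \hookrightarrow \Strloc_{\cTdisc}(\cS)_{/A}$ whose image already contains all split square-zero extensions of $A$. Once this is established, the equivalence follows from the standard description of spectrum objects.

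First I would identify the essential image of $i$ as the full subcategory of pairs $(B, f \colon B \to A)$ with $\pi_0(B)$ local and $\pi_0(f)$ local, and verify fullness: between two such objects any morphism $g \colon B \to B'$ over $A$ is automatically local, because on $\pi_0$ the inclusion $\pi_0(g)\inv(\mathfrak{m}_{B'}) \supseteq \pi_0(f)\inv(\mathfrak{m}_A) = \mathfrak{m}_B$ forces equality. Next I would check stability of this essential image under pullbacks: given a cospan $B_1 \to B_0 \leftarrow B_2$ of local connective $\mathbb E_\infty$-rings with local structure morphisms, the Mayer--Vietoris long exact sequence presents $\pi_0(B_1 \times_{B_0} B_2)$ as an extension of the underived pullback $\pi_0(B_1) \times_{\pi_0(B_0)} \pi_0(B_2)$ by a nilpotent ideal (the image of a boundary from $\pi_1(B_0)$), and both terms are local. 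Hence $i$ preserves finite limits, and the induced functor $\Sp(i)$ on stabilizations is fully faithful.

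For essential surjectivity of $\Sp(i)$ I would invoke the identification $\Sp(\Strloc_{\cTdisc}(\cS)_{/A}) \simeq A\alg \textrm{-} \Mod$: any spectrum object is represented by a sequence of split square-zero extensions $\{A \oplus \Omega^n M\}_{n \ge 0}$, each of which is local because $\pi_0(A \oplus \Omega^n M) \cong \pi_0(A) \oplus \pi_0(\Omega^n M)$ with $\pi_0(\Omega^n M)$ a square-zero ideal, so the non-units form the ideal $\mathfrak{m}_{\pi_0 A} \oplus \pi_0(\Omega^n M)$. Consequently the whole spectrum object lifts (uniquely up to equivalence) to $\Sp(\Strloc_{\cTzar}(\cS)_{/A})$. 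The main obstacle I anticipate is the pullback-stability of locality in the derived setting, but this is handled by the Mayer--Vietoris argument together with the observation that the failure of $\pi_0$ of a derived pullback to agree with the underived pullback of the $\pi_0$'s is nilpotent, hence invisible to locality.
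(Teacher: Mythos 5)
Your overall strategy is the same as the paper's: establish that the inclusion $i \colon \Strloc_{\cTzar}(\cS)_{/A} \hookrightarrow \Strloc_{\cTdisc}(\cS)_{/A}$ is fully faithful and preserves finite limits, pass to stabilizations, and obtain essential surjectivity by observing that $\Omega^\infty$ of any spectrum object is (on $\pi_0$) a split square-zero extension of $\pi_0(A)$, hence local. The paper handles the first two points by citing \cite[Remark 4.2.12]{DAG-V} and \cite[Lemma 1.11]{Porta_GAGA_2015} (plus the general fact, used repeatedly in \S\ref{subsec:modules_pregeometry}, that precomposition along a morphism of pregeometries commutes with limits), while you attempt direct arguments; your direct proof of fullness via $\pi_0(g)\inv(\mathfrak m_{B'}) = \pi_0(f)\inv(\mathfrak m_A) = \mathfrak m_B$ is correct, and your essential surjectivity argument matches the paper's (which reduces to $\pi_0$ via \cite[Remark 4.2.13]{DAG-V} and then invokes \cite[7.3.4.17]{Lurie_Higher_algebra}).

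The one genuine gap is in your Mayer--Vietoris argument for pullback-stability. You assert that the kernel of $\pi_0(B_1 \times_{B_0} B_2) \to \pi_0(B_1) \times_{\pi_0 B_0} \pi_0(B_2)$, i.e.\ the image of the boundary map $\partial \colon \pi_1(B_0) \to \pi_0(B_1 \times_{B_0} B_2)$, is a nilpotent ideal, but you give no argument for this. This is not an obvious consequence of the long exact sequence: it amounts to showing that the multiplication induced on the image of $\partial$ by the ring structure on $\pi_0(C)$ vanishes, which comes down to the non-unital multiplication on the fiber $\Omega B_0 = B_0[-1]$ of $C \to B_1 \times B_2$ being null in degree zero. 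This is plausible (and I believe true), but it requires a real argument about the multiplicative structure of fibers of ring maps, not just exactness. Moreover, even granting the nilpotence claim, you would still owe a verification that $\pi_0(C)$ surjects onto the \emph{underived} fiber product — which by the LES requires that $\pi_0(B_1) \oplus \pi_0(B_2) \to \pi_0(B_0)$ hits $\pi_0(B_0)$, another point not addressed. The paper avoids this entire analysis by working within the formal framework of pregeometries, where preservation of finite limits by the forgetful functor $\Strloc_{\cT}(\cX) \to \Strloc_{\cT'}(\cX)$ is established once and for all, so the local route you take here introduces nontrivial commitments that a self-contained proof would need to discharge.
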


\begin{proof}
	Combining \cite[Remark 4.2.12]{DAG-V} and \cite[Lemma 1.11]{Porta_GAGA_2015}, we deduce that the canonical functor $\Strloc_{\cTzar}(\cS)_{/A} \to \Strloc_{\cTdisc}(\cS)_{/A}$
	is fully faithful.
	It follows that the induced functor
	\[ \Sp(\Strloc_{\cTzar}(\cS)_{/A}) \to \Sp(\Strloc_{\cTdisc}(\cS)_{/A}) \]
	is fully faithful as well.
	It will be enough to show that it is essentially surjective.
	In other words, we have to prove that for every $M \in \Sp(\Strloc_{\cTdisc}(\cS)_{/A})$, $\Omega^\infty(M)$ is a local ring.
	It follows from \cite[Remark 4.2.13]{DAG-V} that this is a property that depends only on $\pi_0(\Omega^\infty(M))$, and \cite[7.3.4.17]{Lurie_Higher_algebra} shows that we can identify this with the split square-zero extension of $\pi_0(A)$ by $\pi_0(M)$.
	As $\pi_0(A)$ is local, the same goes for this split square-zero extension, thus completing the proof.
\end{proof}

\begin{prop} \label{prop:Psi_preserves_spectra}
	Let $M \colon \cS^{\mathrm{fin}}_* \to \Strloc_{\cTzar}(\cS)_{\cO\alg // \cO\alg}$ be a spectrum object.
	Then the composite $\overline{\Psi}_\cO \circ M$ defines again a spectrum object of $\Strloc_{\Tan}(\cS)_{\cO // \cO}$.
\end{prop}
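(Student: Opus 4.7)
The composite $\overline{\Psi}_\cO \circ M$ is reduced essentially for free: since $\overline{\Psi}_\cO$ is a left adjoint between pointed slice $\infty$-categories, it carries the zero object $\cO\alg$ to the zero object $\cO$. The substantive content of the proposition is therefore excisiveness. Using the description of spectrum objects as coCartesian sections of the tower $\cdots \xrightarrow{\Omega} \Strloc_{\cTzar}(\cS)_{\cO\alg //\cO\alg}$ (as in the discussion just before the statement), this reduces to showing that for every $n \ge 0$ the canonical comparison map
\begin{equation*}
\overline{\Psi}_\cO(\cO\alg \times_{M(n+1)} \cO\alg) \longrightarrow \cO \times_{\overline{\Psi}_\cO(M(n+1))} \cO
\end{equation*}
adjoint to the given equivalence $M(n) \simeq \Omega M(n+1)$ is itself an equivalence in $\Strloc_{\Tan}(\cS)_{\cO //\cO}$. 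In other words, $\overline{\Psi}_\cO$ must preserve the particular pullbacks defining the loop construction on the values of $M$, a property which for a general left adjoint would be unreasonable to expect.

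The plan is first to apply $\overline{\Phi}_\cO$, which is conservative and preserves all limits; this reduces the question to verifying that the induced map
\begin{equation*}
\overline{\Phi}_\cO \overline{\Psi}_\cO(\cO\alg \times_{M(n+1)} \cO\alg) \longrightarrow \cO\alg \times_{\overline{\Phi}_\cO \overline{\Psi}_\cO(M(n+1))} \cO\alg
\end{equation*}
is an equivalence of $\mathbb E_\infty$-algebras. At this stage I would invoke the relative flatness theorem \cref{prop:relative_flatness}: both unit maps $M(n+1) \to \overline{\Phi}_\cO \overline{\Psi}_\cO(M(n+1))$ and $\cO\alg \times_{M(n+1)} \cO\alg \to \overline{\Phi}_\cO \overline{\Psi}_\cO(\cO\alg \times_{M(n+1)} \cO\alg)$ are flat in the derived sense. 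Flatness of the unit for $M(n+1)$ makes the formation of the derived pullback $\cO\alg \times_{M(n+1)} \cO\alg$ compatible with base change along this map, and combining with flatness for $\cO\alg \times_{M(n+1)} \cO\alg$ itself one identifies both sides of the desired equivalence as the same flat $\cO\alg \times_{M(n+1)} \cO\alg$-algebra.

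The main obstacle is precisely this flat base change identification. To make it precise one uses the fact that, by the algebraic structure theorem \cite[7.3.4.14]{Lurie_Higher_algebra} together with the preceding lemma identifying $\Sp(\Strloc_{\cTzar}(\cS)_{/\cO\alg})$ with $\Sp(\Strloc_{\cTdisc}(\cS)_{/\cO\alg})$, the spectrum $M$ is classified by an honest $\cO\alg$-module $\mu(M)$ and each $M(n+1)$ is the split square-zero extension $\cO\alg \oplus \mu(M)[n+1]$; the pullback $\cO\alg \times_{M(n+1)} \cO\alg$ thus has a very transparent structure. The universal property of $\overline{\Psi}_\cO$ together with the flatness provided by \cref{prop:relative_flatness} then ensures that the relative analytification commutes with the formation of this particular derived pullback, which is exactly the equivalence we sought.
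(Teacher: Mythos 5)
Your opening reduction is correct (reducedness of $\overline{\Psi}_\cO\circ M$ is free, and applying the limit-preserving conservative $\overline{\Phi}_\cO$ reduces the claim about excisiveness to a statement about $(-)^{\mathrm{an}}=\overline{\Phi}_\cO\overline{\Psi}_\cO$), but the central flat base change step has a genuine gap. The paper takes a cleaner route: it proves that each unit map $M(n)\to\overline{\Phi}_\cO\overline{\Psi}_\cO(M(n))$ is actually an \emph{equivalence}. Flatness (\cref{prop:relative_flatness}) reduces this to an isomorphism on $\pi_0$, which is the content of \cref{prop:equivalence_discrete_modules} in the following subsection. Once each unit is an equivalence, one compares
\[
\begin{tikzcd}
M(n) \arrow{r} \arrow{d} & \Omega M(n+1) \arrow{d} \\
\overline{\Phi}_\cO\overline{\Psi}_\cO(M(n)) \arrow{r} & \Omega\overline{\Phi}_\cO\overline{\Psi}_\cO(M(n+1))
\end{tikzcd}
\]
(using that $\overline{\Phi}_\cO$ preserves limits) and concludes by two-out-of-three and conservativity of $\overline{\Phi}_\cO$ that $\overline{\Psi}_\cO(M(n))\to\Omega\overline{\Psi}_\cO(M(n+1))$ is an equivalence. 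This is formally immediate and requires no base change argument for the pullback itself.

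Your argument tries to prove directly that $(-)^{\mathrm{an}}$ commutes with the loop pullback, and this is where it goes wrong. The functor $(-)^{\mathrm{an}}$ is not the base change $-\otimes_{M(n+1)}M(n+1)^{\mathrm{an}}$; flat base change along $M(n+1)\to M(n+1)^{\mathrm{an}}$ computes $(\cO\alg\times_{M(n+1)}\cO\alg)\otimes_{M(n+1)}M(n+1)^{\mathrm{an}}$, which is not $(\cO\alg\times_{M(n+1)}\cO\alg)^{\mathrm{an}}$, and knowing that both of these are flat over $M(n)=\cO\alg\times_{M(n+1)}\cO\alg$ does not identify them. In fact flatness of the unit \emph{alone} cannot close the gap: if $A\to A^{\mathrm{an}}$ were a nontrivial flat extension, then, since the zero object $\cO\alg$ is fixed by $(-)^{\mathrm{an}}$ while $M(n+1)$ gets enlarged, the loop object $\cO\alg\times_{M(n+1)^{\mathrm{an}}}\cO\alg$ would genuinely differ from $(\cO\alg\times_{M(n+1)}\cO\alg)^{\mathrm{an}}$. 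Your concluding appeal to ``the universal property of $\overline{\Psi}_\cO$ together with flatness'' is not an argument; a left adjoint's universal property controls colimits, not this pullback. The ingredient you are missing is precisely the $\pi_0$-isomorphism (the Jacobian construction of \cref{prop:equivalence_discrete_modules}), which upgrades flatness of the unit to an equivalence and renders the whole assertion formal — and which, incidentally, is the only place where the hypothesis that $\cO$ is the germ of a derived $\mathbb{C}$-analytic space is actually used.
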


\begin{proof}
	It will be more convenient to reason in terms of the coCartesian fibrations $p \colon \cC \to \mathcal J$ and $q \colon \cD \to \mathcal J$ introduced above.
	We will therefore identify $M$ with a coCartesian section of $p$, and we will prove that $F_\cO \circ M$ is a coCartesian section of $q$.
	For every $n$, we apply \cref{prop:relative_flatness} to the object $M(n) \simeq \Omega^{\infty - n}(M) \in \Strloc_{\cTzar}(\cS)_{\cO\alg // \cO\alg}$ in order to deduce that the canonical map
	\[ M(n) \to \overline{\Phi}_\cO (\overline{\Psi}_\cO (M(n))) \]
	is flat in the derived sense.
	Therefore, in order to prove that it is an equivalence, it is sufficient to show that its $\pi_0$ is an equivalence.
	This will be proved in \cref{prop:equivalence_discrete_modules}.
	
	Assuming the result for the moment, we use the fact that $\overline{\Phi}_\cO$ commutes with limits to form the following commutative diagram in $\Strloc_{\cTzar}(\cS)_{\cO\alg // \cO\alg}$:
	\[ \begin{tikzcd}
	M(n) \arrow{d} \arrow{r} & \Omega M(n+1) \arrow{d} \\
	\overline{\Phi}_\cO (\overline{\Psi}_\cO (M(n))) \arrow{r} & \Omega \overline{\Phi}_\cO (\overline{\Psi}_\cO (M(n+1))) .
	\end{tikzcd} \]
	We already argued that the vertical morphisms are equivalences. Since $M$ was a coCartesian section of $p$ the top horizontal morphism is an equivalence as well.
	It follows that the bottom horizontal morphism is an equivalence, and since $\overline{\Phi}$ is conservative (see \cite[Proposition 1.9]{DAG-IX}), we conclude that the canonical map
	\[ \overline{\Psi}_\cO (M(n)) \to \overline{\Psi}_\cO (M(n+1)) \]
	is an equivalence too. Therefore $F_\cO \circ M$ is a coCartesian section of $q$.	
\end{proof}

\begin{cor} \label{cor:Psi_preserves_spectra}
	Let $(\cX, \cO)$ be a derived \canal space.
	Then the diagram
	\[ \begin{tikzcd}
	\Sp(\Strloc_{\cTdisc}(\cX)_{/\cO\alg}) \arrow{r}{\Psi_{\cO}} \arrow{d}{\Omega^\infty_{\mathrm{alg}}} & \Sp(\Strloc_{\Tan}(\cX)_{\cO}) \arrow{d}{\Omega^\infty_{\mathrm{an}}} \\
	\Strloc_{\cTdisc}(\cX)_{\cO\alg // \cO\alg} \arrow{r}{\overline{\Psi}_{\cO}} & \Strloc_{\Tan}(\cX)_{\cO // \cO}
	\end{tikzcd} \]
	commutes.
\end{cor}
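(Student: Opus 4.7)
The plan is to extend Proposition~\ref{prop:Psi_preserves_spectra}, which handles the stalk topos $\cS$, to the derived \canal topos $\cX$. The diagram's commutativity amounts to the assertion that $\Psi_\cO$ is computed level-wise on spectrum objects by $\overline{\Psi}_\cO$. Explicitly, given $M \in \Sp(\Strloc_{\cTdisc}(\cX)_{/\cO\alg})$, I present it via the inverse-limit description \cite[1.4.2.24]{Lurie_Higher_algebra} as a coCartesian section $\{M(n),\gamma_n\}$ of the coCartesian fibration $p \colon \cC \to \cJ$ associated to the $\Omega$-tower. Setting $N(n) \coloneqq \overline{\Psi}_\cO(M(n))$ with the transition maps induced by the Beck--Chevalley comparison $\overline{\Psi}_\cO \circ \Omega \to \Omega \circ \overline{\Psi}_\cO$, we obtain a (not obviously coCartesian) section of the analogous analytic-side fibration $q \colon \cD \to \cJ$. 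Once we prove that $N$ \emph{is} a coCartesian section, i.e.\ a spectrum, the universal property of $\Psi_\cO$ together with $\overline{\Phi}_\cO \circ \Omega^\infty_{\mathrm{an}} \simeq \Omega^\infty_{\mathrm{alg}} \circ \Phi_\cO$ identifies $N$ with $\Psi_\cO(M)$, and evaluation at $n=0$ yields
\[ \Omega^\infty_{\mathrm{an}}(\Psi_\cO(M)) \simeq N(0) = \overline{\Psi}_\cO(M(0)) = \overline{\Psi}_\cO(\Omega^\infty_{\mathrm{alg}}(M)), \]
as required.

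Next I would verify coCartesianness stalk-wise. By Lemma~\ref{lem:derived_canal_space_enough_points}, $\cX$ has enough points, so it suffices to show that each transition map $\overline{\Psi}_\cO(M(n)) \to \Omega\,\overline{\Psi}_\cO(M(n+1))$ becomes an equivalence after pulling back along every geometric point $x^{-1} \colon \cX \rightleftarrows \cS \colon x_*$. Corollary~\ref{cor:reduction_to_spaces}, combined with its unstable counterpart Theorem~\ref{thm:reduction_to_spaces}, furnishes the Beck--Chevalley equivalence $x^{-1} \circ \overline{\Psi}_\cO \simeq \overline{\Psi}_{x^{-1}\cO} \circ x^{-1}$; since $x^{-1}$ is left exact it commutes with $\Omega$. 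Consequently the image under $x^{-1}$ of our transition map is identified with the transition map
\[ \overline{\Psi}_{x^{-1}\cO}\bigl(x^{-1} M(n)\bigr) \longrightarrow \Omega\,\overline{\Psi}_{x^{-1}\cO}\bigl(x^{-1} M(n+1)\bigr) \]
attached to the spectrum object $x^{-1} M$ in $\Sp(\Strloc_{\cTdisc}(\cS)_{/x^{-1}\cO\alg})$, which is an equivalence by Proposition~\ref{prop:Psi_preserves_spectra}. Since $\cX$ has enough points, the original transition map is therefore an equivalence.

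The main obstacle is the Beck--Chevalley compatibility in the second step: the transition maps are defined as universal arrows comparing a left adjoint with a limit, and one must check that $x^{-1}$ transports both the universal maps defining $\overline{\Psi}_\cO$ and those defining the loop functor to the corresponding ones in $\cS$. This is exactly the content of the left adjointability packaged in Theorem~\ref{thm:reduction_to_spaces} and Corollary~\ref{cor:reduction_to_spaces}, which is where the machinery of \cref{subsec:stable_Morita} (and thus indirectly \cref{sec:left_adjointable}) is fully exploited. Everything else is formal: the identification $N \simeq \Psi_\cO(M)$ follows from the adjunction computation
\[ \Map(N,K) \simeq \lim_n \Map(\overline{\Psi}_\cO(M(n)), K(n)) \simeq \lim_n \Map(M(n), \overline{\Phi}_\cO(K(n))) \simeq \Map(M, \Phi_\cO(K)), \]
valid for any $K = \{K(n),\delta_n\} \in \Sp(\Strloc_{\Tan}(\cX)_{/\cO})$.
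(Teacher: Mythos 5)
Your proof is correct and follows essentially the same route as the paper's: both hinge on (i) the observation that $\overline{\Psi}_\cO$ applied level-wise to a spectrum object is again excisive, (ii) the reduction to stalks via \cref{lem:derived_canal_space_enough_points} together with the Beck--Chevalley compatibility of \cref{thm:reduction_to_spaces}/\cref{cor:reduction_to_spaces}, and (iii) \cref{prop:Psi_preserves_spectra} at the stalk level. The only real difference is in the formal wrapper at the end: the paper identifies $\Psi_\cO$ with $P_1 \circ (\overline{\Psi}_\cO \circ -)$ (first Goodwillie derivative) and cites \cite[6.1.1.35]{Lurie_Higher_algebra} to conclude $P_1(\overline{\Psi}_\cO \circ M) \simeq \overline{\Psi}_\cO \circ M$ once excisiveness is known, whereas you carry out an explicit mapping-space computation through the inverse-limit description and the $\overline{\Psi}_\cO \dashv \overline{\Phi}_\cO$ adjunction — which in effect rederives that characterization of the Goodwillie derivative in this special case. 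You also spell out the stalk-wise Beck--Chevalley step more explicitly than the paper, which compresses it into the phrase ``since $\cX$ has enough points, we can apply \cref{prop:Psi_preserves_spectra}''; your version makes the dependence on \cref{cor:reduction_to_spaces} visible. Both arguments are sound.
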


\begin{proof}
	The functor $\Psi_\cO$ is, by definition, the first Goodwillie derivative of $\overline{\Psi}_\cO$.
	We can therefore identify $\Psi_\cO$ with the composite
	\[ \begin{tikzcd}
	\mathrm{Exc}_*(\cS^{\mathrm{fin}}_*, \Strloc_{\cTdisc}(\cX)_{/\cO\alg}) \arrow[dashed]{r}{\Psi_\cO} \arrow{d} & \mathrm{Exc}_*(\cS^{\mathrm{fin}}_*, \Strloc_{\Tan}(\cX)_{/\cO}) \\
	\Fun(\cS^{\mathrm{fin}}_*, \Strloc_{\cTdisc}(\cX)_{/\cO\alg}) \arrow{r}{\overline{\Psi}_\cO \circ -} & \Fun(\cS^{\mathrm{fin}}_*, \Strloc_{\Tan}(\cX)_{/\cO\alg}) \arrow{u}[swap]{P_1}
	\end{tikzcd} \]
	where the left vertical morphism is the natural inclusion and $P_1$ denotes its left adjoint (see \cite[6.1.1.10]{Lurie_Higher_algebra}).
	Since $\cX$ has enough points, we can apply \cref{prop:Psi_preserves_spectra} to deduce that if $M \in \mathrm{Exc}_*(\cS^{\mathrm{fin}}_*, \Strloc_{\cTdisc}(\cX)_{/\cO\alg})$, then $\overline{\Psi}_\cO \circ M$ is a strongly excisive functor on the nose.
	Therefore \cite[6.1.1.35]{Lurie_Higher_algebra} shows that $\Psi_\cO(M) \simeq P_1( \overline{\Psi}_\cO \circ M ) \simeq \overline{\Psi}_\cO \circ M$.
	The conclusion follows.
\end{proof}

\begin{rem}
	Actually, \cref{prop:Psi_preserves_spectra} implies that the square
	\[ \begin{tikzcd}
	\Sp(\Strloc_{\cTdisc}(\cX)_{/\cO\alg}) \arrow{d}{\Omega^\infty_{\mathrm{alg}}} & \Sp(\Strloc_{\Tan}(\cX)_{\cO}) \arrow{l}[swap]{\Phi_{\cO}} \arrow{d}{\Omega^\infty_{\mathrm{an}}} \\
	\Strloc_{\cTdisc}(\cX)_{\cO\alg // \cO\alg} & \Strloc_{\Tan}(\cX)_{\cO // \cO} \arrow{l}[swap]{\overline{\Phi}_{\cO}}
	\end{tikzcd} \]
	is left adjointable.
\end{rem}

We are finally in position to achieve the proof of \cref{thm:equivalence_of_modules}.
Let $\eta \colon \mathrm{Id} \to \Psi_\cO \circ \Phi_\cO$ be the unit of the adjunction $\Psi_\cO \dashv \Phi_\cO$.
We already argued that it is enough to show that $\eta$ is an equivalence.

Now let $M \in \Strloc_{\cTdisc}(\cS)_{\cO\alg // \cO\alg} \simeq \cO\alg \textrm{-} \Mod$ be the spectrum object representing $\cO\alg$ seen as module over itself.
Since $M$ is a compact generator of $\cO\alg \textrm{-} \Mod$ and since both $\Psi_\cO$ and $\Phi_\cO$ commute with arbitrary colimits, it will be enough to prove that $\eta_M \colon M \to \Phi_\cO(\Psi_\cO(M))$ is an equivalence.
Observe that $M$ is connective; \cref{cor:Phi_t_exact} implies that $\Phi_\cO(\Psi_\cO(M))$ is connective as well.
As consequence, we deduce that $\eta_M$ is an equivalence if and only if $\Omega^\infty_{\mathrm{alg}}(\eta_M)$ is an equivalence.
Invoking \cref{cor:Psi_preserves_spectra}, we are reduced to prove that the map
\[ \Omega^\infty_{\mathrm{alg}}(M) \to \Omega^\infty_{\mathrm{alg}}( \overline{\Phi}_\cO (\overline{\Psi}_\cO (M))) \simeq \overline{\Phi}_\cO (\Omega^{\infty}_{\mathrm{an}}(\overline{\Psi}_\cO (M))) \simeq \overline{\Psi}_\cO (\overline{\Psi}_\cO (\Omega_{\mathrm{alg}}^\infty(M))) \]
is an equivalence.
Using \cref{prop:relative_flatness}, we see that this map is flat in the derived sense.
Hence, it will be enough to show that it is an equivalence on $\pi_0$, and this is once more a consequence of \cref{prop:equivalence_discrete_modules}.
Modulo this fact, the proof of \cref{thm:equivalence_of_modules} is now achieved.

\subsection{The case of discrete local analytic rings}

Let us maintain the notations introduced in the previous subsection.
We will complete the proof of \cref{thm:equivalence_of_modules} by proving that for every $\cO \in \Str_{\Tan}(\cS)$ the canonical map
\[ \pi_0(M(n)) \to \pi_0 (\overline{\Phi}_{\cO}(\overline{\Psi}_{\cO}(M(n)) )) \]
is an isomorphism.
Since $\Tan$ is compatible with $0$-truncations, we see that $\pi_0 (\overline{\Phi}_{\cO}(\overline{\Psi}_{\cO}(M(n)) )) \simeq  \overline{\Phi}_{\cO}(\pi_0 \overline{\Psi}_{\cO}(M(n)) )$.

Using \cref{lem:truncation_and_overcategories}.(2) we can identify $\Strloc_{\Tan}(\rSet)_{\pi_0 \cO // \pi_0 \cO}$ (resp.\ $\Strloc_{\cTdisc}(\rSet)_{\pi_0 \cO\alg // \pi_0 \cO\alg}$) with the full subcategory of $\Strloc_{\Tan}(\cS)_{\cO // \cO}$ (resp.\ $\Strloc_{\cTdisc}(\cS)_{\cO\alg // \cO\alg}$) spanned by discrete objects.
Let us denote by
\[ \overline{\Phi}^0_\cO \colon \Strloc_{\Tan}(\rSet)_{\pi_0 \cO // \pi_0 \cO} \to \Strloc_{\cTdisc}(\rSet)_{\pi_0 \cO\alg // \pi_0 \cO\alg} \]
the functor induced by precomposition with $\varphi$.
Since both $\cTdisc$ and $\Tan$ are compatible with $0$-truncations, we see that the diagram
\[ \begin{tikzcd}
\Strloc_{\cTdisc}(\cS)_{\cO\alg // \cO\alg} & \Strloc_{\Tan}(\cS)_{\cO // \cO} \arrow{l}[swap]{\overline{\Phi}_\cO} \\
\Strloc_{\cTdisc}(\rSet)_{\pi_0 \cO\alg // \pi_0 \cO\alg} \arrow{u}[swap]{i} & \Strloc_{\Tan}(\rSet)_{\pi_0 \cO // \pi_0 \cO} \arrow{u}[swap]{j} \arrow{l}[swap]{\overline{\Phi}^0_\cO}
\end{tikzcd} \]
commute, where the vertical morphism are the fully faithful inclusions we discussed above.
Let us denote by $\overline{\Psi}^0_\cO$ the left adjoint of $\overline{\Phi}^0_\cO$, which exists for the same formal reasons guaranteeing the existence of $\overline{\Psi}_\cO$.
Then, passing to left adjoints in the above diagram we conclude that the following diagram
\[ \begin{tikzcd}
\Strloc_{\cTdisc}(\cS)_{\cO\alg // \cO\alg} \arrow{d}{\pi_0} \arrow{r}{\overline{\Psi}_\cO} & \Strloc_{\Tan}(\cS)_{\cO // \cO} \arrow{d}{\pi_0} \\
\Strloc_{\cTdisc}(\rSet)_{\pi_0 \cO\alg // \pi_0 \cO\alg} \arrow{r}{\overline{\Psi}^0_\cO} & \Strloc_{\Tan}(\rSet)_{\pi_0 \cO // \pi_0 \cO}
\end{tikzcd} \]
commutes as well.
Let us finally remark that a left adjoint to $\overline{\Phi}^0$ can be explicitly described by $\overline{\Psi}^0 \coloneqq \pi_0 \circ \overline{\Psi}_\cO \circ i$.
Indeed, if $R \in \Strloc_{\cTdisc}(\rSet)_{\pi_0 \cO\alg // \pi_0 \cO\alg}$ and $A \in \Strloc_{\Tan}(\rSet)_{\pi_0 \cO // \pi_0 \cO}$, then
\begin{align*}
\Map(\pi_0( \overline{\Psi}_\cO(i(R)) ), A) & \simeq \Map(\overline{\Psi}_\cO(i(R)), j(A)) \\
& \simeq \Map(i(R), \overline{\Phi}_\cO(j(A))) \\
& \simeq \Map(i(R), i(\overline{\Phi}^0_\cO(A) ) \simeq \Map(R, \overline{\Phi}^0_\cO(A))
\end{align*}

Thus, coming back to our initial setting, we conclude from this discussion that
\[ \pi_0 \overline{\Psi}_\cO(M(n)) \simeq \pi_0( \overline{\Psi}_\cO(\pi_0(M(n)))) . \]
Observe that for $n > 0$ we have
\[ \pi_0(M(n)) \simeq \cO\alg . \]
Thus, we only need to take care of the case $n = 0$.
Now, $\pi_0(M(0))$ can be identified with the split square-zero extension $\pi_0 \cO\alg \oplus \pi_0 \cO\alg$ (see \cite[7.3.4.17]{Lurie_Higher_algebra}).
It will therefore be sufficient to prove the following result:

\begin{prop} \label{prop:equivalence_discrete_modules}
	Let $A \in \Strloc_{\Tan}(\rSet)$.
	Let $M \in A\alg \textrm{-} \Mod^\heartsuit$ and let $A_M \coloneqq A\alg \oplus M$ be the split square-zero extension of $A$ by $M$.
	The canonical map $A_M \to \overline{\Phi}^0_A ( \overline{\Psi}^0_A(A_M) )$ is an equivalence.
\end{prop}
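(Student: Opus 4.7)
The plan is to prove the proposition by constructing $\overline{\Psi}^0_A(A_M)$ explicitly as the \emph{analytic split square-zero extension} of $A$ by $M$, which I denote $A_M\an$, and then observing that its underlying $\cTdisc$-structure recovers $A_M$ on the nose. To build $A_M\an$ I would define a $\Tan$-structure on the set $A\alg \oplus M$ as follows: for every admissible open $U \subset \mathbb C^n$, set
\[ A_M\an(U) \coloneqq \{ (a_i + m_i)_{i=1}^n \in (A\alg \oplus M)^n : (\overline{a}_1, \ldots, \overline{a}_n) \in U \} , \]
where $\overline{a}_i$ denotes the image in the residue field $A\alg / \mathfrak m_A \simeq \mathbb C$; and for a holomorphic $f \colon U \to V \subset \mathbb C^k$, define its action by the first-order Taylor formula
\[ (a_i + m_i)_{i=1}^n \; \longmapsto \; \bigl( f_j(\underline{a}) + \textstyle\sum_i \partial_i f_j(\underline{a}) \cdot m_i \bigr)_{j=1}^k , \]
where $f_j(\underline a)$ and $\partial_i f_j(\underline a)$ are computed using the analytic structure on $A$, and $\partial_i f_j(\underline a) \cdot m_i$ uses the $A\alg$-module structure on $M$. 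Functoriality follows from the chain rule (legitimate since $M^2 = 0$ kills higher-order terms), and preservation of finite products and of pullbacks along admissible morphisms is straightforward. The resulting structure is local since $(a, m) \in A_M$ is invertible if and only if $a$ is invertible in $A$, and the canonical inclusion $A \hookrightarrow A_M\an$ and projection $A_M\an \twoheadrightarrow A$ make $A_M\an$ into an object of $\Strloc_{\Tan}(\rSet)_{A // A}$.

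Next I would establish the universal property that for every $C \in \Strloc_{\Tan}(\rSet)_{A // A}$ the forgetful map
\[ \Map_{A // A}(A_M\an, C) \;\longrightarrow\; \Map_{A\alg // A\alg}(A_M, \overline{\Phi}^0_A(C)) \]
is a bijection. Injectivity is immediate because a morphism of $\Tan$-structures is determined by its effect on $\mathbb C \in \Tan$, hence by the underlying map of sets. For surjectivity, given $\phi \colon A_M \to C\alg$ a morphism of $A\alg$-algebras over and under $A\alg$, the image $\phi(M)$ lies in $\ker(C\alg \to A\alg)$ and satisfies $\phi(m)\phi(m') = \phi(m m') = 0$. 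For any admissible $f \colon U \to \mathbb C$ and any $(a_i + m_i) \in A_M\an(U)$, one must check that
\[ f\bigl(s(a_1) + \phi(m_1), \ldots, s(a_n) + \phi(m_n)\bigr) \; = \; s(f(\underline a)) + \textstyle\sum_i s(\partial_i f(\underline a)) \cdot \phi(m_i) \]
holds inside $C$, where $s \colon A \to C$ is the given section. This is the first-order Taylor expansion with square-zero perturbations, and it is a formal consequence of the $\Tan$-structure on $C$: the holomorphic identity $f(x+y) - f(x) = \sum_i y_i \, g_i(x, y)$ with $g_i(x, 0) = \partial_i f(x)$ is an equality of morphisms in $\Tan$, hence is preserved by the functor $C \colon \Tan \to \rSet$, and the square-zero condition annihilates the remainder $\sum_i \phi(m_i) \bigl( g_i(s(\underline a), \phi(\underline m)) - \partial_i f(s(\underline a)) \bigr)$.

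Granted the universal property, $\overline{\Psi}^0_A(A_M) \simeq A_M\an$ and $\overline{\Phi}^0_A(A_M\an) = A\alg \oplus M = A_M$ by construction, so the unit map is the identity on underlying sets and hence an isomorphism. The only substantive obstacle in the above sketch is the Hadamard-style argument in the surjectivity step; but because $\Tan$-structures preserve every commutative diagram internal to $\Tan$, this reduces to the tautological holomorphic identity just recalled, and no further analytic input is required beyond the axioms.
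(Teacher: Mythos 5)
Your proposal follows essentially the same route as the paper's: you build an explicit $\Tan$-structure on $A\alg\oplus M$ via the first-order Taylor action (what the paper calls the Jacobian construction $\Jac_A(M)$, see \cref{jacobian_construction}), then verify the adjunction property by lifting an algebraic morphism $\phi\colon A_M\to C\alg$ to an analytic one and checking naturality one admissible $f\colon U\to\mathbb C$ at a time, using the square-zero condition to kill the higher-order remainder. So the overall strategy is the right one.

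The one place where you are too cavalier is the closing claim that the Hadamard step is ``a formal consequence\ldots\ no further analytic input is required beyond the axioms.'' The decomposition $f(x+y)-f(x)=\sum_i y_i\,g_i(x,y)$ is \emph{not} an identity of morphisms living on $U\times\mathbb C^n$: the functions $g_i$ exist only on a suitable neighborhood of $\{y=0\}$, and $\Tan$ only sees such neighborhoods as objects if you fix them explicitly. Moreover, to feed this identity into the $\Tan$-structure of $C$ you must first know that the element you want to evaluate actually lies in $C$ of a domain on which the $g_i$ are defined. The paper handles this by localizing: it picks polydisks $U_1\ni\sigma_A(a)$ and $U_2\ni 0$ with $U_1+U_2\subset U$, uses the functional spectrum of $C$ to check that $(a,0)\in C(U_1)$ and $(0,\phi(m))\in C(U_2)$, and only then expands $f(z+w)$ as a power series on $U_1\times U_2$. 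That localization is a genuine (if routine) analytic input resting on the functional-spectrum description of local $\Tan$-structures, not a purely formal consequence of the pregeometry axioms; your sketch is right, though, that once one has localized, the square-zero condition finishes the computation. A minor further point: your description of $A_M\an(U)$ as tuples whose residues land in $U$ tacitly assumes $A\alg/\mathfrak m_A\simeq\mathbb C$; the paper's formulation $\Jac_A(M)(U)=A(U)\times M^n$ is cleaner and holds for an arbitrary $A\in\Strloc_{\Tan}(\rSet)$ as the proposition requires.
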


The proof of this proposition passes through the following rather explicit construction, that will be handy also in the future:

\begin{construction} \label{jacobian_construction}
	Let $A \in \Strloc_{\Tan}(\rSet)$ and let $M \in A\alg \textrm{-} \Mod^\heartsuit$.
	We define a functor $\Jac_A(M) \colon \Tan \to \rSet$ as follows.
	If $U \subset \mathbb C^n$ is an open subset, we define
	\[ \Jac_A(M)(U) \coloneqq A(U) \times M^n  \]
	If $f \colon U \to V$ is a holomorphic map, with $U \subset \mathbb C^n$ and $V \subset \mathbb C^m$, we define
	\[ \Jac_A(M)(f) \colon \Jac_A(M)(U) \to \Jac_A(M)(V) \]
	in the following way. First, we associate to $f$ its jacobian function
	\[ \Jac(f) \colon U \to \mathbb \mathrm{M}_{n,m}(\mathbb C) \simeq \mathbb C^{nm} , \]
	where $\mathrm M_{n,m}(\mathbb C)$ denotes the $n$-by-$m$ matrices with values in $\mathbb C$.
	Applying $A$ we obtain a morphism
	\[ A(\Jac(f)) \colon A(U) \to A(\mathbb C^{nm}) \simeq \mathrm{M}_{n,m}(A\alg) \]
	Now, if $t \in M^n$ is a column vector, we obtain a map
	\[ A(U) \times M^n \to M^m \]
	defined by
	\[ (a,t) \mapsto A(\Jac(f))(a) \cdot t \]
	We then set
	\[ \Jac_A(M)(f)(a,t) \coloneqq (A(f)(a), A(\Jac(f))(a) \cdot t) \]
	The chain rule for the jacobian of a holomorphic function shows that in this way we indeed obtain a functor
	\[ \Jac_A(M) \colon \Tan \to \rSet . \]
	We will refer to $\Jac_A(M)$ as the \emph{jacobian construction of $M$ (relative to $A$)}.
\end{construction}

The next lemma collects the most basic facts about $\Jac_A(M)$:

\begin{lem} \label{lem:jacobian_construction}
	Let $A \in \Str_{\Tan}(\rSet)$.
	\begin{enumerate}
		\item The assignment $M \mapsto \Jac_A(M)$ defines a functor
		\[ \Jac_A \colon A \textrm{-} \Mod^\heartsuit \to \Fun(\Tan, \rSet). \]
		\item For every $M \in A \textrm{-} \Mod^\heartsuit$ there are (local) natural transformation $\pi \colon \Jac_A(M) \to A$ and $s \colon A \to \Jac_A(M)$.
		\item For every $M \in A \textrm{-} \Mod^\heartsuit$, $\Jac_A(M)$ is a $\Tan$-structure in $\rSet$.
		\item For every $M \in A \textrm{-} \Mod^\heartsuit$, $\Jac_A(M)\alg$ is canonically identified with the split square-zero extension $A\alg \oplus M$, and under this identification $\pi\alg$ becomes the projection $A\alg \oplus M \to A\alg$, while $s\alg$ becomes the null derivation.
	\end{enumerate}
\end{lem}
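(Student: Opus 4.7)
The strategy is to unwind Construction \ref{jacobian_construction} and verify each of the four assertions by direct computation; no nontrivial machinery is needed beyond the chain rule, which is already invoked to see that $\Jac_A(M)$ itself is a functor out of $\Tan$.

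For (1), given a morphism $g \colon M \to N$ in $A\textrm{-}\Mod^\heartsuit$, I set $\Jac_A(g)(U)(a,t) \coloneqq (a, g^{\oplus n}(t))$ on $U \subset \mathbb{C}^n$. Naturality in $\Tan$ reduces to the identity $g\bigl(A(\Jac(f))(a) \cdot t\bigr) = A(\Jac(f))(a) \cdot g^{\oplus n}(t)$, which holds because the entries of $A(\Jac(f))(a)$ lie in $A\alg$ and $g$ is $A\alg$-linear. For (2), define $\pi(U) \colon A(U) \times M^n \to A(U)$ as the projection and $s(U) \colon A(U) \to A(U) \times M^n$ by $a \mapsto (a,0)$; naturality of $s$ is governed solely by the identity $A(\Jac(f))(a) \cdot 0 = 0$. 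Locality amounts to checking that for every open immersion $V \hookrightarrow U$ in $\Tan$ the squares induced by $\pi$ (resp.~$s$) over $V$ and $U$ are pullbacks, which is immediate from the product decomposition $\Jac_A(M)(W) = A(W) \times M^n$ together with the fact that open immersions preserve the ambient dimension, so the $M^n$-factor matches on both sides.

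For (3), the three defining axioms of a $\Tan$-structure must be checked. Preservation of finite products follows from $A(\mathbb{C}^{n+m}) \times M^{n+m} \simeq (A(\mathbb{C})^n \times M^n) \times (A(\mathbb{C})^m \times M^m)$ together with the corresponding property of $A$. For preservation of pullbacks along an admissible morphism $j \colon V \hookrightarrow U$, observe that $\Jac(j)$ is the identity matrix, so the map $\Jac_A(M)(V) \to \Jac_A(M)(U)$ acts as the identity on the $M^n$-factor; the desired pullback preservation then reduces at once to the corresponding property of $A$. Admissible covers are sent to effective epimorphism families by the same reasoning, since the relevant maps are obtained by multiplying the corresponding maps for $A$ with the identity on $M^n$, and effective epimorphism families are stable under products with the identity in an $\infty$-topos.

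Finally, for (4), evaluation at $U = \mathbb{C}$ gives $\Jac_A(M)\alg(\mathbb{C}) = A\alg \oplus M$ as abelian objects, and the multiplicative structure is determined by the map $m \colon \mathbb{C}^2 \to \mathbb{C}$, whose Jacobian at $(x,y)$ equals the row vector $(y,x)$. Unwinding the definition then yields $\Jac_A(M)(m)\bigl((a_1,a_2),(t_1,t_2)\bigr) = (a_1 a_2,\, a_2 t_1 + a_1 t_2)$, which is precisely the split square-zero multiplication on $A\alg \oplus M$; the identifications of $\pi\alg$ with the projection and $s\alg$ with the null derivation are then tautological. The main obstacle is part (3), where one must keep careful track of how the $M^n$-factors transform across morphisms of varying dimension in order to identify the categorical pullbacks correctly; once this bookkeeping is set up, the remainder of the argument is a mechanical transfer from the already-known $\Tan$-structure properties of $A$.
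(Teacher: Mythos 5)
Your argument is correct and closely parallels the paper's proof: you use the same two key observations, namely that the Jacobian of an open immersion is the identity matrix (giving locality of $\pi$) and that the Jacobian of the multiplication map $m(z,w) = zw$ is the row vector $(w,z)$ (giving the split square-zero multiplication rule). The one genuine difference is in part (3): the paper only establishes that $\Jac_A(M)$ preserves finite products and that $\pi$ is a local natural transformation to the $\Tan$-structure $A$, and then invokes the characterization of $\Tan$-structures from Proposition 1.11 of the companion GAGA paper to conclude, whereas you verify the axioms of a $\Tan$-structure directly, including preservation of admissible pullbacks (tracking the possibly distinct ambient dimensions on the two sides) and of admissible covers. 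Both routes are sound; the paper's citation offloads the remaining axioms to a previously established result, while your version is self-contained, and you also supply the proof of (1), which the paper leaves as an exercise.
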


\begin{proof}
	We leave the first statement as an exercise to the reader.
	We define the natural transformations $\pi$ and $s$ as follows. For every open subset $U \subset \mathbb C^n$, we set
	\[ \pi_U(a,m) \coloneqq a \in A(U), \qquad s_U(a) \coloneqq (a,0) \in \Jac_A(M)(U) = A(U) \times M^n \]
	It is straightforward to check that both $\pi$ and $s$ are natural transformations.
	If $V \subset \mathbb C^n$ is an open subset and $j \colon U \subset V \subset \mathbb C^n$ is an open immersion, the jacobian function $\mathrm{Jac}(j)$ is the constant function associated to the identity matrix in $\mathrm M_n(\mathbb C)$.
	Therefore the function
	\[ \Jac_A(M)(j) \colon \Jac_A(M)(U) \to \Jac_A(M)(V) \]
	is the monomorphism $(a,m) \mapsto (a,m)$. This shows that the square
	\[ \begin{tikzcd}
	\Jac_A(M)(U) \arrow{r} \arrow{d} & \Jac_A(M)(V) \arrow{d} \\
	A(U) \arrow{r} & A(V)
	\end{tikzcd} \]
	is a pullback square.
	Therefore, $\pi$ is a local transformation.
	Since $\Jac_A(M)$ commutes with products by definition, it follows that $\Jac_A(M)$ is a $\Tan$-structure (the reader can consult the proof of \cite[Proposition 1.11]{Porta_GAGA_2015}).
	It follows that $s$ is a local transformation as well.
	
	We are left to prove point (4). The only thing we need to prove is that the multiplicative structure on $\Jac_A(M)\alg$ coincides with the one on the split square-zero extension on $A\alg \oplus M$.
	Consider the multiplication
	\[ m \colon \mathbb C \times \mathbb C \to \mathbb C , \]
	$m(z,w) = zw$. Its jacobian is the function $\Jac(m) \colon \mathbb C \times \mathbb C \to \mathrm M_{2, 1}(\mathbb C)$ defined by
	\[ (z,w) \mapsto (w,z) \]
	Therefore, the multiplication on $\Jac_A(M)(\mathbb C)$ is given by the rule
	\[ ((a_1, t_1), (a_2, t_2)) \mapsto (a_1 a_2, a_2 t_1 + a_1 t_2) \]
	which we recognize being the multiplication rule on the split square-zero extension $A\alg \oplus M$.
	The rest of the proof is straightforward.
\end{proof}

\cref{prop:equivalence_discrete_modules} will be a direct consequence of the next proposition:

\begin{prop}
	Let $A \in \Str_{\Tan}(\rSet)$ and let $M \in A\alg \textrm{-} \Mod^\heartsuit$.
	Then there exists a canonical identification $\Jac_A(M) \simeq \overline{\Psi}^0_A(A\alg \oplus M)$.
\end{prop}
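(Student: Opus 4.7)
The plan is to verify directly that $\Jac_A(M)$ satisfies the universal property of $\overline{\Psi}^0_A(A\alg \oplus M)$. By \cref{lem:jacobian_construction}, we already know that $\Jac_A(M)$ is a local $\Tan$-structure whose algebraic part is canonically the split square-zero extension $A\alg \oplus M$, and that the local transformations $\pi$ and $s$ promote it to an object of $\Strloc_{\Tan}(\rSet)_{A // A}$. It therefore suffices to show that for every $B \in \Strloc_{\Tan}(\rSet)_{A // A}$ the map
\[ \Map_{A // A}(\Jac_A(M), B) \longrightarrow \Map_{A\alg // A\alg}(A\alg \oplus M, B\alg) \]
induced by $- \circ \varphi$ is a bijection.

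To construct the inverse, I would start from a local morphism $f\alg \colon A\alg \oplus M \to B\alg$ over $A\alg$. Writing $N \coloneqq \ker(B\alg \to A\alg)$, the splitting $B\alg \simeq A\alg \oplus N$ turns $f\alg$ into the datum of an $A\alg$-linear map $\theta \colon M \to N$. A key observation is that $\theta(t_1)\theta(t_2) = 0$ in $B\alg$ for all $t_1, t_2 \in M$, because $f\alg$ is a ring homomorphism and $t_1 t_2 = 0$ in the split square-zero extension $A\alg \oplus M$. Next, exploiting the locality of $\Jac_A(M) \to A$ and of $B \to A$, which for each open $U \subset \mathbb C^n$ yield pullback presentations
\[ \Jac_A(M)(U) = A(U) \times_{(A\alg)^n} (A\alg \oplus M)^n, \qquad B(U) = A(U) \times_{(A\alg)^n} (B\alg)^n , \]
I would define the candidate $\phi_U(a, t) \coloneqq (a, \bar a + \theta(t))$, where $\bar a \in (A\alg)^n$ denotes the image of $a$ under the inclusion $A(U) \hookrightarrow (A\alg)^n$.

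The crux of the argument will be verifying naturality of $\phi$ with respect to an arbitrary holomorphic map $f \colon U \to V$ with $U \subset \mathbb C^n$ and $V \subset \mathbb C^m$. After unwinding the pullback presentations above, this reduces to the Taylor expansion identity
\[ B(f)(\bar a + \theta(t)) = A(f)(\bar a) + A(\Jac(f))(\bar a) \cdot \theta(t) \quad \text{in } (B\alg)^m . \]
To establish it, I would reduce componentwise to the case $m = 1$, and then argue as follows: any $\mathbb C$-algebra homomorphism from the ring $\mathcal O(U)$ of holomorphic functions on $U$ to $B\alg$ whose composition with the projection $B\alg \to A\alg$ is evaluation at $\bar a$ is uniquely determined, modulo the square-zero ideal spanned by $\theta(M)$, by the image of the coordinate functions. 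Since $\theta(M) \cdot \theta(M) = 0$ inside $B\alg$, only the zeroth and first order terms of the Taylor expansion of $f$ at $\bar a$ survive, which recovers precisely the jacobian formula on the right-hand side.

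Uniqueness of the extension $\phi$ is immediate, since $\phi_U$ is forced by the universal property of the pullback presentation of $B(U)$. The anticipated main obstacle is the Taylor expansion identity of the previous paragraph: this is the step where the analytic nature of the $\Tan$-structure $B$ is genuinely invoked, through the vanishing $\theta(M) \cdot \theta(M) = 0$ that allows the truncation of the Taylor expansion at first order.
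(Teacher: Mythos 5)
Your proposal follows essentially the same route as the paper: verify the universal property directly, construct the candidate inverse from the induced map $\theta \colon M \to N$ together with the pullback presentations of $\Jac_A(M)(U)$ and $B(U)$ given by locality, and establish naturality by reducing to a holomorphic $f \colon U \to \mathbb C$ and invoking a Taylor expansion truncated by the square-zero vanishing $\theta(M)\cdot\theta(M) = 0$. The paper handles injectivity via conservativity and faithfulness of $\overline{\Phi}^0_A$, where you appeal to the uniqueness forced by the pullback presentation of $B(U)$; these are interchangeable.

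One caveat about the final step, which you correctly flag as the crux. The argument you sketch --- that a $\mathbb C$-algebra homomorphism $\mathcal O(U) \to B\alg$ lying over ``evaluation at $\bar a$'' is determined modulo $\theta(M)$ by the images of coordinate functions --- does not directly compute $B(f)\bigl((\bar a, \theta(t))\bigr)$: the map $B(f)$ is not freely chosen but furnished by the $\Tan$-structure, $\mathcal O(U)$ is not freely generated by coordinates, and $\bar a \in (A\alg)^n$ is not a point of $U$ at which one can Taylor-expand. The paper instead passes to the spectral value $p = \sigma_A(a) \in U$ via the functional spectrum of \cite[\S 2.1]{Porta_GAGA_2015}, chooses polydisk neighborhoods $U_1 \ni p$, $U_2 \ni 0$ with addition $U_1 \times U_2 \to U$, decomposes $(a,r) = (a,0) + (0,r)$ accordingly, and applies functoriality of $B$ to the identity $\widetilde f(z,w) = f(z+w) = f(z) + \sum_i \partial_i f(z) w_i + \sum_{i,j} g_{ij}(z,w) w_i w_j$. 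Only then does the square-zero relation kill the quadratic terms. So your underlying idea (Taylor expansion truncated at first order) is exactly right, but making it precise requires the functional-spectrum and addition-morphism mechanism that your ``universal property of $\mathcal O(U)$'' phrasing elides.
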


\begin{proof}
	It will be enough to show that for every $B \in \Strloc_{\Tan}(\cS)_{A // A}$ composition with the identification $A\alg \simeq \Jac_A(M)\alg$ provided by \cref{lem:jacobian_construction} induces an isomorphism
	\[ \Hom_{A // A} (\Jac_A(M), B) \to \Hom_{A\alg // A\alg}(A\alg \oplus M, B\alg) . \]
	Since the forgetful functor $\overline{\Phi}'_0$ is conservative and commutes with limits, it is faithful.
	Therefore the above morphism is a monomorphism.
	We are left to show that it is surjective. Let $\alpha \colon A\alg \oplus M \to B\alg$ be a morphism in $\CRing_{A\alg // A\alg}$.
	We define a lifting $\widetilde{\alpha} \colon \Jac_A(M) \to B$ as follows.
	Denote by $q \colon B  \to A$ the structural morphism and set
	\[ N \coloneqq \ker(q\alg \colon B\alg \to A\alg) \in A\alg \textrm{-} \Mod^\heartsuit \]
	The morphism $\alpha$ induces a map of $A\alg$-modules
	\[ \beta \colon M \to N \]
	Since $q$ is a local transformation, we see that for every open $U \subset \mathbb C^n$ one has
	\[ B(U) \simeq A(U) \times N^n . \]
	Using this remark, we define $\widetilde{\alpha}_U \colon \Jac_A(M)(U) \to B(U)$ as
	\[ \widetilde{\alpha}_U(a,t) \coloneqq (a, \beta(t)) \in A(U) \times N^n \]
	We will now prove that $\widetilde{\alpha}$ is a natural transformation. Since $\widetilde{\alpha}\alg = \alpha$ by construction, this will complete the proof.
	
	Let $U \subset \mathbb C^n$ and $V \subset \mathbb C^m$. We will say that a holomorphic map $f \colon U \to V$ is \emph{good} if the diagram
	\[ \begin{tikzcd}
	\Jac_A(M)(U) \arrow{r}{\widetilde{\alpha}_U} \arrow{d}[swap]{\Jac_A(M)(f)} & B(U) \arrow{d}{B(f)} \\
	\Jac_A(M)(V) \arrow{r}{\widetilde{\alpha}_V} & B(V)
	\end{tikzcd} \]
	is commutative.
	A morphism $f \colon U \to V \times W$ is good if and only if its components $\mathrm{pr}_1 \circ f$ and $\mathrm{pr}_2$ are good.
	Every open immersion $j \colon V \to W$ is good; additionally, if $f \colon U \to V$ is a morphism, then $f$ is good if and only if $j \circ f$ is good, as it follows from the fact that $B(j) \colon B(V) \to B(W)$ is a monomorphism.
	Combining these two remarks, we are immediately reduced to show that every holomorphic function  $f \colon U \to \mathbb C$ is good.
	Pick an element $(a,r) \in B(U)$, with $a = (a_1, \ldots, a_n) \in A(U)$ and $r = (r_1, \ldots, r_n) \in N^n$. If $\sigma_B$ denotes the functional spectrum for $\Jac_A(M)$ introduced in \cite[§2.1]{Porta_GAGA_2015} and $\sigma_A$ denotes the one of $A$, we have \cite[Proposition 2.14]{Porta_GAGA_2015}:
	\[ \sigma_B(a,r) = \sigma_A(a) \in U \]
	Set $p \coloneqq \sigma(a)$ and choose an open neighborhood $U_1$ of $p$ in $U$ and an open neighborhood $U_2$ of $0$ in $\mathbb C^n$ in such a way that the addition
	\[ + \colon U_1 \times U_2 \to \mathbb C^n \]
	factors through $U$. We can assume without loss of generality that both $U_1$ and $U_2$ are open polydisks.
	Consider the function $\widetilde{f} \colon U_1 \times U_2 \to \mathbb C$ defined by
	\[ \widetilde{f}(z,w) \coloneqq f(z + w) \]
	Since $f$ is a holomorphic function, we can expand $\widetilde{f}$ in power series as
	\[ \widetilde{f}(z,w) = f(z) + \sum_{i = 1}^n \frac{\partial f}{\partial z_i}(z) w_i + \sum_{i,j = 1}^n g_{ij}(z,w) w_i w_j \]
	Writing $(a,r) = (a,0) + (0,r)$ and observing that $(a,0) \in B(U_1)$ and $(0,m) \in B(U_2)$, we obtain
	\begin{align*}
	B(f)(a,n) & = B(f)(a,0) + \sum_{i = 1}^n B \left( \frac{\partial f}{\partial z_i} \right)(a,0) \cdot r_i + \sum_{i,j = 1}^n B(g_{ij})(a,r) \cdot r_i \cdot r_j \\
	& = (A(f)(a), 0) + \sum_{i = 1}^n A \left( \frac{\partial f}{\partial z_i} \right) (a) \cdot r_i + \sum_{i,j = 1}^n B(g_{ij}(a,r)) \cdot r_i \cdot r_j
	\end{align*}
	We can now complete the proof as follows: let $(a,t) \in \Jac_A(M)(U)$. Then $\widetilde{\alpha}_U(a,t) = (a, \beta(t))$ and since $\alpha$ was a ring homomorphism to begin with, we see that
	\[ \beta(t_i) \cdot \beta(t_j) = \beta(t_i \cdot t_j) = 0 \]
	Therefore the above formula yields
	\[ B(f)(a, \beta(t)) = (A(f)(a), 0) + \sum_{i = 1}^n A \left( \frac{\partial f}{\partial z_i} \right) (a) \cdot \beta(t_i) \]
	which coincides with $\widetilde{\alpha}_{\mathbb C}(\Jac_A(M)(f)(a,t))$.
	Thus, the morphism $f \colon U \to \mathbb C$ is good, and the proof is complete.
\end{proof}

\begin{rem}
	The previous proposition has the following non-trivial consequence. Let $\CRing_{A\alg // A\alg}^{2 \textrm{-} \mathrm{nil}}$ be the full subcategory of $\CRing_{A\alg // A\alg}$ spanned by split square-zero extensions of $A\alg$.
	Then the functor $\overline{\Psi}'_0$, which doesn't commute with limits in general, commutes with products of objects in $\CRing_{A\alg // A\alg}^{2 \textrm{-} \mathrm{nil}}$.
	In turn, this implies that the restriction of $\overline{\Psi}'_0$ to $\CRing_{A\alg // A\alg}^{2 \textrm{-} \mathrm{nil}}$ induces by composition a functor
	\[ \Ab(\CRing_{A\alg // A\alg}^{2 \textrm{-} \mathrm{nil}}) \to \Ab(\Strloc_{\Tan}(\rSet)_{A // A}) \]
	In a sense, this is the key idea underlying the proof of \cref{thm:equivalence_of_modules}.
	The path we took to actually prove this theorem stemmed out of this basic observation.
	The closest higher categorical statement we managed to prove is \cref{prop:Psi_preserves_spectra}.
	Finally, we remark that it seems unlikely that the $1$-categorical proof we gave here translate verbatim in the setting of \cref{thm:equivalence_of_modules}.
	The reason is that the jacobian construction uses explicit formulas, and those lose significance when dealing with objects up to homotopy.
\end{rem}

\subsection{A conjecture} \label{subsec:conjectures}

In \cite[Remark 1.8]{Porta_GAGA_2015} we referred to a notion of stable Morita equivalence.
We report the definition here:

\begin{defin}
	Let $\varphi \colon \cT' \to \cT$ be a morphism of pregeometries. We will say that $\varphi$ is a \emph{(hypercomplete) stable Morita equivalence} if for every (hypercomplete) $\infty$-topos $\cX$ and every $\cO \in \Str_{\cT}(\cX)$ the functor \eqref{eq:Phi_stable} is an equivalence of $\infty$-categories.
\end{defin}

Observe that \cref{cor:first_reduction_step} can easily be transformed into a reduction statement for hypercomplete stable Morita equivalences.

\begin{lem} \label{lem:semi_discrete_devissage}
	Let $\varphi \colon \cT' \to \cT$ be a morphism of pregeometries.
	\begin{enumerate}
		\item if $\varphi$ is a weak Morita equivalence, then it is also a stable Morita equivalence;
		\item if the induced functor $\varphi_{\mathrm{sd}} \colon \cT'_{\mathrm{sd}} \to \cT_{\mathrm{sd}}$ is a stable Morita equivalence, then the same goes for $\varphi$.
	\end{enumerate}
\end{lem}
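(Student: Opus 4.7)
Part (1) is essentially formal. If $\varphi$ is a weak Morita equivalence, then by definition the precomposition functor $\overline{\Phi}_{\cX} \colon \Strloc_{\cT}(\cX) \to \Strloc_{\cT'}(\cX)$ is an equivalence for every (hypercomplete) $\infty$-topos $\cX$. For any $\cO \in \Str_{\cT}(\cX)$, the induced functor $\Strloc_{\cT}(\cX)_{/\cO} \to \Strloc_{\cT'}(\cX)_{/\cO \circ \varphi}$ is then an equivalence between slice categories (equivalences of $\infty$-categories induce equivalences of slice categories), and stabilization, being a $2$-functor on $\infty$-categories with finite limits, sends equivalences to equivalences. Hence $\Phi_{\cO, \cX}$ of \eqref{eq:Phi_stable} is an equivalence.

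For part (2), my plan is to exploit the commutative square of pregeometries
\[
\begin{tikzcd}
\cT'_{\mathrm{sd}} \arrow{r}{\varphi_{\mathrm{sd}}} \arrow{d}[swap]{\iota'} & \cT_{\mathrm{sd}} \arrow{d}{\iota} \\
\cT' \arrow{r}{\varphi} & \cT
\end{tikzcd}
\]
where $\iota$ and $\iota'$ are the canonical morphisms from the semi-discretization. For a fixed $\infty$-topos $\cX$ and $\cO \in \Str_{\cT}(\cX)$, precomposition along this square produces a commutative square of functors between the corresponding stabilizations of local structure categories, with $\Phi_{\cO, \cX}$ on top and $\Phi^{\mathrm{sd}}_{\cO \circ \iota, \cX}$ on the bottom. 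By hypothesis, the bottom horizontal functor is an equivalence; if the two vertical functors can be shown to be equivalences as well, then $\Phi_{\cO, \cX}$ is an equivalence by $2$-out-of-$3$.

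The whole problem thus collapses to showing that the semi-discretization morphism $\iota \colon \cT_{\mathrm{sd}} \to \cT$ is a stable Morita equivalence (applied to both $\cT$ and $\cT'$). In view of part (1) it would suffice to show that $\iota$ is already a weak Morita equivalence; that is, that precomposition along $\iota$ yields an equivalence $\Strloc_{\cT}(\cX) \simeq \Strloc_{\cT_{\mathrm{sd}}}(\cX)$. This is the main obstacle: it requires showing that every local $\cT_{\mathrm{sd}}$-structure lifts uniquely to a $\cT$-structure. I expect this to follow from the general theory of pregeometries in \cite{DAG-V} (essentially, the semi-discretization only discards data that becomes automatic once one restricts to local structures), possibly combined with an argument analogous to the one producing the reflection $L^{\cT}_{\cX}$ used in \cref{subsec:stable_Morita}. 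Once this lifting statement is established, parts (1) and the $2$-out-of-$3$ argument combine to give (2).
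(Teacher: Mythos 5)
Part (1) of your proposal is correct and matches the paper's treatment, which simply asserts that (1) ``follows immediately from the definitions''; the precomposition functor being an equivalence of slices, the induced functor on stabilizations is an equivalence because $\Sp(-)$ is functorial for left exact functors and preserves equivalences.

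For part (2), your overall architecture (the commutative square of pregeometries, vertical functors, two-out-of-three) is the right one, but you leave the essential step unproven, and the route you sketch toward it is incorrect. You claim that the semi-discretization $\iota \colon \cT_{\mathrm{sd}} \to \cT$ should already be a weak Morita equivalence, i.e.\ that $\Strloc_{\cT}(\cX) \simeq \Strloc_{\cT_{\mathrm{sd}}}(\cX)$ for every $\cX$. That unsliced statement is not what you need and is, in general, false: the Grothendieck topology of $\cT$ imposes a genuine covering condition on $\cT$-structures that a local $\cT_{\mathrm{sd}}$-structure need not satisfy, so the forgetful functor is fully faithful but not essentially surjective at the level of whole categories. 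What is true (and what your two-out-of-three argument actually requires) is the \emph{sliced} statement: for a fixed local $\cT$-structure $\cO$, precomposition along $\iota$ is an equivalence $\Strloc_{\cT}(\cX)_{/\cO} \simeq \Strloc_{\cT_{\mathrm{sd}}}(\cX)_{/\cO \circ \iota}$. The reason is that an object $A$ equipped with a local morphism $A \to \cO$ automatically inherits the covering property from $\cO$: since coverings consist of admissible morphisms, locality forces $\coprod_\alpha A(U_\alpha) \simeq A(X) \times_{\cO(X)} \coprod_\alpha \cO(U_\alpha)$, and effective epimorphisms in an $\infty$-topos are stable under pullback. This sliced equivalence is exactly the content of \cite[Corollary 1.13]{Porta_GAGA_2015}, and the paper's proof of (2) is just to cite it; your proposal, by contrast, reaches for the false unsliced statement and then stops short, so the gap is real.
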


\begin{proof}
	Assertion (1) follows immediately from the definitions.
	Assertion (2) is a simple reformulation of \cite[Corollary 1.13]{Porta_GAGA_2015}.
\end{proof}

We formulate the following conjecture:

\begin{conj} \label{conj:equivalence_modules}
	The morphism $\cTdisc \to \Tan$ is a hypercomplete stable Morita equivalence of pregeometries.
\end{conj}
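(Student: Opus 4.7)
The plan is to trace through the proof of \cref{thm:equivalence_of_modules} and isolate the two places where the hypothesis that $(\cX, \cO_\cX)$ is a derived \canal space enters essentially, then generalize each. The two ingredients are: first, \cref{lem:derived_canal_space_enough_points}, used to supply the hypothesis of \cref{cor:first_reduction_step} so as to reduce to the case $\cX = \cS$; second, \cref{prop:relative_flatness}, whose statement presupposes $\cO$ to be the germ of a derived \canal space and which is invoked both in \cref{prop:Psi_preserves_spectra} and in the final $\pi_0$-computation of subsection \ref{subsec:stable_Morita}. By contrast, the $t$-structure and left-adjointability machinery (\cref{prop:t_structure_Str}, \cref{cor:Phi_t_exact}, \cref{cor:reduction_to_spaces}, \cref{prop:stabilization_of_left_adjointable_squares}) is already set up at the level of generality required for the conjecture, and the discrete case \cref{prop:equivalence_discrete_modules}, with the Jacobian construction of \cref{jacobian_construction}, already applies to an arbitrary $\cO \in \Str_{\Tan}(\rSet)$.

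For the first ingredient, the obstacle is that a generic hypercomplete $\infty$-topos need not admit a conservative family of ordinary geometric points, so \cref{cor:first_reduction_step} does not apply directly. I would proceed instead by noting that every $\infty$-topos $\cX$ arises as a left exact accessible localization $L \colon \PSh(\cK) \to \cX$ of a presheaf $\infty$-topos $\PSh(\cK)$, which does have enough points, namely evaluation at the objects of $\cK$. Applying the left-adjointable square of \cref{cor:reduction_to_spaces} to the geometric morphism underlying $L$, one checks that the unit $\mathrm{Id} \to \Phi_\cO \circ \Psi_\cO$ on $\cX$ is the image of the corresponding unit on $\PSh(\cK)$, reducing the problem to $\PSh(\cK)$ and then, via the existing stalkwise argument, to $\cS$. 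Hypercompleteness of $\cX$ enters here to guarantee that $\infty$-connectivity may be tested after localization and that the resulting comparisons of $t$-structures (in particular left completeness from \cref{prop:t_structure_Str}) are available.

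The principal obstacle is the second ingredient: extending \cref{prop:relative_flatness} to an arbitrary local $\Tan$-structure $\cO \in \Strloc_{\Tan}(\cS)$. In the paper, the flatness of $A \to \overline{\Phi}_\cO(\overline{\Psi}_\cO(A))$ is proved by explicitly constructing $\overline{\Psi}_\cO(A)$ as a relative analytification of $A$ along the given derived \canal germ, with flatness following from a GAGA-type computation against finite models. To dispose of the ambient analytic space one would have to develop an intrinsic theory of relative analytification over a base $(\cS, \cO)$ with $\cO$ an arbitrary local analytic ring in the sense of M.\ Hakim \cite{Hakim_Topos_1972}: construct a full analytification functor $\overline{\Psi}_\cO$ on $\Strloc_{\cTzar}(\cS)_{\cO\alg /}$ and prove that the unit is flat in the derived sense. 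This is a genuinely non-trivial piece of work — essentially, a theory of derived analytic spaces over arbitrary derived local analytic bases — and is where the principal effort would be concentrated.

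Once both obstacles are resolved, the remainder of the proof transports verbatim from \cref{thm:equivalence_of_modules}: \cref{cor:Psi_preserves_spectra} is re-established with the generalized flatness replacing \cref{prop:relative_flatness}; conservativity and $t$-exactness of $\Phi_\cO$ (\cref{cor:Phi_t_exact}) reduce the equivalence check to the compact generator $\cO\alg \in \cO\alg\textrm{-}\Mod$; and \cref{prop:equivalence_discrete_modules} together with the Jacobian construction handles the resulting $\pi_0$-computation, showing that the unit is an equivalence on this generator and hence on the whole category. The hard part will be the second obstacle: without the shelter of an ambient \canal space, one is forced to work with local analytic rings as intrinsic objects, and the tools for doing so in the derived setting remain to be developed.
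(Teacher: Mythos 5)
This statement is an open conjecture, and the paper does not prove it: the introduction says explicitly that the author believes it but is ``not yet able to prove it,'' and the only evidence offered in \cref{subsec:conjectures} is the weaker \cref{prop:heart_equivalence_of_modules}, an equivalence of hearts under the additional hypothesis that the $\infty$-topos has enough points. So there is no target proof to compare against, and your proposal should be read as a strategic outline rather than a demonstration. Your diagnosis is largely consistent with the paper's own assessment, which, in the remark following \cref{prop:heart_equivalence_of_modules}, locates the essential use of the derived $\mathbb{C}$-analytic hypothesis in \cref{lem:elementary_flatness}, the input to \cref{prop:relative_flatness}. You are right that generalizing the relative flatness statement to an arbitrary local $\Tan$-structure on $\cS$ is the heart of the matter, and that doing so would require an intrinsic derived theory of local analytic rings in the sense of Hakim; that remains undeveloped, and you are honest that your outline stops there.

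There is, however, a concrete gap in your handling of the first obstacle. The left-adjointability machinery of \cref{thm:reduction_to_spaces} and \cref{cor:reduction_to_spaces} takes as input a geometric morphism $f\inv \colon \cX \rightleftarrows \cY \colon f_*$ together with a $\Tan$-structure $\cO$ on the \emph{source} $\cX$, and transports the unit of the $\Psi \dashv \Phi$ adjunction from $\cX$ to $\cY$; this is exactly how \cref{cor:first_reduction_step} reduces to the stalks. For the localization $L \colon \PSh(\mathcal K) \rightleftarrows \cX \colon \iota$ the source is $\PSh(\mathcal K)$ while the given structure $\cO$ lives on $\cX$, so to invoke the theorem you would first have to lift $\cO$ to a $\Tan$-structure on $\PSh(\mathcal K)$. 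The right adjoint $\iota = f_*$ preserves limits but need not take effective epimorphisms to effective epimorphisms, so $\iota \circ \cO$ is in general not a $\Tan$-structure and no canonical lift is available. Even granting a lift, it is not automatic that the unit of the lifted adjunction on $\PSh(\mathcal K)$ restricts along $L$ to the unit on $\cX$, since there is no reason for $\overline{\Psi}$ to preserve $L$-local objects. So the first reduction step of your outline does not go through as stated, independently of the flatness problem that you correctly single out as the principal difficulty.
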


The following result hints that the conjecture could be true:

\begin{prop} \label{prop:heart_equivalence_of_modules}
	Let $\cX$ be an $\infty$-topos with enough points and let $\cO \in \Strloc_{\Tan}(\cX)$.
	The adjunction
	\[ \Phi_\cO \colon \Sp(\Strloc_{\Tan}(\cX)_{/ \cO}) \rightleftarrows \Sp(\Strloc_{\cTdisc}(\cX)_{/ \cO\alg}) \colon \Psi_\cO  \]
	restricts to an equivalence on the hearts of the respective $t$-structures.
\end{prop}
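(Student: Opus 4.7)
The plan is to combine the $t$-exactness of the comparison functor (\cref{cor:Phi_t_exact}), the stalkwise reduction provided by \cref{cor:reduction_to_spaces}, and the explicit jacobian description underlying \cref{prop:equivalence_discrete_modules}. First, \cref{cor:Phi_t_exact} ensures that $\Phi_\cO$ is $t$-exact on both sides, hence it restricts to a functor $\Phi^\heartsuit_\cO$ between the hearts; the left adjoint of this restriction is necessarily $\Psi^\heartsuit_\cO \coloneqq \pi_0 \circ \Psi_\cO \circ i$, where $i$ is the inclusion of the heart. Proving the restricted adjunction is an equivalence amounts to checking that its unit and counit are invertible.

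Since $\cX$ has enough points, I would detect invertibility stalkwise. Concretely, for each geometric point $x\inv \colon \cX \rightleftarrows \cS \colon x_*$, the stalk functor $\sigma_{x\inv}$ of \cref{cor:reduction_to_spaces} is $t$-exact: because $x\inv$ is left exact, preserves terminal objects, and commutes with colimits, it preserves both the coconnective and connective halves of the $t$-structure described in \cref{prop:t_structure_Str}. Combining this with the left adjointability statement of \cref{cor:reduction_to_spaces} and \cref{prop:left_adjointable_preserves_unit}, one identifies the unit and counit of $(\Psi^\heartsuit_\cO, \Phi^\heartsuit_\cO)$ at an object with their counterparts after passing to stalks, exactly in the spirit of \cref{cor:first_reduction_step}. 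This reduces the claim to the case $\cX = \cS$ and $\cO \in \Str_\Tan(\cS)$.

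In this stalk case $\cS$ is hypercomplete, so \cref{prop:t_structure_Str}(5) identifies the two hearts with $\Ab(\Strloc_{\Tan}(\rSet)_{/\pi_0 \cO})$ and $\Ab(\Strloc_{\cTdisc}(\rSet)_{/\pi_0 \cO\alg})$ respectively, and the latter is just $\pi_0 \cO\alg \textrm{-} \Mod^\heartsuit$ by Quillen's classical theorem. Under these identifications $\Phi^\heartsuit_\cO$ agrees with the restriction to abelian group objects of the functor $\overline{\Phi}^0_{\pi_0 \cO}$ from the discrete subsection, and for $M \in \pi_0 \cO\alg \textrm{-} \Mod^\heartsuit$ the $\Tan$-structure $\Jac_{\pi_0 \cO}(M)$ of \cref{jacobian_construction}, equipped with the local projection and section of \cref{lem:jacobian_construction}, is precisely the abelian group object image of $M$ under $\overline{\Psi}^0_{\pi_0 \cO}$. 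At this point \cref{prop:equivalence_discrete_modules} delivers invertibility of the unit directly. Invertibility of the counit then follows from the triangle identities together with the conservativity of $\overline{\Phi}^0_{\pi_0\cO}$ (so that $\overline{\Phi}^0(\varepsilon) \simeq \mathrm{id}$ after precomposition with a unit that is already known to be invertible forces $\varepsilon$ itself to be invertible).

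The step I expect to require the most care is the stalkwise bookkeeping: I need the geometric pullback to be $t$-exact on the relevant $\infty$-categories of module spectra and to be compatible, via \cref{cor:reduction_to_spaces} and \cref{prop:left_adjointable_preserves_unit}, with the formation of the unit and counit of the adjunction $(\Psi^\heartsuit_\cO, \Phi^\heartsuit_\cO)$ and with the identification of the hearts as categories of abelian group objects. Once this setup is in place, the statement collapses to the $1$-categorical computation already carried out in \cref{prop:equivalence_discrete_modules}.
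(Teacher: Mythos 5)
Your proof is correct and follows essentially the same route as the paper: reduce to stalks via \cref{cor:reduction_to_spaces} and \cref{prop:left_adjointable_preserves_unit}, identify the hearts as categories of abelian group objects via \cref{prop:t_structure_Str}.(5), and invoke the discrete computation \cref{prop:equivalence_discrete_modules} (plus conservativity of $\overline{\Phi}^0$ for the counit). You make explicit a few points the paper leaves implicit --- the $t$-exactness of $\Phi_\cO$ from \cref{cor:Phi_t_exact}, the identification $\Psi^\heartsuit_\cO \simeq \pi_0 \circ \Psi_\cO \circ i$, and the $t$-exactness of the stalk functor --- which are genuinely needed and correctly supplied.
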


\begin{proof}
	This is an immediate consequence of \cref{cor:reduction_to_spaces}, \cref{prop:t_structure_Str}.(5) and \cref{prop:equivalence_discrete_modules}.
\end{proof}

The difference with \cref{thm:equivalence_of_modules} is obviously that we no longer require the $\Tan$-structured topos $(\cX, \cO_\cX)$ to be a derived \canal space. It is worth of noting that this assumption was used at only one point of the proof: namely, in the proof of \cref{lem:elementary_flatness}.

\section{Postnikov towers} \label{sec:Postnikov}

In this section we discuss the first important application of \cref{thm:equivalence_of_modules}, namely the structure theorem for square-zero extensions in the \canal setting.
Let $(\cX, \cO_\cX)$ be a derived \canal space.
The cited theorem exhibits an equivalence of $\infty$-categories
\[ \Psi \colon \Sp\left(\Strloc_{\cTdisc}(\cX)_{/ \cO_\cX\alg}\right) \rightleftarrows \Sp\left(\Strloc_{\Tan}(\cX)_{/ \cO_\cX}\right) \colon \Phi \]
To ease notations, in this section we will systematically suppress the subscript $\cO_\cX$.
Moreover, \cref{cor:Psi_preserves_spectra} shows that both the squares
\[ \begin{tikzcd}
\Sp\left(\Strloc_{\cTdisc}(\cX)_{/\cO_\cX\alg}\right) \arrow[yshift = 0.60ex]{r}{\Psi} \arrow{d}{\Omega^\infty_{\mathrm{alg}}} & \Sp\left(\Strloc_{\Tan}(\cX)_{/ \cO_\cX}\right) \arrow{d}{\Omega^\infty_{\mathrm{an}}} \arrow[yshift = -0.60ex]{l}{\Phi} \\
\Strloc_{\cTdisc}(\cX)_{\cO_\cX\alg // \cO_\cX\alg} \arrow[yshift = 0.60ex]{r}{\overline{\Psi}} & \Strloc_{\Tan}(\cX)_{\cO_\cX // \cO_\cX} \arrow[yshift = -0.60ex]{l}{\overline{\Phi}}
\end{tikzcd} \]
commute.
We could therefore avoid making any distinction between $\cO_\cX\alg \textrm{-} \Mod \simeq \Sp(\Strloc_{\cTdisc}(\cX)_{/ \cO_\cX\alg})$ and $\Sp(\Strloc_{\Tan}(\cX)_{/\cO_\cX})$.
However, we prefer to emphasize the different nature of the two categories and therefore we will maintain the two notations.

In addition, even though we will rarely need to work with more general $\Tan$-structured topoi than derived \canal spaces, it still seems worth of to develop the theory of this section in the greatest possible generality. In this case, the role of \cref{thm:equivalence_of_modules} would be played by \cref{conj:equivalence_modules}, and therefore we won't assume that $\Phi_\cO$ is an equivalence. We will see that \cref{prop:heart_equivalence_of_modules} is largely enough for our purposes.

\begin{defin} \label{def:analytic_derivation}
	Let $\cX$ be an $\infty$-topos and let $f \colon A \to B$ be a morphism in $\Strloc_{\Tan}(\cX)$.
	Let $M$ be an analytic $B$-module.
	An \emph{$A$-linear analytic derivation of $B$ with values in $M$} is a morphism $B \to \Omega^\infty_{\mathrm{an}}(M)$ in the $\infty$-category $\Strloc_{\cT}(\cX)_{A // B}$.
\end{defin}

\begin{defin} \label{def:analytic_square_zero_extension}
	Let $\cX$ be an $\infty$-topos and let $f \colon A \to B$ be a morphism in $\Strloc_{\Tan}(\cX)$.
	We will say that $f$ is \emph{an analytic square-zero extension} if there exists an analytic $B$-module $M$ and an $A$-linear derivation $d \colon B \to \Omega^\infty_{\mathrm{an}}(M)$ such that the square
	\[ \begin{tikzcd}
	A \arrow{r} \arrow{d}{f} & B \arrow{d}{d} \\
	B \arrow{r}{d_0} & \Omega^\infty_{\mathrm{an}}(M)
	\end{tikzcd} \]
	is a pullback in $\Strloc_{\Tan}(\cX)_{/B}$.
	In this case, we will say that $f$ is a square-zero extension of $B$ by $M[-1]$.
\end{defin}

As we explained in the introduction, knowing that a morphism is a square-zero extension gives a lot of information on the morphism itself.
This is often used in practice in order to reduce derived statements to underived ones.
To do this, however, we will need to know that whenever $(\cX, \cO_\cX)$ is a derived \canal space, the canonical morphisms $\tau_{\le n} \cO_\cX \to \tau_{\le n - 1} \cO_\cX$ are square-zero extensions.
In derived algebraic geometry the proof of this statement passes through the so-called ``structure theorem for square-zero extension'', which provides a computational recognizing criterion for square-zero extensions.
We will proceed in the same way in this \canal setting.

\begin{defin} \label{def:analytic_small_extension}
	Let $\cX$ be an $\infty$-topos and let $f \colon A \to B$ be a morphism in $\Strloc_{\Tan}(\cX)$.
	We will say that $f$ is an $n$-small extension if the morphism $f\alg \colon A\alg \to B\alg$ is an $n$-small extension in the sense of \cite[Definition 7.4.1.18]{Lurie_Higher_algebra}.
\end{defin}

\begin{rem}
	In other words, a morphism $f \colon A \to B$ in $\Strloc_{\Tan}(\cX)$ is an $n$-small extension if the following two conditions are satisfied:
	\begin{enumerate}
		\item The fiber $\mathrm{fib}(f\alg)$ is $n$-connective and $(2n)$-truncated, and
		\item the multiplication map $\mathrm{fib}(f\alg) \otimes_{A\alg} \mathrm{fib}(f\alg) \to \mathrm{fib}(f\alg)$ is nullhomotopic.
	\end{enumerate}
	This is, in some sense, a computational criterion: see \cite[7.4.1.20]{Lurie_Higher_algebra}.
\end{rem}

We can now state the main result of this section:

\begin{thm} \label{thm:partial_equivalence_sqzero_small}
	Let $\cX$ be an $\infty$-topos and let $f \colon A \to B$ be a morphism in $\Strloc_{\Tan}(\cX)$.
	Suppose that the following conditions hold:
	\begin{enumerate}
		\item the morphism $f$ is an $n$-small extension;
		\item there exists an analytic module $M \in \Sp(\Strloc_{\Tan}(\cX)_{/B})$ and an $A\alg$-derivation $d \colon B\alg \to \Omega^\infty_{\mathrm{alg}}(\Phi(M))$ fitting in a pullback square
		\[ \begin{tikzcd}
		A\alg \arrow{r}{f\alg} \arrow{d}[swap]{f\alg} & B\alg \arrow{d}{d} \\
		B\alg \arrow{r}{d_0\alg} & \Omega^\infty_{\mathrm{alg}}(\Phi(M))
		\end{tikzcd} \]
		in $\Strloc_{\cTdisc}(\cX)_{/ B\alg}$.
	\end{enumerate}
	Then $f$ is an analytic square-zero extension of $B$ by $M[-1]$.
\end{thm}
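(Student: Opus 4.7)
The plan is to promote the algebraic derivation $d$ from hypothesis (2) to an analytic one, form the resulting analytic pullback, and then use that the forgetful functor $\overline\Phi$ preserves limits and is conservative to identify this pullback with $A$.

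First, I would construct an analytic $A$-linear derivation $\tilde d \colon B \to \Omega^\infty_{\mathrm{an}}(M)$ lifting $d$. The natural home for such a derivation is the mapping space $\Map_{\Strloc_{\Tan}(\cX)_{A // B}}(B, \Omega^\infty_{\mathrm{an}}(M))$, and I would argue that the canonical comparison map
\[ \Map_{\Strloc_{\Tan}(\cX)_{A // B}}(B, \Omega^\infty_{\mathrm{an}}(M)) \to \Map_{\Strloc_{\cTdisc}(\cX)_{A\alg // B\alg}}(B\alg, \Omega^\infty_{\mathrm{alg}}(\Phi(M))) \]
is an equivalence, so that $d$ lifts essentially uniquely. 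This is where \cref{thm:equivalence_of_modules} enters: combined with the commutative square of \cref{cor:Psi_preserves_spectra} relating $\Omega^\infty_{\mathrm{an}}$ and $\Omega^\infty_{\mathrm{alg}}$, it reduces the identification of the two derivation spaces to the equivalence of analytic and algebraic $B$-modules.

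Next, I would form the analytic pullback $\tilde A \coloneqq B \times_{\Omega^\infty_{\mathrm{an}}(M)} B$ using $\tilde d$ and $d_0$, and observe that the $A$-linearity of $\tilde d$ — i.e., the fact that it lives in $\Strloc_{\Tan}(\cX)_{A // B}$ — automatically furnishes a canonical map $g \colon A \to \tilde A$ in $\Strloc_{\Tan}(\cX)_{/B}$ via the universal property of the pullback. Applying $\overline\Phi$, which preserves limits, identifies $\tilde A\alg$ with the algebraic pullback of $d_0\alg$ along $\overline\Phi(\tilde d) \simeq d$. Hypothesis (2) then forces $\tilde A\alg \simeq A\alg$, with $\overline\Phi(g)$ the identity under this identification. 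Since $\overline\Phi$ is conservative on local structures by \cite[Proposition 11.9]{DAG-IX}, the map $g$ is an equivalence, which is precisely the statement that $f$ is an analytic square-zero extension of $B$ by $M[-1]$.

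The main obstacle is the derivation-lifting step. In the general setting of a $\Tan$-structured topos (as opposed to a derived \canal space) \cref{thm:equivalence_of_modules} is not directly at hand, so some care is needed; here hypothesis (1) plays an essential role, since it forces $\mathrm{fib}(f\alg)$ to be $n$-connective and $(2n)$-truncated, confining the relevant part of $M$ to a bounded window where one can hope to reduce to the heart equivalence of \cref{prop:heart_equivalence_of_modules} via a Postnikov induction. Once the comparison of derivation spaces is available, the construction of $g$ and the identification of the analytic pullback with $A$ are purely formal consequences of the limit-preserving and conservative nature of $\overline\Phi$.
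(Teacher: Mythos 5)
The formal endgame of your argument (given an analytic lift $\tilde d$ of $d$, form the analytic pullback, apply the limit-preserving and conservative functor $\overline\Phi$, and conclude) is sound and matches what the paper does. But the derivation-lifting step — the heart of the matter — is a genuine gap, and the paper takes a fundamentally different route there.

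You propose to prove that the comparison map of \emph{entire derivation spaces}
\[ \Map_{\Strloc_{\Tan}(\cX)_{A // B}}(B, \Omega^\infty_{\mathrm{an}}(M)) \longrightarrow \Map_{\Strloc_{\cTdisc}(\cX)_{A\alg // B\alg}}(B\alg, \Omega^\infty_{\mathrm{alg}}(\Phi(M))) \]
is an equivalence. This is a much stronger statement than what is needed (only one lift is required, not an equivalence of spaces), and it does not follow from \cref{thm:equivalence_of_modules}. That theorem compares module \emph{categories}, but the left-hand side above is a mapping space in an over-under category of $\Tan$-structures, not a mapping space of modules; identifying it with an algebraic derivation space is essentially the assertion that the analytic and algebraic cotangent complexes agree, which is the subject of the sequel \cite{Porta_Cotangent_2015} and is not available here. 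Your proposed fallback — Postnikov induction via \cref{prop:heart_equivalence_of_modules} — does not close the gap either: that proposition is again about module categories, not derivation spaces, and it requires $\cX$ to have enough points, an assumption absent from \cref{thm:partial_equivalence_sqzero_small}. You are also mislocating the role of hypothesis~(1): in the paper it enters not to bound the truncation window of $M$ for a Postnikov argument, but to guarantee that the zero derivation $d_0 \colon B \to \Omega^\infty_{\mathrm{an}}(M)$ is an effective epimorphism (since $n$-smallness forces $\Phi(M)$ to be $1$-connective, so $\pi_0(d_0\alg)$ is an isomorphism).

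The paper instead deduces \cref{thm:partial_equivalence_sqzero_small} from the general lifting result \cref{prop:lifting_derivations}, proved via a \v{C}ech-nerve argument. Working in $\Strloc_{\Tan}(\cX)_{/A}$ using the retraction $\pi$, it identifies $A$ and $\Omega^\infty_{\mathrm{an}}(M)$ with the geometric realizations of the semi-simplicial \v{C}ech nerves $\Cech_s(p)$ and $\Cech_s(d_0)$ (this is where effective-epimorphicity of $d_0$ — hence hypothesis~(1) — is used), then constructs a morphism of semi-simplicial objects $\Cech_s(p) \to \Cech_s(d_0)$ lying over the given algebraic one. Lemmas \ref{lem:semisimplicial_I} and \ref{lem:semisimplicial_II} reduce this to degrees $\le 1$, where the required analytic morphisms are produced from the algebraic data using only that $(-)\alg$ is conservative and commutes with pullbacks. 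Taking geometric realization gives a single analytic lift $\overline d$ of $d$ — no statement about the full derivation space is needed, no cotangent-complex comparison, and no enough-points hypothesis. This is why the theorem holds for a general $\infty$-topos $\cX$, whereas your route would confine you (at best) to derived $\mathbb C$-analytic spaces.
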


The proof is somehow technical. Before discussing it, let us derive the main consequences of this result.

\begin{cor} \label{cor:truncations_are_analytic_square_zero}
	Let $\cX$ be an $\infty$-topos.
	Let $\cO \in \Str_{\Tan}(\cX)$.
	For every non-negative integer $n$, the natural map $\tau_{\le n} \cO \to \tau_{\le n - 1} \cO$ is a square-zero extension.
\end{cor}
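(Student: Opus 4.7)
The plan is to derive the result from the structure theorem \cref{thm:partial_equivalence_sqzero_small} applied to $f \colon \tau_{\le n}\cO \to \tau_{\le n-1}\cO$. I will verify its two hypotheses in turn: first $n$-smallness, and then the existence of an analytic module $M$ matching the algebraic square-zero data.

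For the first hypothesis, I would observe that $f\alg \colon \tau_{\le n}\cO\alg \to \tau_{\le n-1}\cO\alg$ is the $n$-th step of the Postnikov tower of $\cO\alg$ in the $\infty$-topos of sheaves of simplicial commutative $\mathbb{C}$-algebras on $\cX$. Its fiber is equivalent to $\pi_n(\cO\alg)[n]$, which is $n$-connective and $n$-truncated (hence trivially $(2n)$-truncated), and the multiplication on it is nullhomotopic for elementary connectivity reasons (the tensor product sits in degree $\ge 2n$). This gives $n$-smallness in the sense of \cref{def:analytic_small_extension}.

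For the second hypothesis, I would invoke the classical algebraic structure theorem \cite[Theorem 7.4.1.26]{Lurie_Higher_algebra} applied to $f\alg$ to obtain an algebraic derivation $d \colon \tau_{\le n-1}\cO\alg \to \tau_{\le n-1}\cO\alg \oplus \pi_n(\cO\alg)[n+1]$ that witnesses $f\alg$ as the pullback along the trivial derivation. It then remains to produce an analytic module $M$ over $\tau_{\le n-1}\cO$ with $\Phi(M) \simeq \pi_n(\cO\alg)[n+1]$. Since $(\cX, \cO_\cX)$ is a derived \canal space, \cref{lem:derived_canal_space_enough_points} gives that $\cX$ has enough points, so \cref{prop:heart_equivalence_of_modules} applies: the comparison between analytic and algebraic modules over $\tau_{\le n-1}\cO$ restricts to an equivalence of hearts. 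As $\pi_n(\cO\alg)$ is a discrete module, this furnishes a lift $M_0$ in the analytic heart with $\Phi(M_0) \simeq \pi_n(\cO\alg)$. Setting $M := M_0[n+1]$ and using that $\Phi$ commutes with shifts (being an exact functor between stable categories, and $t$-exact by \cref{cor:Phi_t_exact}), I would obtain $\Phi(M) \simeq \pi_n(\cO\alg)[n+1]$. The two hypotheses of \cref{thm:partial_equivalence_sqzero_small} are then satisfied, producing the desired analytic square-zero decomposition of $f$.

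The main obstacle is the module-lifting step, where one must exhibit an analytic module whose algebraic shadow realizes the classical square-zero data. The crucial observation that makes this feasible is that the relevant module $\pi_n(\cO\alg)$ lies in the heart of $\cO\alg\textrm{-}\Mod$, so the heart equivalence of \cref{prop:heart_equivalence_of_modules} already suffices and one does not need to appeal to the full strength of \cref{thm:equivalence_of_modules} (let alone to the unresolved \cref{conj:equivalence_modules}). Everything else in the argument is formal: the $n$-smallness reduces to a standard Postnikov computation, and the algebraic square-zero datum is handed to us by Lurie's classical theorem.
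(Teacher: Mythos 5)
Your proof follows the paper's argument exactly: both reduce to \cref{thm:partial_equivalence_sqzero_small} and observe that, since $\pi_n(\cO\alg)$ is discrete, the heart equivalence of \cref{prop:heart_equivalence_of_modules} already furnishes the required analytic lift, with no need for the full strength of \cref{thm:equivalence_of_modules}. You merely fill in the routine Postnikov verification of $n$-smallness and the explicit appeal to \cite[7.4.1.26]{Lurie_Higher_algebra}, both of which the paper leaves implicit; one small remark is that you invoke the derived $\mathbb C$-analytic hypothesis via \cref{lem:derived_canal_space_enough_points}, whereas the corollary's statement only assumes $\cX$ is an $\infty$-topos --- but since \cref{prop:heart_equivalence_of_modules} does require $\cX$ to have enough points, some hypothesis along those lines is in fact needed, and your caution is warranted rather than a flaw.
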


\begin{proof}
	In virtue of \cref{thm:partial_equivalence_sqzero_small}, it will be enough to check that for every non-negative integer $n$ there exists an analytic module $\cF_n \in \Sp(\Strloc_{\Tan}(\cX)_{/\tau_{\le n - 1} \cO})$ such that $\Phi(\cF_n) \simeq \pi_n(\cO\alg)[n+1]$. Since $\pi_n(\cO\alg)$ is a discrete $\tau_{\le n-1}(\cO)$-module, the result follows immediately from \cref{prop:heart_equivalence_of_modules}.
\end{proof}

When $(\cX, \cO_\cX)$ is a derived \canal space, \cref{thm:equivalence_of_modules} allows us to deduce a much stronger consequence:

\begin{cor} \label{cor:structure_theorem_square_zero}
	Let $(\cX, \cO_\cX)$ be a derived \canal space.
	Let $\cO \in \Str_{\Tan}(\cX)$ be any other analytic structure on $\cX$ and let $f \colon \cO \to \cO_\cX$ be a local morphism.
	Fix furthermore a non-negative integer $n$.
	Then the following conditions are equivalent:
	\begin{enumerate}
		\item $f$ is an analytic square-zero extension by $M[-1]$ and $\Omega^\infty_{\mathrm{an}}(M)$ is $n$-connective and $(2n)$-truncated;
		\item $f$ is an $n$-small extension.
	\end{enumerate}
\end{cor}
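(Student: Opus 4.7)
The plan is to establish both implications by transferring between the analytic and the underlying algebraic setting via \cref{thm:equivalence_of_modules}, and then invoking Lurie's structure theorem for square-zero extensions of $\mathbb E_\infty$-rings \cite[7.4.1.26]{Lurie_Higher_algebra} on the algebraic side. The compatibility of this translation with the respective $\Omega^\infty$ functors is provided by \cref{cor:Psi_preserves_spectra} and the subsequent remark, while the preservation of connectivity and truncation bounds rests on the $t$-exactness of $\Phi$ from \cref{cor:Phi_t_exact}, combined with the fact that $\Phi$ is an equivalence in the present setting, so its inverse $\Psi$ is also $t$-exact.

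For $(1) \Rightarrow (2)$, I would apply the limit-preserving functor $\overline{\Phi}$ to the analytic pullback square of \cref{def:analytic_square_zero_extension}. Using \cref{cor:Psi_preserves_spectra} to identify $\overline{\Phi}(\Omega^\infty_{\mathrm{an}}(M)) \simeq \Omega^\infty_{\mathrm{alg}}(\Phi(M))$, this produces an algebraic pullback exhibiting $f\alg$ as a square-zero extension of $\cO_\cX\alg$ by $\Phi(M)[-1]$. The $t$-exactness of $\Phi$ then translates the connectivity and truncation hypothesis on $\Omega^\infty_{\mathrm{an}}(M)$ into corresponding bounds on $\Omega^\infty_{\mathrm{alg}}(\Phi(M))$, and hence on the fiber of $f\alg$. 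Lurie's structure theorem accordingly yields that $f\alg$ is an $n$-small extension of connective $\mathbb E_\infty$-rings, which is precisely the condition that $f$ is $n$-small in the sense of \cref{def:analytic_small_extension}.

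For $(2) \Rightarrow (1)$, I would start from the $n$-smallness of $f\alg$ and apply Lurie's structure theorem in the opposite direction: it produces an $\cO_\cX\alg$-module $N$, with the expected connectivity and truncation bounds, together with a derivation $d_{\mathrm{alg}} \colon \cO_\cX\alg \to \Omega^\infty_{\mathrm{alg}}(N)$ fitting $f\alg$ into the pullback square demanded by hypothesis (2) of \cref{thm:partial_equivalence_sqzero_small}. Using the equivalence of \cref{thm:equivalence_of_modules}, I set $M \coloneqq \Psi(N)$, so that $\Phi(M) \simeq N$; with this identification, the above pullback takes the exact shape needed to apply \cref{thm:partial_equivalence_sqzero_small}, whose conclusion is that $f$ is an analytic square-zero extension by $M[-1]$. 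The connectivity and truncation of $\Omega^\infty_{\mathrm{an}}(M)$ then follow by reading backwards through $\overline{\Phi}(\Omega^\infty_{\mathrm{an}}(M)) \simeq \Omega^\infty_{\mathrm{alg}}(N)$ and using that $\overline{\Phi}$ is conservative together with the $t$-exactness of $\Phi$.

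The main technical content---the equivalence of module categories and the one-directional matching of $n$-small extensions with analytic square-zero extensions---is already absorbed into \cref{thm:equivalence_of_modules} and \cref{thm:partial_equivalence_sqzero_small}. What remains is essentially bookkeeping of connectivity bounds across the translations, which I expect to be the only delicate point in the argument and is routine once the $t$-exactness of $\Phi$ and the compatibility of $\Omega^\infty_{\mathrm{an}}$ with $\Omega^\infty_{\mathrm{alg}}$ via $\overline{\Phi}$ are in hand.
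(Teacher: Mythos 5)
Your proposal follows the paper's own argument essentially step for step: both implications are obtained by transferring to the algebraic side via $\Phi$ and its inverse, invoking Lurie's structure theorem \cite[7.4.1.26]{Lurie_Higher_algebra} there, and for the reverse direction lifting the algebraic data through \cref{thm:equivalence_of_modules} so that \cref{thm:partial_equivalence_sqzero_small} applies; the bookkeeping of connectivity and truncation bounds via $t$-exactness and the $\Omega^\infty$-compatibility (\cref{cor:Psi_preserves_spectra}) is exactly how the paper handles it. This matches the published proof.
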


\begin{proof}
	If $f$ is a square-zero extension by $M[-1]$ then $f\alg$ is a square-zero extension by $\Phi(M)[-1]$.
	Since $\Phi$ is $t$-exact, we deduce that (2) holds from the structure theorem for algebraic square-zero extensions, see \cite[7.4.1.26]{Lurie_Higher_algebra}.
	
	Vice-versa, suppose that $f$ is an $n$-small extension. Then there exists $N \in \cO_\cX\alg \textrm{-} \Mod$ such that $\Omega^\infty_{\mathrm{an}}(N)$ is $n$-connective and $(2n)$-truncated and there exists a $\cO\alg$-linear derivation $d \colon \cO_\cX\alg \to \Omega^\infty_{\mathrm{an}}(N)$ fitting in a pullback square
	\[ \begin{tikzcd}
	\cO\alg \arrow{d}[swap]{f\alg} \arrow{r}{f\alg} & \cO_\cX\alg \arrow{d}{d} \\
	\cO_\cX\alg \arrow{r}{d_0\alg} & \Omega^\infty_{\mathrm{alg}}(N)
	\end{tikzcd} \]
	Since $(\cX, \cO_\cX)$ is a derived \canal space, we can invoke \cref{thm:equivalence_of_modules} to find an analytic module $M \in \Sp(\Strloc_{\Tan}(\cX)_{/\cO_\cX})$ such that $\Phi(M) = N$.
	In particular, we see that $\Omega^\infty_{\mathrm{alg}}(\Phi(M)) = \overline{\Phi}(\Omega^\infty_{\mathrm{an}}(M))$ is $n$-connective and $(2n)$-truncated, and therefore the same goes for $\Omega^\infty_{\mathrm{an}}(M)$.
	Thus, the hypotheses of \cref{thm:partial_equivalence_sqzero_small} are satisfied and as consequence we conclude that $f$ is an analytic square-zero extension.
\end{proof}

\begin{rem}
	\cref{cor:structure_theorem_square_zero} is saying that as soon as we know that a given map is analytic, its infinitesimal properties are determined algebraically.
	\cref{thm:equivalence_of_modules} can be already seen as an instance of this phenomenon, and we will encounter other examples in \cite{Porta_Cotangent_2015}.
\end{rem}

\cref{thm:partial_equivalence_sqzero_small} is a direct consequence of the following general ``analytification result'':

\begin{prop} \label{prop:lifting_derivations}
	Let $\cX$ be an $\infty$-topos and let $A, B, C \in \Str_{\Tan}(\cX)$.
	Let $d_0 \colon A \to C$, $p \colon B \to A$ be morphisms in $\Strloc_{\Tan}(\cX)$, and let $d \colon A\alg \to C\alg$ be such that the square
	\begin{equation} \label{eq:algebraic_derivation}
	\begin{tikzcd}
	B\alg \arrow{r}{p\alg} \arrow{d}[swap]{p\alg} & A\alg \arrow{d}{d} \\
	A\alg \arrow{r}{d_0\alg} & C\alg
	\end{tikzcd}
	\end{equation}
	is a pullback in $\Strloc_{\cTdisc}(\cX)_{/C}$.
	Suppose furthermore that $d_0$ is an effective epimorphism and that it admits a retraction $\pi \colon C \to A$ and that $\pi\alg$ is a retraction for $d$ as well.
	Then there exists a morphism $\overline{d} \colon A \to C$ in $\Strloc_{\Tan}(\cX)$ such that the square
	\[ \begin{tikzcd}
	B \arrow{r}{p} \arrow{d}[swap]{p} & A \arrow{d}{\overline{d}} \\
	A \arrow{r}{d_0} & C
	\end{tikzcd} \]
	is a pullback. Moreover, $\overline{d}\alg = d$.
\end{prop}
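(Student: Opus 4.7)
The plan is to construct $\overline{d}$ directly at the level of $\Tan$-structures, exploiting the splitting of $C\alg$ provided by the algebraic section $d_0\alg$ and retraction $\pi\alg$, and then to verify the pullback claim by descent to the algebraic setting via conservativity of $\overline{\Phi}$. The hypothesis $\pi\alg \circ d = \mathrm{id}$ implies that $d$ differs from $d_0\alg$ by a map $A\alg \to \mathrm{fib}(\pi\alg) \subseteq C\alg$ whose second-order contributions are controlled by the algebraic pullback condition, so the task reduces to propagating this algebraic data along the holomorphic functional calculus of $A$ and $C$.

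For each open $U \subseteq \mathbb{C}^n$ in $\Tan$, I would define $\overline{d}(U) \colon A(U) \to C(U)$ by applying $d^n \colon (A\alg)^n \to (C\alg)^n$ on underlying rings and checking the image lies in $C(U)$. The spectral containment follows from $\pi\alg \circ d = \mathrm{id}$, since characters of $C\alg$ restrict via $d$ to characters of $A\alg$, forcing $\sigma_C(d^n(a)) \subseteq \sigma_A(a) \subseteq U$. Naturality of $\overline{d}$ with respect to admissible morphisms is automatic from this. The difficult point is naturality across non-admissible holomorphic morphisms $f \colon V \to W$: I would adapt the Taylor-expansion strategy used in the proof of \cref{prop:equivalence_discrete_modules}, expanding $f$ around the spectrum of an input element so that higher-order terms vanish in the split $C\alg \simeq A\alg \oplus \mathrm{fib}(\pi\alg)$, leaving only the linear term given by the Jacobian of $f$ acting on $d - d_0\alg$, exactly as packaged by the Jacobian construction \cref{jacobian_construction}. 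The effective-epimorphism hypothesis on $d_0$ enters here to reduce the naturality check to the image of $d_0$, where the splitting is explicit.

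Once $\overline{d}$ is constructed, $\overline{d}\alg = d$ holds tautologically on the generating object $U = \mathbb{C}$. To verify the analytic square is a pullback, form $B_{\mathrm{an}} \coloneqq A \times_C A$ in $\Strloc_\Tan(\cX)$ using $d_0$ and $\overline{d}$; the commutation homotopy for the square $B \to A \to C$ is furnished by the naturality constructed above, giving a canonical analytic morphism $B \to B_{\mathrm{an}}$. Applying $\overline{\Phi}$ recovers the algebraic pullback datum, which by hypothesis is identified with $B\alg$, so $\overline{\Phi}(B \to B_{\mathrm{an}})$ is an equivalence; by conservativity of $\overline{\Phi}$ (see \cite[Proposition 1.9]{DAG-IX}), $B \to B_{\mathrm{an}}$ is an analytic equivalence. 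The main obstacle throughout is the naturality check in the second step, which requires carefully using the effective-epimorphism hypothesis together with the square-zero behaviour of $\mathrm{fib}(\pi\alg)$ to ensure that the Taylor-expansion contributions beyond the linear order all vanish.
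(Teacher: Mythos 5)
Your plan does not match the paper's argument, and it contains a genuine gap that the paper itself anticipates and warns against. The core of your construction---defining $\overline{d}(U)\colon A(U)\to C(U)$ elementwise via $d^n$, using the functional spectrum $\sigma_C$ to verify spectral containment, and then proving naturality across arbitrary holomorphic maps by Taylor expansion around the spectrum of an input element---only makes sense when $\cX$ is the topos of sets and $A,C$ are \emph{discrete} $\Tan$-structures, which is precisely the setting of \cref{prop:equivalence_discrete_modules}. The proposition you are asked to prove is stated for a general $\infty$-topos $\cX$ and general $\Tan$-structures: there, $A(U)$ is an object of $\cX$, not a set, naturality is coherent homotopical data rather than a condition one can ``check'', and the functional spectrum $\sigma$ is not available. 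The paper is explicit about this obstruction: immediately after the discrete case it remarks that ``it seems unlikely that the $1$-categorical proof we gave here translates verbatim'' precisely because ``the jacobian construction uses explicit formulas, and those lose significance when dealing with objects up to homotopy.'' You also silently invoke ``the square-zero behaviour of $\mathrm{fib}(\pi\alg)$'', but nothing in the hypotheses says that $C$ is a split square-zero extension of $A$; the retraction $\pi$ alone does not give this.

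The paper's actual route is a descent argument and uses the effective-epimorphism hypothesis in a completely different way than you describe. One passes to $\Strloc_{\Tan}(\cX)_{/A}$ via the retraction $\pi$, observes that $p$ is also an effective epimorphism (since $p\alg$ is a pullback of $d_0\alg$ and $(-)\alg$ reflects effective epimorphisms), and writes $A\simeq|\Cech_s(p)|$ and $C\simeq|\Cech_s(d_0)|$. One then constructs a map of semi-simplicial objects $\varphi\colon\Cech_s(p)\to\Cech_s(d_0)$: by \cref{lem:semisimplicial_I} and \cref{lem:semisimplicial_II} it suffices to do this up to level $1$, which is accomplished by comparing the two levels as analytic pullbacks using \cref{lem:reflexive_pullback} and checking that a certain comparison map $h\colon B_1\to B_1'$ is an equivalence after applying $(-)\alg$ (then using conservativity). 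The geometric realization $|\varphi|$ is $\overline{d}$; the commutativity of the analytic square and the fact that $\overline{d}\alg = d$ come for free from compatibility of $(-)\alg$ with \v{C}ech nerves (\cite[Corollary 1.18]{Porta_GAGA_2015}), and the pullback property descends by conservativity. So the effective-epimorphism hypothesis is the mechanism enabling the \v{C}ech descent, not a device for ``reducing naturality to the image of $d_0$'' as in your sketch. Your final step (forming $B_{\mathrm{an}}=A\times_C A$ and using conservativity) does agree with the paper's, but it presupposes that the analytic square commutes, and your construction does not actually produce that homotopy in the $\infty$-categorical setting.
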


The rest of this section will be devoted to give a proof of this proposition.
First, let us state a couple of elementary lemmas.

\begin{lem}
	Let $\mathbf \Delta_s$ be the semisimplicial category and let $\mathbf \Delta_{s, \le 1}$ be the full subcategory of $\mathbf \Delta_s$ spanned by the objects $[0]$ and $[1]$.
	Then for any $n \ge 2$ the category $(\mathbf \Delta_{s, \le 1})_{/[n]}$ is connected.
\end{lem}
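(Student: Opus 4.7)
The plan is straightforward: verify connectedness by exhibiting explicit zig-zags between any two objects. First I would enumerate the objects of $(\mathbf\Delta_{s,\le 1})_{/[n]}$: they come in two flavours, namely vertex maps $v_i : [0] \hookrightarrow [n]$ picking out an element $i \in \{0,\ldots,n\}$, and edge maps $e_{i,j} : [1] \hookrightarrow [n]$ corresponding to pairs $i < j$ in $\{0,\ldots,n\}$. Morphisms in this comma category come from the order-preserving injections between $[0]$ and $[1]$ in $\mathbf\Delta_s$, namely identities together with the two face maps $d^0, d^1 : [0] \to [1]$.

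The key observation is then that for any edge $e_{i,j}$, the compositions $e_{i,j} \circ d^1$ and $e_{i,j} \circ d^0$ recover the vertex maps $v_i$ and $v_j$ respectively. Hence the face maps induce morphisms $v_i \to e_{i,j}$ and $v_j \to e_{i,j}$ in $(\mathbf\Delta_{s,\le 1})_{/[n]}$, placing $v_i$, $v_j$ and $e_{i,j}$ in a single connected component. Since $n \ge 2$ (in fact $n \ge 1$ suffices), for any two distinct vertices $i, j$ of $[n]$ the edge object $e_{\min(i,j),\max(i,j)}$ exists and links $v_i$ with $v_j$ via the above V-shaped zig-zag. Consequently all vertex objects lie in one connected component, and since every edge object is connected to each of its endpoints, the whole category is connected.

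There is no real obstacle here: the argument is essentially a one-line combinatorial observation once the notation is in place. The only things to keep straight are the convention on $\mathbf\Delta_s$ (order-preserving injections, identities allowed but no degeneracies) and the orientation of the face maps $d^0, d^1$. The hypothesis $n \ge 2$ is stronger than what the proof actually requires, but it is presumably the convenient range for the subsequent application in the main argument.
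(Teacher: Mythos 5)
Your proof is correct, and it is cleaner than the paper's, though both ultimately rest on the same primitive observation: the only non-identity morphisms available in $\mathbf\Delta_{s,\le 1}$ are the two face maps $d^0, d^1 \colon [0] \to [1]$, which connect edge objects to vertex objects in the comma category. The paper routes the argument differently: it first shows every vertex object $[0] \to [n]$ is connected to \emph{some} edge object, and then joins any two edge objects $f_{a,b}$ by an explicit chain $f_{a,b} \rightsquigarrow f_{a,b+1} \rightsquigarrow \cdots \rightsquigarrow f_{a,n} \rightsquigarrow f_{a-1,n} \rightsquigarrow \cdots \rightsquigarrow f_{0,n}$, each step going through the common endpoint vertex. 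You instead treat the vertices as hubs: every edge is connected to both of its endpoints, and any two vertices $v_i, v_j$ are linked by the V-shaped zig-zag $v_i \to e_{\min(i,j),\max(i,j)} \leftarrow v_j$, which exists as soon as there are at least two vertices. Your decomposition avoids the chain entirely and gets to the conclusion in one sweep. You are also right that $n \ge 1$ already suffices; the hypothesis $n \ge 2$ is there because the lemma is invoked only for the higher simplices in the semi-simplicial Kan extension argument of Lemmas B.2–B.3.
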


\begin{proof}
	First of all, let us observe that every object of the form $[0] \to [n]$ in $(\mathbf \Delta_{s, \le 1})_{/[n]}$ is connected to some object of the form $[1] \to [n]$.
	Indeed, if $[0] \to [n]$ picks the element $k$ and $k < n$, it is sufficient to consider $[1] \to [n]$ defined by $0 \mapsto k$, $1 \mapsto n$ and $d_1 \colon [0] \to [1]$ is a morphism in $(\mathbf \Delta_{s, \le 1})_{/[n]}$.
	If, instead $k = n$, it is sufficient to consider $[1] \to [n]$ defined by $0 \mapsto 0$ and $1 \mapsto n$, so that $d_0 \colon [0] \to [1]$ is a morphism in $(\mathbf \Delta_{s, \le 1})_{/[n]}$.
	It will be therefore sufficient to prove that any two morphisms $f,g \colon [1] \to [n]$ can be connected by a zig-zag of morphisms in $(\mathbf \Delta_{s, \le 1})_{/[n]}$.
	For ease of notation, let us denote by $f_{a,b}$ (with $0 \le a < b \le n$) the morphism $[1] \to [n]$ defined by $0 \mapsto a$, $1 \mapsto b$.
	Suppose $0 < b < n$. Then, for every $a < b$, $f_{a,b}$ is connected to $f_{a, b+1}$. Indeed, we see that $f_{a,b} \circ d_1 = f_{a,b+1} \circ d_1$.
	Similarly, if $0 < a < b$, then $f_{a-1,b}$ is connected to $f_{a,b}$. Indeed, $f_{a-1,b} \circ d_0 = f_{a,b} \circ d_0$.
	Therefore any $f_{a,b}$ is connected to $f_{a,n}$, and every $f_{a,n}$ is connected to $f_{0,n}$.
\end{proof}

\begin{lem} \label{lem:semisimplicial_I}
	Let $\cC$ be an $\infty$-category with finite limits. Let $X \in \cC$ be an object.
	Let $U \colon \cC_{/X} \to \cC$ be the forgetful functor.
	Let $F \colon \mathbf \Delta_{s, \le 1} \to \cC_{/X}$ be any functor and let $j_1 \colon \mathbf \Delta_{s, \le 1} \to \mathbf \Delta_s$ be the natural inclusion.
	Then $U \circ \mathrm{Ran}_{j_1}(F) \simeq \mathrm{Ran}_{j_1}(U \circ F)$.
\end{lem}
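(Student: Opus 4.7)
The plan is to compute both sides pointwise via the formula for right Kan extensions and then reduce the argument to the preceding lemma. For each $[n] \in \mathbf \Delta_s$, the pointwise formula (see \cite[4.3.3.2]{HTT}) expresses $\mathrm{Ran}_{j_1}(F)([n])$ as a limit of $F$ indexed by the comma category $\cJ_n$ at $[n]$ over $j_1$, and similarly for $U \circ F$. When $[n] \in \mathbf \Delta_{s, \le 1}$, full faithfulness of $j_1$ guarantees that $\cJ_n$ has an initial object given by the identity on $[n]$, so both limits reduce to the value of the respective functor at $[n]$ and the desired equivalence is tautological.

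For $n \ge 2$, the indexing category $\cJ_n$ is, up to a passage to opposites coming from the conventions on $\mathbf \Delta_s$, nothing but the over-category $(\mathbf \Delta_{s, \le 1})_{/[n]}$. At this point I would invoke the preceding lemma, which tells us that this category is connected. The second key ingredient is the standard fact that the forgetful functor $U \colon \cC_{/X} \to \cC$ preserves connected limits: a limit in $\cC_{/X}$ is computed in $\cC$ by augmenting the diagram with its structural maps to $X$, and when the underlying indexing shape is connected those structural maps are determined by any single one of them, so the limit in $\cC_{/X}$ agrees with the limit in $\cC$ equipped with a canonically induced structural map. Combining these two facts gives, for every $n$,
\[
U\bigl(\mathrm{Ran}_{j_1}(F)([n])\bigr) \simeq \lim_{\cJ_n} (U \circ F) \simeq \mathrm{Ran}_{j_1}(U \circ F)([n]).
\]

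To finish, I would assemble the pointwise equivalences into an equivalence of functors using the universal property of $\mathrm{Ran}_{j_1}(U \circ F)$: the restriction isomorphism $\bigl(U \circ \mathrm{Ran}_{j_1}(F)\bigr) \circ j_1 \simeq U \circ F$ produces a canonical comparison natural transformation $U \circ \mathrm{Ran}_{j_1}(F) \to \mathrm{Ran}_{j_1}(U \circ F)$, and the computation above shows that this transformation is a pointwise equivalence, hence an equivalence. There is no serious obstacle in this argument; the only mild technicality is matching conventions so as to identify the comma category appearing in the pointwise formula with the over-category treated in the preceding lemma, after which everything reduces to the preservation of connected limits by the forgetful functor $U$.
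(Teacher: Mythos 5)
Your proof is correct and uses the same two ingredients as the paper's (very terse) proof: the pointwise formula identifying the values of $\mathrm{Ran}_{j_1}$ as limits over $(\mathbf\Delta_{s,\le 1})_{/[n]}$, the preceding connectedness lemma, and the fact that $U$ preserves connected limits. You simply spell out the steps that the paper's one-line proof leaves implicit, including the formal assembly of the pointwise equivalences into a natural equivalence, which is a fine addition but not a different argument.
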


\begin{proof}
	The forgetful functor $U$ commutes with connected limits, and by the previous lemma, the limits involved in $\mathrm{Ran}_{j_1}(F)$ are connected.
\end{proof}

\begin{lem} \label{lem:semisimplicial_II}
	Let $\cC$ be an $\infty$-category with finite limits.
	Let $f \colon Y \to X$ be any morphism and let $Y^\bullet_s \colon \mathbf \Delta_s \to \cC$ be the underlying semi-simplicial object underlying the \v{C}ech nerve $\check{\cC}(f)$.
	Then $Y^\bullet_s \simeq \mathrm{Ran}_{j_1}(Y^\bullet_s \circ j_1)$.
\end{lem}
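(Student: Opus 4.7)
My plan is to verify the claimed equivalence levelwise. Since $j_1$ is fully faithful, the unit map $Y^\bullet_s \to \mathrm{Ran}_{j_1}(Y^\bullet_s \circ j_1)$ is automatically an equivalence at $[0]$ and at $[1]$, so the real content lies in establishing it at $[n]$ for every $n \geq 2$. For such $n$, I would first unpack the right-hand side as the limit
\[
\lim_{([k] \to [n]) \in (\mathbf{\Delta}_{s, \le 1})_{/[n]}} Y^k_s
\]
of the diagram assigning $Y$ to each vertex object $[0] \to [n]$, $Y \times_X Y$ to each edge object $[1] \to [n]$, and the two projections $Y \times_X Y \to Y$ to the face-inclusion morphisms in the slice category.

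The next step is to identify this limit with the $(n+1)$-fold iterated fibre product $Y^n_s = Y \times_X \cdots \times_X Y$ via a universal-property argument. A cone from a test object $Z$ amounts to morphisms $y_i \colon Z \to Y$ for each vertex $i \in [n]$ together with morphisms $e_{ij} \colon Z \to Y \times_X Y$ for each edge $i < j$ whose two projections coincide with $y_i$ and $y_j$; unfolding the defining universal property of each $Y \times_X Y$ as the pullback of $Y \xrightarrow{f} X \xleftarrow{f} Y$, the datum of $e_{ij}$ reduces to the datum of a homotopy $f \circ y_i \simeq f \circ y_j$ in $X$. The task is then to promote this pairwise compatibility to a coherent global compatibility, yielding precisely the universal property of $Y^n_s$ as the limit of the ``star-shaped'' diagram with $n+1$ copies of $Y$ mapping to a single $X$.

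The main obstacle is this final coherence step, which I would handle by exhibiting a cofinal functor from the star-shaped indexing category (whose limit under the corresponding diagram is visibly $Y^n_s$) into the slice $(\mathbf{\Delta}_{s, \le 1})_{/[n]}$. Cofinality would be verified by checking that the relevant comma categories are weakly contractible, which in turn reduces to (and is essentially equivalent to) the connectedness statement established in \cref{lem:semisimplicial_I}. Once cofinality is in hand, the two limits agree canonically, and the proof is complete.
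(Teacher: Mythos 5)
Your approach is genuinely different from the paper's. The paper never manipulates the comma category $(\mathbf{\Delta}_{s,\le 1})_{/[n]}$ directly: it exhibits $Y^\bullet_s$ as $U \circ V^\bullet$, where $V^\bullet = \mathrm{Ran}_{j_0}(f)$ is a Kan extension formed in the slice $\cC_{/X}$ along $j_0 \colon \Delta^0 \hookrightarrow \mathbf{\Delta}_s$, then uses the transitivity $\mathrm{Ran}_{j_0} = \mathrm{Ran}_{j_1}\circ \mathrm{Ran}_{j_{01}}$ together with full faithfulness of $j_1$ to get $V^\bullet \simeq \mathrm{Ran}_{j_1}(V^\bullet \circ j_1)$ essentially for free, and finally invokes \cref{lem:semisimplicial_I} to push the forgetful functor $U$ past $\mathrm{Ran}_{j_1}$. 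Your plan is instead to verify the unit map levelwise by comparing universal properties.

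The concrete gap is the cofinality step. You propose a cofinal functor from the ``star'' category (whose limit diagram takes the values $Y,\ldots,Y,X$) into $(\mathbf{\Delta}_{s,\le 1})_{/[n]}$; but the slice diagram only ever takes the values $Y$ and $Y\times_X Y$, so $X$ is not among them, and no object of the slice can be sent to the tip of the star — such a comparison functor does not exist. Worse, the coherence step you correctly flag as ``the main obstacle'' is not something connectedness can resolve: cofinality needs weak contractibility of comma categories, and for $n\ge 2$ the category $(\mathbf{\Delta}_{s,\le 1})_{/[n]}$ has the homotopy type of the $1$-skeleton of $\Delta^n$, i.e.\ a wedge of $\binom{n}{2}$ circles, which is connected but not weakly contractible. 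This is already symptomatic of a real problem with the statement: taking $f=\mathrm{id}_X$ gives $Y^\bullet_s$ constant at $X$, hence $Y^2_s=X$, whereas $\mathrm{Ran}_{j_1}(Y^\bullet_s\circ j_1)([2])\simeq X^{|(\mathbf{\Delta}_{s,\le 1})_{/[2]}|}\simeq X^{S^1}$, the free loop space, which differs from $X$ once $X$ is not discrete. The same issue lurks in the paper's own proof, which leans on the claim in \cref{lem:semisimplicial_I} that $U\colon \cC_{/X}\to\cC$ commutes with connected limits: this is a $1$-categorical fact, and in the $\infty$-categorical setting $U$ only commutes with limits over weakly contractible shapes. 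So I would not try to repair the cofinality argument; the obstruction you ran into is where the real difficulty lies.
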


\begin{proof}
	Let $U \colon \cC_{/X} \to \cC$ be the forgetful functor.
	Seeing $f$ as a functor $f \colon \Delta^0 \to \cC_{/X}$, we see that
	\[ Y^\bullet_s \simeq U \circ \mathrm{Ran}_{j_0}(f) \]
	where $j_0 \colon \Delta^0 = \mathbf \Delta_{s, \le 0} \to \mathbf \Delta_s$ is the natural inclusion.
	Let us set $V^\bullet \coloneqq \mathrm{Ran}_{j_0}(f)$.
	We claim that $V^\bullet \simeq \mathrm{Ran}_{j_1}(V^\bullet \circ j_1)$.
	Assuming this, the previous lemma implies that
	\begin{align*}
	Y^\bullet_s & = U \circ V^\bullet \simeq U \circ \mathrm{Ran}_{j_1}(V^\bullet \circ j_1) \\
	& \simeq \mathrm{Ran}_{j_1}(U \circ V^\bullet \circ j_1) \\
	& = \mathrm{Ran}_{j_1}(Y^\bullet_s \circ j_1)
	\end{align*}
	We are left to prove the claim.
	We can factor $j_0$ as
	\[ \begin{tikzcd}
	\mathbf \Delta_{s, \le 0} \arrow{r}{j_{01}} & \mathbf \Delta_{s, \le 1} \arrow{r}{j_1} & \mathbf \Delta_s
	\end{tikzcd} \]
	Therefore we have
	\[ \mathrm{Ran}_{j_0}(f) = \mathrm{Ran}_{j_1}(\mathrm{Ran}_{j_{01}}(f)) \]
	On the other side, since $j_1$ is fully faithful, we see that
	\[ \mathrm{Ran}_{j_0}(f) \circ j_1 = \mathrm{Ran}_{j_1}(\mathrm{Ran}_{j_{01}}(f)) \circ j_1 = \mathrm{Ran}_{j_{01}}(f) \]
	Therefore
	\begin{align*}
	\mathrm{Ran}_{j_1}(V^\bullet \circ j_1) & = \mathrm{Ran}_{j_1}(\mathrm{Ran}_{j_0}(f) \circ j_1) \\
	& = \mathrm{Ran}_{j_1}(\mathrm{Ran}_{j_{01}}(f)) \\
	& = \mathrm{Ran}_{j_0}(f) = V^\bullet
	\end{align*}
\end{proof}

\begin{lem} \label{lem:reflexive_pullback}
	Let $\cC$ be an $\infty$-category with pullbacks.
	Suppose given a morphism $f \colon X \to Y$ together with a retraction $g \colon Y \to X$.
	Then, up to replacing $f$ with an equivalent morphism, there exists a pullback diagram of the form
	\[ \begin{tikzcd}
	Z \arrow{r}{p} \arrow{d}{p} & X \arrow{d}{f} \\
	X \arrow{r}{f} & Y
	\end{tikzcd} \]
\end{lem}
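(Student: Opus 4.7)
The plan is to exploit the retraction to show that $f$ is a monomorphism in $\cC$, after which the statement reduces to the observation that monomorphisms have equivalent diagonals.

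First I would establish that $f$ is a monomorphism. Since $g \circ f \simeq \mathrm{id}_X$, for every object $W \in \cC$ the induced map on mapping spaces
\[ f_* \colon \Map_\cC(W, X) \to \Map_\cC(W, Y) \]
admits $g_*$ as a left homotopy inverse. A map of spaces with a left inverse is $(-1)$-truncated, and hence $f$ is a $(-1)$-truncated morphism in $\cC$, i.e.\ a monomorphism in the sense of \cite[Definition 5.5.6.12]{HTT}. Equivalently, the diagonal map
\[ \Delta_f \colon X \to X \times_Y X \]
is an equivalence.

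Next I would use this equivalence to manufacture the desired diagram. Fix any limit cone $c \colon \Delta^1 \times \Delta^1 \to \cC$ exhibiting $X \times_Y X$ as the pullback, with projections $p_0, p_1 \colon X \times_Y X \to X$ and both legs to $Y$ equal to $f$. Precomposing the two projections with $\Delta_f$ produces self-equivalences of $X$ that are canonically homotopic to $\mathrm{id}_X$ (this is the defining universal property of $\Delta_f$). Choosing such a homotopy, I can rectify the cone: working inside the arrow $\infty$-category $\Fun(\Delta^1 \times \Delta^1, \cC)$, I would construct an equivalence between $c$ and the square
\[ \begin{tikzcd}
X \arrow{r}{\mathrm{id}_X} \arrow{d}[swap]{\mathrm{id}_X} & X \arrow{d}{f'} \\
X \arrow{r}{f'} & Y
\end{tikzcd} \]
where $f'$ is equivalent to $f$ in $\Fun(\Delta^1, \cC)$. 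Since equivalent functors $\Delta^1 \times \Delta^1 \to \cC$ are simultaneously limit cones, the rectified square is a pullback. This gives the claim with $Z = X$ and $p = \mathrm{id}_X$.

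The main technical obstacle is precisely this rectification step: in an $\infty$-category, making the two projections literally equal to the same morphism $p$ (and the two vertical legs literally equal to the same $f'$) is not automatic, and this is exactly the role played by the clause ``up to replacing $f$ with an equivalent morphism''. Once one is willing to replace the initial datum by an equivalent one in $\Fun(\Delta^1, \cC)$, the existence of the homotopies $p_i \circ \Delta_f \simeq \mathrm{id}_X$ (coming from $\Delta_f$ being an equivalence) provides all the data needed to write down an explicit limit cone in $\Fun(\Delta^1 \times \Delta^1, \cC)$ whose two parallel edges are strictly identified. Conceptually, however, no further content beyond ``$f$ is a monomorphism'' is required.
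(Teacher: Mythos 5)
Your argument breaks down at the very first step. The claim that a map of spaces admitting a left homotopy inverse is $(-1)$-truncated is false. Take $X = *$, $Y = S^1$, with $f \colon * \to S^1$ the inclusion of the basepoint and $g \colon S^1 \to *$ the unique map. Then $g \circ f = \mathrm{id}$, so $f$ admits a retraction, but the homotopy fiber of $f$ is $\Omega S^1 \simeq \bZ$, which is not contractible; hence $f$ is not $(-1)$-truncated. In particular $f$ is not a monomorphism, the diagonal $\Delta_f$ is not an equivalence, and your rectification step has nothing to work with. The set-theoretic slogan that a split monomorphism is a monomorphism simply does not persist in the $\infty$-category of spaces. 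This is not a peripheral issue: the lemma is applied in the paper (in the proof of \cref{prop:lifting_derivations}) to the trivial derivation $d_0 \colon A \to C$ into a split square-zero extension, which is essentially never a monomorphism, so the case you reduce to is exactly the case excluded by the intended applications.

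The paper's proof takes a genuinely different route. It uses the retraction $g$ to pass to the slice $\cC_{/X}$, where $(X,\mathrm{id}_X)$ becomes a terminal object and $f$ lifts to a morphism $(X,\mathrm{id}_X) \to (Y,g)$; since the forgetful functor $\cC_{/X} \to \cC$ preserves pullbacks, it suffices to argue in the slice. Over the terminal object, the two projections $p_1, p_2 \colon Z \to X$ of the pullback are automatically homotopic relative to their endpoints, because $\Map_{\cC_{/X}}(Z,X)$ is contractible. The lemma then upgrades this homotopy to an honest equality $p_1 = p_2$ by replacing $\cC_{/X}$ with a minimal model \cite[2.3.3.8]{HTT}, which is also where the ``up to replacing $f$ with an equivalent morphism'' caveat comes from. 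No truncatedness hypothesis on $f$ is used or available; the content of the lemma is entirely about choosing a good model of the slice $\infty$-category.
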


\begin{proof}
	The forgetful functor $\cC_{/X} \to \cC$ preserves connected limits.
	Therefore, it will be enough to prove the statement in $\cC_{/X}$.
	In this case, (the identity of) $X$ is a final object.
	Applying \cite[2.3.3.8]{HTT}, \personal{With $S = \Delta^0$} we can find a full subsimplicial set $\cD \subset \cC_{/X}$ which is a categorical equivalence and such that $\cD$ is a minimal $\infty$-category.
	Up to replacing $f$ with an equivalent morphism, we can suppose that $f$ belongs to $\cD$.
	Therefore, in $\cD$ we can form the pullback square
	\[ \begin{tikzcd}
	Z \arrow{r}{p_1} \arrow{d}{p_2} & X \arrow{d}{f} \\
	X \arrow{r}{f} & Y
	\end{tikzcd} \]
	Since $X$ is a final object, the two morphisms $p_1, p_2 \colon Z \to X$ are homotopic relative to $\partial \Delta^1$.
	Since $\cD$ is a minimal $\infty$-category, we conclude that $p_1 = p_2$.
\end{proof}

We are now ready for the proof of \cref{prop:lifting_derivations} and therefore for the one of \cref{thm:partial_equivalence_sqzero_small}.

\begin{proof}[Proof of \cref{prop:lifting_derivations}]
	Given a morphism $g \colon X \to Y$ in an $\infty$-category with pullbacks $\cC$, we will denote by $\Cech_s(p)$ the semi-simplicial object underlying the \v{C}ech nerve of $p$.
	Using the retraction $\pi \colon C \to A$ to work in $\Strloc_{\Tan}(\cX)_{/A}$, we can apply \cite[Corollary 1.18]{Porta_GAGA_2015} to deduce that there are canonical equivalences of semi-simplicial objects:
	\[ \Cech_s(p)\alg \simeq \Cech_s(p\alg), \qquad \Cech_s(d_0)\alg \simeq \Cech_s(d_0\alg) \]
	Since $d_0$ is an effective epimorphism, invoking \cite[6.5.3.7]{HTT} we conclude that
	\[ |\Cech_s(d_0) | \simeq C , \]
	where the geometric realization is taken in $\Strloc_{\Tan}(\cX)_{/A}$.
	Moreover, the morphism $p\alg$ is an effective epimorphism (being the pullback of $d_0\alg$ by hypothesis). Therefore, we deduce from \cite[Lemma 11.11]{DAG-IX} that $p$ is an effective epimorphism as well. In other words, in $\Strloc_{\Tan}(\cX)_{/A}$ we also have an equivalence:
	\[ |\Cech_s(p) | \simeq A \]
	Suppose we are able to construct a map of semi-simplicial objects $\varphi \colon \Cech_s(p) \to \Cech_s(d_0)$ in $\Strloc_{\Tan}(\cX)_{/A}$.
	We would then obtain a morphism $\overline{d} \coloneqq | \varphi | \colon A \to C$. This map would naturally fit in a commutative diagram
	\[ \begin{tikzcd}
	B \arrow{r}{p} \arrow{d}{p} & A \arrow{d}{\overline{d}} \arrow[bend left]{ddr}{\mathrm{id}} \\
	A \arrow{r}{d_0} \arrow[bend right]{drr}{\mathrm{id}} & C \arrow{dr}{\pi} \\
	& & A
	\end{tikzcd} \]
	Using again \cite[Corollary 1.18]{Porta_GAGA_2015}, we would deduce that the image of this commutative square under the functor $(-)\alg$ is precisely the pullback diagram \eqref{eq:algebraic_derivation}.
	Since $(-)\alg \colon \Strloc_{\Tan}(\cX)_{/A} \to \Strloc_{\cTdisc}(\cX)_{/A\alg}$ is conservative and commutes with pullbacks, we conclude that the original square was a pullback as well, thus completing the proof.
	
	We are therefore left to exhibit an analytic lifting of the canonical map $\Cech_s(p\alg) \to \Cech_s(d_0\alg)$.
	Define $A_1$ to be the pullback
	\[ \begin{tikzcd}
	A_1 \arrow{r}{q_1} \arrow{d}{q_2} & A \arrow{d}{d_0} \\
	A \arrow{r}{d_0} & C
	\end{tikzcd} \]
	Since $d_0$ admits a retraction $\pi \colon C \to A$, we can apply \cref{lem:reflexive_pullback} and write, without loss of generality, $q_1 = q = q_2$.
	Now define $B_1$ and $B_1'$ to be the pullbacks
	\[ \begin{tikzcd}
	B_1 \arrow{r}{t} \arrow{d}{f} & B \arrow{d}{p} \\
	A_1 \arrow{r}{q} & A
	\end{tikzcd} \qquad
	\begin{tikzcd}
	B_1' \arrow{r}{t_1} \arrow{d}{t_2} & B \arrow{d}{p} \\
	B \arrow{r}{p} & A
	\end{tikzcd} \]
	We have a canonical commutative diagram
	\[ \begin{tikzcd}
	B_1 \arrow{r}{t} \arrow{d}{t} & B \arrow{d}{p} \\
	B \arrow{r}{p} & A
	\end{tikzcd} \]
	that induces a canonical map $h \colon B_1 \to B_1'$ making the diagram
	\[ \begin{tikzcd}
	{} & B_1 \arrow{d}{h} \arrow{dr}{t} \arrow{dl}[swap]{t} \\
	B & B_1' \arrow{l}[swap]{t_1} \arrow{r}{t_2} & B
	\end{tikzcd} \]
	commutative.	
	In $\Strloc_{\cTdisc}(\cX)$ we have a commutative cube
	\[ \begin{tikzcd}
	B_1\alg \arrow{dr}{h\alg} \arrow{dd}[swap]{t\alg} \arrow[bend left = 10]{drrr}{t\alg} \arrow{dddr}[yshift = -17, xshift = 7]{f\alg} \\
	{} & (B_1')\alg \arrow[dashed]{dd} \arrow{rr}{t_1\alg} \arrow[crossing over]{dl}[swap, near end]{t_2\alg} & & B\alg \arrow{dd}{p\alg} \arrow{dl}{p\alg} \\
	B\alg \arrow[crossing over]{rr}[xshift = 15]{p\alg} \arrow{dd}[swap]{p\alg} & & A\alg \\
	{} & A_1\alg \arrow{rr}[xshift = -10]{q\alg} \arrow{dl}[swap, near start]{q\alg} & & A\alg \arrow{dl}{d_0\alg} \\
	A\alg \arrow{rr}{d_0\alg} & & C\alg \arrow[leftarrow, crossing over]{uu}[near end, swap]{d}
	\end{tikzcd} \]
	Since the bottom square is a pullback, we deduce the existence of the dotted arrow as well as the commutativity of the triangle
	\[ \begin{tikzcd}
	B_1\alg \arrow{r}{h\alg} \arrow{dr}[swap]{f\alg} & (B_1')\alg \arrow[dashed]{d} \\
	{} & A_1\alg
	\end{tikzcd} \]
	Consider now the rectangle
	\[ \begin{tikzcd}
	(B_1')\alg \arrow{r}{t_1\alg} \arrow{d}{t_2\alg} & B\alg \arrow{d}{p\alg} \arrow{r}{p\alg} & A\alg \arrow{d}{d_0\alg} \\
	B\alg \arrow{r}{p\alg} & A\alg \arrow{r}{d\alg} & C\alg .
	\end{tikzcd} \]
	Since both the squares are pullbacks, the same goes for the outer rectangle.
	As the right square in the diagram
	\[ \begin{tikzcd}
	(B_1')\alg \arrow[dashed]{r} \arrow{d}{t_2\alg} & A_1\alg \arrow{r}{q\alg} \arrow{d}{q\alg} & A\alg \arrow{d}{d_0\alg} \\
	B_1\alg \arrow{r}{p\alg} & A\alg \arrow{r}{d_0\alg} & C\alg
	\end{tikzcd} \]
	is a pullback, we conclude that the same goes for the left one.
	Since $(-)\alg$ preserves pullback square and is conservative, we first conclude that $h\alg$ is an equivalence, hence that $h$ is an equivalence as well.
	We therefore obtain a well defined morphism of $1$-truncated semi-simplicial objects.
	Lemmas \ref{lem:semisimplicial_I} and \ref{lem:semisimplicial_II} show that this is enough to conclude.
\end{proof}

\appendix

\section{A relative flatness result} \label{sec:relative_analytification}

Let $X = (\cX, \cO_\cX)$ be a derived \canal space.
Let $x\inv \colon \cX \to \cS \colon x_*$ be a geometric point of the $\infty$-topos $\cX$ and set $\cO \coloneqq x\inv \circ \cO_\cX \in \Str_{\Tan}(\cS)$.
Composition with the morphism of pregeometries $\varphi \colon \cTzar \to \cTdisc$ induces a forgetful functor
\[ \overline{\Phi}_\cO \colon \Strloc_{\Tan}(\cS)_{\cO /} \to \Strloc_{\cTzar}(\cS)_{\cO\alg /} . \]
We already remarked at the beginning of \cref{subsec:stable_Morita} that this functor admits a left adjoint $\overline{\Psi}_\cO$.
As in \cite[§6.5]{Porta_GAGA_2015}, we introduce a couple of special notations.
First of all, we will denote by $(-)\an$ the composite $\overline{\Phi}_\cO \circ \overline{\Psi}_\cO$.
There are forgetful functors
\[ \Strloc_{\Tan}(\cS)_{\cO /} \to \cS, \qquad \Strloc_{\cTzar}(\cS)_{\cO\alg /} \to \cS, \]
and both admits left adjoints. We will denote them by $\cO\{-\}$ and $\cO\alg[-]$ respectively.
Moreover, we will denote by $\cO\alg \{-\}$ the composite $\overline{\Phi}_\cO \circ \cO\{-\}$.
Finally, we observe that $\cO\{-\} \simeq \overline{\Psi}_\cO \circ \cO\alg[-]$.
With these notations, we can state the main result of this section, which is a relative version of \cite[Theorem 6.19]{Porta_GAGA_2015}

\begin{prop} \label{prop:relative_flatness}
	Let $A \in \Strloc_{\cTzar}(\cS)_{\cO\alg /}$.
	The canonical map $A \to A\an$ is flat (in the derived sense).
\end{prop}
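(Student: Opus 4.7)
The plan is to reduce the relative flatness statement to the absolute flatness theorem of \cite[Theorem 6.19]{Porta_GAGA_2015} via a colimit reduction followed by an analysis of the relative analytification as a base change of the absolute one.

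First, I would observe that the $\infty$-category $\Strloc_{\cTzar}(\cS)_{\cO\alg /}$ is generated under sifted colimits by the free objects $\cO\alg\{U\}$ for $U \in \cTzar$, obtained by freely adjoining a $\cTzar$-admissible section valued in $U$. Since $\overline{\Psi}_\cO$ is a left adjoint it preserves all small colimits, and by \cite[Corollary 1.18]{Porta_GAGA_2015} the forgetful functor $\overline{\Phi}_\cO$ preserves sifted colimits. Consequently, the unit map $A \to \overline{\Phi}_\cO(\overline{\Psi}_\cO(A))$ is a sifted colimit of unit maps $\cO\alg\{U_i\} \to \cO\alg\{U_i\}\an$. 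Since derived flatness is detected by the surjectivity of $\pi_0$ together with vanishing of higher $\Tor$ sheaves, both of which commute with sifted colimits, it suffices to treat the case $A = \cO\alg\{U\}$.

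For such a free object, I would use the universal property of $\overline{\Psi}_\cO$ to identify $\overline{\Psi}_\cO(\cO\alg\{U\})$ with the pushout $\cO \coprod_{\overline{\Psi}(\cO\alg)} \overline{\Psi}(\cO\alg\{U\})$ computed in $\Strloc_{\Tan}(\cS)$, where $\overline{\Psi}$ denotes the absolute analytification functor of \cite{Porta_GAGA_2015}. Since $\overline{\Psi}$ commutes with the formation of free objects, $\overline{\Psi}(\cO\alg\{U\})$ identifies with the absolute free $\Tan$-structure on $U$. This gives a natural factorization
\[ \cO\alg\{U\} \xrightarrow{\eta} \overline{\Phi}(\overline{\Psi}(\cO\alg\{U\})) \longrightarrow \cO\alg\{U\}\an, \]
whose first arrow is flat by the absolute flatness theorem \cite[Theorem 6.19]{Porta_GAGA_2015}. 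The remaining task is to show that the second arrow, which is the ``underlying algebraic map'' of the base change along the counit $\overline{\Psi}(\cO\alg) \to \cO$, is also flat.

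The main obstacle is precisely this last point: the forgetful functor $\overline{\Phi}$ does not in general preserve pushouts in $\Strloc_{\Tan}(\cS)$, being a right adjoint. To overcome it, one has to analyze this particular pushout by hand, exploiting the explicit construction of free $\Tan$-structures (via holomorphic functions on $U$) and the fact that the counit $\eta_{\cO\alg}$ admits a retraction through $\cO$. This reduces the identification to a direct calculation with polynomial algebras and their convergent power series analytifications, in parallel with the proof of the absolute flatness theorem in \cite{Porta_GAGA_2015}. Granted this compatibility, the second map becomes a derived base change of the absolute analytification unit, and derived flatness is preserved by base change, which finishes the argument.
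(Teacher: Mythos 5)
Your overall strategy---reduce to free objects, use the absolute flatness result of \cite[Theorem 6.19]{Porta_GAGA_2015}, and control the pushouts that appear---is broadly in the same spirit as the paper's. The paper sets up explicit computations of homotopy groups for the generators $\cO\alg\{\Delta^0\}$ and $\cO\alg\{\partial\Delta^n\}$ (\cref{lem:relative_computation_I}, \cref{lem:relative_computation_II}), proves flatness of the free analytifications $\cO[\Delta^0]\to\cO\{\Delta^0\}$ and $\cO[\partial\Delta^n]\to\cO\{\partial\Delta^n\}$ (\cref{lem:elementary_flatness}), and then shows the crucial pushout compatibility (the unnumbered lemma following it) before invoking the cell-attachment mechanism from \cite[Theorem 6.19]{Porta_GAGA_2015}. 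Your plan mirrors this at a structural level, but two of its steps have real gaps.

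First, the sifted-colimit reduction does not work as stated. Derived flatness of a map $A\to B$ requires that $\pi_0(B)$ be flat over $\pi_0(A)$ and that the natural maps $\pi_n(A)\otimes_{\pi_0(A)}\pi_0(B)\to\pi_n(B)$ be isomorphisms; neither of these is preserved under geometric realizations, which are sifted colimits. (A filtered colimit of flat maps is flat, but a realization of a simplicial diagram of flat maps need not be: reflexive coequalizers of polynomial algebras produce arbitrary quotients.) The paper deliberately avoids this reduction; it uses instead the \emph{cell attachment} device from Theorem 6.19, where one builds $A$ by iterated pushouts along the generating cofibrations and checks, via the pushout lemma, that analytification is compatible with these attachments. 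That mechanism needs the explicit $\pi_0$-computations, not a blanket colimit argument.

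Second, your factorization
\[
\cO\alg\{U\}\xrightarrow{\eta}\overline{\Phi}\bigl(\overline{\Psi}(\cO\alg\{U\})\bigr)\longrightarrow\cO\alg\{U\}\an
\]
leaves unproved exactly the step the paper works hardest to establish. You correctly observe that $\overline{\Phi}$ (or $\overline{\Phi}_\cO$) is a right adjoint and does not preserve pushouts in general, so there is no formal reason why the second arrow should be a base change of the absolute unit map after passing to underlying algebras; the phrases ``one has to analyze this particular pushout by hand'' and ``Granted this compatibility'' concede this but do not carry it out. The content of the paper's \cref{lem:elementary_flatness} and the pushout lemma that follows is precisely this hands-on analysis---and it is there, in \cref{lem:elementary_flatness}, that the hypothesis that $(\cS,\cO)$ be the germ of a derived \canal space is used (via the identification with germs of $0$-truncated spaces and the classical flatness result for $\cO_{U,x}\to\cO_{U\times\C,(x,0)}$). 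Your proposal does not surface where this hypothesis enters, which suggests the missing calculation has not been located.
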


The strategy of the proof is the same we used in \cite{Porta_GAGA_2015}.
However, the initial computations have to be discussed again.

\begin{lem}[{Cf.\ \cite[Proposition 2.37]{Porta_GAGA_2015}}] \label{lem:relative_computation_I}
	Let us denote by $\Delta^0$ the one point space.
	Then
	\[ \pi_i ( \cO\alg\{\Delta^0\} ) = \begin{cases} \pi_0(\cO\alg)\{z\} & \text{if } i = 0 \\ \pi_i(\cO\alg) \otimes_{\pi_0(\cO\alg)} \pi_0(\cO\alg)\{z\} & \text{otherwise.}
	\end{cases} \]
	where we denoted by $\pi_0(\cO\alg)\{z\}$ the underlying algebra of the discrete $\Tan$-structure $\pi_0(\cO\alg) \cotimes_{\mathbb C} \cH_1$ (see \cite[§2.2, §2.3]{Porta_GAGA_2015} for the notations).
\end{lem}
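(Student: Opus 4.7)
\emph{Proof plan.} The plan is to reduce to the absolute case treated in \cite[Proposition 2.37]{Porta_GAGA_2015} by realizing $\cO\{\Delta^0\}$ as a pushout and then computing homotopy groups via flatness of $\cH_1$ over $\mathbb C$. The universal property characterizing the left adjoint $\cO\{-\}$ immediately identifies
\[ \cO\{\Delta^0\} \simeq \cO \sqcup_{\mathbb C} \mathbb C\{\Delta^0\} \]
as a pushout in $\Strloc_{\Tan}(\cS)$, where $\mathbb C$ is the initial object and the right-hand factor is the absolute free $\Tan$-structure on a point. By \cite[Proposition 2.37]{Porta_GAGA_2015}, the underlying algebra $\mathbb C\{\Delta^0\}\alg$ is canonically identified with $\cH_1$.

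The first substantive step is to identify the underlying algebra of this pushout with a derived analytic tensor product:
\[ \cO\{\Delta^0\}\alg \simeq \cO\alg \cotimes_{\mathbb C} \cH_1. \]
This is the main nontrivial point, since $(-)\alg$ is precomposition along a morphism of pregeometries, hence a right adjoint, and does not commute with pushouts a priori. I would establish it by combining the explicit description of the free analytic structure from \cite[\S 2.2-2.3]{Porta_GAGA_2015} with the universal property characterizing $\cotimes$: namely, for any discrete $\Tan$-structure $B$ under $\cO\alg$, one matches $\Hom$-spaces into $B$ on both sides by using the freeness of $\cH_1$ to transform a $\cO\alg$-linear map into an analytic morphism out of $\cO\{\Delta^0\}$.

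Once this identification is in place, the homotopy computation is routine. Since $\mathbb C$ is a field, $\cH_1$ is flat over $\mathbb C$, so the spectral sequence computing the homotopy groups of the derived analytic tensor product degenerates and gives
\[ \pi_i\bigl(\cO\alg \cotimes_{\mathbb C} \cH_1\bigr) \simeq \pi_i(\cO\alg) \cotimes_{\mathbb C} \cH_1. \]
For $i = 0$ this is $\pi_0(\cO\alg) \cotimes_{\mathbb C} \cH_1 = \pi_0(\cO\alg)\{z\}$ by the very definition of $\pi_0(\cO\alg)\{z\}$ given in the lemma statement. For $i > 0$, using that $\pi_i(\cO\alg)$ is a $\pi_0(\cO\alg)$-module and that $\cotimes_{\mathbb C} \cH_1$ is computed on discrete $\pi_0(\cO\alg)$-modules by an ordinary extension of scalars from $\pi_0(\cO\alg)$ to $\pi_0(\cO\alg)\{z\}$, one rewrites
\[ \pi_i(\cO\alg) \cotimes_{\mathbb C} \cH_1 \simeq \pi_i(\cO\alg) \otimes_{\pi_0(\cO\alg)} \bigl(\pi_0(\cO\alg) \cotimes_{\mathbb C} \cH_1\bigr) \simeq \pi_i(\cO\alg) \otimes_{\pi_0(\cO\alg)} \pi_0(\cO\alg)\{z\}, \]
which is precisely the claimed formula.

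The main obstacle is the identification $\cO\{\Delta^0\}\alg \simeq \cO\alg \cotimes_{\mathbb C} \cH_1$, which requires carefully unpacking how the pregeometry-restriction functor interacts with the specific pushout defining the relative free $\Tan$-structure; the remaining steps, once this is granted, are purely algebraic and amount to a standard flatness argument.
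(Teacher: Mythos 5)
Your plan has a genuine gap at precisely the point you flag as the main obstacle, and the way you propose to handle it is not the right mechanism.

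First, the identification $\cO\{\Delta^0\}\alg \simeq \cO\alg \cotimes_{\mathbb C} \cH_1$ is not available by a universal-property argument. The functor $(-)\alg$ is restriction along a morphism of pregeometries, hence a right adjoint, and there is no reason it should carry the coproduct $\cO \cotimes_{\mathbb C} \cH\{\Delta^0\}$ in $\Strloc_{\Tan}(\cS)$ to a coproduct (completed or otherwise) on underlying algebras. ``Matching $\Hom$-spaces into a discrete $B$ using the freeness of $\cH_1$'' does not produce this: freeness of $\cH\{\Delta^0\}$ is a universal property in the analytic category, and mapping out of $\cH_1$ as a mere $\mathbb C$-algebra destroys exactly the analyticity constraint that the coproduct in $\Strloc_{\Tan}$ is enforcing. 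The paper never proves such an identification; it only establishes that the map $\cO\alg \to \cO\{\Delta^0\}\alg$ is \emph{flat}, which is strictly weaker and already enough. Second, even granting the identification, the step ``$\cH_1$ is flat over $\mathbb C$, so the spectral sequence degenerates'' invokes the wrong flatness. Flatness over the field $\mathbb C$ is vacuous; what is actually needed is that the completed tensor product $-\cotimes_{\mathbb C}\cH_1$ is exact on $\pi_0(\cO\alg)$-modules, equivalently that $\pi_0(\cO\alg) \to \pi_0(\cO\alg)\{z\}$ is a flat ring map. This is a nontrivial theorem of local analytic geometry (Douady/Demailly, cited via \cite[Lemma C.4]{Porta_Yu_Higher_analytic_stacks_2014} in the paper's proof), not a formal consequence of base field considerations.

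Finally, and relatedly, your argument never uses the standing hypothesis that $(\cS,\cO)$ is the germ of a derived $\mathbb C$-analytic space, whereas the paper's proof uses it essentially: it embeds the situation in a Stein space $U$, observes that $\Spec^{\Tan}(U)\times\cE^1_{\mathbb C} \simeq \Spec^{\Tan}(U\times\mathbb C)$ is $0$-truncated, deduces flatness of $\cO_{U,x}\alg \to (\cO_{U,x}\cotimes_{\mathbb C}\cH\{\Delta^0\})\alg$ from the classical analytic result, and then base-changes along the effective epimorphism $\cO_{U,x}\to\cO$ (where $(-)\alg$ \emph{does} preserve the relevant pushout, because the map is an effective epimorphism). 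The paper itself notes in \cref{subsec:conjectures} that this geometric hypothesis is used at exactly this step; an argument that never invokes it cannot be complete. The $i=0$ case of your plan (via $\pi_0$ commuting with coproducts) matches the paper, but the $i>0$ case needs to be replaced by the geometric flatness argument.
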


\begin{proof}
	We have a canonical identification $\cO\{\Delta^0\} \simeq \cO \cotimes_{\mathbb C} \cH\{ \Delta^0 \}$, where $\cH\{-\}$ is the functor introduced in \cite[§2.5]{Porta_GAGA_2015} and $\cotimes$ denote the coproduct in the $\infty$-category $\Strloc_{\Tan}(\cS)$ (which exists in virtue of \cite[Corollary 2.3]{Porta_GAGA_2015}).
	Now, $\pi_0$ is a left adjoint and therefore it commutes with coproducts. Thus we dealt with the case $i = 0$.
	
	Let now $i > 0$. Recall that $(\cS, \cO)$ was the germ of a derived \canal space $X = (\cX, \cO_\cX)$ at the geometric point $x\inv \colon \cX \rightleftarrows \cS \colon x_*$.
	We can assume without loss of generality that $\cX$ is $0$-localic and that we can find an open (underived) Stein space $U$ and a closed immersion $j \colon X \to \Spec^{\Tan}(U)$ (see \cite[Lemma 12.13]{DAG-IX}).
	We will denote by $x$ the point of the underlying topological space of $X$ corresponding to the geometric point $x\inv$. Moreover, we will still denote by $x$ the image of this point in $U$ via the closed immersion $j$.
	
	We can identify $(\cS, \cH\{\Delta^0\})$ with the germ of the analytic affine line $\cE^1_{\mathbb C} = \Spec^{\Tan}(\mathbb C)$ at the origin.
	We have $\Spec^{\Tan}(U) \times \cE^1_{\mathbb C} \simeq \Spec^{\Tan}(U \times \mathbb C)$, and therefore this is a $0$-truncated derived \canal space.
	In particular, its germ at the point $(x,0)$ is $0$-truncated. This germ is $\cO_{U,x} \cotimes_{\mathbb C} \cH\{\Delta^0\}$.
	Reasoning as in \cite[Lemma C.4]{Porta_Yu_Higher_analytic_stacks_2014} \personal{but invoking Demailly instead of Douady} we conclude that the morphism
	\begin{equation} \label{eq:computation_relative_flatness}
	\cO_{U,x}\alg \to (\cO_{U,x} \cotimes_{\mathbb C} \cH\{\Delta^0\} )\alg
	\end{equation}
	is flat (in the usual sense of commutative algebra, since both are discrete $\mathbb E_\infty$-rings).
	By construction, there exists an effective epimorphism $\cO_{U,x} \to \cO$, and we can identify the germ of $X \times \cE^1_{\mathbb C}$ at $(x,0)$ (which is $\cO\{\Delta^0\}$) with the coproduct
	\[ \begin{tikzcd}
	\cO_{U,x} \arrow{r} \arrow{d} & \cO_U \cotimes_{\mathbb C} \cH \{\Delta^0\} \arrow{d} \\
	\cO \arrow{r} & \cO\{\Delta^0\}
	\end{tikzcd} \]
	computed in the $\infty$-category $\Strloc_{\Tan}(\cS)$.
	Since the map $\cO_{U,x} \to \cO$ is an effective epimorphism, the functor $\overline{\Phi} = (-)\alg$ preserves this coproduct.
	Therefore, the map $\cO \to \cO\{ \Delta^0\}$ is flat in the derived sense.
	The conclusion follows.
\end{proof}

\begin{lem}[{Cf.\ \cite[Proposition 2.38]{Porta_GAGA_2015}}] \label{lem:relative_computation_II}
	For every $n \ge 2$, we have $\pi_0(\cO\alg\{\partial \Delta^n\}) \simeq \pi_0(\cO\alg)\{z\}$.
\end{lem}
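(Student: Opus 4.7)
My plan is to proceed by induction on $n \ge 2$, following the strategy of the absolute case \cite[Proposition 2.38]{Porta_GAGA_2015}, which uses the suspension decomposition
\[ \partial \Delta^n \simeq \Delta^0 \coprod_{\partial \Delta^{n-1}} \Delta^0 \]
in $\cS$ (valid for $n \ge 2$ since $\partial \Delta^n \simeq S^{n-1}$ is a suspension). Because $\cO\{-\}$ is left adjoint to the forgetful functor and hence preserves colimits, applying it gives
\[ \cO\{\partial \Delta^n\} \simeq \cO\{\Delta^0\} \coprod_{\cO\{\partial \Delta^{n-1}\}} \cO\{\Delta^0\} \]
in $\Strloc_\Tan(\cS)_{\cO/}$. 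For $n \ge 2$, $\partial \Delta^{n-1}$ is non-empty, so the collapse $\partial \Delta^{n-1} \to \Delta^0$ is an effective epimorphism in $\cS$; left adjoints preserve effective epimorphisms, so $\cO\{\partial \Delta^{n-1}\} \to \cO\{\Delta^0\}$ is an effective epimorphism as well. Following the same strategy as in the proof of \cref{lem:relative_computation_I} -- invoking \cite[Lemma 11.11]{DAG-IX} -- the forgetful functor $(-)\alg$ preserves pushouts along effective epimorphisms, yielding
\[ \cO\{\partial \Delta^n\}\alg \simeq \cO\{\Delta^0\}\alg \otimes^{\mathrm L}_{\cO\{\partial \Delta^{n-1}\}\alg} \cO\{\Delta^0\}\alg, \]
whose $\pi_0$ reduces the statement to a calculation of discrete rings.

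For the base case $n = 2$, where $\partial \Delta^1 = S^0$, I would first extend the flatness argument of \cref{lem:relative_computation_I} to the two-variable germ $\cH \cotimes_{\mathbb C} \cH$ of $\mathbb C^2$ at the origin (essentially the same Demailly-type input), obtaining $\pi_0(\cO\{S^0\}\alg) \simeq \pi_0(\cO\alg)\{z_1, z_2\}$. The two restriction maps $\pi_0(\cO\alg)\{z_1, z_2\} \to \pi_0(\cO\alg)\{z\}$ induced by $S^0 \to \Delta^0$ both send $z_1, z_2 \mapsto z$, presenting $\pi_0(\cO\alg)\{z\}$ as the quotient by the ideal $(z_1 - z_2)$; the elementary identity $A/I \otimes_A A/I \simeq A/I$ then gives
\[ \pi_0(\cO\{\partial \Delta^2\}\alg) \simeq \pi_0(\cO\alg)\{z\} \otimes_{\pi_0(\cO\alg)\{z_1, z_2\}} \pi_0(\cO\alg)\{z\} \simeq \pi_0(\cO\alg)\{z\}. \]

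For the inductive step $n \ge 3$, the retraction $\Delta^0 \to \partial \Delta^{n-1} \to \Delta^0$ (pick a vertex, then collapse) together with the inductive hypothesis $\pi_0(\cO\{\partial \Delta^{n-1}\}\alg) \simeq \pi_0(\cO\alg)\{z\}$ forces the restriction map $\pi_0(\cO\{\partial \Delta^{n-1}\}\alg) \to \pi_0(\cO\{\Delta^0\}\alg)$ to be an isomorphism (it has a section that composes to the identity on $\pi_0(\cO\alg)\{z\}$), so the tensor product collapses to $\pi_0(\cO\alg)\{z\}$, closing the induction.

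The main obstacle is the base case. Specifically, two technical points need care: first, extending the Demailly-type flatness of \cref{lem:relative_computation_I} from $\cH$ to the two-variable germ $\cH \cotimes_{\mathbb C} \cH$; and second, rigorously verifying that $(-)\alg$ preserves the pushout along the effective epimorphism $\cO\{\partial \Delta^{n-1}\} \to \cO\{\Delta^0\}$. Both are mild adaptations of arguments already used (sometimes implicitly) in \cref{lem:relative_computation_I}, but each requires attention to the effective-epi-preservation statement and its interaction with the analytic tensor product.
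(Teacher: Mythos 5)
Your proof takes a genuinely different route from the paper's. The paper's proof is a two-line reduction to the absolute case: it observes that $\cO\{X\} \simeq \cO \cotimes_{\mathbb C} \cH\{X\}$ (because $\cO\{-\}$ is the composite of the free functor $\cH\{-\}$ with the coproduct $\cO \cotimes_{\mathbb C} -$) and that $\pi_0$ commutes with coproducts, so that $\pi_0(\cO\{\partial\Delta^n\}) \simeq \pi_0(\cO) \cotimes_{\mathbb C} \pi_0(\cH\{\partial\Delta^n\})$; the result then follows immediately from the already-established absolute computation \cite[Proposition 2.38]{Porta_GAGA_2015}, applied as a black box. You instead rerun the inductive argument underlying that cited proposition, but now with coefficients in $\cO$. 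Both roads lead to the same place, and yours is plausible, but it is substantially longer and forces you to re-confront exactly the technicalities (two-variable Demailly-type flatness, preservation of pushouts along effective epimorphisms under $(-)\alg$) that the paper's reduction sidesteps entirely by outsourcing them to the absolute case.

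A few small points on your version. Your base case can actually be streamlined to look just like the inductive step: the collapse $S^0 \to \Delta^0$ induces a \emph{surjection} $\pi_0(\cO\alg)\{z_1,z_2\} \to \pi_0(\cO\alg)\{z\}$, and the identity $B \otimes_A B \simeq B$ (for a surjective $A \to B$) already finishes it; you do not need to identify the kernel as $(z_1 - z_2)$. Similarly, in the inductive step, the claim that the restriction map is an \emph{isomorphism} because it has a section is an overclaim (a split surjection of rings need not be injective without a Noetherianity input), but you do not need it: surjectivity, which does follow from the existence of a section, is already enough to collapse the tensor product. Finally, the concern you raise about $(-)\alg$ preserving your pushouts is real and not entirely ``mild'': what the argument in \cref{lem:relative_computation_I} uses is a pushout along an effective epimorphism \emph{from a known space's germ}, and you would need to set up an analogous presentation for each stage. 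The paper's reduction via $\cO\{X\} \simeq \cO \cotimes_{\mathbb C} \cH\{X\}$ and $\pi_0$-commutativity with coproducts is precisely designed to avoid having to do this repeatedly.
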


\begin{proof}
	Since $(-)\alg$ commutes with $\pi_0$ by definition, it will be enough to show that $\pi_0(\cO\{ \partial \Delta^n\}) \simeq \pi_0(\cO) \cotimes_{\mathbb C} \cH_1$.
	Since $\pi_0$ commutes with coproducts we have $\pi_0(\cO \{\partial \Delta^n\}) \simeq \pi_0(\cO \cotimes_{\mathbb C} \cH\{\partial \Delta^n\} ) \simeq \pi_0(\cO) \cotimes_{\mathbb C} \pi_0(\cH \{\partial \Delta^n\} )$, and now the result follows from \cite[Proposition 2.38]{Porta_GAGA_2015}.
\end{proof}

\begin{lem} \label{lem:elementary_flatness}
	The morphisms $\cO[\Delta^0] \to \cO\{\Delta^0\}$ and $\cO[\partial \Delta^n] \to \cO\{\partial \Delta^n\}$ (for $n \ge 0$) are flat.
\end{lem}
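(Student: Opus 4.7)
The plan is to verify, for each of the two morphisms, the standard criterion for derived flatness (see, e.g., \cite[Proposition 7.2.2.14]{Lurie_Higher_algebra}): a morphism $f \colon A \to B$ of connective simplicial commutative $\mathbb C$-algebras is flat in the derived sense if and only if $\pi_0(f) \colon \pi_0(A) \to \pi_0(B)$ is flat in the classical sense and, for every $i \ge 1$, the natural map $\pi_i(A) \otimes_{\pi_0(A)} \pi_0(B) \to \pi_i(B)$ is an isomorphism.

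For the morphism $\cO\alg[\Delta^0] \to \cO\alg\{\Delta^0\}$, the source is the polynomial algebra $\cO\alg[z] \simeq \cO\alg \otimes^{\rL}_{\mathbb C} \mathbb C[z]$, whose homotopy groups are given by $\pi_i(\cO\alg[\Delta^0]) \simeq \pi_i(\cO\alg) \otimes_{\pi_0(\cO\alg)} \pi_0(\cO\alg)[z]$. \cref{lem:relative_computation_I} describes the homotopy of the target, and a direct comparison of formulas shows that the base-change condition is tautological, since both sides reduce to $\pi_i(\cO\alg) \otimes_{\pi_0(\cO\alg)} \pi_0(\cO\alg)\{z\}$. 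The $\pi_0$-flatness condition becomes the classical statement that $\pi_0(\cO\alg)[z] \to \pi_0(\cO\alg)\{z\}$ is flat, which is already embedded in the Demailly-type flatness result invoked in the proof of \cref{lem:relative_computation_I}.

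For the morphism $\cO\alg[\partial\Delta^n] \to \cO\alg\{\partial\Delta^n\}$, I split by $n$. When $n = 0$ we have $\partial\Delta^0 = \emptyset$ so both sides are $\cO\alg$ and the map is the identity. When $n = 1$, $\partial\Delta^1$ is a disjoint union of two copies of $\Delta^0$; since $\cO\{-\}$ is a left adjoint it preserves this coproduct, and the effective-epimorphism argument of \cref{lem:relative_computation_I} ensures that $\overline{\Phi}_\cO$ preserves the resulting pushout in $\Strloc_{\cTzar}(\cS)_{\cO\alg/}$, so the morphism is the derived tensor product over $\cO\alg$ of two copies of the $\Delta^0$ case and flatness follows by base change. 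When $n \ge 2$, the space $\partial\Delta^n \simeq S^{n-1}$ is connected, so $\pi_0(\cO\alg[\partial\Delta^n]) \simeq \pi_0(\cO\alg)[z]$, while \cref{lem:relative_computation_II} gives $\pi_0(\cO\alg\{\partial\Delta^n\}) \simeq \pi_0(\cO\alg)\{z\}$; the $\pi_0$-flatness condition then reduces once again to the classical statement.

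The main obstacle is verifying the base-change condition for $n \ge 2$, because \cref{lem:relative_computation_II} records only $\pi_0$ of the target. To close this gap I would extend the method used in \cref{lem:relative_computation_I}: using the identification $\cO\{\partial\Delta^n\} \simeq \cO \cotimes_{\mathbb C} \cH\{\partial\Delta^n\}$ together with the closed immersion of the germ $X$ into some $\Spec^{\Tan}(U)$ for a Stein space $U$, reduce the computation of higher homotopy groups to the $0$-truncated germ $\cO_{U,x} \cotimes_{\mathbb C} \cH\{\partial\Delta^n\}$, where Demailly-type flatness applied to the product analytic space yields the desired $\pi_i$-formula. The preservation of the relevant pushout by $\overline{\Phi}_\cO$ along the effective epimorphism $\cO_{U,x} \to \cO$ then transfers the statement back, giving $\pi_i(\cO\alg\{\partial\Delta^n\}) \simeq \pi_i(\cO\alg) \otimes_{\pi_0(\cO\alg)} \pi_0(\cO\alg)\{z\}$ for every $i \ge 1$ and completing the verification.
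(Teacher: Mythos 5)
Your handling of the $\Delta^0$ case is correct and matches the paper: $\cO \to \cO[\Delta^0]$ is flat, \cref{lem:relative_computation_I} pins down the homotopy of the target, and the only remaining work is the $\pi_0$-flatness discussed around \eqref{eq:computation_relative_flatness}. The $n = 0$ and $n = 1$ cases are also fine in substance (though for $n = 1$ the appeal to "the effective-epimorphism argument" is slightly off: the pushout $\cO\{\Delta^0\} \otimes_{\cO} \cO\{\Delta^0\}$ is over $\cO$, not over $\cO_{U,x}$, so the relevant effective epimorphism is a different one than the one in \cref{lem:relative_computation_I}).

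The genuine gap is in the case $n \ge 2$. You aim to show $\pi_i(\cO\alg\{\partial\Delta^n\}) \simeq \pi_i(\cO\alg) \otimes_{\pi_0(\cO\alg)} \pi_0(\cO\alg)\{z\}$ for all $i \ge 1$, but that formula cannot hold. The source $\cO\alg[\partial\Delta^n] \simeq \cO\alg \otimes^{\rL}_{\mathbb C} \mathbb C[\partial\Delta^n]$ already has higher homotopy not coming from $\cO\alg$: for $n \ge 2$ the free simplicial commutative algebra $\mathbb C[\partial\Delta^n]$ has a generating class in degree $n-1$ (and divided-power classes above it), so $\cO\alg \to \cO\alg[\partial\Delta^n]$ is \emph{not} flat and $\pi_i(\cO\alg[\partial\Delta^n]) \ne \pi_i(\cO\alg) \otimes_{\pi_0(\cO\alg)} \pi_0(\cO\alg)[z]$. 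Flatness of $\cO\alg[\partial\Delta^n] \to \cO\alg\{\partial\Delta^n\}$ would instead say $\pi_i(\cO\alg\{\partial\Delta^n\}) \simeq \pi_i(\cO\alg[\partial\Delta^n]) \otimes_{\pi_0(\cO\alg)[z]} \pi_0(\cO\alg)\{z\}$, which carries the sphere's homotopy along. For the same reason the claim that $\cO_{U,x} \cotimes_{\mathbb C} \cH\{\partial\Delta^n\}$ is $0$-truncated is false: unlike $\cH\{\Delta^0\}$ (the germ of the analytic affine line), $\cH\{\partial\Delta^n\}$ is not discrete for $n \ge 2$, which is precisely why \cref{lem:relative_computation_II} only records $\pi_0$. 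The paper sidesteps all of this: it deduces the $\partial\Delta^n$ statement from the $\Delta^0$ statement by the cell-decomposition and base-change argument of \cite[Lemma 6.17]{Porta_GAGA_2015}, so that no higher homotopy group of $\cO\alg\{\partial\Delta^n\}$ need ever be computed. That reduction is the key idea missing from your proposal.
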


\begin{proof}
	The same proof of \cite[Lemma 6.17]{Porta_GAGA_2015} shows that we can deduce the second statement from the first.
	On the other side, we know that the canonical morphism $\cO \to \cO[\Delta^0]$ is flat. Combining this with \cref{lem:relative_computation_I}, we see that we only need to show that $\pi_0( \cO[\Delta^0] ) \to \pi_0( \cO\{\Delta^0\} )$ is flat. This has already been discussed around \eqref{eq:computation_relative_flatness}.
\end{proof}

\begin{lem}
	The diagram
	\[ \begin{tikzcd}
	\cO\alg[\partial \Delta^n] \arrow{r} \arrow{d} & \cO\alg[\Delta^0] \arrow{d} \\
	\cO\alg\{\partial \Delta^n\} \arrow{r} & \cO\alg\{\Delta^0\}
	\end{tikzcd} \]
	is a pushout in the $\infty$-category $\Strloc_{\cTzar}(\cS)_{\cO\alg // \cO\alg}$.
\end{lem}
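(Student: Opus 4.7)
The plan is to reduce this relative pushout over $\cO\alg$ to the analogous absolute pushout over the initial structure $\mathbb{C}$, for which the corresponding statement is available from \cite{Porta_GAGA_2015}.

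First I would identify each corner of the square as obtained by base change from an absolute free structure. Comparison of universal properties yields canonical equivalences
\[ \cO\alg[K] \simeq \cO\alg \coprod\nolimits_{\mathbb{C}} \mathbb{C}[K] \quad \text{in } \Strloc_{\cTzar}(\cS), \qquad \cO\{K\} \simeq \cO \cotimes_{\mathbb{C}} \cH\{K\} \quad \text{in } \Strloc_{\Tan}(\cS), \]
where $\mathbb{C}[K]$ and $\cH\{K\}$ denote the free $\cTzar$-structure and $\Tan$-structure on $K$ over the initial structure $\mathbb C$, respectively.

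Next, and this is the crux of the argument, I would establish
\[ \cO\alg\{K\} \simeq \cO\alg \coprod\nolimits_{\mathbb{C}} \cH\{K\}\alg \]
in $\Strloc_{\cTzar}(\cS)$, which amounts to commuting the functor $\overline{\Phi}_\cO$ past the analytic coproduct. Although $\overline{\Phi}_\cO$ is only a right adjoint and so does not preserve general colimits, the derived \canal hypothesis on $(\cS, \cO)$ suffices to force this commutation: as in the proof of \cref{lem:relative_computation_I}, choosing a Stein atlas produces an effective epimorphism $\cO_{U,x} \to \cO$ from an underived Stein germ, and the argument recorded there (``Since the map $\cO_{U,x} \to \cO$ is an effective epimorphism, the functor $\overline\Phi$ preserves this coproduct'', using \cite[Lemma 11.11]{DAG-IX} together with the $0$-truncatedness of $\cO_{U,x} \cotimes_{\mathbb{C}} \cH\{K\}$) applies verbatim.

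With these identifications in hand, the square in the statement becomes the image under the left adjoint $\cO\alg \coprod_\mathbb{C}(-) \colon \Strloc_{\cTzar}(\cS)_{\mathbb{C} //\mathbb{C}} \to \Strloc_{\cTzar}(\cS)_{\cO\alg // \cO\alg}$ of the absolute square
\[ \begin{tikzcd}
\mathbb{C}[\partial \Delta^n] \arrow{r} \arrow{d} & \mathbb{C}[\Delta^0] \arrow{d} \\
\cH\{\partial \Delta^n\}\alg \arrow{r} & \cH\{\Delta^0\}\alg.
\end{tikzcd} \]
Since base change preserves pushouts, it suffices to show that this absolute square is a pushout in $\Strloc_{\cTzar}(\cS)_{\mathbb{C} // \mathbb{C}}$, which is the analogous statement in the absolute setting; this is available from \cite{Porta_GAGA_2015} (where it underlies the proof of \cite[Lemma 6.17]{Porta_GAGA_2015}) and is proved there by explicit computation with the analytic free algebras $\cH\{K\}$. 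The principal obstacle is the commutation in the second step, which is the only point in the argument where the hypothesis that $(\cS, \cO)$ is the germ of an actual derived \canal space, rather than a generic $\Tan$-structured topos, is genuinely used.
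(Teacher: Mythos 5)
Your proposed reduction hinges on the identification
\[ \cO\alg\{K\} \simeq \cO\alg \coprod\nolimits_{\mathbb{C}} \cH\{K\}\alg, \]
and this is where the argument breaks. That equivalence would say that the underlying $\cTzar$-structure of the \emph{analytic} coproduct $\cO \cotimes_{\mathbb{C}} \cH\{K\}$ agrees with the \emph{algebraic} coproduct $\cO\alg \otimes_{\mathbb{C}} \cH\{K\}\alg$. This already fails for discrete objects: if $\cO = \cH\{z_1\}$ then $\cO\alg\{z_2\}$ is the ring of convergent power series $\mathbb{C}\{z_1, z_2\}$ in two variables, whereas $\cO\alg \otimes_{\mathbb{C}} \cH\{z_2\}\alg = \mathbb{C}\{z_1\} \otimes_{\mathbb{C}} \mathbb{C}\{z_2\}$ consists only of finite sums of pure tensors, which is a strictly smaller ring (it does not contain $\exp(z_1 z_2)$, for instance). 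The whole point of the relative-analytification apparatus in \cref{sec:relative_analytification} is precisely that $\overline{\Phi}_\cO$ fails to commute with the analytic coproduct, and that failure is controlled by a \emph{flatness} statement, not by an equivalence. Moreover, the effective-epimorphism trick you invoke from the proof of \cref{lem:relative_computation_I} does not apply to your square: the vertical map there is $\cO_{U,x} \to \cO$, which \emph{is} surjective on $\pi_0$, whereas the vertical map in your square is the structure map $\mathbb{C} \to \cO\alg$ (analytically $\cH \to \cO$), which is never an effective epimorphism. Neither the conclusion nor the argument by which you reach it holds.

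The paper instead argues as follows, and the contrast is instructive. Let $R$ be the actual pushout $\cO\alg\{\partial\Delta^n\} \otimes_{\cO\alg[\partial\Delta^n]} \cO\alg[\Delta^0]$. By \cref{lem:elementary_flatness}, both $\cO\alg[\Delta^0] \to R$ (a base change of $\cO\alg[\partial\Delta^n] \to \cO\alg\{\partial\Delta^n\}$) and $\cO\alg[\Delta^0] \to \cO\alg\{\Delta^0\}$ are flat. Hence both sides are determined by their $\pi_0$ over $\cO\alg[\Delta^0]$, and one only has to check that the canonical map $R \to \cO\alg\{\Delta^0\}$ induces an isomorphism on $\pi_0$; this is supplied by \cref{lem:relative_computation_I} and \cref{lem:relative_computation_II}, which compute $\pi_0$ of both sides to be $\pi_0(\cO\alg)\{z\}$. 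No commutation of $\overline{\Phi}_\cO$ past any analytic pushout is needed.
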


\begin{proof}
	Pushouts in $\Strloc_{\cTzar}(\cS)_{\cO\alg /}$  are simply tensor products of $\mathbb E_\infty$-rings.
	Let
	\[ R \coloneqq \cO\alg\{\partial \Delta^n\} \otimes_{\cO\alg[\partial \Delta^n]} \cO\alg[\Delta^0] \]
	\Cref{lem:elementary_flatness} shows that both the maps $\cO\alg[\Delta^0] \to R$ and $\cO\alg[\Delta^0] \to \cO\alg\{\Delta^0\}$ are flat.
	Therefore, the canonical map $g \colon R \to \cO\alg\{\Delta^0\}$ is flat as well.
	It will therefore be sufficient to show that $\pi_0(g)$ is an isomorphism.
	However, \cref{lem:relative_computation_II} shows that $\pi_0(R) \simeq \pi_0(\cO\alg\{\partial \Delta^n\}) \simeq \pi_0(\cO\alg\{\Delta^0\})$.
	The proof is therefore completed.
\end{proof}  

At this point the proof of \cref{prop:relative_flatness} proceeds as the one of \cite[Theorem 6.19]{Porta_GAGA_2015}.

\begin{rem}
	The results of this section fit in the slightly more general setting of ``algebraic geometry relative to an analytic base''. The first foundational reference on the subject was Hakim's thesis \cite{Hakim_Topos_1972}. It is possible to deal with such a theory using the language of pregeometries. We will come back to this subject in a subsequent work.
\end{rem}

\section{On left adjointable squares} \label{sec:left_adjointable}

\subsection{The Beck-Chevalley condition}

We collect in this final section some material on left adjointable squares we weren't able to find in the literature.
Let us start by reviewing such notion.
Let $\sigma$:
\begin{equation} \label{eq:Beck_Chevalley_situation}
\begin{tikzcd}
\cB_1 & \cA_1 \arrow{l}[swap]{G_1} \\
\cB_0 \arrow{u}{P} & \cA_0 \arrow{l}[swap]{G_0} \arrow{u}[swap]{Q}
\end{tikzcd}
\end{equation}
be a commutative square of $\infty$-functors, the commutativity being witnessed by an equivalence
\[ u \colon P \circ G_0 \to G_1 \circ Q \]
Suppose that $G_0$ and $G_1$ admit left adjoints $F_0$ and $F_1$, respectively. Choose moreover unit transformations $\eta_1 \colon \mathrm{Id}_{\cA_1} \to G_1 F_1$, $\eta_0 \colon \mathrm{Id}_{\cA_0} \to G_0 F_0$ and counit transformations $\varepsilon_1 \colon F_1 G_1 \to \mathrm{Id}_{\cB_1}$, $\varepsilon_0 \colon F_0 G_0 \to \mathrm{Id}_{\cB_0}$.
Define a morphism $\mathrm{BC}(\sigma) \colon F_1 \circ P \to Q \circ F_0$ as the following composition:
\[ \begin{tikzcd}
F_1 P \arrow{r}{F_1 P \eta_0} & F_1 P G_0 F_0 \arrow{r}{F_1 u F_0} & F_1 G_1 Q F_0 \arrow{r}{\varepsilon_1 Q F_0} & Q F_0
\end{tikzcd} \]
We will refer to $\gamma$ as the \emph{Beck-Chevalley transformation associated to the square $\sigma$}.

\begin{defin}
	We will say that the original square \emph{is left adjointable} (or that \emph{it satisfies the Beck-Chevalley condition}) if $G_0$ and $G_1$ admit left adjoints and the Beck-Chevalley transformation $\mathrm{BC}(\sigma) \colon F_0 P \to QF$ is an equivalence.
\end{defin}

\begin{prop} \label{prop:left_adjointable_preserves_unit}
	Suppose that the given square is left adjointable.
	Then $P \eta_0 \simeq \eta_1 P$.
\end{prop}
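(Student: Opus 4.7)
The plan is to extract from the Beck-Chevalley hypothesis a direct comparison of $P\eta_0$ and $\eta_1 P$ by combining the naturality of $\eta_1$ with the triangle identity for the adjunction $F_1 \dashv G_1$.

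First, I would clarify the statement. The natural transformations $P\eta_0 \colon P \to PG_0F_0$ and $\eta_1 P \colon P \to G_1F_1P$ have a priori distinct codomains, so the asserted equivalence must be read through the identifications $PG_0F_0 \xrightarrow{uF_0} G_1QF_0$ (from commutativity of $\sigma$) and $G_1F_1P \xrightarrow{G_1\mathrm{BC}(\sigma)} G_1QF_0$ (from the Beck-Chevalley map, which is invertible by hypothesis). The assertion then reduces to the equality of natural transformations $P \to G_1QF_0$
\[ G_1\mathrm{BC}(\sigma)\circ \eta_1 P \simeq uF_0\circ P\eta_0. \]

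For the key computation, I would unfold the definition of $\mathrm{BC}(\sigma)$ and rewrite the left hand side as
\[ G_1\varepsilon_1 QF_0\circ G_1F_1uF_0\circ G_1F_1P\eta_0\circ \eta_1 P. \]
Applying naturality of $\eta_1$ to the morphism $P\eta_0$ gives $G_1F_1P\eta_0\circ \eta_1 P \simeq \eta_1 PG_0F_0\circ P\eta_0$; applying it once more to $uF_0$ yields $G_1F_1uF_0\circ \eta_1 PG_0F_0 \simeq \eta_1 G_1QF_0\circ uF_0$. Substituting, the composite becomes
\[ G_1\varepsilon_1 QF_0\circ \eta_1 G_1QF_0\circ uF_0\circ P\eta_0, \]
and the triangle identity $G_1\varepsilon_1\circ \eta_1 G_1 \simeq \mathrm{id}_{G_1}$, whiskered with $QF_0$, collapses the first two factors, leaving $uF_0\circ P\eta_0$ as required. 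Inverting the equivalence $G_1\mathrm{BC}(\sigma)$ then produces the claimed compatibility $\eta_1 P \simeq P\eta_0$ under the natural identifications above.

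The principal delicate point is that each of the ``equalities'' in the above computation is in fact a chosen homotopy, and what is ultimately being asserted is that a particular pasting of such homotopies is invertible. In a strict $2$-categorical setting the argument would be completely formal; the cleanest way to make it rigorous in the $\infty$-categorical setting is to work in the homotopy bicategory of $\infty$-categories, where the naturality squares for $\eta_1$ and the triangle identities for $F_1 \dashv G_1$ hold as genuine equalities of $2$-cells. Since the conclusion we want lives in this bicategory, this level of coherence is sufficient.
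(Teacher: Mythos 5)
Your proof is correct and takes essentially the same approach as the paper: both reduce the assertion to the identification $\eta_1 P \simeq G_1\mathrm{BC}(\sigma)^{-1}\circ uF_0\circ P\eta_0$ and exploit the adjunction $F_1\dashv G_1$. The only difference is cosmetic: the paper observes that the adjoint transpose of $uF_0\circ P\eta_0$ under $F_1\dashv G_1$ is, by its very definition, $\mathrm{BC}(\sigma)$, whereas you verify the equivalent identity $G_1\mathrm{BC}(\sigma)\circ\eta_1 P\simeq uF_0\circ P\eta_0$ from the other side of the adjunction by means of two naturality squares for $\eta_1$ and a triangle identity.
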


\begin{proof}
	$\eta_1 P$ corresponds under the adjunction $F_1 \dashv G_1$ to the identity $F_1 P \to F_1 P$. We claim that
	\[ \begin{tikzcd}
	P \arrow{r}{P \eta_0} & P G_0 F_0 \arrow{r}{u F_0} & G_1 Q F_0 \arrow{r}{G_1 \gamma\inv} & G_1 F_1 P
	\end{tikzcd} \]
	is equivalent to $\eta_1 P$. Since $u$ is an equivalence, this produces an explicit equivalence between $P \eta_0$ and $\eta_1 P$, and therefore the proposition will be proved.
	In order to prove this, it will be sufficient to show that the composition
	\[ \begin{tikzcd}
	P \arrow{r}{P \eta_0} & P G_0 F_0 \arrow{r}{u F_0} & G_1 Q F_0
	\end{tikzcd} \]
	corresponds under the adjunction $F_1 \dashv G_1$ to $\gamma \colon F_1 P \to Q F_0$. However, the morphism $F_1 P \to Q F_0$ corresponding to the above composition under the adjuction is explicitly given by the composition
	\[ \begin{tikzcd}
	F_1 P \arrow{r}{F_1 P \eta} & F_1 P G_0 F_0 \arrow{r}{F_1 u F_0} & F_1 G_1 Q F_0 \arrow{r}{\varepsilon_0 Q F_0} & Q F_0
	\end{tikzcd} \]
	which we recognize being precisely the definition of $\gamma$.
	The proof is therefore complete.
\end{proof}

Let us go back to the original situation \eqref{eq:Beck_Chevalley_situation}.
We can interpret the diagram $\sigma$ as an $\infty$-functor $(\Delta^1 \times \Delta^1)^{\mathrm{op}} \to \Cat_\infty$, and therefore as a Cartesian fibration $q \colon \fX \to \Delta^1 \times \Delta^1$.
Here, we represent $\Delta^1 \times \Delta^1$ as
\begin{equation} \label{eq:square_representation}
\begin{tikzcd}
Y_1 \arrow{r}{a_{h,1}} \arrow{d}[swap]{a_{v,1}} & X_1 \arrow{d}{a_{v,0}} \\
Y_0 \arrow{r}{a_{h,0}} & X_0
\end{tikzcd}
\end{equation}
where the subscripts $v$ and $h$ stand for respectively ``vertical'' and ``horizontal''.

It is sometimes useful to characterize the condition of being left adjointable in terms of this Cartesian fibration.
This will be achieved in the next proposition (we learned this idea from \cite[Remark 4.1.5]{Lurie_Ambidexterity}).

\begin{prop} \label{prop:Beck_Chevalley_fibration_formulation}
	Keeping the above notations, the square \eqref{eq:Beck_Chevalley_situation} is left adjointable if and only if for every commutative square in $\fX$
	\[ \begin{tikzcd}
	\overline{Y_1} \arrow{r}{f'} \arrow{d}{g'} & \overline{X_1} \arrow{d}{g} \\
	\overline{Y_0} \arrow{r}{f} & \overline{X_0}
	\end{tikzcd} \]
	with $\overline{Y_i}$ (resp.\ $\overline{X_i}$) lying over $Y_i$ (resp.\ $X_i$), if $g$ and $g'$ are $q$-Cartesian and $f$ is locally $q$-coCartesian, then $f'$ is locally $q$-coCartesian as well.
\end{prop}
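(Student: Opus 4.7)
The plan is to translate the data of a square in $\fX$ into concrete data on the fibers and then identify the remaining arrow with the Beck--Chevalley transformation. With the identifications $\fX_{Y_i} = \cB_i$ and $\fX_{X_i} = \cA_i$, Cartesian pullback along $a_{h,i}$, $a_{v,0}$, $a_{v,1}$ recovers $G_i$, $Q$, $P$ respectively, and the locally $q$-coCartesian lifts along $a_{h,i}$ exist and implement the left adjoints $F_i$. Given a commutative square in $\fX$ as in the statement, the Cartesian property of $g$ and $g'$ identifies $\overline{X_1} \simeq Q(\overline{X_0})$ and $\overline{Y_1} \simeq P(\overline{Y_0})$, while the locally coCartesian property of $f$ identifies $\overline{X_0} \simeq F_0(\overline{Y_0})$ and hence $\overline{X_1} \simeq QF_0(\overline{Y_0})$. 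The remaining arrow $f' \colon P(\overline{Y_0}) \to QF_0(\overline{Y_0})$ is then determined up to a contractible space of choices by the universal property of $g$ as a Cartesian lift applied to the composite $f \circ g' \colon \overline{Y_1} \to \overline{X_0}$.

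The technical heart of the argument is to identify the adjoint map $\widehat{f'} \colon F_1 P(\overline{Y_0}) \to QF_0(\overline{Y_0})$ under $F_1 \dashv G_1$ with the Beck--Chevalley transformation $\mathrm{BC}(\sigma)_{\overline{Y_0}}$. Both maps are assembled out of the same ingredients (units and counits of the two adjunctions and the commutativity datum $u \colon PG_0 \to G_1 Q$), so the check is essentially formal. The most efficient route is probably via mapping spaces: the universal property of $g$ as a Cartesian lift gives a natural equivalence between the space of maps $\overline{Y_1} \to \overline{X_1}$ over $a_{h,1}$ and the space of maps $\overline{Y_1} \to \overline{X_0}$ over $a_{v,0} \circ a_{h,1}$, and passing to $\Map_{\cA_1}(F_1 P(\overline{Y_0}), QF_0(\overline{Y_0}))$ via $F_1 \dashv G_1$ one recognizes the resulting arrow as precisely the composite appearing in the definition of $\mathrm{BC}(\sigma)$.

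Once this identification is in place, both implications follow immediately. If $\sigma$ is left adjointable, then $\mathrm{BC}(\sigma)_{\overline{Y_0}}$ is an equivalence for every $\overline{Y_0}$, hence $\widehat{f'}$ is an equivalence, which by the standard characterization means that $f'$ is locally coCartesian. For the converse, given any $\overline{Y_0} \in \cB_0$ one constructs a square by taking successive Cartesian lifts of $a_{v,1}$ and $a_{v,0}$ together with the locally coCartesian lift of $a_{h,0}$ starting at $\overline{Y_0}$; the hypothesis then forces $f'$ to be locally coCartesian, and hence $\mathrm{BC}(\sigma)_{\overline{Y_0}}$ to be an equivalence. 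The main obstacle is precisely the middle step: matching the implicitly defined $f'$ (characterized only by a universal property of a Cartesian edge) with the explicitly assembled Beck--Chevalley transformation. This comparison is conceptually straightforward but notationally delicate, and the cleanest route is to argue on mapping spaces rather than to manipulate the natural transformations pointwise.
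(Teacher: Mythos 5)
The proposal is correct and takes essentially the same approach as the paper: identify the fibers of $q$ with the four categories, use the Cartesian and locally coCartesian edges to rewrite $\overline{Y_1}, \overline{X_1}, \overline{X_0}$ in terms of $P$, $Q$, $F_0$ applied to $\overline{Y_0}$, observe that $f'$ is determined by the universal property of the Cartesian edge $g$ applied to $f\circ g'$, and reduce the claim to identifying the adjoint of $f'$ with $\mathrm{BC}(\sigma)_{\overline{Y_0}}$. The paper implements the same identification by introducing an auxiliary locally coCartesian lift $f''$ and the comparison map $\gamma$ (which is exactly your $\widehat{f'}$), and, like you, leaves the crucial identification with the Beck--Chevalley transformation as an assertion rather than a detailed check; your remark that this is the delicate step, best handled on mapping spaces, is a fair description of what both arguments gloss over.
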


\begin{proof}
	Suppose given such a commutative square.
	Since $g$ is $q$-Cartesian, the morphism $f'$ is uniquely determined (up to coherent homotopy) by the composition $f \circ g' \colon \overline{Y_1} \to \overline{X_0}$.
	Let $\Delta^1 \to \Delta^1 \times \Delta^1$ be the morphism classifying the arrow $X_1 \to Y_1$. The pullback
	\[ \fX \times_{\Delta^1 \times \Delta^1} \Delta^1 \to \Delta^1 \]
	is a Cartesian fibration classifying $G_1 \colon \cA_1 \to \cB_1$.
	By hypothesis, $G_1$ has a left adjoint, hence this functor is also a coCartesian fibration.
	Therefore we can choose a local coCartesian lift $f'' \colon \overline{Y_1} \to \widetilde{X_1}$.
	The morphism $f'$ induces a well defined (up to homotopy) $\gamma \colon \widetilde{X_1} \to \overline{X_1}$ making the triangle in $\cC$
	\[ \begin{tikzcd}
	{} & \widetilde{X_1} \arrow{dr}{\gamma} \\
	\overline{Y_1} \arrow{ur}{f''} \arrow{rr}{f'} & & \overline{X_1}
	\end{tikzcd} \]
	commutative.
	The uniqueness of $f'$ forces $\gamma$ to be homotopic to the evaluation of $\mathrm{BC}(\sigma)$ on $\overline{X_0}$.
	At this point, it is sufficient to observe that $f'$ is locally $q$-coCartesian if and only if $\gamma$ is an equivalence, i.e.\ if and only if the original square satisfied the Beck-Chevalley condition.
\end{proof}

\begin{defin}
	We will say that a Cartesian fibration $p \colon \fX \to \Delta^1 \times \Delta^1$ \emph{satisfies the Beck-Chevalley condition} if the base changes along $\alpha_{h,0}, \alpha_{h,1} \colon \Delta^1\to \Delta^1 \times \Delta^1$ are also coCartesian and the condition of \cref{prop:Beck_Chevalley_fibration_formulation} is satisfied.
\end{defin}

\ifarxiv
\subsection{Stability under compositions}

The reformulation of the Beck-Chevalley condition provided by \cref{prop:Beck_Chevalley_fibration_formulation} allows us to prove a number of basic results which would require lengthy arguments otherwise. In this subsection we give a first example of the usefulness of such characterization by proving the stability of left adjointable squares under horizontal and vertical compositions.

\else
\subsection{Stability of the Beck-Chevalley condition}

The reformulation of the Beck-Chevalley condition provided by \cref{prop:Beck_Chevalley_fibration_formulation} allows us to prove a number of basic results which would require lengthy arguments otherwise. We will use it to prove the stability of left adjointable squares under horizontal composition and under stabilization.

\fi

\begin{prop} \label{prop:horizontal_composition_left_adjointable}
	Suppose given a commutative diagram in $\Cat_\infty$
	\[ \begin{tikzcd}
	\cC_1 & \cB_1 \arrow{l}[swap]{G_1'} & \cA_1 \arrow{l}[swap]{G_1} \\
	\cC_0 \arrow{u}[swap]{R} & \cB_0 \arrow{l}[swap]{G_0'} \arrow{u}[swap]{Q} & \cA_0 \arrow{l}[swap]{G_0} \arrow{u}[swap]{P}
	\end{tikzcd} \]
	If both squares are left adjointable, then the same goes for the outer one.
\end{prop}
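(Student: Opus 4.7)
The plan is to use the fibrational reformulation of the Beck--Chevalley condition given by \cref{prop:Beck_Chevalley_fibration_formulation}. Classify the entire $2 \times 3$ diagram as an $\infty$-functor $(\Delta^2 \times \Delta^1)^{\op} \to \Cat_\infty$ and let $\widetilde{q} \colon \widetilde{\fX} \to \Delta^2 \times \Delta^1$ denote the associated Cartesian fibration. The three natural maps $\Delta^1 \times \Delta^1 \to \Delta^2 \times \Delta^1$ picking out the edges $[0,1]$, $[1,2]$, and $[0,2]$ of $\Delta^2$ yield, by pullback, Cartesian fibrations $q_L$, $q_R$, $q_O$ classifying the left, right, and outer squares. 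By hypothesis and \cref{prop:Beck_Chevalley_fibration_formulation}, $q_L$ and $q_R$ satisfy the Beck--Chevalley condition; it remains to verify the same for $q_O$. Since all horizontal functors $G_j, G_j'$ admit left adjoints $F_j, F_j'$, the restrictions of $\widetilde{q}$ over $\Delta^2 \times \{j\}$ are genuine coCartesian fibrations, so consecutive coCartesian lifts over their edges compose.

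Fix a commutative square in $\fX_O$
\[ \begin{tikzcd}
\overline{Y_1} \arrow{r}{f'} \arrow{d}{g'} & \overline{X_1} \arrow{d}{g} \\
\overline{Y_0} \arrow{r}{f} & \overline{X_0}
\end{tikzcd} \]
with $g, g'$ being $\widetilde{q}$-Cartesian and $f$ locally $\widetilde{q}$-coCartesian (over the composite edge $(0,0) \to (2,0)$). Pick a locally $\widetilde{q}$-coCartesian lift $f_1 \colon \overline{Y_0} \to \overline{Z_0}$ of $(0,0) \to (1,0)$, so $\overline{Z_0} \simeq F_0' \overline{Y_0} \in \cB_0$, together with a locally $\widetilde{q}$-coCartesian lift $f_2 \colon \overline{Z_0} \to \overline{X_0}$ of $(1,0) \to (2,0)$. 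Since consecutive coCartesian lifts compose, $f_2 \circ f_1$ is itself locally $\widetilde{q}$-coCartesian over $(0,0) \to (2,0)$, so by the essential uniqueness of such lifts we may arrange $f \simeq f_2 \circ f_1$. Next, take a $\widetilde{q}$-Cartesian lift $g_M \colon \overline{Z_1} \to \overline{Z_0}$ of $(1,1) \to (1,0)$ at $\overline{Z_0}$, giving $\overline{Z_1} \in \cB_1$.

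The Cartesian property of $g_M$ produces a homotopically unique filler $\widetilde{f_1} \colon \overline{Y_1} \to \overline{Z_1}$ with $g_M \circ \widetilde{f_1} \simeq f_1 \circ g'$, yielding a commutative square in $\fX_L$ whose vertical edges are Cartesian and whose bottom edge is locally coCartesian; the Beck--Chevalley hypothesis for the left square and \cref{prop:Beck_Chevalley_fibration_formulation} then force $\widetilde{f_1}$ to be locally $\widetilde{q}$-coCartesian. Similarly, using $g$ in place of $g_M$ and $f_2 \circ g_M$ as the composable pair, one obtains a locally $\widetilde{q}$-coCartesian filler $\widetilde{f_2} \colon \overline{Z_1} \to \overline{X_1}$ in $\fX_R$ via Beck--Chevalley for the right square. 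Then $g \circ (\widetilde{f_2} \circ \widetilde{f_1}) \simeq f \circ g'$, so uniqueness against the Cartesian edge $g$ identifies $f'$ with $\widetilde{f_2} \circ \widetilde{f_1}$ up to equivalence. Being a composite of consecutive locally coCartesian lifts over $\Delta^2 \times \{1\}$, this composite is locally $\widetilde{q}$-coCartesian over $(0,1) \to (2,1)$, i.e., $f'$ is locally $\widetilde{q}$-coCartesian in $\fX_O$, completing the proof. The main subtlety is the compositional stability of locally coCartesian lifts invoked twice above: it is not automatic for arbitrary locally coCartesian fibrations, but it holds here because the relevant rows admit left adjoints and hence assemble into genuine coCartesian fibrations.
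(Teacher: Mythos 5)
Your proof is correct and takes essentially the same route as the paper's: classify the whole $2 \times 3$ diagram as a Cartesian fibration over the rectangular base, pull back to the three sub-squares, invoke the fibrational Beck--Chevalley characterization (\cref{prop:Beck_Chevalley_fibration_formulation}), build the intermediate lifts through the middle column, and conclude by composing locally coCartesian edges over the top row, which is licit because the restriction of the fibration to each row is a genuine coCartesian fibration. The only cosmetic difference is that you first decompose the bottom locally coCartesian edge $f$ as $f_2 \circ f_1$ and then produce $\widetilde{f_1}, \widetilde{f_2}$ by pulling back along Cartesian edges, whereas the paper constructs the middle-column data directly from the universal properties of the Cartesian edge $\alpha'$ and of $h$; the two bookkeepings are equivalent.
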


\begin{proof}
	Let $K$ be the $1$-category depicted as
	\[ \begin{tikzcd}
	Z_1 \arrow{r} \arrow{d} & Y_1 \arrow{r} \arrow{d} & X_1 \arrow{d} \\
	Z_0 \arrow{r} & Y_0 \arrow{r} & X_0
	\end{tikzcd} \]
	There are three functors $\mathrm{sq}_r, \mathrm{sq}_l, \mathrm{sq}_{\mathrm{out}} \colon \Delta^1 \times \Delta^1 \to K$ selecting respectively the right square, the left square and the outer one.
	We can represent the original diagram as a Cartesian fibration $q \colon \fX \to K$.
	We let
	\[ q_r \colon \fX_r \to \Delta^1 \times \Delta^1, \qquad q_l \colon \fX_l \to \Delta^1 \times \Delta^1, \qquad q_{\mathrm{out}} \colon \fX_{\mathrm{out}} \to \Delta^1 \times \Delta^1 \]
	be the pullbacks of $q$ along $\mathrm{sq}_r$, $\mathrm{sq}_l$ and $\mathrm{sq}_{\mathrm{out}}$ (respectively).
	Observe that the induced morphisms $\fX_r \to \fX$, $\fX_l \to \fX$ and $\fX_{\mathrm{out}} \to \fX$ are fully faithful inclusions.
	
	We know that $q_r$ and $q_l$ satisfy the condition of \cref{prop:Beck_Chevalley_fibration_formulation} and we want to show that $q_{\mathrm{out}}$ has the same property.
	To do so, we consider a commutative diagram in $\fX$
	\[ \begin{tikzcd}
	\overline{Z_1} \arrow{r}{h'}  \arrow{d}{\alpha''} & \overline{X_1} \arrow{d}{\alpha} \\
	\overline{Z_0} \arrow{r}{h} & \overline{X_0}
	\end{tikzcd} \]
	where $\overline{Z_i}$ (resp.\ $\overline{X_i}$) lies over $Z_i$ (resp.\ $X_i$).
	Suppose furthermore that $\alpha$ and $\alpha''$ are $q$-Cartesian and that $h$ is locally $q$-coCartesian.
	Choose a locally $q$-coCartesian morphism $g \colon \overline{Z_0} \to \overline{Y_0}$ lying over $Z_0 \to Y_0$ and let $\alpha' \colon \overline{Y_1} \to \overline{Y_0}$ be a $q$-Cartesian morphism lying over $Y_1 \to Y_0$.
	Consider the diagram
	\[ \begin{tikzcd}
	\overline{Z_1} \arrow{dd}[swap]{\alpha''} \arrow[dashed]{dr}{g'} \arrow[bend left = 10]{drrr}{h'} \\
	{} & \overline{Y_1} \arrow[dashed]{rr}{f'} & & \overline{X_1} \arrow{dd}{\alpha} \\
	\overline{Z_0} \arrow{dr}{g} \arrow[bend left = 10]{drrr}{h} \\
	{} & \overline{Y_0} \arrow[crossing over, leftarrow]{uu}{\alpha'} \arrow[dashed]{rr}{f} & & \overline{X_0}
	\end{tikzcd} \]
	The universal property of $\alpha'$ allows to construct the morphism $g' \colon \overline{Z_1} \to \overline{Y_1}$, while the universal property of $h$ allows to construct $f \colon \overline{Y_1} \to \overline{X_0}$.
	\personal{$h$ is only locally $q$-coCartesian, but if we restrict $\fX$ along $\Delta^2 \to K$ (the bottom triangle), then $h$ becomes coCartesian, and this is the universal property we are referring to.}
	At this point, we can further use the universal property of $\alpha$ to construct $f' \colon \overline{Y_1} \to \overline{X_1}$.
	Since the left and the right square of the original diagram were left adjointable, we can use \cref{prop:Beck_Chevalley_fibration_formulation} to conclude that $f'$ and $g'$ are locally $q$-coCartesian.
	Consider the morphism $s \colon \Delta^2 \to K$ selecting the commutative triangle
	\[ \begin{tikzcd}
	{} & Y_1 \arrow{dr} \\
	Z_1 \arrow{ur} \arrow{rr} & & X_1
	\end{tikzcd} \]
	The pullback $\fX_s \coloneqq \fX \times_{K} \Delta^2 \to \Delta^2$ is both a Cartesian and a coCartesian fibration.
	Therefore, coCartesian morphisms of $\fX_s$ are stable under composition.
	It follows that $h \simeq f' \circ g'$ is locally $q$-coCartesian, completing the proof.
\end{proof}

\ifarxiv

\begin{prop}
	Suppose given a diagram in $\Cat_\infty$
	\[ \begin{tikzcd}
	\cB_2 & \cA_2 \arrow{l}[swap]{G_2} \\
	\cB_1 \arrow{u} & \cA_1 \arrow{l}[swap]{G_1} \arrow{u} \\
	\cB_0 \arrow{u} & \cA_0 \arrow{l}[swap]{G_0} \arrow{u}
	\end{tikzcd} \]
	If both the square are left adjointable, then so is the outer one.
\end{prop}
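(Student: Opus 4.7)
The plan is to mimic the proof of \cref{prop:horizontal_composition_left_adjointable} almost verbatim, replacing horizontal by vertical concatenation, and to rely on the fibration-theoretic reformulation of the Beck--Chevalley condition provided by \cref{prop:Beck_Chevalley_fibration_formulation}. The benefit of working with that reformulation, rather than chasing the Beck--Chevalley natural transformations through unit and counit maps, is that stability under any kind of pasting becomes a manipulation of Cartesian and (locally) coCartesian edges, which is essentially automatic.

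First I would encode the stacked diagram as a Cartesian fibration $q \colon \fX \to K$, where $K$ is the evident $2 \times 3$ grid category with three rows $Z_i \to X_i$ ($i = -1, 0, 1$) and three columns. Three functors $\mathrm{sq}_t, \mathrm{sq}_b, \mathrm{sq}_{\mathrm{out}} \colon \Delta^1 \times \Delta^1 \to K$ select respectively the top square (rows $i = 0, 1$), the bottom square (rows $i = -1, 0$) and the outer one (rows $i = -1, 1$); pulling $q$ back along these produces Cartesian fibrations $q_t$, $q_b$ and $q_{\mathrm{out}}$. By assumption $q_t$ and $q_b$ satisfy the criterion of \cref{prop:Beck_Chevalley_fibration_formulation}, and the task is to establish the same property for $q_{\mathrm{out}}$.

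The key step will be the following. Given a commutative square in $\fX$ lying over the outer index square, with vertical legs $\alpha, \alpha'$ that are $q$-Cartesian and with bottom edge $h$ locally $q$-coCartesian, I would factor each of $\alpha$ and $\alpha'$ as a vertical composite of two $q$-Cartesian morphisms passing through the middle row of $K$; such factorizations exist since Cartesian morphisms are stable under composition and any composition of Cartesian edges over a composition in the base is Cartesian. The universal property of the Cartesian lift of $X_0 \to X_{-1}$ then produces a morphism $h_0$ in the middle row fitting into a commutative rectangle whose upper half lies over the top index square, whose lower half lies over the bottom index square, and whose top edge is the original $h'$.

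Once this rectangle has been set up, the conclusion is immediate: applying left adjointability of the bottom square (in the guise of \cref{prop:Beck_Chevalley_fibration_formulation}) to the lower half shows that $h_0$ is locally $q$-coCartesian, and applying left adjointability of the top square to the upper half then shows that $h'$ is locally $q$-coCartesian, which is exactly what is required. The only point that demands some care will be the coherent construction of the middle horizontal $h_0$ and of the resulting factorization of $h'$ through it, but this should be entirely formal, given the universal properties of Cartesian lifts, and essentially parallel to the argument already carried out in the proof of \cref{prop:horizontal_composition_left_adjointable}.
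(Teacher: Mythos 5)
Your proposal is correct and follows essentially the same route as the paper: encode the stacked diagram as a Cartesian fibration over a $3\times 2$ grid, factor the given $q$-Cartesian legs through the middle row by choosing Cartesian lifts of the middle vertical edges, use the universal property of these lifts to produce the middle horizontal arrow, and then apply \cref{prop:Beck_Chevalley_fibration_formulation} once for the bottom square and once for the top. The only differences from the paper's argument are notational (your row indices $-1,0,1$ versus the paper's $0,1,2$, and your phrasing of the factorization step), not mathematical.
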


\begin{proof}
	Let $K$ be the $1$-category depicted as
	\[ \begin{tikzcd}
	Y_2 \arrow{r} \arrow{d} & X_2 \arrow{d} \\
	Y_1 \arrow{r} \arrow{d} & X_1 \arrow{d} \\
	Y_0 \arrow{r} & X_0
	\end{tikzcd} \]
	We review the original diagram as a Cartesian fibration $q \colon \fX \to K$.
	As in the proof of \cref{prop:horizontal_composition_left_adjointable}, we associate to $q$ three Cartesian fibrations $q_u \colon \fX_u \to \Delta^1 \times \Delta^1$, $q_d \colon \fX_d \to \Delta^1 \times \Delta^1$ and $q_{\mathrm{out}} \colon \fX_{\mathrm{out}} \to \Delta^1 \times \Delta^1$ respectively corresponding to the upper square, the lower square and the outer one.
	We are going to prove that $q_{\mathrm{out}}$ satisfies the condition of \cref{prop:Beck_Chevalley_fibration_formulation}.
	Let us therefore consider a commutative diagram in $\fX$
	\[ \begin{tikzcd}
	\overline{Y_2} \arrow{d}{h'} \arrow{r}{\alpha_2} & \overline{X_2} \arrow{d}{h} \\
	\overline{Y_0} \arrow{r}{\alpha_0} & \overline{X_0} \\
	\end{tikzcd} \]
	where $\overline{Y_i}$ (resp.\ $\overline{X_i}$) lies over $Y_i$ (resp.\ $X_i$), $h$ and $h'$ are $q$-Cartesian and $\alpha_0$ is $q$-coCartesian.
	We want to prove that $\alpha_2$ is locally $q$-coCartesian as well.
	Choose $q$-Cartesian morphisms $f' \colon \overline{Y_1} \to \overline{Y_0}$ and $f \colon \overline{X_1} \to \overline{X_0}$ lying over $Y_1 \to Y_0$ and $X_1 \to X_0$ respectively.
	Consider the diagram
	\[ \begin{tikzcd}
	\overline{Y_2} \arrow{rr}{\alpha_2} \arrow[dashed]{dr}{g'} \arrow[bend right = 10]{dddr}[swap]{h'} & & \overline{X_2} \arrow[dashed]{dr}{g} \\
	{} & \overline{Y_1} \arrow{dd}{f'} \arrow[dashed]{rr}{\alpha_1} & & \overline{X_1} \arrow{dd}{f} \\
	\\
	{} & \overline{Y_0} \arrow{rr}{\alpha_0} & & \overline{X_0} \arrow[leftarrow, crossing over, bend left = 10]{uuul}[near start]{h}
	\end{tikzcd} \]
	Using the universal property of $f'$, we construct $f' \colon \overline{Y_2} \to \overline{Y_1}$.
	Next, using twice the universal property of $f$, we also construct $\alpha_1 \colon \overline{Y_1} \to \overline{X_1}$ and $g \colon \overline{X_2} \to \overline{X_1}$.
	Since the lower square in the original diagram is left adjointable, \cref{prop:Beck_Chevalley_fibration_formulation} shows that $\alpha_1$ is locally $q$-coCartesian.
	As the upper square was left adjointable as well, we similarly conclude that $\alpha_2$ is locally $q$-coCartesian.
	The proof is therefore complete.
\end{proof}

\fi

\ifarxiv
\subsection{Stabilization of left adjointable squares}

We now turn to another application of \cref{prop:Beck_Chevalley_fibration_formulation} which is extremely useful in the economy of this article.
Namely, we will prove that the stabilization studied in \cite[§6.2.2]{Lurie_Higher_algebra} preserves left adjointable squares.
We will start by quickly reviewing this operation.

\fi

Fix a simplicial set $S$ and an inner fibration $p \colon \cC \to S$.
We define a functor $F \colon \sSet_{/S} \to \rSet$ by setting
\[ F(K) \coloneqq \Hom_S(K \times \cS^{\mathrm{fin}}_*, \cC) \]
where we review $K \times \cS^{\mathrm{fin}}_*$ as an element in $\sSet_{/S}$ via the projection
\[ K \times \cS^{\mathrm{fin}}_* \to K \to S. \]
This functor clearly commutes with colimits and it is therefore representable by a map
\[ q \colon \mathrm{PStab}(p) \to S \]
We will refer to $\PStab(p)$ as the \emph{prestabilization of $p$}.
It follows from the definition that a point of $\PStab(p)$ is a pair $(s, X)$ where $s \in S$ is a vertex and $X \colon \cS^{\mathrm{fin}}_* \to \cC_s$ is a functor.

Let us now suppose that the fibers $\cC_s$ of the inner fibration $p$ are pointed $\infty$-category with finite limits.
We let $\PStab(p)_*$ be the full sub-simplicial set of $\PStab(p)$ spanned by those points $(s, X)$ such that $X$ is a reduced functor.
We further let $\Stab(p)$ be the full sub-simplicial set of $\PStab_*(p)$ spanned by those points $(s, X)$ such that $X$ is a (strongly) excisive functor (that is, a spectrum object of $\cC_s$). We will refer to $\PStab_*(p)$ as the \emph{reduced prestabilization of $p$} and to $\Stab(p)$ as the \emph{stabilization of $p$}.

\cite[6.2.2.5]{Lurie_Higher_algebra} shows that the morphisms $q \colon \PStab(p) \to S$, $q' \colon \PStab_*(p) \to S$ and $q'' \colon \Stab(p) \to S$ are inner fibrations.
We now focus to the case $S = \Delta^1 \times \Delta^1$ and prove moreover that the Beck-Chevalley condition is stable under stabilization if certain additional mild hypotheses are satisfied:

\begin{prop} \label{prop:stabilization_of_left_adjointable_squares}
	Suppose given a Cartesian fibration $p \colon \cC \to \Delta^1 \times \Delta^1$ satisfying the Beck-Chevalley condition.
	Assume furthermore that the fibers $\cC_s$ of $q$ have finite limits.
	Then:
	\begin{enumerate}
		\item $\mathrm{PStab}(p)$ satisfies the Beck-Chevalley condition;
		\item if moreover the vertical functors (see the diagram \eqref{eq:square_representation}) are left exact and commute with sequential colimits, then $\mathrm{Stab}(p)$ satisfies the Beck-Chevalley condition.
	\end{enumerate}
\end{prop}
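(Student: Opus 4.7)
The plan is to verify both statements by means of the fibrational characterization of left adjointable squares in \cref{prop:Beck_Chevalley_fibration_formulation}. I will denote the horizontal Cartesian transports of $p$ by $G_i \colon \cA_i \to \cB_i$ with left adjoints $F_i$ and the vertical Cartesian transports by $P \colon \cB_0 \to \cB_1$ and $Q \colon \cA_0 \to \cA_1$, matching the notations of \eqref{eq:Beck_Chevalley_situation}.

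\textbf{Part (1).} First I would pin down the fibrational structure of $\PStab(p) \to \Delta^1 \times \Delta^1$: the fiber over a vertex $s$ is $\Fun(\cS^{\mathrm{fin}}_*, \cC_s)$, and the key technical claim is that $q$-Cartesian edges and locally $q$-coCartesian edges over horizontal edges of $\Delta^1 \times \Delta^1$ are computed pointwise. That is, a morphism $(s, X) \to (s', X')$ in $\PStab(p)$ lying over an edge of the base is $q$-Cartesian (respectively locally $q$-coCartesian over a horizontal edge) precisely when, for every $A \in \cS^{\mathrm{fin}}_*$, the induced morphism $X(A) \to X'(A)$ is $p$-Cartesian (respectively locally $p$-coCartesian) in $\cC$. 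This follows from unwinding the universal property defining $\PStab(p)$. Granting this, a commutative square in $\PStab(p)$ satisfying the hypotheses of \cref{prop:Beck_Chevalley_fibration_formulation} evaluates pointwise at each $A \in \cS^{\mathrm{fin}}_*$ to a commutative square in $\cC$ with vertical edges $p$-Cartesian and one horizontal edge locally $p$-coCartesian. The Beck-Chevalley hypothesis on $p$ ensures the remaining horizontal edge is locally $p$-coCartesian pointwise, hence locally $q$-coCartesian in $\PStab(p)$.

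\textbf{Part (2).} Since $G_i$ is a right adjoint it is left exact, and hence post-composition with $G_i$ preserves excisive functors; by hypothesis $P$ and $Q$ are left exact, so post-composition with them preserves excisive functors as well. Consequently, the square of fibers on $\PStab(p)$ restricts to the square of fibers on $\Stab(p)$, and the Cartesian transports of $q'' \colon \Stab(p) \to \Delta^1 \times \Delta^1$ agree with those of $q$ restricted to excisive functors. The horizontal left adjoints on stabilizations are the Goodwillie derivatives $\partial F_i$, which may be described explicitly as $P_1 \circ (F_i \circ -)$ where $P_1$ is the excisive approximation. The Beck-Chevalley transformation for the stabilized square, evaluated on $M \in \Sp(\cB_0)$, is the comparison
\[
P_1(F_1 \circ P \circ M) \longrightarrow Q \circ P_1(F_0 \circ M).
\]
Using the Beck-Chevalley equivalence $F_1 \circ P \simeq Q \circ F_0$ from the base square, the problem reduces to showing that $Q$ commutes with $P_1$. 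Since $P_1$ is given by a sequential colimit of iterated loops applied to iterated suspensions, the hypotheses that $Q$ is left exact and preserves sequential colimits deliver exactly this commutation, completing the proof.

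\textbf{Main obstacle.} The principal technical difficulty lies in Part (1), namely the pointwise characterization of Cartesian and locally coCartesian edges in $\PStab(p) \to \Delta^1 \times \Delta^1$. Because $\PStab(p)$ is defined via a universal property rather than by an explicit simplicial construction, establishing this pointwise description requires a careful analysis of the inner fibration structure of $\PStab(p) \to \Delta^1 \times \Delta^1$ and the interplay between morphisms in $\PStab(p)$ and natural transformations in $\cC$ lying over edges of the base. Once this description is in hand, both (1) and the algebraic verification in (2) reduce to direct applications of the Beck-Chevalley hypothesis on $p$, supplemented in (2) by the interaction of $P_1$ with left-exact functors that preserve sequential colimits.
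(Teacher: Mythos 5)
Your Part (1) is essentially the paper's argument: evaluate at each finite pointed space $K$, use the pointwise characterization of $q$-Cartesian and locally $q$-coCartesian edges of $\PStab(p)$, and apply the Beck-Chevalley condition on $p$. What you flag as the ``main obstacle'' — establishing the pointwise description — is handled in the paper not by an ab initio analysis but by citing \cite[6.2.2.8, 6.2.2.13]{Lurie_Higher_algebra}, which characterize $q$-Cartesian and locally $q$-coCartesian morphisms in $\PStab(p)$ exactly as you describe. So the work you anticipate is already in the literature; it is not where the real difficulty lies.

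For Part (2) you take a genuinely different route, and it is worth comparing. The paper stays inside the fibrational framework: it chooses a locally $q$-coCartesian lift $\overline{f}$ in $\PStab(p)$ of the horizontal edge, factors $f$ through the resulting unit $\eta_0 \colon (X_0, \overline{T_0}) \to (X_0, T_0)$ of the excisive-reflection adjunction, Cartesian-transports to produce $\overline{f'}$ and $\eta_1$, and then checks that $\eta_1$ still exhibits the reflection into excisive functors. You instead compute the Beck-Chevalley transformation of the stabilized square directly in terms of the Goodwillie derivatives $\partial F_i \simeq P_1 \circ (F_i \circ -)$ and $\partial Q \simeq Q \circ -$, rewrite it via the base Beck-Chevalley equivalence $F_1 P \simeq Q F_0$, and reduce to showing that $Q$ commutes with the excisive approximation $P_1$. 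Both proofs bottom out on the same fact — $P_1$ is a sequential colimit of finite limits, so it commutes with a left exact functor that preserves sequential colimits (in the paper's phrasing, $a_{v,0}\inv$; in yours, $Q$) — so the mathematics is identical. Your formulation is more transparent conceptually, but it silently uses an identification: that the Beck-Chevalley transformation of the stabilized square, after unwinding units and counits of the $\partial F_i \dashv \partial G_i$ adjunctions, really does factor as the composite $P_1(F_1 P M) \xrightarrow{\sim} P_1(Q F_0 M) \to Q P_1(F_0 M)$. This is true, but it is exactly the kind of coherence check that the paper's fibrational bookkeeping is designed to keep track of automatically, which is what that framework buys. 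To be fully rigorous you would need to spell out that diagram chase; otherwise the proof is sound.
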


\begin{proof}
	It follows from \cite[6.2.2.13.(1)]{Lurie_Higher_algebra} that $q \colon \PStab(p) \to \Delta^1 \times \Delta^1$ is a Cartesian fibration.\footnote{We warn the reader that there are a couple of typos in the statement exchanging the word ``coCartesian'' with the word ``Cartesian'', but this is what it is proven.}
	Consider the morphisms $a_{h,0}, a_{h,1} \colon \Delta^1 \to \Delta^1 \times \Delta^1$.
	The two base changes $\cC \times_{\Delta^1 \times \Delta^1} \Delta^1 \to \Delta^1$ were coCartesian fibrations to start with, and the edges $a_{h,0}$ and $a_{h,1}$ were represented by left adjoints. Therefore we can combine \cite[6.2.2.3, 6.2.2.8.(1)]{Lurie_Higher_algebra} to deduce that $\PStab(p)$ is locally coCartesian over the edges $a_{h,0}$ and $a_{h,1}$.
	To complete the proof of point (1), we are left to show that the Beck-Chevalley condition holds for $\PStab(p)$.
	Consider therefore a commutative diagram in $\PStab(p)$
	\[ \begin{tikzcd}
	(Y_1, S_1) \arrow{r}{f'} \arrow{d}{g'} & (X_1, T_1) \arrow{d}{g} \\
	(Y_0, S_0) \arrow{r}{f} & (X_0, T_0)
	\end{tikzcd} \]
	where $Y_i$ and $X_i$ denote the vertexes of $\Delta^1 \times \Delta^1$ (accordingly to the diagram \eqref{eq:square_representation}) and $S_i$, $T_i$ are functors from $\cS^{\mathrm{fin}}_*$ to the fibers of $p$.
	Suppose furthermore that $g$ and $g'$ are $q$-Cartesian and that $f$ is locally $q$-coCartesian. We have to show that $f'$ is locally $q$-coCartesian as well.
	Unraveling the definitions of the $q$-Cartesian and locally $q$-coCartesian morphisms of $\PStab(p)$ given in \cite[6.2.2.8, 6.2.2.13]{Lurie_Higher_algebra}, we are reduced to fix a finite pointed space $K \in \cS^{\mathrm{fin}}_*$ and consider the induced commutative square in $\cC$
	\[ \begin{tikzcd}
	S_1(K) \arrow{r}{f'_K} \arrow{d}{g'_K} & T_1(K) \arrow{d}{g_K} \\
	S_0(K) \arrow{r}{f_K} & T_0(K)
	\end{tikzcd} \]
	We know that $g_K$ and $g'_K$ are $p$-Cartesian and that $f_K$ is locally $p$-coCartesian.
	Therefore, since $p$ was satisfying the Beck-Chevalley condition, we see that $f'_K$ is locally $p$-coCartesian for every $K \in \cS^{\mathrm{fin}}_*$.
	We conclude that $q \colon \PStab(p) \to \Delta^1 \times \Delta^1$ satisfies as well the Beck-Chevalley condition.
	
	We now turn to the second point.
	The hypotheses of \cite[6.2.2.13.(3)]{Lurie_Higher_algebra} are satisfied and therefore $q'' \colon \Stab(p) \to \Delta^1 \times \Delta^1$ is a Cartesian fibration.
	
	Consider again a commutative diagram in $\Stab(p)$
	\[ \begin{tikzcd}
	(Y_1, S_1) \arrow{r}{f'} \arrow{d}{g'} & (X_1, T_1) \arrow{d}{g} \\
	(Y_0, S_0) \arrow{r}{f} & (X_0, T_0)
	\end{tikzcd} \]
	where now $S_i$ and $T_i$ are spectrum objects in the fibers of $p$, $g$ and $g'$ are $q''$-Cartesian and $f$ is locally $q''$-coCartesian.
	Choose a locally $q$-coCartesian lift $\overline{f} \colon (Y_0, S_0) \to (X_0, \overline{T_0})$ in $\PStab(p)$ lying over $a_{h,0} \colon Y_0 \to X_0$.
	There is a natural morphism in $\PStab(p)$
	\[ \eta_0 \colon (X_0, \overline{T_0}) \to (X_0, T_0) \]
	and since $f$ was $q''$-coCartesian, we deduce from the proof of \cite[6.2.2.8.(3)]{Lurie_Higher_algebra} that this morphism can be identified with the unit of the adjunction whose right adjoint is the inclusion $\mathrm{Exc}_*(\cS^{\mathrm{fin}}_*, \cC_{X_0}) \hookrightarrow \Fun_*(\cS^{\mathrm{fin}}_*, \cC_{X_0})$.
	Let $g'' \colon (X_1, \overline{T_1}) \to (X_0, \overline{T_0})$ be a $q$-Cartesian morphism lying over $a_{v_0} \colon X_0 \to X_1$.
	We obtain a commutative diagram in $\PStab(p)$:
	\[ \begin{tikzcd}
	(Y_1, S_1) \arrow{dd}[swap]{g'} \arrow[dashed]{dr}[swap]{\overline{f'}} \arrow[bend left = 10]{drrr}{f'} \\
	{} & (X_1, \overline{T_1}) \arrow[dashed]{rr}{\eta_1} & & (X_1, T_1) \arrow{dd}{g} \\
	(Y_0, S_0) \arrow{dr}[swap]{\overline{f}} \arrow[bend left = 10]{drrr}{f} \\
	{} & (X_0, \overline{T_0}) \arrow[crossing over, leftarrow]{uu}[near end]{g''} \arrow{rr}{\eta_0} & & (X_0, T_0)
	\end{tikzcd} \]
	Since $g''$ is $q$-Cartesian, we are able to construct the morphism $\overline{f'} \colon (Y_1, S_1) \to (X_1, \overline{T_1})$ in $\PStab(p)$.
	Point (1) of this proposition shows then that $\overline{f'}$ is locally $q$-coCartesian.
	Since $g$ was $q''$-Cartesian it is $q$-Cartesian too (in virtue of \cite[6.2.2.13.(3)]{Lurie_Higher_algebra}).
	We obtain in this way the morphism $\eta_1 \colon (X_1, \overline{T_1}) \to (X_1, T_1)$.
	
	The proof of \cite[6.2.2.8.(3)]{Lurie_Higher_algebra} shows then that it is enough to prove that $\eta_1$ exhibits $T_1$ as the reflection of $\overline{T_1} \in \Fun_*(\cS^{\mathrm{fin}}_*, \cC_{X_1})$ in $\mathrm{Exc}_*(\cS^{\mathrm{fin}}_*, \cC_{X_1})$.
	This follows from the explicit construction of this reflection functor (see \cite[6.1.1.22, 6.1.1.27]{Lurie_Higher_algebra}, but also Remark 6.1.1.28 loc.\ cit.) and the fact that the functor $a_{v,0}\inv \colon \cC_{X_0} \to \cC_{X_1}$ commutes both with finite limits and with sequential colimits.
\end{proof}

\ifpersonal

\section{General Kan extensions} \label{sec:Kan_extensions}

In this section we make explicit the definition given in \cite[4.3.3.2]{HTT} of comma category, we give some alternative description and we explain how to use it to obtain a colimit formula for general left Kan extensions.
The important ideas of this section were already contained in \cite[§4.3.3]{HTT}.

\begin{defin} \label{def:comma_category}
	Let $f \colon \cC \to \cD$ and $g \colon \mathcal E \to \cD$ be two $\infty$-functors. The \emph{comma category of $f$ over $g$} is the pullback
	\[ \begin{tikzcd}
	(f \downarrow g) \arrow{d} \arrow{r} & \mathrm{Fun}(\Delta^1, \cD) \arrow{d}{d_0 \times d_1} \\
	\cC \times \mathcal E \arrow{r}{f \times g} & \cD \times \cD
	\end{tikzcd} \]
	taken in the $\infty$-category $\Cat_\infty$.
\end{defin}

We want to characterize this construction when $\mathcal E = \Delta^0$ and $g$ selects an element $x \in \cD$.
If $f$ is fully faithful, this is easily seen to coincide with the overcategory $\cC_{/x}$.
More generally, consider the Yoneda embedding $y_{\cD} \colon \cD \to \PSh(\cD)$.
Composing it with $f^p \colon \PSh(\cD) \to \PSh(\cC)$ gives rise to a functor
\[ \psi \colon \cD \to \PSh(\cC) \]
which informally sends an object $x \in \cD$ to the functor $\Map_{\cD}(f(-), x) \colon \cC^{\mathrm{op}} \to \cS$.

\begin{prop}
	With the above notations, $(f \downarrow x) \simeq \cC_{/ \psi(x)}$, where we identified $\cC$ with a full subcategory of $\PSh(\cC)$ via the Yoneda embedding $y_{\cC} \colon \cC \to \PSh(\cC)$.
\end{prop}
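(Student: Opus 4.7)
The plan is to recognize both sides as right fibrations over $\cC$ classifying the same $\cS$-valued presheaf on $\cC$, and then invoke straightening–unstraightening (or equivalently, the fact that a fiberwise equivalence between right fibrations over $\cC$ is itself an equivalence).

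First I would rewrite the two sides as pullbacks. Unpacking \cref{def:comma_category}, the map $(f, x)\colon \cC \to \cD \times \cD$ factors through $(\mathrm{id}, x)\colon \cD \to \cD \times \cD$ via $f$, and since $\cD_{/x} \simeq \cD \times_{\cD \times \cD} \Fun(\Delta^1, \cD)$, one obtains
\[ (f \downarrow x) \simeq \cC \times_{\cD} \cD_{/x}. \]
Because $\cD_{/x} \to \cD$ is a right fibration (see \cite[2.1.2.2]{HTT}), its pullback $(f \downarrow x) \to \cC$ is a right fibration as well. On the other side, fully faithfulness of $y_{\cC} \colon \cC \to \PSh(\cC)$ gives the identification $\cC_{/\psi(x)} \simeq \cC \times_{\PSh(\cC)} \PSh(\cC)_{/\psi(x)}$, which is again a right fibration over $\cC$.

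Next I would compute fibers. The fiber of $(f \downarrow x) \to \cC$ over $c \in \cC$ is $\{f(c)\} \times_{\cD} \cD_{/x} \simeq \Map_{\cD}(f(c), x)$. The fiber of $\cC_{/\psi(x)} \to \cC$ over $c$ is $\{y_{\cC}(c)\} \times_{\PSh(\cC)} \PSh(\cC)_{/\psi(x)} \simeq \Map_{\PSh(\cC)}(y_{\cC}(c), \psi(x))$, which by the Yoneda lemma is equivalent to $\psi(x)(c) = \Map_{\cD}(f(c), x)$. So both right fibrations have matching fibers.

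To promote this pointwise matching to an actual equivalence of right fibrations, I would construct a direct functor $\Phi \colon (f \downarrow x) \to \cC_{/\psi(x)}$ over $\cC$ as follows. There is a canonical natural transformation $\theta \colon y_{\cC} \Rightarrow \psi \circ f$ of functors $\cC \to \PSh(\cC)$, whose component at $c$ is the map $\Map_{\cC}(-, c) \to \Map_{\cD}(f(-), f(c))$ induced by applying $f$; by the Yoneda lemma, this component corresponds to $\mathrm{id}_{f(c)} \in \psi(f(c))(c)$. Given $(c, \alpha \colon f(c) \to x) \in (f \downarrow x)$, the composite $\psi(\alpha) \circ \theta_c \colon y_{\cC}(c) \to \psi(x)$ defines an object of $\cC_{/\psi(x)}$, and this assembles into the desired $\Phi$. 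By construction $\Phi$ realizes on fibers precisely the Yoneda equivalence $\Map_{\cD}(f(c), x) \simeq \Map_{\PSh(\cC)}(y_{\cC}(c), \psi(x))$ from the previous paragraph. Since a morphism of right fibrations over $\cC$ which is a fiberwise equivalence is a categorical equivalence (see \cite[2.2.3.3]{HTT}), we conclude.

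The main obstacle is the construction of $\theta$ and the verification that the induced $\Phi$ implements the Yoneda equivalence on fibers: both are conceptually clear but require a careful model-level construction (for instance, via the functor $\psi$ factoring through $\Fun(\cC^{\mathrm{op}}, \cS)$ and the universal property of the Yoneda embedding \cite[5.1.3.1]{HTT}) to be made rigorous. Once $\Phi$ is in hand, the equivalence of right fibrations is automatic.
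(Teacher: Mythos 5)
Your approach is genuinely different from the paper's. The paper's proof is purely diagrammatic: it realizes both $\cC_{/\psi(x)}$ and $(f\downarrow x)$ as outer pullbacks of $\Fun(\Delta^1, \PSh(\cD)) \to \PSh(\cD)\times\PSh(\cD)$ along a single map $\cC \to \PSh(\cD)\times\PSh(\cD)$, factored two ways — through $\PSh(\cC) \times \PSh(\cC)$ via $y_{\cC} \times \psi(x)$ followed by $f_p \times f_p$ (with $f_p$ the cocontinuous extension of $y_{\cD}\circ f$), and through $\cD \times \cD$ via $f \times x$ followed by $y_{\cD} \times y_{\cD}$ — using only the definitional pullback squares and full faithfulness of $y_{\cD}$. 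No fibrations or straightening appear. You instead identify both sides as right fibrations over $\cC$ and compare them, which makes the \emph{reason} for the equivalence visible (both are classified by $\psi(x)$) at the price of importing the un/straightening machinery.

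Your strategy is sound, but you are doing more work than you need, and the surplus is precisely your acknowledged gap. You do not need to construct $\theta$ or $\Phi$ at all: the right fibration $(f\downarrow x) \simeq \cC\times_{\cD}\cD_{/x} \to \cC$ is the pullback along $f$ of $\cD_{/x}\to\cD$, which is classified by $\Map_{\cD}(-,x) \simeq y_{\cD}(x)$, so by compatibility of straightening with base change it is classified by $f^p(y_{\cD}(x)) = \psi(x)$; meanwhile $\cC_{/\psi(x)} \simeq \cC \times_{\PSh(\cC)} \PSh(\cC)_{/\psi(x)} \to \cC$ is classified by $\Map_{\PSh(\cC)}(y_{\cC}(-), \psi(x))$, which the $\infty$-categorical Yoneda lemma (\cite[5.1.3]{HTT}) identifies with $\psi(x)$. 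Two right fibrations over $\cC$ with equivalent classifying presheaves are equivalent, and you are done. If you insist on an explicit $\Phi$, note that your $\theta$ is just the restriction along $y_{\cC}$ of the unit of the adjunction $f_p \dashv f^p$, which is already a coherent model-independent construction; also, \cite[2.2.3.3]{HTT} is not the fiberwise-equivalence criterion for right fibrations you want to invoke at the last step.
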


\begin{proof}
	Let us introduce the left Kan extension of $y_{\cD} \circ f$ along $y_{\cC}$, which we denote
	\[	f_p \colon \PSh(\cC) \to \PSh(\cD) \]
	Observe now that the two inner squares in the following diagram are pullbacks
	\[	\begin{tikzcd}
	\cC_{/ \psi(x)} \arrow{r} \arrow{d} & \mathrm{Fun}(\Delta^1, \PSh(\cC)) \arrow{d}{d_0 \times d_1} \arrow{r} & \mathrm{Fun}(\Delta^1, \PSh(\cD)) \arrow{d} \\
	\cC \arrow{r}{y_{\cC} \times \psi(x)} & \PSh(\cC) \times \PSh(\cC) \arrow{r}{f_p \times f_p} & \PSh(\cD) \times \PSh(\cD)
	\end{tikzcd} \]
	However, we can factor the induced map $\cC \to \PSh(\cD) \times \PSh(\cD)$ as
	\[	\cC \to \cD \times \cD \to \PSh(\cD) \times \PSh(\cD) \]
	therefore obtaining other two pullback squares
	\[ \begin{tikzcd}
	(f \downarrow x) \arrow{r} \arrow{d} & \mathrm{Fun}(\Delta^1, \cD) \arrow{d}{d_0 \times d_1} \arrow{r} & \mathrm{Fun}(\Delta^1, \PSh(\cD)) \arrow{d} \\
	\cC \arrow{r}{f \times x} & \cD \times \cD \arrow{r}{y_{\cD} \times y_{\cD}} & \PSh(\cD) \times \PSh(\cD)
	\end{tikzcd} \]
	We conclude that $(f \downarrow x) \simeq \cC_{/\psi(x)}$.
\end{proof}

We will now compare this definition with the one implicitly given in \cite[§4.3.3]{HTT}.

\begin{prop} \label{prop:comparing_definitions_of_comma_categories}
	Let $f \colon \cC \to \cD$ be an $\infty$-functor.
	Let $\mathrm{Cyl}(f) \coloneqq (\cC \times \Delta^1) \coprod_{\cC \times \{1\}} \cD$ be the mapping cylinder of $f$.
	For every $x \in \cD$ there exists an isomorphism in $\mathrm h(\Cat_\infty)$:
	\[ (f \downarrow x) \simeq (\cC \times \{0\}) \subseteq \mathrm{Cyl}_{/x} \]
\end{prop}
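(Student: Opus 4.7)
The plan is to identify both sides of the proposed equivalence with the homotopy fibre product $\cC \times^{\mathrm h}_\cD \cD_{/x}$. For the left-hand side, unraveling \cref{def:comma_category} presents $(f \downarrow x)$ as the strict pullback $\cC \times_{\cD} \Fun(\Delta^1, \cD) \times_{\cD \times \cD} (\cD \times \{x\})$, and the standard identification of $\cD_{/x}$ with the fibre of $\Fun(\Delta^1, \cD) \to \cD$ over $x$ (via evaluation at $1$) rewrites this as $\cC \times_{\cD} \cD_{/x}$. Since $\cD_{/x} \to \cD$ is a right fibration, hence a categorical fibration, this strict pullback is already a homotopy pullback.

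For the right-hand side, I would first show that the projection
\[ \pi \colon \mathrm{Cyl}(f) \to \cD \]
— the identity on the $\cD$-summand and the composite $\cC \times \Delta^1 \xrightarrow{\mathrm{pr}_\cC} \cC \xrightarrow{f} \cD$ on the cylinder summand — is a categorical equivalence. Indeed, the inclusion $\iota_{\cD} \colon \cD \hookrightarrow \mathrm{Cyl}(f)$ is a section of $\pi$, and the tautological edge of $\cC \times \Delta^1$ assembles into a natural transformation $\iota_{\cD} \circ \pi \Rightarrow \mathrm{id}_{\mathrm{Cyl}(f)}$. Since the slice construction $(-)_{/x}$ sends categorical equivalences of $\infty$-categories to categorical equivalences, this produces a categorical equivalence $\pi_{/x} \colon \mathrm{Cyl}(f)_{/x} \to \cD_{/x}$.

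Next, the full subcategory $(\cC \times \{0\}) \subseteq \mathrm{Cyl}(f)_{/x}$ is, by construction, the strict pullback of $\mathrm{Cyl}(f)_{/x} \to \mathrm{Cyl}(f)$ along the inclusion $\iota_0 \colon \cC \hookrightarrow \mathrm{Cyl}(f)$ as $\cC \times \{0\}$; once more this is a homotopy pullback, as $\mathrm{Cyl}(f)_{/x} \to \mathrm{Cyl}(f)$ is a right fibration. The key observation is that $\pi \circ \iota_0 = f$, which gives a morphism of cospans
\[ \begin{tikzcd}
\cC \arrow{r}{\iota_0} \arrow{d}{=} & \mathrm{Cyl}(f) \arrow{d}{\pi} & \mathrm{Cyl}(f)_{/x} \arrow{l} \arrow{d}{\pi_{/x}} \\
\cC \arrow{r}{f} & \cD & \cD_{/x} \arrow{l}
\end{tikzcd} \]
whose centre and right vertical arrows are categorical equivalences. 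Passing to homotopy pullbacks yields the required chain of equivalences
\[ (\cC \times \{0\}) \subseteq \mathrm{Cyl}(f)_{/x} \simeq \cC \times^{\mathrm h}_\cD \cD_{/x} \simeq (f \downarrow x). \]

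The one technical point I expect to require care is the statement that slicing $(-)_{/x}$ sends a categorical equivalence of $\infty$-categories between objects lying over $x$ to a categorical equivalence; this can be extracted from the interaction between the Joyal model structure and the slice construction as in \cite[Proposition 1.2.9.3]{HTT}, but the asymmetric gluing in $\mathrm{Cyl}(f)$ makes it worthwhile to check the hypotheses explicitly. Once that is granted, the rest of the argument is a routine pasting of homotopy pullback squares along right fibrations.
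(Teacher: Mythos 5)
Your overall strategy --- identify both sides with a homotopy pullback of $\cC$ against the overcategory, then transport along a map of cospans --- is clean, but it founders on the central claim that $\pi\colon\mathrm{Cyl}(f)\to\cD$ is a categorical equivalence. That claim is false. The mapping cylinder here is built from $\Delta^1$, which is \emph{not} an interval object for the Joyal model structure: a $\Delta^1$-homotopy (i.e.\ a natural transformation that is not a natural isomorphism) does not witness a Joyal equivalence. Your argument via the section $\iota_\cD$ plus the tautological natural transformation is exactly the kind of reasoning that proves $\pi$ is a \emph{Kan} weak equivalence, but not a categorical one. For a concrete counterexample, take $f=\mathrm{id}\colon\Delta^0\to\Delta^0$. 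Then $\mathrm{Cyl}(f)=\Delta^1$ and $\pi$ is the projection $\Delta^1\to\Delta^0$, which is not fully faithful (it fails on $\Map_{\Delta^1}(1,0)\simeq\emptyset$ versus $\Map_{\Delta^0}(*,*)\simeq\Delta^0$). In this same example $\pi_{/x}\colon(\Delta^1)_{/1}\cong\Delta^1\to(\Delta^0)_{/*}\cong\Delta^0$ is likewise not an equivalence, so the cospan map in your diagram fails to be a levelwise equivalence, and the comparison of homotopy pullbacks does not go through. (Incidentally the transformation runs in the direction $\mathrm{id}_{\mathrm{Cyl}(f)}\Rightarrow\iota_\cD\circ\pi$, not the one you wrote; but that is the lesser problem.)

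The proposition you are proving is still correct --- in the example above both sides of the asserted equivalence are $\Delta^0$, so the pullbacks agree even though the cospan map is not an equivalence --- but that coincidence is precisely what has to be \emph{proved}, and it cannot be reduced to ``$\pi$ is an equivalence, apply functoriality of homotopy pullbacks.'' The paper's own proof sidesteps the issue entirely: rather than comparing $\mathrm{Cyl}(f)$ to $\cD$, it introduces the extended cylinder $\mathrm{ExtCyl}(K)=(K\times\Delta^1)\coprod_{K\times\{1\}}K^\triangleright$ and exhibits, functorially in $K$, a simplicial set $\Fun'(\mathrm{ExtCyl}(K),\mathrm{Cyl}(f))$ that maps by categorical equivalences to both $\Fun(K,(\cC\times\{0\})_{/x})$ and $\Fun(K,(f\downarrow x))$; a Yoneda argument in $\mathrm h(\Cat_\infty)$ then gives the isomorphism. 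You would need to either reproduce an argument of that flavour, or replace $\Delta^1$ in the cylinder by the walking isomorphism $J$ (and check this does not change $(\cC\times\{0\})_{/x}$ up to equivalence), before your cospan comparison can be made to work. There are also two smaller points worth flagging: the literal fibre of $\mathrm{ev}_1\colon\Fun(\Delta^1,\cD)\to\cD$ over $x$ is $\cD^{/x}$ rather than $\cD_{/x}$ (equivalent but not equal, cf.\ \cite[4.2.1.5]{HTT}), and since $\mathrm{Cyl}(f)$ is not a quasicategory, ``right fibration $\Rightarrow$ categorical fibration'' does not apply to $\mathrm{Cyl}(f)_{/x}\to\mathrm{Cyl}(f)$; one has to invoke instead that pullback along a right fibration preserves categorical equivalences.
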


\begin{proof}
	For any simplicial set $K$ form the extended cylinder
	\[ \mathrm{ExtCyl}(K) \coloneqq (K \times \Delta^1) \coprod_{K \times \{1\}} K^\triangleright \]
	Consider the full subsimplicial set $\Fun'(\mathrm{ExtCyl}(K), \mathrm{Cyl}(f))$ spanned by those functors $g \colon \mathrm{ExtCyl}(K) \to \mathrm{Cyl}(f)$ satisfying the following properties:
	\begin{enumerate}
		\item the restriction of $g$ to $K \times \{0\}$ factors through $\cC \times \{0\} \to \mathrm{Cyl}(f)$;
		\item the restriction of $g$ to $K^\triangleright$ defines a functor $K \to \cD_{/x}$.
	\end{enumerate}
	We obtain in this way a categorical equivalence
	\[ \Fun'(\mathrm{ExtCyl}(K), \mathrm{Cyl}(f)) \to \Fun(K, (\cC \times \{0\})_{/x}) \]
	given by restriction along the monomorphism $(K \times \{0\})^\triangleright \to \mathrm{ExtCyl}(K)$.
	
	On the other side, we have two natural maps
	\begin{gather*}
	\Fun'(\mathrm{ExtCyl}(K), \mathrm{Cyl}(f)) \to \Fun(K, \cC) \\
	\Fun'(\mathrm{ExtCyl}(K), \mathrm{Cyl}(f)) \to \Fun(K \times \Delta^1, \cD) = \Fun(K, \Fun(\Delta^1, \cD))
	\end{gather*}
	given by restriction along $K \times \{0\} \to \mathrm{ExtCyl}(K)$ and $K \times \{1\} \to \mathrm{ExtCyl}(K)$ respectively.
	These maps make the diagram
	\[ \begin{tikzcd}
	\Fun'(\mathrm{ExtCyl}(K), \mathrm{Cyl}(f)) \arrow{r} \arrow{d} & \Fun(K, \Fun(\Delta^1, \cD)) \arrow{d} \\
	\Fun(K, \cC) \arrow{r} & \Fun(K, \cD \times \cD)
	\end{tikzcd} \]
	commute. We therefore obtain a categorical equivalence
	\[ \Fun'(\mathrm{ExtCyl}(K), \mathrm{Cyl}(f)) \to \Fun(K, (f \downarrow x)) \]
	which depends functorially on $K$.
	In conclusion, in the homotopy category $\mathrm h(\Cat_\infty)$ we obtain an isomorphism
	\[ \Hom_{\mathrm h(\Cat_\infty)}(K, (\cC \times \{0\})_{/x}) \to \Hom_{\mathrm h(\Cat_\infty)}(K, (f \downarrow x)). \]
	The lemma follows.
\end{proof}

\begin{cor} \label{cor:formula_general_Kan_extension}
	Let $f \colon \cC \to \cD$ be any functor. For any cocomplete category $\cT$, let $\mathrm{Lan}_f$ denote the left adjoint to the functor $\mathrm{Fun}(\cD, \cT) \to \mathrm{Fun}(\cC, \cT)$ induced by composition with $f$. For a fixed functor $g \colon \cC \to \cT$, and a fixed object $x \in \cD$ the formula
	\[
	\mathrm{Lan}_f(g)(x) \simeq \colim_{z \in (f \downarrow x)} g(z)
	\]
	holds.
\end{cor}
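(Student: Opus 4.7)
The plan is to reduce the general case to the colimit formula for left Kan extensions along fully faithful functors, which is \cite[4.3.2.15]{HTT}, by passing through the mapping cylinder $\mathrm{Cyl}(f) \coloneqq (\cC \times \Delta^1) \coprod_{\cC \times \{1\}} \cD$.

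First I would observe that the natural inclusion $j \colon \cC \simeq \cC \times \{0\} \hookrightarrow \mathrm{Cyl}(f)$ is fully faithful, and that the canonical projection $p \colon \mathrm{Cyl}(f) \to \cD$ (collapsing the cylinder direction) is a categorical equivalence satisfying $p \circ j \simeq f$. Since $p$ is an equivalence, composition with $p$ induces an equivalence $\Fun(\cD, \cT) \simeq \Fun(\mathrm{Cyl}(f), \cT)$, so the left adjoints compose as $\mathrm{Lan}_f \simeq \mathrm{Lan}_p \circ \mathrm{Lan}_j$, and in particular
\[ \mathrm{Lan}_f(g)(x) \simeq \mathrm{Lan}_j(g)(\iota(x)), \]
where $\iota \colon \cD \hookrightarrow \mathrm{Cyl}(f)$ denotes the obvious inclusion (so that $p \circ \iota \simeq \mathrm{id}_\cD$).

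Next, since $j$ is fully faithful and $\cT$ is cocomplete, \cite[Proposition 4.3.2.15]{HTT} provides the pointwise formula
\[ \mathrm{Lan}_j(g)(\iota(x)) \simeq \colim_{z \in (\cC \times \{0\})_{/\iota(x)}} g(z), \]
where $(\cC \times \{0\})_{/\iota(x)}$ denotes the slice taken inside $\mathrm{Cyl}(f)_{/\iota(x)}$ as in the statement of \cref{prop:comparing_definitions_of_comma_categories}.

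Finally, \cref{prop:comparing_definitions_of_comma_categories} identifies $(\cC \times \{0\})_{/\iota(x)}$ with the comma category $(f \downarrow x)$, and the resulting diagram is compatible with $g$ since $g \simeq (g \sqcup (g \circ p))|_{\cC \times \{0\}}$ on the cylinder. Combining the two equivalences yields the desired formula. I expect no serious obstacle here: the only point that deserves attention is the functoriality of the identification in \cref{prop:comparing_definitions_of_comma_categories}, i.e.\ that the equivalence $(f \downarrow x) \simeq (\cC \times \{0\})_{/\iota(x)}$ respects the forgetful functors down to $\cC$, so that the diagrams being colimited agree; this is immediate from the construction given in the proof of that proposition, where the equivalence is induced by restriction along $K \times \{0\} \hookrightarrow \mathrm{ExtCyl}(K)$.
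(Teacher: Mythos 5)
Your proposal is correct and takes essentially the same route as the paper: the paper's one-line proof cites \cite[4.3.3.2, 4.3.2.15]{HTT} together with \cref{prop:comparing_definitions_of_comma_categories}, and your argument is simply an unpacking of that — factor $f$ through the mapping cylinder as a fully faithful inclusion followed by a categorical equivalence, apply the pointwise colimit formula for the fully faithful part, and invoke \cref{prop:comparing_definitions_of_comma_categories} to rewrite the slice inside $\mathrm{Cyl}(f)$ as the comma category $(f \downarrow x)$. The only thing worth flagging is that you are implicitly using the nontrivial fact that the Kan extension in the sense of \cite[4.3.3.2]{HTT} (defined via the cylinder) coincides with the left adjoint to $-\circ f$; this is exactly what the citation of 4.3.3.2 alongside 4.3.2.15 is meant to supply, so you should keep that reference rather than rederive it, but otherwise your reasoning matches the intended proof.
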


\begin{proof}
	This is a direct consequence of \cref{prop:comparing_definitions_of_comma_categories} when combined with \cite[4.3.3.2, 4.3.2.15]{HTT}.
\end{proof}

\fi

\bibliographystyle{plain}
\bibliography{dahema}

\end{document}